\theoremstyle{plain}
\newtheorem{theorem}{Theorem}[section]
\newtheorem{proposition}[theorem]{Proposition}
\newtheorem{corollary}[theorem]{Corollary}
\newtheorem{lemma}[theorem]{Lemma}
\theoremstyle{definition}
\newtheorem{remark}[theorem]{Remark}
\newtheorem{question}[theorem]{Question}
\newtheorem{example}[theorem]{Example}
\newcommand{\abs}[1]{\lvert#1\rvert}
\newcommand{\norm}[1]{\lVert#1\rVert}
\newcommand{\bigabs}[1]{\bigl\lvert#1\bigr\rvert}
\newcommand{\Bigabs}[1]{\Bigl\lvert#1\Bigr\rvert}
\newcommand{\term}[1]{{\textit{\textbf{#1}}}}   % To introduce a term
\renewcommand{\mid}{\::\:}
\newcommand{\goestau}{\xrightarrow{\tau}}	% $\tau$-convergence
\newcommand{\goeseta}{\xrightarrow{\eta}}	% $\eta$-convergence
\newcommand{\goesl}{\xrightarrow{\lambda}}	% $\lambda$-convergence
\newcommand{\goesp}{\xrightarrow{\mathrm{p}}}	% $\lambda$-convergence
\newcommand{\goeslB}{\xrightarrow{\lambda_{\mathcal B}}}	
\newcommand{\goesueta}{\xrightarrow{\mathrm{u}\eta}}	% $u\eta$-convergence
\newcommand{\goesuIe}{\xrightarrow{\mathrm{u}_I\eta}}	% $uI\eta$-convergence
\newcommand{\goesceta}{\xrightarrow{\mathrm{c}\eta}}	% $c\eta$-convergence
\newcommand{\goeso}{\xrightarrow{\mathrm{o}}}	% o-convergence
\newcommand{\goesfd}{\xrightarrow{\mathrm{fd}}}
\newcommand{\goesc}{\xrightarrow{\mathrm{c}}}
\newcommand{\goesuc}{\xrightarrow{\mathrm{uc}}}
\newcommand{\goesso}{\xrightarrow{\sigma\mathrm{o}}}	% sigma-o-convergence
\newcommand{\goesu}{\xrightarrow{\mathrm{ru}}} % uniform convergence
\newcommand{\goesuo}{\xrightarrow{\mathrm{uo}}}	% uo-convergence
\newcommand{\goesun}{\xrightarrow{\mathrm{un}}}	% un-convergence
\newcommand{\goesnorm}{\xrightarrow{\norm{\cdot}}} % norm convergence
\newcommand{\goesucc}{\xrightarrow{\mathrm{ucc}}} % ucc convergence
\newcommand{\goesmuB}{\xrightarrow{\mu_\mathcal B}} % mu_B convergence
\DeclareSymbolFont{bbold}{U}{bbold}{m}{n}
\DeclareSymbolFontAlphabet{\mathbbold}{bbold}
\def\one{\mathbbold{1}}
\DeclareMathOperator{\Range}{Range}
\DeclareMathOperator{\Sol}{Sol}
\DeclareMathOperator{\Span}{span}
\DeclareMathOperator{\Int}{Int}
\renewcommand{\le}{\leqslant}
\renewcommand{\ge}{\geqslant}
\begin{document}

\title{Locally solid convergence structures}

\author[Bilokopytov]{E. Bilokopytov}
\email{bilokopy@ualberta.ca}
\address{Department of Mathematical and Statistical Sciences,
  University of Alberta, Edmonton, Canada}

\author[Conradie]{J. Conradie}
\email{jurie.conradie@uct.ac.za}
\address{Department of Mathematics and Applied Mathematics, University
  of Cape Town, Cape Town, South Africa}

\author[Troitsky]{V.G. Troitsky}
\email{troitsky@ualberta.ca}
\address{Department of Mathematical and Statistical Sciences,
  University of Alberta, Edmonton, Canada.}

\author[van der Walt]{J.H. van der Walt}
\email{janharm.vanderwalt@up.ac.za}
\address{Department of Mathematics and Applied Mathematics, University
  of Pretoria, Pretoria, South Africa}

%\thanks{The second author was supported by an NSERC grant.}
\keywords{vector lattice, convergence structure, bornology, order
  convergence, unbounded
  convergence, relative uniform convergence, Choquet convergence}
\subjclass[2020]{Primary: 46A40. Secondary: 46A19, 54A20}
% 54A20 Convergence in general topology (sequences, filters, limits,
% convergence spaces, nets, etc.)
% 46A19 Other ``topological'' linear spaces (convergence spaces, ranked spaces, spaces with a metric taking values in an ordered structure more general than R, etc.)
% 46A40 Ordered topological linear spaces, vector lattices

\date{\today}

\begin{abstract}
  While there is a well developed theory of locally solid topologies,
  many important convergences in vector lattice theory are not
  topological. Yet they share many properties with locally solid
  topologies. Building upon the theory of convergence structures, we
  develop a theory of locally solid convergences, which generalize
  locally solid topologies but also includes many important
  non-topological convergences on a vector lattice. We consider some
  natural modifications of such structures: unbounded, bounded, and
  Choquet. We also study some specific convergences in vector lattices
  from the perspective of locally solid convergence structures.
\end{abstract}

\maketitle

\setcounter{tocdepth}{1} 
\tableofcontents

\section{Introduction}

In a topological space~$X$, there is a natural way to define
convergence, for both nets and filters. A net $(x_\alpha)$ converges
to a point $x\in X$ if every neighborhood of $x$ contains a tail of
the net. A filter in the power set $\mathcal P(X)$ converges to $x$ if
it contains all neighborhoods of~$x$.

There are, however, many examples of convergences occurring in
analysis that are not derived from a topology in this way. For this
reason, the concept of convergence has been extended beyond
topological spaces, leading to so called \term{convergence structures}
on sets and hence to \term{convergence spaces}. This theory can be
formulated in the language of filters, as in ~\cite{Beattie:02}, or
``translated'' into the language of nets, as was done
in~\cite{OBrien:23}.
%We will briefly outline the constructions here.

Many useful convergences occur in the context of vector
lattices. There is a well-developed theory of topologies compatible
with the vector lattice structure, and of these the locally solid
topolgies are the most important (see, for example,
~\cite{Aliprantis:03}). There are examples of important
non-topological convergences in vector lattices that share many
properties with those of topological locally solid convergences. This
motivates the generalization of the notion of a locally solid topology
to that of a locally solid convergence. This was first explored in
~\cite{VanderWalt:13} in the context of filter convergence
structures. In this paper we continue the investigation of locally
solid convergences, mostly, but not exclusively using net convergence.

We give a brief overview of the paper. Net and filter convergence
structures are introduced and their relationship clarified in
Section~\ref{sec:prelim}, which also contains results on adherences
and uniform continuity. Basic properties of locally solid
convergences are discussed in Section~\ref{sec:loc-sol}. A thread
that runs through the paper is that of a modification of a given
convergence. We look at a number of such modifications: the unbounded
modification (Section~\ref{sec:unbdd}), the Mackey modification
(Section~\ref{sec:born-conv}), the bounded modification, also known as
specified sets convergence (Section~\ref{sec:bddmod}), the Choquet
modification (Section~\ref{sec:Choquet}) and, in various places
throughout the paper, the topological modification.

A further concept that plays an important role in some of the
convergences we discuss is that of a linear bornology; one can think of
this as a generalization of the collection of bounded sets in a
topological vector space. Bounded sets in convergence vector spaces
are defined and characterized in Section~\ref{sec:bddsets}. In
Section~\ref{sec:born-conv}, we investigate convergences induced by
linear bornologies.

Two specific convergence structures are discussed in more detail:
relative uniform convergence in Section~\ref{sec:ru} and
finite-dimensional convergence in Section~\ref{sec:born-conv}. We investigate
properties of relative uniform convergence, and study its relationship
to uniform convergence on compact sets in certain spaces of continuous
functions. Finite-dimensional convergence can be defined on any vector
space $X$ ($x_\alpha\goesfd x$ if a tail of $(x_\alpha)$ is contained
in a finite-dimensional subspace of $X$ and converges to $x$ there in
the usual sense) and is the finest linear convergence on~$X$. On
vector lattices, the Archimedean property can be characterized in
terms of it.

In Section~\ref{sec:minimal}, we consider minimal elements in the set
of all Hausdorff locally solid convergences on a vector lattice and
list a number of open problems. Finally, in Section~\ref{sec:u-ideal},
we show that an ideal $J$ is a projection band if for some
(equivalently, for all) complete locally solid
convergence structure, the operation of taking the unbounded
modification commutes with that of taking the restriction to~$J$; this
answers a question from~\cite{Kandic:17}.

\section{Preliminaries}
\label{sec:prelim}
\subsection*{Nets}
Let $X$ be a set. A net in $X$ is a function $x\colon A\to X$, where
$A$ is a directed set; we denote this net by $(x_\alpha)_{\alpha\in A}$, or simply by $(x_\alpha)$. We use the
convention of assuming that if the index set of a net is not
explicitly given, it is the capital Greek letter corresponding to the
lower case Greek letter used as subscript for the terms of the
net. Thus, for example, the index set for the net $(x_\beta)$ is
assumed to be~$B$, and that for the net $(x_\gamma)$ is~$\Gamma$.

A tail set of the net $(x_\alpha)$ is a set of the form
$\{x_\alpha\mid\alpha\ge \alpha_0\}$ for some $\alpha_0\in A$. We say
that a net $(y_\beta)$ is a \term{quasi-subnet} of $(x_\alpha)$ if
every tail set of $(x_\alpha)$ contains a tail set of $(y_\beta)$ as a
subset. Note that $(x_\alpha)$
and $(y_\beta)$ may have different index sets. Two nets $(x_\alpha)$
and $(y_\beta)$ are \term{tail equivalent} if each is a quasi-subnet
of the other.
% we write $(x_\alpha)\sim(y_\beta)$.

\subsection*{Convergence structures}
We define a \term{net convergence structure} on a set $X$ by
specifying, for each $x\in X$, the nets $(x_\alpha)$ of elements of
$X$ which converge to~$x$; we denote this convergence by
$x_\alpha\to x$. To qualify as a net convergence structure, the
following natural axioms must be satisfied:
\begin{enumerate}
\item Constant nets converge: if $x_\alpha=x$ for every
  $\alpha$ then $x_\alpha\to x$;
\item If a net converges to $x$ then every
  quasi-subnet of it converges to~$x$;
\item  Suppose that $(x_\alpha)_{\alpha\in A}$
  and $(y_\alpha)_{\alpha\in A}$ both converge to~$x$. Let
  $(z_\alpha)_{\alpha\in A}$ be a net in $X$ such that
  $z_\alpha\in\{x_\alpha,y_\alpha\}$ for every~$\alpha$. Then
  $z_\alpha\to x$.
\end{enumerate}
It is easy to see that tail equivalent nets have the same limits.
When dealing with a pair of nets, we will often use the trick
described in Remark~2.5 in~\cite{OBrien:23} to replace them with a
pair of nets with the same index set, which are tail equivalent to the
original nets.

\medskip

A \term{filter convergence structure} on $X$ is defined by specifying, for each $x\in X$, the filters $\mathcal F$ in $\mathcal
P(X)$ which converge to~$x$; we denote this convergence by
$\mathcal F\to x$. To qualify as a filter convergence structure, the
following axioms must be satisfied:
\begin{enumerate}
  \item $[x]\to x$ for every $x\in X$, where $[x]=\{F\subseteq X\mid x\in F\}$ is the principal ultrafilter of~$x$;
  \item  If $\mathcal F\to x$ and $\mathcal F\subseteq\mathcal
    G$ then $\mathcal G\to x$;
  \item If $\mathcal F\to x$ and $\mathcal G\to x$ then
    $\mathcal F\cap\mathcal G\to x$.
\end{enumerate}

\medskip

Filters and nets in a set $X$ are related in a natural way: for any
net $(x_\alpha)$ its tail sets form a base of a filter, called the
\term{tail filter} of $(x_\alpha)$ and denoted by $[x_\alpha]$, while
every filter in $\mathcal P(X)$ is the tail filter of some net. It was
shown in~\cite{OBrien:23} that there is a one-to-one correspondence
between filter convergence structures and net convergence
structures. Namely, given a filter convergence structure, one may
define a net convergence structure on the same space by requiring that
a net $(x_\alpha)$ converges to $x$ whenever its tail filter
$[x_\alpha]$ converges to~$x$. Conversely, given a net convergence
structure, we define a filter convergence structure as follows: a
filter $\mathcal F$ converges to $x$ if there is a net $(x_\alpha)$
such that $x_\alpha\to x$ and $\mathcal F=[x_\alpha]$. This allows us
to just talk about \term{convergence structures}.  We will use net and
filter terminologies interchangeably.  When dealing with more than one
convergence structure on $X$ at once, we will give names to them. For
example, if we denote a convergence structure on $X$ by $\lambda$ then
we write $x_\alpha\goesl x$ instead of just  $x_\alpha\to x$, and we call the
resulting pair $(X,\lambda)$ a \term{convergence space}. We refer the reader to~\cite{Beattie:02,OBrien:23} for unexplained
notation and terminology.

Given two convergence structures $\lambda$ and $\eta$ on~$X$, we say
that $\lambda$ is stronger than $\eta$ (or $\eta$ is weaker that
$\lambda$) and write $\lambda\ge\eta$ if $x_\alpha\goesl x$ implies
$x_\alpha\goeseta x$ for every net $(x_\alpha)$ in $X$ or,
equivalently, $\mathcal F\goesl x$ implies $\mathcal F\goeseta x$ for
every filter $\mathcal F$ on~$X$.  For a filter base $\mathcal B$ of
subsets of~$X$, we write $[\mathcal B]$ for the filter generated
by~$\mathcal B$.  Recall that if $f\colon X\to Y$ is a function and
$\mathcal F$ is a filter on $X$ then we write $f(\mathcal F)$ for the
filter generated by $\bigl\{f(A)\mid A\in\mathcal F\bigr\}$. For a net
$(x_\alpha)$ in $X$ we have
$f\bigl([x_\alpha]\bigr)=\bigl[f(x_\alpha)\bigr]$. A function $f$
between two convergence spaces is \term{continuous} if $x_\alpha\to x$
implies $f(x_\alpha)\to f(x)$ for every net
$(x_\alpha)$. Equivalently, if $\mathcal F\to x$ implies
$f(\mathcal F)\to f(x)$ for every filter~$\mathcal F$. A convergence
structure on a vector space is \term{linear} if the vector space
operations are continuous; we call such a space a \term{convergence
  vector space}.

\subsection*{Closure and adherence}
Recall that a set $A$ in a convergence space $(X,\eta)$ is
\term{closed} if it contains the limit of each convergent net in
it. For a set~$A$, the \term{closure} of $A$ is defined as the
intersection of all the closed sets containing~$A$, or, equivalently,
the least closed set containing~$A$. We denote the closure of $A$ by
$\overline{A}^\eta$ or just by~$\overline{A}$. By the \term{adherence}
of $A$ we mean the set of limits of all the nets in~$A$. We write
$\overline{A}^{1,\eta}$ or just $\overline{A}^1$ for the adherence
of~$A$. Note that we always have
$A\subseteq\overline{A}^1\subseteq\overline{A}$. Further, we write
$\overline{A}^2$ for the adherence of $\overline{A}^1$, etc. In
general, for an ordinal~$\kappa$, we define $\overline{A}^{\kappa+1}$
as the adherence of $\overline{A}^\kappa$; if $\kappa$ is a limit
ordinal then we define $\overline{A}^\kappa$ as the union of
$\overline{A}^\iota$ for all the ordinals $\iota$ that are less
than~$\kappa$.

\begin{proposition}\label{k-adh-clos}
  Let $A$ be a subset of~$X$. Then there exists an ordinal number
  $\kappa$ whose cardinality does not exceed that of $X$ such that
  $\overline{A}^\kappa=\overline{A}$.
\end{proposition}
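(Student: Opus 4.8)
The plan is to treat $\bigl(\overline{A}^\kappa\bigr)_\kappa$ as an increasing transfinite chain of subsets of~$X$ that is bounded above by $\overline{A}$, and to argue that such a chain must stop growing after at most $\abs{X}$ steps at which it strictly increases. First I would check monotonicity: since constant nets converge, every set $B$ satisfies $B\subseteq\overline{B}^1$, so applying this with $B=\overline{A}^\kappa$ gives $\overline{A}^\kappa\subseteq\overline{A}^{\kappa+1}$, and at limit ordinals monotonicity is immediate from the defining union. A routine transfinite induction, using that $\overline{A}$ is closed (so the adherence of any subset of $\overline{A}$ stays inside $\overline{A}$), then shows $\overline{A}^\kappa\subseteq\overline{A}$ for every ordinal~$\kappa$.

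The crucial observation is a stabilisation criterion: a set $B$ is closed if and only if $\overline{B}^1=B$, because being closed means precisely that $B$ contains the limit of every convergent net in it, that is, $\overline{B}^1\subseteq B$, while $B\subseteq\overline{B}^1$ always holds. Consequently, if $\overline{A}^\kappa=\overline{A}^{\kappa+1}$ for some ordinal~$\kappa$, then $\overline{A}^\kappa$ is closed; since it contains $A$ and $\overline{A}$ is the least closed set containing~$A$, we get $\overline{A}\subseteq\overline{A}^\kappa$, and together with the inclusion from the previous paragraph this yields $\overline{A}^\kappa=\overline{A}$.

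It remains to locate such a $\kappa$ of cardinality at most $\abs{X}$. Let $\kappa_0$ be the least ordinal with $\overline{A}^{\kappa_0}=\overline{A}^{\kappa_0+1}$. For every $\beta<\kappa_0$ minimality forces the inclusion $\overline{A}^\beta\subseteq\overline{A}^{\beta+1}$ to be strict, so I can choose $x_\beta\in\overline{A}^{\beta+1}\setminus\overline{A}^\beta$. If $\beta<\beta'<\kappa_0$ then $x_\beta\in\overline{A}^{\beta+1}\subseteq\overline{A}^{\beta'}$ whereas $x_{\beta'}\notin\overline{A}^{\beta'}$, so the assignment $\beta\mapsto x_\beta$ is an injection of $\kappa_0$ into~$X$; hence $\abs{\kappa_0}\le\abs{X}$. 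This same distinctness argument shows that the ordinals at which the chain strictly increases inject into~$X$ and therefore form a set bounded in the ordinals, which guarantees that $\kappa_0$ exists, since the chain cannot strictly increase at arbitrarily large ordinals.

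The only real care needed anywhere is the bookkeeping at limit ordinals in establishing monotonicity and the transfinite counting in the last paragraph; neither is deep. In particular, I do not need to verify that equality at the successor step $\kappa_0$ propagates through later limit ordinals, because closedness of $\overline{A}^{\kappa_0}$ already identifies it with $\overline{A}$ outright.
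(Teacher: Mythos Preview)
Your proof is correct and follows essentially the same line as the paper's: both pick witnesses $x_\beta\in\overline{A}^{\beta+1}\setminus\overline{A}^\beta$ at every strictly increasing step and use that this yields an injection into~$X$. Your version is in fact more carefully written, since you explicitly verify the stabilisation criterion (that $\overline{A}^\kappa=\overline{A}^{\kappa+1}$ forces $\overline{A}^\kappa=\overline{A}$), whereas the paper's contradiction argument silently uses this implication when asserting that failure of the conclusion gives strict inclusions at every stage below the Hartogs-type ordinal.
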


\begin{proof}
  Suppose not. Let $\alpha$ be the first ordinal number whose
  cardinality is greater than that of~$X$. Then for every ordinal
  number $\kappa<\alpha$ we have 
  $\overline{A}^\kappa\subsetneq\overline{A}^{\kappa+1}$. Pick any point in
  $\overline{A}^{\kappa+1}\setminus\overline{A}^\kappa$ and denote it
  by $f(\kappa)$. Then $f$ is an injection from $[0,\alpha)$ into~$X$,
  which is impossible because the cardinality of $[0,\alpha)$ is
  greater than that of~$X$.
\end{proof}

\subsection*{Uniform continuity} In order to translate the concept of
uniform continuity of a function on a vector space into the language
of nets, we first review the filter definition (cf.~\cite{Beattie:02},
Chapter 2).  Let $X$ be a convergence vector space. For
$A\subseteq X$, we define $\Delta A\subseteq X^2$ by
$\Delta A=\bigl\{(x,y)\mid x-y\in A\bigr\}$. For a filter $\mathcal F$
on~$X$, we define $\Delta\mathcal F$ to be the filter on $X^2$
generated by $\{\Delta A\mid A\in\mathcal F\}$. We define the
\term{induced uniform convergence structure} $\mathfrak A$ as the set
of filters $\mathcal U$ on $X^2$ such that
$\mathcal U\supseteq\Delta\mathcal F$ for some filter $\mathcal F$ on
$X$ with $\mathcal F\to 0$. Let $Y$ be another convergence vector
space and $\mathfrak B$ be the induced uniform convergence structure on~$Y^2$,
defined in a similar way. Let $f\colon X\to Y$. We define
$f^{(2)}\colon X^2\to Y^2$ by $f^{(2)}(x,y)=\bigl(f(x),f(y)\bigr)$. As
usual, for a filter $\mathcal U$ on~$X^2$, we write
$f^{(2)}(\mathcal U)$ for $\bigl[f^{(2)}(H)\mid H\in\mathcal
U\bigr]$. We say that $f$ is \term{uniformly continuous} if
$f^{(2)}(\mathcal U)\in\mathfrak B$ whenever
$\mathcal U\in\mathfrak A$. As before, every filter $\mathcal U$ on
$X^2$ is the tail filter of some net in~$X^2$. That is, we can find
two nets $(x_\alpha)$ and $(y_\alpha)$ in~$X$, indexed by the same
directed set, such that $\mathcal
U=\bigl[(x_\alpha,y_\alpha)\bigr]$. It follows that
$f^{(2)}(\mathcal U)=\bigl[\bigl(f(x_\alpha),f(y_\alpha)\bigr)\bigr]$.

\begin{lemma}\label{uniformity}
  Let $\mathcal U$ be a filter on $X^2$ with
  $\mathcal U=\bigl[(x_\alpha,y_\alpha)\bigr]$ as above. Then
  $\mathcal U\in\mathfrak A$ iff $x_\alpha-y_\alpha\to 0$ in~$X$.
\end{lemma}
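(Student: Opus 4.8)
The plan is to reduce everything to the subtraction map $s\colon X^2\to X$, $s(u,v)=u-v$, together with two elementary identities relating $s$ and the operation $\Delta$. Observe first that, for the net $\alpha\mapsto(x_\alpha,y_\alpha)$ defining $\mathcal U$, the identity $f\bigl([u_\alpha]\bigr)=\bigl[f(u_\alpha)\bigr]$ recorded above gives $s(\mathcal U)=[x_\alpha-y_\alpha]$; so the assertion to be proved reads: $\mathcal U\in\mathfrak A$ iff $s(\mathcal U)\to 0$. The two identities I will establish are: (a) $s(\Delta A)=A$ for every $A\subseteq X$, which holds because $z=z-0$ exhibits each $z\in A$ as an element of $s(\Delta A)$; since $\{\Delta A\mid A\in\mathcal F\}$ is a base of $\Delta\mathcal F$ (using $\Delta(A\cap B)=\Delta A\cap\Delta B$) and images of base sets form a base of the image filter, this upgrades to $s(\Delta\mathcal F)=\mathcal F$ for every filter $\mathcal F$ on $X$. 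And (b) $\Delta\,s(\mathcal U)\subseteq\mathcal U$, which I check on bases: the tail sets $S_{\alpha_0}=\{(x_\beta,y_\beta)\mid\beta\ge\alpha_0\}$ of $\mathcal U$ map under $s$ to the tail sets $T_{\alpha_0}=\{x_\beta-y_\beta\mid\beta\ge\alpha_0\}$ of $s(\mathcal U)$, and $\beta\ge\alpha_0$ forces $x_\beta-y_\beta\in T_{\alpha_0}$, i.e.\ $(x_\beta,y_\beta)\in\Delta T_{\alpha_0}$; thus $S_{\alpha_0}\subseteq\Delta T_{\alpha_0}$, so each base set $\Delta T_{\alpha_0}$ of $\Delta\,s(\mathcal U)$ already lies in $\mathcal U$.

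With these in hand both implications are short. For the forward direction, if $\mathcal U\in\mathfrak A$ then $\mathcal U\supseteq\Delta\mathcal F$ for some filter $\mathcal F\to 0$; applying $s$ and using monotonicity of the image operation together with identity (a) gives $s(\mathcal U)\supseteq s(\Delta\mathcal F)=\mathcal F$, and since $\mathcal F\to 0$, axiom~(2) of filter convergence (upward closure) yields $s(\mathcal U)\to 0$, that is $x_\alpha-y_\alpha\to 0$. For the converse, suppose $x_\alpha-y_\alpha\to 0$, i.e.\ $s(\mathcal U)\to 0$. Put $\mathcal F:=s(\mathcal U)$; then $\mathcal F\to 0$ and, by identity (b), $\mathcal U\supseteq\Delta\,s(\mathcal U)=\Delta\mathcal F$, so $\mathcal U\in\mathfrak A$ straight from the definition.

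I expect no deep obstacle: the lemma is essentially a dictionary entry translating the filter definition of $\mathfrak A$ into net language. The only place demanding care is the bookkeeping with filter bases---verifying that $\{\Delta A\}$ is a base of $\Delta\mathcal F$, that $s$ carries bases to bases, and that the image operation is monotone---and noting which step uses what: identity (a) genuinely invokes the vector space structure through $z=z-0$, whereas (b) is purely set-theoretic and in fact holds for any filter on $X^2$. Once these routine verifications are recorded, the two inclusions $s(\mathcal U)\supseteq\mathcal F$ and $\mathcal U\supseteq\Delta\mathcal F$ close the argument.
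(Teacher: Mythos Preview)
Your proof is correct and follows essentially the same route as the paper's. Both arguments hinge on the same two facts---that $\mathcal F\subseteq[x_\alpha-y_\alpha]$ whenever $\Delta\mathcal F\subseteq\mathcal U$ (your identity (a) plus monotonicity), and that the tail set $\{(x_\alpha,y_\alpha)\mid\alpha\ge\alpha_0\}$ sits inside $\Delta\{x_\alpha-y_\alpha\mid\alpha\ge\alpha_0\}$ (your identity (b))---you have simply packaged them via the subtraction map $s$ while the paper unwinds them element by element on tails.
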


\begin{proof}
  Suppose $\mathcal U\in\mathfrak A$. Then $\mathcal U\supseteq\Delta
  \mathcal F$ for some filter $\mathcal F$ on $X$ with $\mathcal F\to
  0$. For every $A\in\mathcal F$, we have $\Delta A\in\mathcal U$,
  hence $\Delta A$ contains a tail of
  $\bigl((x_\alpha,y_\alpha)\bigr)$. We conclude that $A$ contains a
  tail of $(x_\alpha-y_\alpha)$. This yields $A\in[x_\alpha-y_\alpha]$
  and, therefore, $\mathcal F\subseteq[x_\alpha-y_\alpha]$. It follows
  from $\mathcal F\to 0$ that $[x_\alpha-y_\alpha]\to 0$ and then
  $x_\alpha-y_\alpha\to 0$.
  
  Conversely, suppose that $x_\alpha-y_\alpha\to
  0$. Fix~$\alpha_0$. It follows from
 \begin{displaymath}
    \bigl\{(x_\alpha,y_\alpha)\mid\alpha\ge\alpha_0\bigr\}
    \subseteq\Delta\{x_\alpha-y_\alpha\mid\alpha\ge\alpha_0\}
  \end{displaymath}
  that 
  \begin{math}
    \Delta\{x_\alpha-y_\alpha\mid\alpha\ge\alpha_0\}
    \in\bigl[(x_\alpha,y_\alpha)\bigr]
  \end{math}
  and, therefore,
  \begin{math}
    \Delta[x_\alpha-y_\alpha]
    \subseteq\bigl[(x_\alpha,y_\alpha)\bigr]=\mathcal U.
  \end{math}
  Now $[x_\alpha-y_\alpha]\to 0$ implies that
  $\mathcal U\in\mathfrak A$.
\end{proof}

\begin{proposition}\label{unif-cont-nets}
  Let $f\colon X\to Y$ be a function between two convergence vector
  spaces. Then $f$ is uniformly continuous iff for every two nets
  $(x_\alpha)$ and $(y_\alpha)$ in $X$ with $x_\alpha-y_\alpha\to 0$
  we have $f(x_\alpha)-f(y_\alpha)\to 0$.
\end{proposition}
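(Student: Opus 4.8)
The plan is to use \Cref{uniformity} as a dictionary between the filter-level definition of uniform continuity and the net-level condition in the statement. The key preliminary observation is that, although \Cref{uniformity} is stated for the space $X$ with its induced uniform convergence structure $\mathfrak A$, its proof is purely formal and uses nothing specific about $X$; consequently the identical statement holds for $Y$ with $\mathfrak B$, namely a filter $\mathcal V=\bigl[(u_\alpha,v_\alpha)\bigr]$ on $Y^2$ lies in $\mathfrak B$ iff $u_\alpha-v_\alpha\to0$ in~$Y$. I would record this once at the start so that both directions become direct applications.

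For the forward implication, I would assume $f$ is uniformly continuous and take two nets $(x_\alpha)$, $(y_\alpha)$ in $X$ (necessarily on a common index set, so that the difference makes sense) with $x_\alpha-y_\alpha\to0$. Setting $\mathcal U=\bigl[(x_\alpha,y_\alpha)\bigr]$, \Cref{uniformity} gives $\mathcal U\in\mathfrak A$, so uniform continuity yields $f^{(2)}(\mathcal U)\in\mathfrak B$. Since $f^{(2)}(\mathcal U)=\bigl[\bigl(f(x_\alpha),f(y_\alpha)\bigr)\bigr]$, as noted just before the lemma, the version of \Cref{uniformity} for $Y$ delivers $f(x_\alpha)-f(y_\alpha)\to0$, as required.

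For the converse, I would assume the net condition and take an arbitrary $\mathcal U\in\mathfrak A$. Using the fact recalled before the lemma that every filter on $X^2$ is the tail filter of a paired net, I write $\mathcal U=\bigl[(x_\alpha,y_\alpha)\bigr]$ with $(x_\alpha)$, $(y_\alpha)$ indexed by the same directed set. Then \Cref{uniformity} gives $x_\alpha-y_\alpha\to0$, the hypothesis gives $f(x_\alpha)-f(y_\alpha)\to0$, and the $Y$-version of \Cref{uniformity} gives $f^{(2)}(\mathcal U)=\bigl[\bigl(f(x_\alpha),f(y_\alpha)\bigr)\bigr]\in\mathfrak B$. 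Hence $f$ is uniformly continuous.

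I do not expect a serious obstacle here: the substance is entirely contained in \Cref{uniformity}, and the proposition is essentially its net-theoretic repackaging. The only point requiring care is bookkeeping with index sets, that is, ensuring that the two nets in the statement and the two coordinate nets coming from $\mathcal U$ share a common directed index set; this is exactly what the representation of filters on $X^2$ as tail filters of paired nets provides, so no appeal to the index-set matching trick beyond that is needed.
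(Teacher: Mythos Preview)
Your proposal is correct and follows essentially the same route as the paper's proof: both directions are obtained by passing between filters on $X^2$ and paired nets via \Cref{uniformity}, applying uniform continuity (or the net hypothesis) in the middle, and then invoking the lemma again on the $Y$ side. Your explicit remark that \Cref{uniformity} holds verbatim for $Y$ and $\mathfrak B$ is exactly what the paper uses implicitly when it writes ``by the lemma again.''
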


\begin{proof}
  Suppose that $f$ is uniformly continuous, and $x_\alpha-y_\alpha\to
  0$. Put $\mathcal U=\bigl[(x_\alpha,y_\alpha)\bigr]$. By the lemma,
  $\mathcal U\in\mathfrak A$. It follows that
  \begin{math}
    \bigl[\bigl(f(x_\alpha),f(y_\alpha)\bigr)\bigr]
    =f^{(2)}(\mathcal U)\in\mathfrak B,
  \end{math}
  and, by the lemma again, $f(x_\alpha)-f(y_\alpha)\to 0$. 

  To prove the converse, suppose that $\mathcal U\in\mathfrak
  A$. Represent $\mathcal U$ as $\mathcal
  U=\bigl[(x_\alpha,y_\alpha)\bigr]$ as before. By the lemma,
  $x_\alpha-y_\alpha\to 0$. By the assumption,
  $f(x_\alpha)-f(y_\alpha)\to 0$. Applying the lemma again, we conclude
  that 
  \begin{math}
    f^{(2)}(\mathcal U)=
    \bigl[\bigl(f(x_\alpha),f(y_\alpha)\bigr)\bigr]
    \in\mathfrak B.
  \end{math}
\end{proof}

\begin{remark}
  If $f$ is uniformly continuous and $(x_\alpha)_{\alpha\in A}$ and
  $(y_\beta)_{\beta\in B}$ are two nets such that
  $x_\alpha-y_\beta\to 0$, where we consider $x_\alpha-y_\beta$ as a
  net indexed by $A\times B$, then $f(x_\alpha)-f(y_\beta)\to
  0$. Indeed, define new nets indexed by $A\times B$ via
  $\tilde x_{\alpha,\beta}=x_\alpha$ and
  $\tilde y_{\alpha,\beta}=y_\beta$. Then
  $\tilde x_{\alpha,\beta}-\tilde y_{\alpha,\beta}\to 0$, hence, by
  the proposition,
  $f(\tilde x_{\alpha,\beta})-f(\tilde y_{\alpha,\beta})\to 0$, which
  just means $f(x_\alpha)-f(y_\beta)\to 0$.

  However, this condition is not equivalent to uniform
  continuity. Indeed, in~$\mathbb R$, $x_\alpha-y_\beta\to 0$ means
  that the two nets converge to the same limit, hence for every
  continuous $f\colon\mathbb R\to\mathbb R$ we have
  $f(x_\alpha)-f(y_\beta)\to 0$.
\end{remark}

\begin{remark}
  Let $\mathcal F$ be a filter on~$X$. Since $\Delta\mathcal F$ is a
  filter on~$X^2$, we can find two nets $(x_\alpha)$ and $(y_\alpha)$
  in $X$ such that $\Delta\mathcal
  F=\bigl[(x_\alpha,y_\alpha)\bigr]$. As in
  the proof of Lemma~\ref{uniformity}, one can prove 
  that $\mathcal F=[x_\alpha-y_\alpha]$.  
%   Indeed, for every $A\in\mathcal F$, $\Delta A$ contains a tail of
%   $\bigl((x_\alpha,y_\alpha)\bigr)$, hence $A$ contains a tail of
%   $(x_\alpha-y_\alpha)$; it follows that $\mathcal
%   F\subseteq[x_\alpha-y_\alpha]$.

%   For the converse, suppose that $A\in[x_\alpha-y_\alpha]$. Then
%   $\Delta A$ contains a tail of $\bigl((x_\alpha,y_\alpha)\bigr)$,
%   hence $\Delta A\in\Delta\mathcal F$. It follows that $\Delta
%   A\supseteq\Delta B$ for some $B\in\mathcal F$. It is now easy to see
%   that $A\supseteq B$ and, therefore, $A\in\mathcal F$.
\end{remark}

\subsection*{Vector lattices}
For vector lattices (Riesz spaces) we will, unless otherwise
indicated, use the notation and terminology of ~\cite{Aliprantis:03},
to which we refer the reader for further information.  Throughout the
paper, all vector lattices are assumed to be Archimedean unless
specified otherwise.

\section{Locally solid convergence structures}
\label{sec:loc-sol}

Recall that a linear topology on a vector lattice $X$ is said to be
locally solid if it has a base of solid neighborhoods of zero. We will
now extend the concept of local solidity to convergence spaces. For a
filter $\mathcal F$ on~$X$, we write $\Sol(\mathcal F)$ for the filter
generated by the solid hulls of the elements of~$\mathcal F$. It is
clear that $\Sol(\mathcal F)\subseteq\mathcal F$.

\begin{theorem}\label{def-ls}
  For a linear convergence structure on a vector lattice~$X$, the
  following are equivalent:
  \begin{enumerate}
  \item\label{def-ls-fil} If $\mathcal F\to 0$ then
    there is a filter $\mathcal G$ with a base of solid sets such
    that $\mathcal G\subseteq\mathcal F$ and $\mathcal G\to 0$;
  \item\label{def-ls-fil-hull} If $\mathcal F\to 0$ then
    $\Sol(\mathcal F)\to 0$;
  \item\label{def-ls-net} If $y_\alpha\to 0$ and
  $\abs{x_\alpha}\le\abs{y_\alpha}$ for every $\alpha$ then
  $x_\alpha\to 0$; 
  \item\label{def-ls-net-dom} If $y_\gamma\to 0$ and for every
  $\gamma_0$ there exists $\alpha_0$ such that for every
  $\alpha\ge\alpha_0$ there exists $\gamma\ge\gamma_0$ with
  $\abs{x_\alpha}\le\abs{y_\gamma}$, then $x_\alpha\to 0$.
  \end{enumerate}
\end{theorem}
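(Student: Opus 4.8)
The plan is to establish the cycle
$\eqref{def-ls-fil}\Leftrightarrow\eqref{def-ls-fil-hull}\Rightarrow\eqref{def-ls-net}\Leftrightarrow\eqref{def-ls-net-dom}$ together with a return implication $\eqref{def-ls-net}\Rightarrow\eqref{def-ls-fil-hull}$, handling the first equivalence purely at the filter level and switching to the net picture via tail filters for the rest. The equivalence of \eqref{def-ls-fil} and \eqref{def-ls-fil-hull} is formal: for \eqref{def-ls-fil-hull}$\Rightarrow$\eqref{def-ls-fil} one takes $\mathcal G=\Sol(\mathcal F)$, which by construction has a base of solid sets and satisfies $\mathcal G\subseteq\mathcal F$. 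Conversely, if $\mathcal G\subseteq\mathcal F$ has a base of solid sets and $\mathcal G\to0$, then each basic solid $G\in\mathcal G$ lies in $\mathcal F$ and coincides with its own solid hull, so $G\in\Sol(\mathcal F)$; hence $\mathcal G\subseteq\Sol(\mathcal F)$, and the axiom that a filter finer than a convergent one also converges yields $\Sol(\mathcal F)\to0$.

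For \eqref{def-ls-fil-hull}$\Rightarrow$\eqref{def-ls-net} I would pass to tail filters. If $y_\alpha\to0$ and $\abs{x_\alpha}\le\abs{y_\alpha}$, then $[y_\alpha]\to0$, so $\Sol([y_\alpha])\to0$ by \eqref{def-ls-fil-hull}. The pointwise domination shows that each tail $\{x_\beta:\beta\ge\alpha_0\}$ is contained in $\Sol\bigl(\{y_\beta:\beta\ge\alpha_0\}\bigr)$, whence $\Sol([y_\alpha])\subseteq[x_\alpha]$; the finer-filter axiom then gives $[x_\alpha]\to0$, that is, $x_\alpha\to0$.

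The substance of the theorem is the equivalence of \eqref{def-ls-net} and \eqref{def-ls-net-dom}. The implication \eqref{def-ls-net-dom}$\Rightarrow$\eqref{def-ls-net} is immediate, since the hypotheses of \eqref{def-ls-net} are the special case of \eqref{def-ls-net-dom} with a common index set and $\gamma=\alpha$. For \eqref{def-ls-net}$\Rightarrow$\eqref{def-ls-net-dom} I would reindex: set $\Delta=\bigl\{(\alpha,\gamma):\abs{x_\alpha}\le\abs{y_\gamma}\bigr\}$ with the product order, and define $z_{(\alpha,\gamma)}=x_\alpha$ and $w_{(\alpha,\gamma)}=y_\gamma$, so that $\abs{z}\le\abs{w}$ holds pointwise on $\Delta$ by construction. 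One then checks that the domination hypothesis of \eqref{def-ls-net-dom} is exactly what makes $\Delta$ directed and what forces $(x_\alpha)$ to be a quasi-subnet of $(z_{(\alpha,\gamma)})$, while $(w_{(\alpha,\gamma)})$ is readily seen to be a quasi-subnet of $(y_\gamma)$. Then $w_{(\alpha,\gamma)}\to0$ by the quasi-subnet axiom, \eqref{def-ls-net} gives $z_{(\alpha,\gamma)}\to0$, and the quasi-subnet relation transfers this to $x_\alpha\to0$. I expect the directedness of $\Delta$ and the verification of these quasi-subnet relations to be the main obstacle, since this is where the somewhat technical hypothesis of \eqref{def-ls-net-dom} must be deployed with care.

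It remains to close the cycle with \eqref{def-ls-net}$\Rightarrow$\eqref{def-ls-fil-hull}, which I would obtain by a similar reindexing that realizes the solid-hull filter as a tail filter. Given $\mathcal F\to0$, write $\mathcal F=[y_\gamma]$ with $y_\gamma\to0$, put $D=\bigl\{(x,\gamma):\abs{x}\le\abs{y_\gamma}\bigr\}$ ordered by the $\gamma$-coordinate, and set $u_{(x,\gamma)}=x$. A short computation shows that the tail sets of $(u_{(x,\gamma)})$ are precisely the sets $\Sol\bigl(\{y_\gamma:\gamma\ge\gamma_0\}\bigr)$, so $[u_{(x,\gamma)}]=\Sol(\mathcal F)$. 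The companion net $v_{(x,\gamma)}=y_\gamma$ dominates $u$ pointwise and is a quasi-subnet of $(y_\gamma)$, hence $v\to0$; applying \eqref{def-ls-net} yields $u\to0$, which by the net--filter correspondence means $\Sol(\mathcal F)\to0$. This completes the chain and hence the proof.
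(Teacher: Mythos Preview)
Your proof is correct. The overall architecture differs slightly from the paper's: the paper runs the cycle $\eqref{def-ls-fil}\Rightarrow\eqref{def-ls-fil-hull}\Rightarrow\eqref{def-ls-net-dom}\Rightarrow\eqref{def-ls-net}\Rightarrow\eqref{def-ls-fil}$, whereas you prove $\eqref{def-ls-fil}\Leftrightarrow\eqref{def-ls-fil-hull}$, $\eqref{def-ls-fil-hull}\Rightarrow\eqref{def-ls-net}$, $\eqref{def-ls-net}\Leftrightarrow\eqref{def-ls-net-dom}$, and close with $\eqref{def-ls-net}\Rightarrow\eqref{def-ls-fil-hull}$. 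The key distinction is in how condition~\eqref{def-ls-net-dom} is reached: the paper observes directly that the domination hypothesis of~\eqref{def-ls-net-dom} says precisely that $\Sol[y_\gamma]\subseteq[x_\alpha]$, so $\eqref{def-ls-fil-hull}\Rightarrow\eqref{def-ls-net-dom}$ is a one-line filter inclusion; you instead derive~\eqref{def-ls-net-dom} from~\eqref{def-ls-net} via the product-ordered index set $\Delta=\{(\alpha,\gamma):\abs{x_\alpha}\le\abs{y_\gamma}\}$, which works but requires checking directedness and two quasi-subnet relations. Your closing step $\eqref{def-ls-net}\Rightarrow\eqref{def-ls-fil-hull}$ and the paper's $\eqref{def-ls-net}\Rightarrow\eqref{def-ls-fil}$ use essentially the same reindexing (the index set $\{(x,\gamma):\abs{x}\le\abs{y_\gamma}\}$ preordered by the $\gamma$-coordinate), with the paper stopping at exhibiting a solid-based $\mathcal G\subseteq\mathcal F$ and you pushing one step further to identify the tail filter as $\Sol(\mathcal F)$ itself. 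The paper's route is marginally more economical, but yours is equally valid and arguably makes the role of the reindexing device more transparent by using it twice.
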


We call a convergence structure that satisfies any, hence all, of the
condition in the theorem \term{locally solid}. By a \term{locally solid
convergence space} we will mean a vector lattice equipped with a
locally solid convergence structure. In the rest of the
paper, we will mostly use \eqref{def-ls-net} (cf.~Definition~2.1
in~\cite{Aydin:21}). Condition \eqref{def-ls-fil} was first used
in~\cite{VanderWalt:13}. While in~\eqref{def-ls-net}, the two nets
have the same index sets, we no longer assume this in~
\eqref{def-ls-net-dom}.

\begin{proof}
  \eqref{def-ls-fil}$\Rightarrow$\eqref{def-ls-fil-hull} Let
  $\mathcal F\to 0$. Find a filter $\mathcal G\subseteq\mathcal F$
  with a basis consisting of solid sets so that $\mathcal G\to
  0$. Then $\mathcal G=\Sol(\mathcal G)\subseteq\Sol(\mathcal F)$, so
  that $\Sol(\mathcal F)\to 0$.

  \eqref{def-ls-fil-hull}$\Rightarrow$\eqref{def-ls-net-dom} It
  follows from $y_\gamma\to 0$ that $[y_\gamma]\to
  0$. By~\eqref{def-ls-fil-hull}, we have $\Sol[y_\gamma]\to 0$. It
  can be easily verified that $[x_\alpha]\supseteq\Sol[y_\gamma]$. We
  conclude that $[x_\alpha]\to 0$ and, therefore, $x_\alpha\to 0$.

  \eqref{def-ls-net-dom}$\Rightarrow$\eqref{def-ls-net} is trivial.

  \eqref{def-ls-net}$\Rightarrow$\eqref{def-ls-fil} Let
  $\mathcal F\to 0$. Find a net $(x_\alpha)_{\alpha\in A}$ such
  that $\mathcal F=[x_\alpha]$. Let
  \begin{math}
    \Gamma=\bigl\{(\alpha,z)\mid\alpha\in A,
       \abs{z}\le\abs{x_\alpha}\bigr\}.
  \end{math}
  We pre-order $\Gamma$ by the first component, making it into a
  directed set. Given $\gamma\in\Gamma$ with $\gamma=(\alpha,z)$, we
  put $u_\gamma=x_\alpha$ and $v_\gamma=z$. Then
  $\abs{v_\gamma}\le\abs{u_\gamma}$ for all $\gamma\in\Gamma$. It
  follows from $x_\alpha\to 0$ and $[u_\gamma]=[x_\alpha]$ that
  $u_\gamma\to 0$ and, therefore, $v_\gamma\to 0$. It is also easy to
  see that tails of $(v_\gamma)$ are solid sets. Let
  $\mathcal G=[v_\gamma]$. Then $\mathcal G\to 0$ and $\mathcal G$ has
  a base of solid sets. Let $U\in\mathcal G$. Then $U$ contains a tail
  $\{v_\gamma\}_{\gamma\ge\gamma_0}$, which, in turn, contains
  $\{u_\gamma\}_{\gamma\ge\gamma_0}$. The later agrees with a tail of
  $(x_\alpha)$. It follows that $U\in\mathcal F$. Hence,
  $\mathcal G\subseteq\mathcal F$.
\end{proof}

We mention a few important examples. Clearly, a linear topology on a
vector lattice $X$ is locally solid iff the corresponding convergence
structure is locally solid. In particular, norm convergence and
absolute weak (aw) convergence (see Definition~2.32
in~\cite{Aliprantis:03}) on a Banach lattice are locally solid.

One of the most important examples of a locally solid convergence is
\term{order convergence}: a net $(x_\alpha)_{\alpha\in A}$ in a vector
lattice $X$ \term{converges in order} to $x$ (written
$x_\alpha\goeso x$) if there exists a net
$(u_\gamma)_{\gamma\in\Gamma}$ such that $u_\gamma\downarrow 0$ and
for every $\gamma\in\Gamma$ there exists $\alpha_0\in\Lambda$ such
that $\abs{x_\alpha-x}\le u_\gamma$ for all $\alpha\ge\alpha_0$.  In
the language of filters, $\mathcal F\goeso x$ if $\mathcal F$ contains
a collection of order intervals whose intersection is~$\{x\}$. Order
convergence as a convergence structure was investigated in
~\cite{OBrien:23}.

Other examples of locally solid convergences include relative
uniform, uo, un, and uaw convergence. These will be defined and
discussed later throughout the paper.

The following proposition is
the net version of Proposition~2.5 in \cite{VanderWalt:13}; it
generalizes many known results for order, relative uniform, norm, uo,
un, and other specific locally solid convergences.

\begin{proposition}\label{op-cont}
  Let $(X,\to)$ be a locally solid convergence vector lattice. Then
  the maps which take $x$ to $\abs{x}$, $x^+$, and $x^-$ are
  continuous, and the maps which take $(x,y)$ to $x\vee y$ and
  $x\wedge y$ are jointly continuous.
\end{proposition}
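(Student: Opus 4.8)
The plan is to reduce everything to the defining net condition \eqref{def-ls-net} of local solidity, together with the continuity of the lattice operations that I will establish first for the absolute value.

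First I would prove that $x\mapsto\abs{x}$ is continuous. Suppose $x_\alpha\to x$. By linearity of the convergence, $x_\alpha-x\to 0$. The key tool is the elementary lattice inequality
\begin{displaymath}
  \bigabs{\,\abs{x_\alpha}-\abs{x}\,}\le\abs{x_\alpha-x}.
\end{displaymath}
Setting $y_\alpha=x_\alpha-x$ and letting the dominated net be $\abs{x_\alpha}-\abs{x}$, condition \eqref{def-ls-net} gives $\abs{x_\alpha}-\abs{x}\to 0$, i.e. $\abs{x_\alpha}\to\abs{x}$. This is the heart of the argument; the remaining maps follow by combining this with linearity.

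Next I would handle $x^+$ and $x^-$ using the identities $x^+=\tfrac12\bigl(x+\abs{x}\bigr)$ and $x^-=\tfrac12\bigl(\abs{x}-x\bigr)$. Since the convergence is linear, addition and scalar multiplication are continuous, and since $x\mapsto\abs{x}$ is continuous by the previous step, each of these maps is a continuous combination of continuous maps, hence continuous. For the binary operations I would similarly invoke the identities $x\vee y=\tfrac12\bigl(x+y+\abs{x-y}\bigr)$ and $x\wedge y=\tfrac12\bigl(x+y-\abs{x-y}\bigr)$. If $(x_\alpha,y_\alpha)\to(x,y)$ jointly, then by linearity $x_\alpha+y_\alpha\to x+y$ and $x_\alpha-y_\alpha\to x-y$, whence $\abs{x_\alpha-y_\alpha}\to\abs{x-y}$ by the continuity of the absolute value; assembling these via linearity yields $x_\alpha\vee y_\alpha\to x\vee y$, and likewise for $\wedge$.

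The one point that requires genuine care, rather than routine verification, is the continuity of $\abs{\cdot}$ itself: this is the single place where local solidity is actually used, and where I must apply \eqref{def-ls-net} to the correct pair of nets. Everything after that is bookkeeping with the lattice identities and the already-assumed linearity of the convergence structure, so I expect no further obstacles.
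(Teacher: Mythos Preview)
Your proposal is correct and follows essentially the same approach as the paper: the paper also proves continuity of $\abs{\cdot}$ via the triangle inequality $\bigabs{\abs{x_\alpha}-\abs{x}}\le\abs{x_\alpha-x}$ together with local solidity, and then derives the continuity of $\vee$ and $\wedge$ from the same identities you use. The only cosmetic difference is that the paper says the proofs for $x^+$ and $x^-$ are ``similar'' (presumably via $\bigabs{x_\alpha^+-x^+}\le\abs{x_\alpha-x}$) rather than invoking the identities $x^\pm=\tfrac12(\abs{x}\pm x)$, but this is not a substantive distinction.
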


\begin{proof}
  If $x_\alpha\to x$ then $x_\alpha-x\to 0$. By the triangle
  inequality,
  \begin{math}
    \bigabs{\abs{x_\alpha}-\abs{x}}\le\abs{x_\alpha-x},
  \end{math}
  so that $\abs{x_\alpha}-\abs{x}\to 0$ and, therefore,
  $\abs{x_\alpha}\to\abs{x}$. The proofs for $x^+$ and $x^-$ are
  similar. To prove joint continuity of $x\vee y$ and $x\wedge
  y$ one can use the identities
  \begin{displaymath}
    x\vee y=\tfrac{1}{2}(x+y)+\tfrac{1}{2}\abs{x-y}\quad\mbox{and}\quad
    x\wedge y=\tfrac{1}{2}(x+y)-\tfrac{1}{2}\abs{x-y}.
  \end{displaymath}
\end{proof}

We will now strengthen Proposition~\ref{op-cont} to uniform continuity.
Recall that the Robertson-Namioka Theorem asserts that a linear topology
on a vector lattice is locally solid iff the lattice operations are
uniformly continuous; see,
e.g.,~\cite[Theorem~2.17]{Aliprantis:03}. We will now generalize this
fact to convergence structures (cf.\ Section~3 in~\cite{Aydin:21}).

A linear convergence on a vector lattice $X$ is said to be
\term{locally full} if it satisfies the ``squeeze law'': if
$x_\alpha\le y_\alpha\le z_\alpha$ for all~$\alpha$, $x_\alpha\to a$
and $z_\alpha\to a$ then $y_\alpha\to a$; equivalently, if
$0\le x_\alpha\le y_\alpha\to 0$ implies $x_\alpha\to 0$. The latter
condition in the context of ordered convergence vector spaces
was called a \emph{full convergence} in~\cite{Aydin:21}; note
that~\cite{Aydin:21} uses a slightly different definition of a
convergence structure.

\begin{proposition}\label{mod-u-cont}
  Let $X$ be a convergence vector space and a vector lattice. The
  following are equivalent:
  \begin{enumerate}
  \item\label{uc-loc-sol} The convergence is locally solid;
  \item\label{uc-uc} The modulus operation is uniformly continuous;
  \item\label{uc-full} The modulus operation is continuous and the
    convergence is locally full.
  \end{enumerate}
\end{proposition}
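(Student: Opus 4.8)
The plan is to prove the equivalences in the cycle $\eqref{uc-loc-sol}\Rightarrow\eqref{uc-uc}\Rightarrow\eqref{uc-full}\Rightarrow\eqref{uc-loc-sol}$, using the net-based characterizations established earlier in the excerpt. Throughout I would rely on Proposition~\ref{unif-cont-nets}, which reduces uniform continuity of the modulus to the following statement: for any two nets $(x_\alpha)$ and $(y_\alpha)$ with $x_\alpha-y_\alpha\to 0$, we have $\abs{x_\alpha}-\abs{y_\alpha}\to 0$. This net reformulation is what makes all three conditions directly comparable.

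For $\eqref{uc-loc-sol}\Rightarrow\eqref{uc-uc}$, I would take nets with $x_\alpha-y_\alpha\to 0$ and apply the triangle inequality $\bigabs{\abs{x_\alpha}-\abs{y_\alpha}}\le\abs{x_\alpha-y_\alpha}$. Setting $z_\alpha=\abs{x_\alpha}-\abs{y_\alpha}$ and $w_\alpha=x_\alpha-y_\alpha$, we have $\abs{z_\alpha}\le\abs{w_\alpha}$ with $w_\alpha\to 0$, so local solidity in the form \eqref{def-ls-net} of Theorem~\ref{def-ls} gives $z_\alpha\to 0$; by Proposition~\ref{unif-cont-nets} the modulus is uniformly continuous. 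For $\eqref{uc-uc}\Rightarrow\eqref{uc-full}$, continuity of the modulus is immediate since uniform continuity implies continuity (apply the net criterion with $y_\alpha=x$ constant). For local fullness I would take $0\le x_\alpha\le y_\alpha$ with $y_\alpha\to 0$; the idea is that $\abs{2x_\alpha-y_\alpha}\le y_\alpha$ (because $0\le x_\alpha\le y_\alpha$ forces $-y_\alpha\le 2x_\alpha-y_\alpha\le y_\alpha$), so $\bigabs{2x_\alpha-y_\alpha}=2x_\alpha-y_\alpha$ since the argument is already nonnegative... here I must be careful, so the cleaner route is to feed the pair $(2x_\alpha-y_\alpha,\,-y_\alpha)$ or similar into the uniform-continuity criterion to extract $x_\alpha\to 0$ from the linearity of the convergence together with $y_\alpha\to 0$.

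The main obstacle is the implication $\eqref{uc-full}\Rightarrow\eqref{uc-loc-sol}$, where from continuity of the modulus plus the squeeze law I must recover full local solidity, i.e.\ condition \eqref{def-ls-net}: $\abs{x_\alpha}\le\abs{y_\alpha}$ and $y_\alpha\to 0$ imply $x_\alpha\to 0$. The difficulty is that local fullness only controls nets trapped between two converging nets, whereas \eqref{def-ls-net} allows an arbitrary $x_\alpha$ dominated in modulus. The plan is to first use continuity of the modulus on $y_\alpha\to 0$ to get $\abs{y_\alpha}\to 0$, and then to squeeze $\abs{x_\alpha}$ between $0$ and $\abs{y_\alpha}$ via $0\le\abs{x_\alpha}\le\abs{y_\alpha}\to 0$, so that local fullness yields $\abs{x_\alpha}\to 0$. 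The final and most delicate point is to pass from $\abs{x_\alpha}\to 0$ back to $x_\alpha\to 0$: this requires continuity of the modulus in the reverse direction, which does not follow formally, so I would instead argue directly using the decomposition $x_\alpha=x_\alpha^+-x_\alpha^-$ together with the bounds $0\le x_\alpha^\pm\le\abs{x_\alpha}$, apply local fullness to each of $x_\alpha^+$ and $x_\alpha^-$ separately, and then invoke linearity of the convergence to conclude $x_\alpha=x_\alpha^+-x_\alpha^-\to 0$. Reconstructing $x_\alpha\to 0$ from the convergence of its positive and negative parts is where I expect the argument to need the most care.
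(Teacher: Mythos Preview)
Your cycle $\eqref{uc-loc-sol}\Rightarrow\eqref{uc-uc}$ matches the paper exactly, and your $\eqref{uc-full}\Rightarrow\eqref{uc-loc-sol}$ is correct but needlessly roundabout: once you have $\abs{y_\alpha}\to 0$ (from continuity of the modulus), you can apply the full three-net squeeze law directly to $-\abs{y_\alpha}\le x_\alpha\le\abs{y_\alpha}$, since the paper records that this form is equivalent to the one-sided version by linearity. There is no need to pass through $\abs{x_\alpha}$ and then decompose into positive and negative parts; the step you flag as ``most delicate'' is actually immediate.

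The genuine gap is in $\eqref{uc-uc}\Rightarrow\eqref{uc-full}$. Your suggested pair $(2x_\alpha-y_\alpha,\,-y_\alpha)$ has difference $2x_\alpha$, which is exactly what you are trying to prove converges, so uniform continuity gives you nothing there. The pair that works (and that the paper uses, modulo swapped variable names) is $(x_\alpha,\,x_\alpha-y_\alpha)$: their difference is $y_\alpha\to 0$, so uniform continuity yields $\abs{x_\alpha}-\abs{x_\alpha-y_\alpha}\to 0$. Since $0\le x_\alpha\le y_\alpha$, this reads $x_\alpha-(y_\alpha-x_\alpha)=2x_\alpha-y_\alpha\to 0$, and linearity with $y_\alpha\to 0$ gives $x_\alpha\to 0$. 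Your earlier observation that $\abs{2x_\alpha-y_\alpha}\le y_\alpha$ is true but unhelpful without already knowing the convergence is locally solid, and your remark that $2x_\alpha-y_\alpha$ is nonnegative is false in general (take $x_\alpha=0$), as you partly sensed.
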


\begin{proof}
  \eqref{uc-loc-sol}$\Rightarrow$\eqref{uc-uc}
  follows from Proposition~\ref{unif-cont-nets} and the triangle inequality 
  \begin{math}
    \bigabs{\abs{x_\alpha}-\abs{y_\alpha}}\le\abs{x_\alpha-y_\alpha}.
  \end{math}

  \eqref{uc-uc}$\Rightarrow$\eqref{uc-full} Suppose that
  $0\le y_\alpha\le x_\alpha\to 0$. Since the modulus is uniformly
  continuous, it follows from $x_\alpha=y_\alpha-(y_\alpha-x_\alpha)$
  that $\abs{y_\alpha}-\abs{y_\alpha-x_\alpha}\to 0$, hence
  $y_\alpha-(x_\alpha-y_\alpha)\to 0$, so that
  $2y_\alpha-x_\alpha\to 0$ and, therefore, $y_\alpha\to 0$.

  \eqref{uc-full}$\Rightarrow$\eqref{uc-loc-sol}
  Suppose that $\abs{y_\alpha}\le\abs{x_\alpha}$ for every
  $\alpha$ and $x_\alpha\to 0$. Continuity of the modulus yields
  $\abs{x_\alpha}\to 0$. It now follows from $-\abs{x_\alpha}\le
  y_\alpha\le\abs{x_\alpha}$ that $y_\alpha\to 0$.
\end{proof}

As in the proof of Proposition~\ref{op-cont}, the modulus operation in
Proposition~\ref{mod-u-cont} may be replaced with any other lattice
operation.

Recall that a convergence structure is \term{Hausdorff} when every net has at
most one limit.

\begin{proposition}\label{Haus}
  For a locally solid convergence space~$X$, the following are equivalent:
  \begin{enumerate}
  \item\label{Haus-Haus}  $X$ is Hausdorff;
  \item\label{Haus-const} Constant nets have unique limits;
  \item\label{Haus-zero} Every constant zero net converges to zero only;
  \item\label{Haus-cone} $X^+$ is closed;
  \item\label{Haus-int} Order intervals are closed;
  \item\label{Haus-mon} If $x_\alpha\to x$, $y_\alpha\to y$, and
    $x_\alpha\le y_\alpha$ for all $\alpha$ then $x\le y$.
  \end{enumerate}
\end{proposition}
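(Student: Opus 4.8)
The plan is to prove the six conditions equivalent by a cycle of implications, organized so that the lattice structure is invoked in exactly one place. Conditions \eqref{Haus-Haus}, \eqref{Haus-const}, \eqref{Haus-zero} turn out to be equivalent for \emph{any} linear convergence, with no appeal to local solidity; local solidity is needed only to pass from Hausdorffness to closedness of the positive cone, and from there the cone- and interval-conditions follow by order bookkeeping.

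First I would dispose of the block \eqref{Haus-Haus}$\Rightarrow$\eqref{Haus-const}$\Rightarrow$\eqref{Haus-zero}, both implications being immediate: uniqueness of limits for arbitrary nets specializes to constant nets, and the constant zero net converges to $0$ by the constant-net axiom, so if it has a unique limit that limit must be $0$. To close this block I would prove \eqref{Haus-zero}$\Rightarrow$\eqref{Haus-Haus}: if $x_\alpha\to x$ and $x_\alpha\to y$ along the same index set, then continuity of subtraction gives that the constant net $x_\alpha-x_\alpha=0$ converges to $x-y$, whence \eqref{Haus-zero} forces $x=y$.

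The one step that uses local solidity is \eqref{Haus-Haus}$\Rightarrow$\eqref{Haus-cone}. Given a net $(z_\alpha)$ in $X^+$ with $z_\alpha\to z$, local solidity makes the modulus continuous by Proposition~\ref{op-cont}, so $\abs{z_\alpha}\to\abs{z}$; but $\abs{z_\alpha}=z_\alpha\to z$, and uniqueness of limits then forces $z=\abs{z}\in X^+$. Thus $X^+$ is closed.

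It remains to reconnect \eqref{Haus-cone}, \eqref{Haus-int}, \eqref{Haus-mon} to the cycle, and this is pure order-and-linearity. For \eqref{Haus-cone}$\Rightarrow$\eqref{Haus-mon}, from $x_\alpha\le y_\alpha$ with $x_\alpha\to x$ and $y_\alpha\to y$, the net $y_\alpha-x_\alpha$ lies in $X^+$ and converges to $y-x$, so closedness of $X^+$ gives $x\le y$. For \eqref{Haus-mon}$\Rightarrow$\eqref{Haus-int}, comparing a net $a\le x_\alpha\le b$ with the constant nets $a$ and $b$ via \eqref{Haus-mon} yields $a\le x\le b$ for the limit, so $[a,b]$ is closed. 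Finally \eqref{Haus-int}$\Rightarrow$\eqref{Haus-zero} is immediate, since the degenerate interval $[0,0]=\{0\}$ is closed precisely when no constant zero net converges to a nonzero point. As \eqref{Haus-zero}$\Rightarrow$\eqref{Haus-Haus} was already shown, this closes the full cycle. I do not expect a genuine obstacle: the only real content is the modulus argument, and everything else is careful manipulation of difference nets and constant nets, for which a common index set can always be arranged.
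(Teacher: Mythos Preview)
Your proof is correct and follows essentially the same cycle-of-implications strategy as the paper, using continuity of lattice operations for the passage into \eqref{Haus-cone} and order/linearity bookkeeping for the remaining steps. The one noteworthy difference is your \eqref{Haus-zero}$\Rightarrow$\eqref{Haus-Haus} via the constant net $x_\alpha-x_\alpha=0\to x-y$, which uses only linearity, whereas the paper's \eqref{Haus-const}$\Rightarrow$\eqref{Haus-Haus} invokes the triangle inequality $\abs{x-y}\le\abs{x_\alpha-x}+\abs{x_\alpha-y}$ and hence local solidity; your observation that the block \eqref{Haus-Haus}$\Leftrightarrow$\eqref{Haus-const}$\Leftrightarrow$\eqref{Haus-zero} holds in any linear convergence space is a small sharpening.
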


\begin{proof}
  The implicatons
  \eqref{Haus-Haus}$\Rightarrow$\eqref{Haus-const}$\Leftrightarrow$\eqref{Haus-zero} and
  \eqref{Haus-cone}$\Leftrightarrow$\eqref{Haus-mon}
  are straightforward.

  \eqref{Haus-const}$\Rightarrow$\eqref{Haus-Haus} Suppose that
  $x_\alpha\to x$ and $x_\alpha\to y$. It follows from
  \begin{displaymath}
    \abs{x-y}\le\abs{x_\alpha-x}+\abs{x_\alpha-y}\to 0
  \end{displaymath}
  that $\abs{x-y}\to 0$, i.e., the constant net $\abs{x-y}$ converges
  to zero. Since we also have $\abs{x-y}\to\abs{x-y}$, our assumption
  yields $\abs{x-y}=0$ and, therefore, $x=y$.

  \eqref{Haus-zero}$\Rightarrow$\eqref{Haus-cone} Let $(x_\alpha)$ be
  a net in $X^+$ with $x_\alpha\to x$. Then $0=x_\alpha^-\to x^-$; it
  follows that $x^-=0$ and, therefore, $x\in X^+$. Hence $X^+$ is
  closed.

  \eqref{Haus-cone}$\Rightarrow$\eqref{Haus-int} because
  $[a,b]=(a+X^+)\cap(b-X^+)$. We get
  \eqref{Haus-int}$\Rightarrow$\eqref{Haus-zero} by considering the
  trivial interval $[0,0]$.
  % \eqref{Haus-cone}$\Rightarrow$\eqref{Haus-zero} Suppose that $X^+$
  % is closed. Then so is $X^-$ and, therefore, $\{0\}=X^+\cap X^-$ is
  % closed. It follows that every constant zero net converges only to
  % zero.
\end{proof}

For every locally solid convergence, we have $x_\alpha\to x$ iff
$\abs{x_\alpha-x}\to 0$. Thus, the convergence is determined by
positive nets converging to zero. In~\cite[Theorem~2.1]{Bilokopytov:23a},
this idea was developed as follows:

\begin{theorem}\label{plsc}
  Consider a non-empty family of nets in~$X_+$; we write
  $x_\alpha\to 0$ to indicate that $(x_\alpha)$ is in the
  family. Suppose that the following axioms are satisfied:
  \begin{enumerate}
  \item\label{plsc-sub} if $x_\alpha\to 0$ and $(y_\beta)$ is a quasi-subnet of
    $(x_\alpha)$ then $y_\beta\to 0$;
  \item\label{plsc-dom} If $x_\alpha\to 0$ and $0\le y_\alpha\le x_\alpha$ for every
    $\alpha$ then $y_\alpha\to 0$;
  \item\label{plsc-sum} If $x_\alpha\to 0$ and $y_\alpha\to 0$ then
    $x_\alpha+y_\alpha\to 0$;
  \item\label{plsc-1n} $\frac1n x\to 0$ for every $x\in X_+$.
  \end{enumerate}
  Then $\to$ extends to a locally solid convergence structure on all
  of $X$ via $x_\alpha\to x$ if $\abs{x_\alpha-x}\to 0$. In this case,
  the resulting convergence on $X$ is Hausdorff iff no constant net in
  $X_+\setminus\{0\}$ converges to zero.
\end{theorem}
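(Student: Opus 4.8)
The plan is to reduce everything to positive nets converging to zero, where the three workhorse axioms \eqref{plsc-sub}, \eqref{plsc-sum}, \eqref{plsc-dom} live, and to lean repeatedly on the triangle inequality. First I would verify the three net-convergence axioms for the relation $x_\alpha\to x\iff\abs{x_\alpha-x}\to0$. Constant nets: the $\mathbb N$-indexed net $\frac1n\cdot0$ lies in the family by \eqref{plsc-1n}, any constant zero net is a quasi-subnet of it, so \eqref{plsc-sub} applies. Quasi-subnets: if $(y_\beta)$ is a quasi-subnet of $(x_\alpha)$ then $(\abs{y_\beta-x})$ is a quasi-subnet of $(\abs{x_\alpha-x})$, and \eqref{plsc-sub} transfers convergence. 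Selection: on a common index set, $0\le\abs{z_\alpha-x}\le\abs{x_\alpha-x}+\abs{y_\alpha-x}$, so \eqref{plsc-sum} and \eqref{plsc-dom} give $\abs{z_\alpha-x}\to0$. The same two axioms yield local solidity in the form \eqref{def-ls-net}: if $\abs{x_\alpha}\le\abs{y_\alpha}$ and $y_\alpha\to0$, then $0\le\abs{x_\alpha}\le\abs{y_\alpha}\to0$ forces $x_\alpha\to0$.

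For linearity, addition is routine: $\abs{(x_\alpha+y_\alpha)-(x+y)}\le\abs{x_\alpha-x}+\abs{y_\alpha-y}\to0$ by \eqref{plsc-sum} and \eqref{plsc-dom}. For scalar multiplication, given $\lambda_\alpha\to\lambda$ and $x_\alpha\to x$ on a common index set, I would split
\[
  \abs{\lambda_\alpha x_\alpha-\lambda x}
  \le\abs{\lambda_\alpha}\,\abs{x_\alpha-x}+\abs{\lambda_\alpha-\lambda}\,\abs{x}
\]
and show each summand tends to $0$. For the first, iterating \eqref{plsc-sum} gives $kv_\alpha\to0$ for integer $k$, hence $Mv_\alpha\to0$ for any $M>0$ via \eqref{plsc-dom} whenever $v_\alpha\to0$; applying this with $M=\abs\lambda+1$ and $v_\alpha=\abs{x_\alpha-x}$, then dominating on the tail where $\abs{\lambda_\alpha}\le M$ and returning to the whole net through \eqref{plsc-sub}, yields $\abs{\lambda_\alpha}\abs{x_\alpha-x}\to0$.

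The crux, where \eqref{plsc-1n} becomes essential, is the second summand: if $c_\alpha\to0$ in $\mathbb R$ with $c_\alpha\ge0$ and $u\in X_+$, then $c_\alpha u\to0$. Direct domination fails, since no single $\frac1n u$ bounds every $c_\alpha u$. The trick I would use is to pass to the directed set $\Gamma=\bigl\{(\alpha,n)\in A\times\mathbb N\mid c_\alpha\le\tfrac1n\bigr\}$ ordered componentwise, whose directedness uses $c_\alpha\to0$. On $\Gamma$ the net $(\alpha,n)\mapsto\frac1n u$ is a quasi-subnet of $(\frac1n u)_n$, hence converges to $0$ by \eqref{plsc-1n} and \eqref{plsc-sub}, and it dominates $(\alpha,n)\mapsto c_\alpha u$; so \eqref{plsc-dom} gives $(c_\alpha u)_\Gamma\to0$. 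Since $c_\alpha\to0$, the original net $(c_\alpha u)_A$ is in turn a quasi-subnet of $(c_\alpha u)_\Gamma$, and one last use of \eqref{plsc-sub} delivers $c_\alpha u\to0$. Reassembling the two summands with \eqref{plsc-sum} and \eqref{plsc-dom} completes joint continuity of scalar multiplication. I expect this index-set juggling to be the main obstacle, as it is the one step that genuinely mixes scalar convergence with the abstract family.

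Finally, the Hausdorff criterion I would prove directly, without invoking Proposition~\ref{Haus}. If no $u\in X_+\setminus\{0\}$ gives a constant family net and $x_\alpha\to x$, $x_\alpha\to y$, then $0\le\abs{x-y}\le\abs{x_\alpha-x}+\abs{x_\alpha-y}$ pushes the constant net $\abs{x-y}\in X_+$ into the family via \eqref{plsc-sum} and \eqref{plsc-dom}, which is possible only if $x=y$; hence $X$ is Hausdorff. Conversely, if some $u\in X_+\setminus\{0\}$ has its constant net converging to $0$, then this constant net converges both to $0$ and, being constant, to $u$, so limits are not unique and $X$ is not Hausdorff. This is consistent with the equivalence of Hausdorffness and condition~\eqref{Haus-zero} in Proposition~\ref{Haus}.
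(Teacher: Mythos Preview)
The paper does not supply its own proof of this theorem; it is cited as Theorem~2.1 of~\cite{Bilokopytov:23a}. Your argument is correct and self-contained. The one genuinely delicate step, continuity of scalar multiplication, is handled cleanly: the auxiliary index set $\Gamma=\{(\alpha,n):c_\alpha\le\tfrac1n\}$ is exactly the device needed to reconcile the $\mathbb N$-indexed axiom~\eqref{plsc-1n} with an arbitrary scalar net, and your check that $(c_\alpha u)_A$ is a quasi-subnet of $(c_\alpha u)_\Gamma$ (using that $c_\alpha\to0$ guarantees eventual membership in $\Gamma$ above any prescribed $(\alpha_0,n_0)$) is sound. The remaining verifications---constant nets via the zero instance of~\eqref{plsc-1n}, quasi-subnets, the selection axiom via $\abs{z_\alpha-x}\le\abs{x_\alpha-x}+\abs{y_\alpha-x}$, additivity, local solidity, and both directions of the Hausdorff criterion---are routine applications of~\eqref{plsc-sub}--\eqref{plsc-sum} through the triangle inequality, as you outline.
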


\subsection*{Properties of locally solid convergences}

\begin{proposition}\label{MCT-dom}
  Let $(x_\alpha)$ be a net in a Hausdorff locally solid
  convergence space~$X$. If $x_\alpha\le y$ for all $\alpha$ and
  $x_\alpha\to y$ then $y=\sup x_\alpha$.
\end{proposition}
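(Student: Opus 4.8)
The plan is to show that $y$ is the least upper bound of the set $\{x_\alpha\mid\alpha\in A\}$. The hypothesis $x_\alpha\le y$ for all $\alpha$ already says that $y$ is an upper bound of this set, so the only thing left to verify is minimality. First I would fix an arbitrary upper bound $z$, that is, an element satisfying $x_\alpha\le z$ for every $\alpha$, and aim to prove $y\le z$.

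The key step is to compare the given net $(x_\alpha)$ with the constant net equal to $z$. Since constant nets converge (axiom~(i) for a net convergence structure), we have $z\to z$; meanwhile $x_\alpha\to y$ by hypothesis and $x_\alpha\le z$ for all $\alpha$. Because $X$ is a Hausdorff locally solid convergence space, Proposition~\ref{Haus} applies, and its equivalent condition~\eqref{Haus-mon} (monotonicity of limits) yields $y\le z$. As $z$ was an arbitrary upper bound, $y$ is the least upper bound, i.e.\ $y=\sup x_\alpha$.

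I expect no serious obstacle here: once the monotonicity property of Proposition~\ref{Haus} is available, the argument reduces to pairing the convergent net with a suitable constant net. The one place where the Hausdorff hypothesis is genuinely used is precisely this invocation of~\eqref{Haus-mon}; without it the limit need not dominate the bound, so local solidity alone would not suffice.
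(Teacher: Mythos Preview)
Your proof is correct and essentially identical to the paper's: both note that $y$ is an upper bound by hypothesis, take an arbitrary upper bound $z$, and invoke Proposition~\ref{Haus}\eqref{Haus-mon} (monotonicity of limits) on $x_\alpha\le z$ to conclude $y\le z$. The paper compresses this into the phrase ``passing to the limit,'' while you spell out the constant-net comparison, but the argument is the same.
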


\begin{proof}
  Clearly, $y$ is an upper bound of the net. If $z$ is another upper
  bound then, passing to the limit, we get $y\le z$ by Proposition~\ref{Haus}.
\end{proof}

The following is motivated by the definition of order convergence; it
follows immediately from Theorem~\ref{def-ls}\eqref{def-ls-net-dom}:

\begin{lemma}\label{net-dom}
  Let $(x_\alpha)_{\alpha\in A}$ and
  $(y_\gamma)_{\gamma\in\Gamma}$ be two nets in a locally solid
  convergence space~$X$. Suppose that $y_\gamma\to 0$ and for every
  $\gamma\in\Gamma$ there exists $\alpha_0\in A$ such that
  $\abs{x_\alpha}\le\abs{y_\gamma}$ whenever $\alpha\ge\alpha_0$. Then
  $x_\alpha\to 0$.
\end{lemma}

The next three statements are concerned with monotone nets.  Note that
$x_\alpha\uparrow x$ need not imply $x_\alpha\to x$.  For example, the
implication fails for norm convergence on
$\ell_\infty$. Theorem~\ref{ocn-mon} will provide a condition under
which the statement will be true.

\begin{proposition}[\cite{VanderWalt:13}]\label{MCT}
  Let $(x_\alpha)$ be an increasing net in a Hausdorff locally solid
  convergence space~$X$. Then $x_\alpha\to x$ implies
  $x_\alpha\uparrow x$.
\end{proposition}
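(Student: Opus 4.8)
The plan is to reduce the statement to Proposition~\ref{MCT-dom}. Since the net $(x_\alpha)$ is already assumed to be increasing, the conclusion $x_\alpha\uparrow x$ amounts to showing that $x=\sup_\alpha x_\alpha$. By Proposition~\ref{MCT-dom}, this will follow automatically once I know that $x$ is an upper bound of the net, i.e.\ that $x_{\alpha_0}\le x$ for every index $\alpha_0$. So the whole problem is to establish this single inequality.

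To prove $x_{\alpha_0}\le x$, I would fix $\alpha_0$ and pass to the tail $(x_\alpha)_{\alpha\ge\alpha_0}$. This tail is a quasi-subnet of $(x_\alpha)$, so by axiom~(2) for net convergence structures it still converges to~$x$. Because the net is increasing, $x_{\alpha_0}\le x_\alpha$ for every $\alpha\ge\alpha_0$. I then compare the tail with the constant net equal to $x_{\alpha_0}$ on the same index set: the constant net converges to $x_{\alpha_0}$, the tail converges to~$x$, and the constant net is pointwise dominated by the tail. Proposition~\ref{Haus}\eqref{Haus-mon} then yields $x_{\alpha_0}\le x$. Since $\alpha_0$ was arbitrary, $x_\alpha\le x$ for all~$\alpha$, and Proposition~\ref{MCT-dom} completes the argument.

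The only place the hypotheses are genuinely needed is the passage-to-the-limit inequality $x_{\alpha_0}\le x$, which relies on the Hausdorff assumption through Proposition~\ref{Haus}\eqref{Haus-mon} (equivalently, through the closedness of $X^+$); without this, the limit of an increasing net could fail to dominate the net and the conclusion would break. I expect this to be the conceptual crux, even though technically it is a direct citation of the earlier proposition. The remaining ingredients—restricting to a tail and invoking Proposition~\ref{MCT-dom}—are routine.
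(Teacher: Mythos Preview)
Your proposal is correct and follows essentially the same approach as the paper: fix $\alpha_0$, use monotonicity on the tail together with the Hausdorff property (Proposition~\ref{Haus}\eqref{Haus-mon}) to get $x_{\alpha_0}\le x$, then invoke Proposition~\ref{MCT-dom}. The paper's proof is simply terser, writing ``hence $x\ge x_{\alpha_0}$'' where you spell out the quasi-subnet and constant-net comparison.
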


\begin{proof}
  Fix any $\alpha_0$. Then $x_\alpha\ge x_{\alpha_0}$ for every
  $\alpha\ge\alpha_0$, hence $x\ge x_{\alpha_0}$. It follows that $x$
  is an upper bound of the net. By Proposition~\ref{MCT-dom}, we have
  $x_\alpha\uparrow x$.
\end{proof}

Note that in the following two results we do not assume $X$ to be
Hausdorff.

\begin{proposition}\label{sset-mon-conv}
  Let $X$ be a locally solid convergence space, and
  $(x_\alpha)_{\alpha\in A}$ and $(y_\gamma)_{\gamma\in\Gamma}$ two
  nets in $X_+$ such that $x_\alpha\!\downarrow$,
  $\{y_\gamma\mid\gamma\in\Gamma\}\subseteq\{x_\alpha\mid\alpha\in
  A\}$, and $y_\gamma\to 0$. Then $x_\alpha\to 0$.
\end{proposition}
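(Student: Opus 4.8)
The plan is to reduce the claim to Lemma~\ref{net-dom} applied to the pair $(x_\alpha)_{\alpha\in A}$ and $(y_\gamma)_{\gamma\in\Gamma}$. Since both nets lie in $X_+$, the moduli appearing in that lemma are just the elements themselves, so it suffices to verify the domination hypothesis in the form: for every $\gamma\in\Gamma$ there exists $\alpha_0\in A$ such that $x_\alpha\le y_\gamma$ whenever $\alpha\ge\alpha_0$.

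To establish this, I would fix $\gamma\in\Gamma$. By hypothesis $y_\gamma\in\{x_\alpha\mid\alpha\in A\}$, so one may choose $\alpha_0\in A$ with $y_\gamma=x_{\alpha_0}$. Because $(x_\alpha)$ is decreasing, for every $\alpha\ge\alpha_0$ we have $x_\alpha\le x_{\alpha_0}=y_\gamma$, which is exactly the required domination on the tail past $\alpha_0$. With this in hand, and since $y_\gamma\to 0$ by assumption, Lemma~\ref{net-dom} immediately yields $x_\alpha\to 0$.

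There is no serious obstacle here; the proof is short once the right tool is identified. The only subtlety worth flagging is that the containment $\{y_\gamma\mid\gamma\in\Gamma\}\subseteq\{x_\alpha\mid\alpha\in A\}$ is a statement about the \emph{ranges} of the two nets and carries no compatibility between the index sets $A$ and $\Gamma$. This is precisely why Lemma~\ref{net-dom}, which permits distinct index sets, is the appropriate instrument rather than Theorem~\ref{def-ls}\eqref{def-ls-net} (which requires a common index). The decreasing hypothesis on $(x_\alpha)$ is what upgrades the single identity $y_\gamma=x_{\alpha_0}$ into domination on an entire tail, and it is therefore indispensable: without monotonicity a given value $y_\gamma=x_{\alpha_0}$ tells us nothing about $x_\alpha$ for $\alpha\ge\alpha_0$. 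Note also that the argument uses neither the Hausdorff property nor any relationship between $\Gamma$ and $A$ beyond the range containment, consistent with the remark preceding the statement.
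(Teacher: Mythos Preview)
Your proof is correct and essentially identical to the paper's own proof: both verify the domination hypothesis of Lemma~\ref{net-dom} by choosing, for each $\gamma$, an index $\alpha_0$ with $y_\gamma=x_{\alpha_0}$ and then using monotonicity to get $x_\alpha\le x_{\alpha_0}=y_\gamma$ for all $\alpha\ge\alpha_0$. Your additional remarks on why Lemma~\ref{net-dom} (rather than Theorem~\ref{def-ls}\eqref{def-ls-net}) is the right tool are apt but not needed for the argument itself.
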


\begin{proof}
  It suffices to show that the assumptions of
  Lemma~\ref{net-dom} are satisfied. For every
  $\gamma\in\Gamma$ there exists $\alpha_0\in A$ such that
  $y_\gamma=x_{\alpha_0}$. It follows that for all $\alpha\ge\alpha_0$
  we have
  \begin{math}
    0\le x_\alpha\le x_{\alpha_0}=y_\gamma.
  \end{math}
\end{proof}

\begin{corollary}\label{qsnet-mon-conv}
  Let $X$ be a locally solid convergence space. If a
  monotone net in $X$ has a convergent quasi-subnet then the entire
  net converges to the same limit.
\end{corollary}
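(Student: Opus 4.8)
The plan is to reduce the statement to Proposition~\ref{sset-mon-conv} together with Theorem~\ref{def-ls}\eqref{def-ls-net-dom}, after first normalizing the net. Since the convergence is linear, replacing $(x_\alpha)$ by $(-x_\alpha)$ if necessary I may assume the net is \emph{decreasing}, and replacing it by $(x_\alpha-x)$, where $x$ is the limit of the given quasi-subnet, I may assume that the quasi-subnet converges to~$0$. Write $(d_\alpha)_{\alpha\in A}$ for the resulting decreasing net and let $(t_\beta)$ be the quasi-subnet, so that $t_\beta\to 0$; the goal is $d_\alpha\to 0$. Because every tail of $(d_\alpha)$ contains a tail of $(t_\beta)$, I would pass to a tail of the quasi-subnet so that every $t_\beta$ is literally a term of $(d_\alpha)$, say $t_\beta=d_{\phi(\beta)}$; this affects neither hypothesis nor conclusion, as both depend only on tails. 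I would then split $d_\alpha=d_\alpha^+-d_\alpha^-$, show that each part tends to~$0$, and conclude $d_\alpha\to 0$ by linearity.

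For the positive part, observe that $(d_\alpha^+)$ is a decreasing net in $X_+$, that $\{d_{\phi(\beta)}^+\}\subseteq\{d_\alpha^+\}$, and that $d_{\phi(\beta)}^+\to 0$ by continuity of the lattice operations (Proposition~\ref{op-cont}). This is exactly the setting of Proposition~\ref{sset-mon-conv}, which yields $d_\alpha^+\to 0$ at once.

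The negative part is the crux. Here $(d_\alpha^-)$ is \emph{increasing} in $X_+$, so Proposition~\ref{sset-mon-conv} does not apply; nor does Lemma~\ref{net-dom}, which would require a single term of the quasi-subnet to dominate an entire tail of an increasing net. Moreover, $X$ is not assumed Hausdorff, so Proposition~\ref{MCT} is unavailable. Instead I would use the full strength of Theorem~\ref{def-ls}\eqref{def-ls-net-dom}, where the dominating index is allowed to depend on~$\alpha$. Using monotonicity of $(d_\alpha^-)$ and the quasi-subnet property, one checks that for every $\delta\in A$ there is $\beta_\delta$ with $d_{\phi(\beta)}^-\ge d_\delta^-$ for all $\beta\ge\beta_\delta$: past a suitable point, $d_{\phi(\beta)}$ is a term of $(d_\alpha)$ lying in a tail beyond~$\delta$, and since $(d_\alpha^-)$ is increasing this forces the inequality. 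Consequently, given any $\beta_0$, for each $\alpha$ one may choose $\beta\ge\beta_0$ (an upper bound of $\beta_0$ and $\beta_\alpha$) with $d_\alpha^-\le d_{\phi(\beta)}^-$, so the domination condition of Theorem~\ref{def-ls}\eqref{def-ls-net-dom} is satisfied with the dominating net $(d_{\phi(\beta)}^-)$, which tends to~$0$ by Proposition~\ref{op-cont}. Hence $d_\alpha^-\to 0$, and therefore $d_\alpha=d_\alpha^+-d_\alpha^-\to 0$, i.e.\ $x_\alpha\to x$.

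The only delicate point is precisely the increasing negative part: the decreasing part falls directly under Proposition~\ref{sset-mon-conv}, whereas the increasing part compels one to exploit the index-dependent domination permitted by Theorem~\ref{def-ls}\eqref{def-ls-net-dom}, rather than the simpler Lemma~\ref{net-dom}, and to avoid any appeal to Hausdorffness.
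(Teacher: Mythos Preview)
Your proof is correct and follows essentially the same route as the paper's: reduce to a decreasing net with a quasi-subnet converging to~$0$, pass to a tail so the quasi-subnet's terms lie among those of the net, split into positive and negative parts, handle the decreasing positive part via Proposition~\ref{sset-mon-conv}, and handle the increasing negative part by verifying the index-dependent domination condition of Theorem~\ref{def-ls}\eqref{def-ls-net-dom}. The only cosmetic difference is that the paper phrases the verification of \eqref{def-ls-net-dom} by first fixing $\gamma_0$ and picking an $\alpha_0$ with $x_{\alpha_0}=y_{\gamma_0}$, whereas you first record that every $\alpha$ admits a $\beta_\alpha$ beyond which the quasi-subnet dominates $d_\alpha^-$; both arguments unwind to the same use of the quasi-subnet property and monotonicity of $(d_\alpha^-)$.
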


\begin{proof}
  Suppose that $(x_\alpha)$ is decreasing, $(y_\gamma)$ is a
  quasi-subnet of~$(x_\alpha)$, and $y_\gamma\to x$. Subtracting $x$
  and passing to a tail of~$(y_\gamma)$, we may assume that
  $y_\gamma\to 0$ and
  $\{y_\gamma\mid\gamma\in\Gamma\}\subseteq\{x_\alpha\mid\alpha\in
  A\}$. From the continuity of lattice operations, we get
  $y_\gamma^+\to 0$ and $y_\gamma^-\to 0$. It suffices to prove that
  $x_\alpha^+\to 0$ and $x_\alpha^-\to 0$.

  Applying Proposition~\ref{sset-mon-conv} to the nets $(x_\alpha^+)$
  and $(y_\gamma^+)$, we conclude that $x_\alpha^+\to 0$. Note that
  $x_\alpha^-\uparrow$. It is enough to show that the nets
  $(x_\alpha^-)$ and $(y_\gamma^-)$ satisfy the assumptions of
  Theorem~\ref{def-ls}\eqref{def-ls-net-dom}. Fix~$\gamma_0$. Find
  $\alpha_0$ such that $x_{\alpha_0}=y_{\gamma_0}$. Take
  $\alpha\ge\alpha_0$. Since $(y_\gamma)$ is a quasi-subnet
  of~$(x_\alpha)$, we can find $\gamma_1$ such that
  $\{y_\gamma\mid\gamma\ge\gamma_1\}\subseteq\{x_\beta\mid\beta\ge\alpha\}$. Take
  $\gamma\ge\gamma_0,\gamma_1$. Then there exists $\beta\ge\alpha$
  such that $y_\gamma^-=x_\beta^-\ge x_\alpha^-\ge 0$.
\end{proof}

A locally solid convergence space $(X,\eta)$ is \term{order
  continuous} or \term{Lebesgue} if $x_\alpha\goeso 0$ implies
$x_\alpha\goeseta 0$ for every net~$(x_\alpha)$, that is,
$\eta\le\mathrm{o}$. The following result, which originally appeared
as Theorem~4.1 in~\cite{Bilokopytov:23a}, follows immediately from
Lemma~\ref{net-dom}.

\begin{theorem}\label{ocn-mon}
  A locally solid convergence space $(X,\eta)$ is order continuous iff
  $x_\alpha\downarrow 0$ implies $x_\alpha\goeseta 0$ for every
  net~$(x_\alpha)$.
\end{theorem}

\begin{proposition}\label{lip}
  Let $X$ be a Hausdorff locally solid convergence space. If every order
  interval in $X$ is compact then $X$ is order complete and order continuous.
\end{proposition}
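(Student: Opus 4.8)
The plan is to treat the two conclusions---order completeness and order continuity---separately, each time using compactness of a suitable order interval to extract a convergent quasi-subnet of a monotone net and then invoking Corollary~\ref{qsnet-mon-conv} to upgrade this to convergence of the whole net. First I would record the translation of compactness into net language: a subset $K$ is compact exactly when every net in $K$ has a quasi-subnet converging to a point of $K$. In filter terms, one refines the tail filter of a given net in $K$ to an ultrafilter on $K$; compactness forces it to converge, and a net whose tail filter is this ultrafilter is a quasi-subnet of the original net precisely because its tail filter refines the original tail filter (every tail set of the original net then contains a tail set of the quasi-subnet).

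For order completeness I would show that every increasing net bounded above has a supremum, which is equivalent to Dedekind completeness (given a bounded-above set, pass to the increasing net of suprema of its finite subsets and take the limit). So let $(x_\alpha)_{\alpha\in A}$ be increasing with $x_\alpha\le u$ for all~$\alpha$. Fixing $\alpha_0$, the tail $\{x_\alpha\mid\alpha\ge\alpha_0\}$ lies in the order interval $[x_{\alpha_0},u]$, which is compact by hypothesis. Hence the tail net, and therefore $(x_\alpha)$ itself, has a quasi-subnet converging to some point~$x$. By Corollary~\ref{qsnet-mon-conv} the whole net converges to $x$, and Proposition~\ref{MCT} then gives $x_\alpha\uparrow x$, i.e.\ $x=\sup x_\alpha$.

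For order continuity I would use Theorem~\ref{ocn-mon} to reduce the claim to showing that $x_\alpha\downarrow 0$ implies $x_\alpha\to 0$. Given such a net, for any fixed $\alpha_0$ the tail lies in the compact interval $[0,x_{\alpha_0}]$, so as above $(x_\alpha)$ has a convergent quasi-subnet and, by Corollary~\ref{qsnet-mon-conv}, $x_\alpha\to x$ for some~$x$. Applying the decreasing analogue of Proposition~\ref{MCT} (obtained by passing to $-x_\alpha$) yields $x_\alpha\downarrow x$; since infima are unique and $x_\alpha\downarrow 0$, we conclude $x=0$, so $x_\alpha\to 0$.

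The reductions to monotone nets are routine; the two points needing care are the compactness-to-quasi-subnet passage in the convergence-space setting (where ordinary subnets must be replaced by quasi-subnets, and ultrafilters are the cleanest tool) and the use of the decreasing version of the monotone convergence theorem, which relies on Hausdorffness and on uniqueness of the infimum to pin the limit down as~$0$. I expect the main obstacle to be verifying that the quasi-subnet extracted from compactness is genuinely a quasi-subnet of the original net, so that Corollary~\ref{qsnet-mon-conv} is applicable.
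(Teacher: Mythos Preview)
Your proposal is correct and follows essentially the same approach as the paper: extract a convergent quasi-subnet from compactness of an order interval, upgrade to convergence of the whole monotone net via Corollary~\ref{qsnet-mon-conv}, and identify the limit via Proposition~\ref{MCT}. The paper is marginally more economical in that it runs a single argument with a decreasing positive net $0\le x_\alpha\downarrow$ to obtain both conclusions at once (existence of $\inf x_\alpha$ gives order completeness, and when $\inf x_\alpha=0$ one gets order continuity), whereas you split into an increasing-net argument for completeness and a decreasing-net argument for continuity; this is purely organizational.
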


\begin{proof}
  Suppose that $0\le x_\alpha\downarrow$. Without loss of generality,
  $(x_\alpha)$ is contained in an order interval. Then it has a
  convergent quasi-subnet, say, $y_\gamma\to x$. By
  Corollary~\ref{qsnet-mon-conv}, $x_\alpha\to
  x$. Proposition~\ref{MCT} yields $x=\inf x_\alpha$. This proves that
  $X$ is order complete. If $x_\alpha\downarrow 0$ then the preceding
  argument yields $x_\alpha\to x$ with $x=\inf x_\alpha=0$, hence $X$
  is order continuous.
\end{proof}

For topologies, Proposition~\ref{lip} was proved in~\cite{Lipecki:22}.

\begin{proposition}\label{adh-sublat}
  Let $Y$ be a sublattice of a locally solid Hausdorff convergence vector
  lattice~$X$. Then the adherence $\overline{Y}^1$ is again
  a sublattice and $\overline{(Y_+)}^1=(\overline{Y}^1)_+$.
\end{proposition}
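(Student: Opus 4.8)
The plan is to verify the three assertions—that $\overline{Y}^1$ is a linear subspace, that it is closed under the lattice operations, and the set identity $\overline{(Y_+)}^1=(\overline{Y}^1)_+$—by transporting nets from $Y$ through the continuous vector and lattice operations supplied by Proposition~\ref{op-cont}. Throughout, recall that $\overline{Y}^1$ is by definition the set of limits of nets lying in $Y$, so it suffices in each case to exhibit a net in the appropriate subset of $Y$ converging to the point in question.

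For the subspace property, take $x,y\in\overline{Y}^1$, with nets in $Y$ converging to $x$ and to $y$. Scalar multiples are immediate from linearity of the convergence: $\lambda x_\alpha\to\lambda x$ and $\lambda x_\alpha\in Y$. For sums I would first use the device recalled in Section~\ref{sec:prelim} (Remark~2.5 of~\cite{OBrien:23}) to replace the two nets by tail-equivalent nets $(x_\alpha)$ and $(y_\alpha)$ sharing a single index set; then $x_\alpha+y_\alpha\to x+y$ with $x_\alpha+y_\alpha\in Y$, so $x+y\in\overline{Y}^1$. Closure under $\vee$ and $\wedge$ runs identically: after matching index sets, the joint continuity of $\vee$ and $\wedge$ from Proposition~\ref{op-cont} gives $x_\alpha\vee y_\alpha\to x\vee y$, and $x_\alpha\vee y_\alpha\in Y$ because $Y$ is a sublattice, whence $x\vee y\in\overline{Y}^1$; the meet is handled the same way. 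This settles that $\overline{Y}^1$ is a sublattice, and I note that the Hausdorff hypothesis plays no role here.

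For the identity, the inclusion $\overline{(Y_+)}^1\subseteq(\overline{Y}^1)_+$ is exactly where Hausdorffness enters. A net in $Y_+$ converging to $x$ lies in $Y$, so $x\in\overline{Y}^1$; and since the positive cone $X_+$ is closed (Proposition~\ref{Haus}), the inequalities $x_\alpha\ge 0$ pass to the limit, giving $x\in X_+$ and hence $x\in(\overline{Y}^1)_+$. Conversely, given $x\in(\overline{Y}^1)_+$, choose a net $(y_\alpha)$ in $Y$ with $y_\alpha\to x$. Because $Y$ is a sublattice containing $0$, each positive part $y_\alpha^+=y_\alpha\vee 0$ lies in $Y_+$, and continuity of $x\mapsto x^+$ (Proposition~\ref{op-cont}) yields $y_\alpha^+\to x^+=x$, the final equality holding since $x\ge 0$. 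Thus $x\in\overline{(Y_+)}^1$.

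The only genuine subtlety is the bookkeeping with differing index sets when applying the binary operations, which the tail-equivalence trick dispenses with cleanly; otherwise each step is a direct application of continuity of a lattice operation. The Hausdorff assumption is used at a single point, namely the closedness of $X_+$ in the first inclusion of the set identity, and is not needed for the sublattice claim.
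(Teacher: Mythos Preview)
Your proof is correct and follows essentially the same approach as the paper: both use continuity of the lattice operations (Proposition~\ref{op-cont}) for the sublattice claim and for the inclusion $(\overline{Y}^1)_+\subseteq\overline{(Y_+)}^1$, and both use the Hausdorff hypothesis for the reverse inclusion. The only cosmetic difference is that you invoke closedness of $X_+$ via Proposition~\ref{Haus}, whereas the paper argues directly from uniqueness of limits ($y_\alpha=y_\alpha^+\to x^+$ and $y_\alpha\to x$ force $x=x^+$); your version is more detailed in handling the index-set bookkeeping, which the paper leaves implicit.
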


\begin{proof}
  $\overline{Y}^1$ is a sublattice and
  $(\overline{Y}^1)_+\subseteq\overline{(Y_+)}^1$ because
  lattice operations are continuous. Suppose now that
  $x\in \overline{(Y_+)}^1$. Then there exists a net $(y_\alpha)$
  in $Y_+$ such that $y_\alpha\to x$. It follows that
  $y_\alpha=y_\alpha^+\to x^+$. Since $X$ is Hausdorff,
  $x=x^+$ and, therefore, $x\ge 0$. It follows that
  $x\in (\overline{Y}^1)_+$.
\end{proof}

The following is a consequence of the continuity of lattice
operations:

\begin{proposition}\label{Jd-closed}
  Let $X$ be a Hausdorff locally solid convergence space. For every
  set $A$ in~$X$, its disjoint complement $A^d$ is closed. It follows
  that every band in $X$ is closed.
\end{proposition}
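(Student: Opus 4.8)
The plan is to verify directly that $A^d$ contains the limit of each convergent net lying in it. So I would begin with a net $(x_\alpha)$ in $A^d$ with $x_\alpha\to x$, fix an arbitrary $a\in A$, and aim to show $\abs{x}\wedge\abs{a}=0$; since $a$ is arbitrary, this forces $x\in A^d$ and hence closedness of $A^d$.

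The crux is a short continuity computation using Proposition~\ref{op-cont}. From $x_\alpha\to x$, continuity of the modulus gives $\abs{x_\alpha}\to\abs{x}$. Regarding $\abs{a}$ as a constant net (indexed by the same directed set as $(x_\alpha)$) and invoking joint continuity of $\wedge$, I obtain $\abs{x_\alpha}\wedge\abs{a}\to\abs{x}\wedge\abs{a}$. On the other hand, each $x_\alpha$ lies in $A^d$, so the net $\bigl(\abs{x_\alpha}\wedge\abs{a}\bigr)$ is identically zero and therefore also converges to $0$ as a constant net. Because $X$ is Hausdorff, a net has at most one limit (Proposition~\ref{Haus}), and so $\abs{x}\wedge\abs{a}=0$. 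As $a\in A$ was arbitrary, $x\in A^d$.

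For the second assertion I would use the standard fact that in an Archimedean vector lattice every band $B$ coincides with its double disjoint complement, $B=B^{dd}=(B^d)^d$. Thus $B$ is itself the disjoint complement of the set $B^d$, and so it is closed by the first part of the proposition. Since the paper assumes throughout that all vector lattices are Archimedean, this applies with no extra hypotheses.

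I do not expect a genuine obstacle here: the argument reduces to a one-line continuity computation together with uniqueness of limits. The only points requiring a little care are that the needed continuity is the \emph{joint} continuity of $\wedge$ applied with a constant net in the second variable (which is exactly what Proposition~\ref{op-cont} supplies), and that the passage from bands to disjoint complements rests on the blanket Archimedean assumption.
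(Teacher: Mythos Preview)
Your proof is correct and is exactly the argument the paper has in mind: the proposition is stated there without proof, merely as ``a consequence of the continuity of lattice operations,'' and you have spelled out precisely that consequence together with the Hausdorff uniqueness of limits. The deduction for bands via $B=B^{dd}$ under the blanket Archimedean assumption is also the intended one.
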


The proof of the following statement is routine.

\begin{proposition}[\cite{Bilokopytov:23a}]\label{conv-ideal}
  Let $X$ be a locally solid convergence space and
  $(x_\alpha)$ a net in~$X_+$. The set
  \begin{math}
    \bigl\{v\in X\mid \abs{v}\wedge x_\alpha\to 0\bigr\} 
  \end{math}
  is an ideal.
\end{proposition}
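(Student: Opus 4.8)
Let $X$ be a locally solid convergence space and $(x_\alpha)$ a net in $X_+$. The set $I = \{v \in X : |v| \wedge x_\alpha \to 0\}$ is an ideal.

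I need to prove this set is an ideal in a vector lattice $X$. An ideal is a solid linear subspace. So I need to check: (1) it's solid, (2) closed under addition, (3) closed under scalar multiplication.

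Let me think about what tools I have. Local solidity gives me the domination property (Theorem def-ls): if $y_\alpha \to 0$ and $|x_\alpha| \le |y_\alpha|$ then $x_\alpha \to 0$. Lattice operations are continuous (Prop op-cont), actually jointly continuous.

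Let me organize the proof.
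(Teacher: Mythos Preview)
Your plan is correct but it is only a plan, not a proof. The paper itself writes ``The proof of the following statement is routine'' and gives no argument, so there is nothing to compare against beyond noting that your outline matches the implicit expectation.

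To complete the argument you need the following concrete steps. Solidity is immediate: if $\abs{w}\le\abs{v}$ and $v\in I$, then $0\le\abs{w}\wedge x_\alpha\le\abs{v}\wedge x_\alpha\to 0$, so $w\in I$ by local solidity. For closure under addition you need the standard vector lattice inequality
\[
  (a+b)\wedge c\le(a\wedge c)+(b\wedge c)\quad\text{for }a,b,c\ge 0,
\]
which combined with $\abs{u+v}\le\abs{u}+\abs{v}$ gives
$\abs{u+v}\wedge x_\alpha\le(\abs{u}\wedge x_\alpha)+(\abs{v}\wedge x_\alpha)\to 0$. Scalar multiplication then follows from solidity and addition: for any scalar $\lambda$ choose $n\in\mathbb N$ with $\abs{\lambda}\le n$, so $\abs{\lambda v}\le n\abs{v}\in I$.

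These are the inequalities your outline was gesturing at; write them down and the proof is complete.
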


\begin{example}
  Recall that a net $(x_\alpha)$ in a vector lattice $X$
  \term{$\sigma$-order converges} to~$x$, written $x_\alpha\goesso x$,
  if there exists a sequence $(u_n)$ in $X_+$ such that
  $u_n\downarrow 0$ and for every $n$ there exists $\alpha_n$ such
  that $\abs{x_\alpha-x}\le u_n$ whenever $\alpha\ge\alpha_n$.  In
  filter language, $\mathcal F\goesso x$ if $\mathcal F$ contains a
  sequence of order intervals whose intersection is~$\{x\}$. It is
  easy to see that this is a Hausdorff locally solid convergence
  structure and it is stronger than order convergence. If $X$ has the
  countable sup property then $\sigma\mathrm{o}$ convergence agrees
  with order convergence. The convergence structure $\sigma\mathrm{o}$
  was investigated in \cite{Anguelov:05}.
\end{example}

Let $X$ be a convergence space and $Y$ a convergence vector space. We
may equip the vector space $C(X,Y)$ of all continuous functions from
$X$ to $Y$ with the convergence structure of \term{continuous
  convergence}: $f_\alpha\to f$ if for every net $(x_\gamma)$ in~$X$,
if $x_\gamma\to x$ then the double net
$\bigl(f_\alpha(x_\gamma)\bigr)_{(\alpha,\gamma)}$ converges to
$f(x)$. In the language of filters, a filter $\Phi$ in $C(X,Y)$
converges to $f\in C(X,Y)$ if $\Phi[\mathcal F]\to f(x)$ whenever
$\mathcal F\to x$ in~$X$; see~\cite{OBrien:23} for details.  It was
observed in~\cite[Proposition~8.1]{OBrien:23} that if $Y$ is a
complete Hausdorff topological vector space then continuous
convergence on $C(X,Y)$ is complete. We now show that local solidness
also passes to $C(X,Y)$:

\begin{proposition}
  Let $X$ be a convergence space and $Y$ a locally solid convergence
  space. Then continuous convergence on $C(X,Y)$ is locally solid.
\end{proposition}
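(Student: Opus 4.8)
The plan is to verify local solidity through the net criterion \eqref{def-ls-net} of Theorem~\ref{def-ls}. Before doing so, I would record two structural facts. First, $C(X,Y)$ is a vector lattice under the pointwise operations: since $Y$ is locally solid, its lattice operations are (jointly) continuous by Proposition~\ref{op-cont}, so the pointwise modulus, supremum, and infimum of continuous functions are again continuous. Thus $C(X,Y)$ is a sublattice of the vector lattice $Y^X$; in particular $\abs{f}(x)=\abs{f(x)}$ for all $x$, and $f\le g$ in $C(X,Y)$ means $f(x)\le g(x)$ for every $x\in X$. Second, continuous convergence is a linear convergence structure on $C(X,Y)$ (see~\cite{OBrien:23}), so that Theorem~\ref{def-ls} is applicable.

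With this in place, suppose $g_\alpha\to 0$ in $C(X,Y)$ and $\abs{f_\alpha}\le\abs{g_\alpha}$ for every~$\alpha$; I must show $f_\alpha\to 0$. By the definition of continuous convergence, it suffices to fix an arbitrary net $x_\gamma\to x$ in $X$ and prove that the double net $\bigl(f_\alpha(x_\gamma)\bigr)_{(\alpha,\gamma)}$ converges to $0(x)=0$ in~$Y$.

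Since $g_\alpha\to 0$ in $C(X,Y)$, the double net $\bigl(g_\alpha(x_\gamma)\bigr)_{(\alpha,\gamma)}$ converges to $0$ in~$Y$. The pointwise domination $\abs{f_\alpha}\le\abs{g_\alpha}$ gives $\abs{f_\alpha(x_\gamma)}\le\abs{g_\alpha(x_\gamma)}$ for every pair $(\alpha,\gamma)$. These two double nets are indexed by the same directed set $A\times\Gamma$, so the local solidity of $Y$, in the form of Theorem~\ref{def-ls}\eqref{def-ls-net}, yields $\bigl(f_\alpha(x_\gamma)\bigr)_{(\alpha,\gamma)}\to 0$. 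As $x_\gamma\to x$ was arbitrary, $f_\alpha\to 0$, which establishes~\eqref{def-ls-net} for continuous convergence.

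The argument is short because all the work is transported pointwise into $Y$; the only point requiring care is that the two double nets $\bigl(f_\alpha(x_\gamma)\bigr)$ and $\bigl(g_\alpha(x_\gamma)\bigr)$ share the common index set $A\times\Gamma$, which is exactly what allows the net form \eqref{def-ls-net} of local solidity in $Y$ to be invoked; this common indexing is automatic from the product structure of the double net. The preliminary identification of $C(X,Y)$ as a vector lattice with pointwise modulus is the other place where the hypothesis that $Y$ is locally solid, hence has continuous lattice operations, is used.
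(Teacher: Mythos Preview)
Your proof is correct and follows essentially the same approach as the paper's: verify criterion \eqref{def-ls-net} by pushing the domination pointwise into $Y$ via the double-net definition of continuous convergence. You supply more detail than the paper does on why $C(X,Y)$ is a vector lattice and why continuous convergence is linear, but the core argument is identical.
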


\begin{proof}
  It is easy to see that $C(X,Y)$ is a vector lattice under the
  point-wise order. Suppose that $f_\alpha\to 0$ and
  $\abs{g_\alpha}\le\abs{f_\alpha}$ for every~$\alpha$. If
  $x_\gamma\to x$ then $f_\alpha(x_\gamma)\to 0$; it follows from
  \begin{math}
    \bigabs{g_\alpha(x_\gamma)}\le\bigabs{f_\alpha(x_\gamma)}\to 0
  \end{math}
  that $g_\alpha(x_\gamma)\to 0$ and, therefore, $g_\alpha\to 0$.
\end{proof}

\section{Unbounded modification}
\label{sec:unbdd}

Let $(X,\eta)$ be a locally solid convergence vector
lattice. Define a new convergence structure $\mathrm{u}\eta$ on $X$ as
follows: $x_\alpha\goesueta x$ if $\abs{x_\alpha-x}\wedge u\goeseta 0$
for every $u\in X_+$. It is easy to see that $\goesueta$ is again a
locally solid convergence structure. We call it the \term{unbounded
  modification} of~$\eta$, or just unbounded $\eta$-convergence.
Clearly, $\eta$ and $\mathrm{u}\eta$ agree on order bounded sets.
It follows that unbounded modification is idempotent, i.e.,
$\mathrm{u}(\mathrm{u}\eta)=\mathrm{u}\eta$.

Unbounded order convergence (i.e., the unbounded modification of order
convergence), abbreviated as uo-convergence, has been extensively
studied; see
\cite{Nakano:48,Papangelou:64,Kaplan:97,Fremlin:04,Gao:17} and
references therein. The unbounded modification of norm convergence in
a Banach lattice, called un-convergence, was studied in
\cite{Troitsky:04,Deng:17,Kandic:17}. The unbounded modification of
absolute weak convergence in Banach lattices was studied
in~\cite{Zabeti:18} under the name of uaw-convergence. Note that the
unbounded modification of a convergence induced by locally solid
topology is again topological; in particular, un-convergence and
uaw-convergence are topological. M.Taylor
in~\cite{Taylor:18,Taylor:19} studied unbounded modifications of
locally solid topologies on vector lattices. Unbounded modifications
of convergence structures were studied in~\cite{Aydin:21} under a
slightly different definition of a convergent structure.

More generally, let $\eta$ be a locally solid convergence structure
on~$X$, $I$ an ideal in~$X$, and $(x_\alpha)$ a net in~$X$; we say
that $(x_\alpha)$ unboundedly $\eta$-converges with respect to~$I$,
and write $x_\alpha\goesuIe x$ if $\abs{x_\alpha-x}\wedge u\goeseta 0$
for every $u\in I_+$. It follows from Theorem~\ref{plsc} (or it can be
verified directly) that $\mathrm{u}_I\eta$ is again a locally solid
convergence structure. Hence, by Proposition~\ref{op-cont}, lattice
operations are $\mathrm{u}_I\eta$-continuous. We clearly have
$\mathrm{u}_I\eta\le\mathrm{u}\eta\le\eta$ and
$\mathrm{u}_I(\mathrm{u}_I\eta)=\mathrm{u}_I\eta$. In the special case
when $I=X$, we get $\mathrm{u}_I\eta=\mathrm{u}\eta$. We require $I$
to be an ideal and not just a subset of $X$ because if $A\subseteq X$
is an arbitrary subset then, by Proposition~\ref{conv-ideal}, the
condition
\begin{displaymath}
  \forall u\in A\quad\abs{x_\alpha-x}\wedge\abs{u}\goeseta 0
\end{displaymath}
is equivalent to $x_\alpha\goesuIe 0$ where $I=I(A)$ is the ideal
generated by~$A$.

\begin{proposition}
  Let $\eta$ be a locally solid convergence on~$X$. Then
  $\mathrm{u}_I\eta$ is Hausdorff iff $\eta$ is Hausdorff and $I$ is
  order dense. In particular, if $\eta$ is Hausdorff then
  $\mathrm{u}\eta$ is Hausdorff.
\end{proposition}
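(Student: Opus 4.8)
The plan is to reduce Hausdorffness to the behavior of positive constant nets. By Proposition~\ref{Haus} (equivalently, the last assertion of Theorem~\ref{plsc}), a locally solid convergence is Hausdorff precisely when no constant net with value in $X_+\setminus\{0\}$ converges to~$0$. I would therefore fix $x\in X_+$, determine exactly when the constant net $x$ satisfies $x\goesuIe 0$, and translate the resulting condition into a statement about the ideal~$I$.

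First I would dispose of the implication ``$\mathrm{u}_I\eta$ Hausdorff $\Rightarrow$ $\eta$ Hausdorff''. This is the general principle that a convergence stronger than a Hausdorff one is again Hausdorff: since $\mathrm{u}_I\eta\le\eta$, any net that $\eta$-converges also $\mathrm{u}_I\eta$-converges, so a net with two distinct $\eta$-limits would have two distinct $\mathrm{u}_I\eta$-limits, contradicting Hausdorffness of~$\mathrm{u}_I\eta$.

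The heart of the matter is the equivalence, under the standing assumption that $\eta$ is Hausdorff, between Hausdorffness of $\mathrm{u}_I\eta$ and order density of~$I$. Fix $x\in X_+$. By definition $x\goesuIe 0$ means $\abs{x}\wedge u=x\wedge u\goeseta 0$ for every $u\in I_+$, where each $x\wedge u$ is a \emph{constant} net. Since $\eta$ is Hausdorff, a constant net $\eta$-converging to~$0$ must be~$0$ (it always converges to its own value, so uniqueness of limits forces that value to be~$0$); hence $x\goesuIe 0$ is equivalent to $x\wedge u=0$ for all $u\in I_+$, i.e.\ to $x\in I^d$. Thus the positive constant nets $\mathrm{u}_I\eta$-converging to~$0$ are exactly those with value in $I^d\cap X_+$, and by the constant-net criterion $\mathrm{u}_I\eta$ is Hausdorff iff $I^d\cap X_+=\{0\}$, i.e.\ iff $I^d=\{0\}$. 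The only nonroutine point is to recognize that $I^d=\{0\}$ is equivalent to order density of~$I$; this is the standard fact for ideals in Archimedean vector lattices (order density is equivalent to $I^{dd}=X$, which is in turn equivalent to $I^d=\{0\}$ since $I^d\cap I^{dd}=\{0\}$), and the paper's blanket Archimedean assumption is exactly what makes it available.

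Putting the two parts together yields the full equivalence: if $\mathrm{u}_I\eta$ is Hausdorff, then $\eta$ is Hausdorff by the first step and $I$ is order dense by the second; conversely, if $\eta$ is Hausdorff and $I$ is order dense, the second step gives Hausdorffness of~$\mathrm{u}_I\eta$. Finally, the ``in particular'' clause is the special case $I=X$, where $X^d=\{0\}$ makes $X$ trivially order dense, so the statement collapses to: $\mathrm{u}\eta$ is Hausdorff iff $\eta$ is Hausdorff.
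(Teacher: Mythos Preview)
Your proof is correct and follows essentially the same approach as the paper: both reduce Hausdorffness to the constant-net criterion from Proposition~\ref{Haus}, identify that (under Hausdorffness of~$\eta$) a positive constant net $x$ satisfies $x\goesuIe 0$ precisely when $x\in I^d$, and then invoke the standard equivalence between order density of an ideal and $I^d=\{0\}$. The only cosmetic difference is that the paper phrases things via a constant zero net $\mathrm{u}_I\eta$-converging to~$x$, while you phrase it via a constant $x$ net $\mathrm{u}_I\eta$-converging to~$0$; by translation invariance these are the same.
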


\begin{proof}
  Suppose that $\eta$ is Hausdorff and $I$ is order dense. We will
  show that $u_I\eta$ is Hausdorff.  Suppose that $(x_\alpha)$ is a
  constant zero net and $x_\alpha\goesuIe x$. According to
  Proposition~\ref{Haus}, it suffices to show that then $x=0$. Let
  $u\in I^+$. By assumption, the constant net $\abs{x}\wedge u$ is
  $\eta$-convergent to zero. Since $\eta$ is Hausdorff, we have
  $\abs{x}\wedge u=0$. Since $u\in I^+$ is arbitrary and $I$ is order
  dense, we conclude that $x=0$.

  Suppose now that $\mathrm{u}_I\eta$ is Hausdorff. Then $\eta$ is
  Hausdorff because it is a stronger convergence.  Suppose that
  $I$ fails to be order dense. There exists $x>0$ in $X$ with
  $x\perp I$. It follows that every constant net with $x$ as the
  constant term $\mathrm{u}_I\eta$-converges to zero, hence
  $\mathrm{u}_I\eta$ fails to be Hausdorff.
\end{proof}

\begin{proposition}[\cite{Bilokopytov:23a}]\label{unb-equiv}
  Let $\eta$ be a Hausdorff locally solid convergence on~$X$. Then
    $x_\alpha\goesueta x$ iff $a\vee(x_\alpha\wedge b)\goeseta
    a\vee(x\wedge b)$ for all $a,b\in X$.
\end{proposition}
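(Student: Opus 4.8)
The plan is to prove both directions by reducing to the defining condition of $\goesueta$, namely that $\abs{x_\alpha-x}\wedge u\goeseta 0$ for all $u\in X_+$. For the forward direction, assume $x_\alpha\goesueta x$ and fix $a,b\in X$. The natural idea is to show that $\bigabs{\bigl(a\vee(x_\alpha\wedge b)\bigr)-\bigl(a\vee(x\wedge b)\bigr)}$ is dominated (in modulus) by something of the form $\abs{x_\alpha-x}\wedge u$, so that local solidity (Theorem~\ref{def-ls}\eqref{def-ls-net}) finishes the job. Using the standard Birkhoff-type inequalities $\bigabs{(s\vee c)-(t\vee c)}\le\abs{s-t}$ and $\bigabs{(s\wedge c)-(t\wedge c)}\le\abs{s-t}$, one gets
\begin{displaymath}
  \Bigabs{\bigl(a\vee(x_\alpha\wedge b)\bigr)-\bigl(a\vee(x\wedge b)\bigr)}
  \le\bigabs{(x_\alpha\wedge b)-(x\wedge b)}\le\abs{x_\alpha-x}.
\end{displaymath}
The key extra observation is that this quantity is also bounded above by a fixed element of $X_+$: since $a\le a\vee(x_\alpha\wedge b)\le a\vee b$ and likewise for the limit term, the difference lies in the order interval determined by $\pm(b-a)^+$ (or $\abs{b-a}$), so in fact the left-hand side is $\le\abs{x_\alpha-x}\wedge u$ with $u=\abs{b-a}$ or a comparable constant. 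Then $\abs{x_\alpha-x}\wedge u\goeseta 0$ by hypothesis, and local solidity gives the conclusion.

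For the converse, assume the two-sided truncation condition holds for all $a,b$ and fix $u\in X_+$; I must produce $\abs{x_\alpha-x}\wedge u\goeseta 0$. The plan is to specialize $a,b$ so that the clamped expression $a\vee(\cdot\wedge b)$ becomes a truncation of $x_\alpha$ into a fixed order interval around $x$, and then express $\abs{x_\alpha-x}\wedge u$ in terms of such truncations. Writing $m=x\wedge b$ type quantities, the point is that truncating both $x_\alpha$ and $x$ to the interval $[x-u,\,x+u]$ and subtracting yields exactly the $\eta$-null net we want, because on that interval the modulus of the difference agrees with $\abs{x_\alpha-x}\wedge u$. Concretely, set $b=x+u$ and $a=x-u$; then $a\vee(x_\alpha\wedge b)$ is the truncation of $x_\alpha$ to $[x-u,x+u]$, while $a\vee(x\wedge b)=x$, and one checks the algebraic identity
\begin{displaymath}
  \Bigabs{\bigl(a\vee(x_\alpha\wedge b)\bigr)-x}=\abs{x_\alpha-x}\wedge u.
\end{displaymath}
By hypothesis the left side $\eta$-converges to $0$, which is precisely $\abs{x_\alpha-x}\wedge u\goeseta 0$; since $u\in X_+$ was arbitrary, $x_\alpha\goesueta x$.

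The main obstacle I anticipate is the converse direction, specifically verifying the pointwise lattice identity $\bigabs{\bigl((x-u)\vee(x_\alpha\wedge(x+u))\bigr)-x}=\abs{x_\alpha-x}\wedge u$. This is a routine but slightly delicate computation in an Archimedean vector lattice, best handled by translating so that $x=0$ (using that $\eta$ is linear) and then checking $\bigabs{(-u)\vee(y\wedge u)}=\abs{y}\wedge u$ for $y=x_\alpha-x$ and $u\ge 0$; this clamping-to-$[-u,u]$ identity is standard and can be confirmed via the Riesz decomposition or by checking the positive and negative parts of $y$ separately. The Hausdorff hypothesis does not appear to be needed for either implication as structured above, though it is available; I would double-check whether the reduction to $x=0$ secretly uses uniqueness of limits, but since both conditions are stated with an explicit limit $x$ and the argument is purely algebraic up to applying local solidity, I expect Hausdorffness to be inessential to the core equivalence and to enter only through the ambient standing assumptions.
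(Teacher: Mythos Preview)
Your proposal is correct and essentially matches the paper's proof. The converse direction is identical: both choose $a=x-u$, $b=x+u$ and invoke the clamping identity $\bigabs{(x-u)\vee(x_\alpha\wedge(x+u))-x}=\abs{x_\alpha-x}\wedge u$. For the forward direction the paper phrases things slightly differently---it appeals to the $\mathrm{u}\eta$-continuity of lattice operations and then notes that the net $\bigl(a\vee(x_\alpha\wedge b)\bigr)$ is order bounded so $\mathrm{u}\eta$ and $\eta$ agree on it---whereas you make the same idea explicit via the Birkhoff inequality together with the bound $(b-a)^+$, yielding domination by $\abs{x_\alpha-x}\wedge(b-a)^+$ directly; this is just a more hands-on repackaging of the same observation. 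Your remark that Hausdorffness is not actually used is also accurate.
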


\begin{proof}
  If $x_\alpha\goesueta x$ and $a,b\in X$, continuity of lattice
  operations yields $a\vee(x_\alpha\wedge b)\goesueta a\vee(x\wedge
  b)$. Since the net $\bigl(a\vee(x_\alpha\wedge b)\bigr)$ is order
  bounded, $\mathrm{u}\eta$ and $\eta$ agree on it.
  %
  % A computational proof
  %
  % then it follows from
  % \begin{displaymath}
  %   \Bigabs{a\vee(x_\alpha\wedge b)-a\vee(x\wedge b)}
  %   \le\abs{x_\alpha-x}\wedge\bigl(\abs{a}+\abs{b}\bigr)
  % \end{displaymath}
  % that $a\vee(x_\alpha\wedge b)\goeseta a\vee(x\wedge b)$.

  The converse follows from the fact that for every $u\ge 0$ we have
  \begin{multline*}
    \abs{x_\alpha-x}\wedge u
    =\Bigabs{(x-u)\vee\bigl(x_\alpha\wedge(x+u)\bigr)-x}\\
    =\Bigabs{(x-u)\vee\bigl(x_\alpha\wedge(x+u)\bigr)-
      (x-u)\vee\bigl(x\wedge(x+u)\bigr)}\goeseta 0.
  \end{multline*}
\end{proof}

Recall that $\overline{A}^\eta$ and $\overline{A}^{1,\eta}$ stand for
the closure and the adherence of $A$ with respect to~$\eta$,
respectively, while $\overline{A}^{2,\eta}$ is the adherence of
$\overline{A}^{1,\eta}$. The proof of the following proposition was
motivated by an unpublished manuscript by M.~Taylor and by
Proposition~4.1 in~\cite{Aydin:21}.

\begin{proposition}\label{sublat-adh}
  Let $Y$ be a sublattice of $X$ and $\eta$ a locally solid Hausdorff
  convergence structure on~$X$. Then
  \begin{math}
    \overline{Y}^{1,\eta}\subseteq\overline{Y}^{1,\mathrm{u}\eta}
    \subseteq\overline{Y}^{2,\eta}.
  \end{math}
  Furthermore, $\overline{Y}^\eta=\overline{Y}^{\mathrm{u}\eta}$.
\end{proposition}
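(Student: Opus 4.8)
The plan is to handle the two inclusions separately and then deduce the closure identity. The first inclusion $\overline{Y}^{1,\eta}\subseteq\overline{Y}^{1,\mathrm{u}\eta}$ is immediate: since $\mathrm{u}\eta\le\eta$, any net in $Y$ that $\eta$-converges to a point also $\mathrm{u}\eta$-converges to it, so every $\eta$-adherent point of $Y$ is $\mathrm{u}\eta$-adherent.

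For the second inclusion $\overline{Y}^{1,\mathrm{u}\eta}\subseteq\overline{Y}^{2,\eta}$, fix $x\in\overline{Y}^{1,\mathrm{u}\eta}$ and a net $(y_\alpha)$ in $Y$ with $y_\alpha\goesueta x$, i.e.\ $\abs{y_\alpha-x}\wedge u\goeseta 0$ for all $u\in X_+$. The idea is to truncate using the elements $\abs{y_\beta}$, which lie in the sublattice $Y$. For fixed $\beta$, the net $\bigl((-\abs{y_\beta})\vee(y_\alpha\wedge\abs{y_\beta})\bigr)_\alpha$ lies in $Y$, is contained in the order interval $[-\abs{y_\beta},\abs{y_\beta}]$, and $\mathrm{u}\eta$-converges to $(-\abs{y_\beta})\vee(x\wedge\abs{y_\beta})$ by $\mathrm{u}\eta$-continuity of the lattice operations; since $\eta$ and $\mathrm{u}\eta$ coincide on order bounded nets, this convergence is in fact in $\eta$, so $p_\beta:=(-\abs{y_\beta})\vee(x\wedge\abs{y_\beta})\in\overline{Y}^{1,\eta}$. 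It then remains to show $p_\beta\goeseta x$. Here I would use the lattice identity $\abs{x-p_\beta}=(\abs{x}-\abs{y_\beta})^+$ (valid in every vector lattice, as both sides are lattice-polynomial expressions in $x$ and $\abs{y_\beta}\ge 0$ agreeing on $\mathbb{R}$), together with the two bounds $(\abs{x}-\abs{y_\beta})^+\le\abs{x}$ and $(\abs{x}-\abs{y_\beta})^+\le\bigabs{\abs{x}-\abs{y_\beta}}\le\abs{x-y_\beta}$, which combine to give $\abs{x-p_\beta}\le\abs{x}\wedge\abs{x-y_\beta}$. The right-hand side is exactly $\abs{y_\beta-x}\wedge u$ for the admissible test element $u=\abs{x}\in X_+$, hence $\eta$-converges to $0$; local solidness of $\eta$ then forces $\abs{x-p_\beta}\goeseta 0$, so $p_\beta\goeseta x$ and $x\in\overline{Y}^{2,\eta}$.

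The main obstacle is precisely this second inclusion: one must resist truncating with an arbitrary $u\in X_+$ (which would leave the sublattice $Y$) and instead truncate with $\abs{y_\beta}\in Y$, and then recognize that the resulting error $(\abs{x}-\abs{y_\beta})^+$ is dominated by $\abs{x}\wedge\abs{x-y_\beta}$, converting the weak $\mathrm{u}\eta$-hypothesis (tested against the single element $u=\abs{x}$) into genuine $\eta$-convergence.

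Finally, for $\overline{Y}^\eta=\overline{Y}^{\mathrm{u}\eta}$: since $\mathrm{u}\eta\le\eta$, every $\mathrm{u}\eta$-closed set is $\eta$-closed, so $\overline{Y}^{\mathrm{u}\eta}$ is an $\eta$-closed set containing $Y$, giving $\overline{Y}^\eta\subseteq\overline{Y}^{\mathrm{u}\eta}$. For the reverse, set $Z=\overline{Y}^\eta$; iterating Proposition~\ref{adh-sublat} through Proposition~\ref{k-adh-clos} shows that $Z$ is an $\eta$-closed sublattice, so $\overline{Z}^{1,\eta}=Z$ and hence $\overline{Z}^{2,\eta}=Z$. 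Applying the already-established second inclusion to the sublattice $Z$ gives $\overline{Z}^{1,\mathrm{u}\eta}\subseteq\overline{Z}^{2,\eta}=Z$, so $Z$ is $\mathrm{u}\eta$-closed; being a $\mathrm{u}\eta$-closed set containing $Y$, it satisfies $\overline{Y}^{\mathrm{u}\eta}\subseteq Z=\overline{Y}^\eta$, which closes the argument.
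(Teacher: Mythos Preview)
Your proof is correct and follows essentially the same strategy as the paper's: show that suitable truncations of $x$ by elements of $Y$ lie in $\overline{Y}^{1,\eta}$, then show these truncations $\eta$-converge to~$x$. The paper streamlines the argument by first reducing to $x\ge 0$ via Proposition~\ref{adh-sublat} (so that one may take $(y_\alpha)\subseteq Y_+$), and then invoking Proposition~\ref{unb-equiv} to obtain $y\wedge y_\alpha\goeseta y\wedge x$ for every $y\in Y_+$, which immediately gives $y_\alpha\wedge x\in\overline{Y}^{1,\eta}$ and $y_\alpha\wedge x\goeseta x$. You instead treat general $x$ directly with the two-sided truncation $p_\beta=(-\abs{y_\beta})\vee(x\wedge\abs{y_\beta})$ and the identity $\abs{x-p_\beta}=(\abs{x}-\abs{y_\beta})^+$; this is valid (it can be checked by writing $x-p_\beta=(x-\abs{y_\beta})^+-(x+\abs{y_\beta})^-$ with the two terms disjoint), but your justification ``agreeing on~$\mathbb R$'' is terse, whereas the paper's route avoids the need for any such identity by leaning on Proposition~\ref{unb-equiv}. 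Your derivation of $\overline{Y}^\eta=\overline{Y}^{\mathrm{u}\eta}$ spells out what the paper compresses into the single observation that $\eta$ and $\mathrm{u}\eta$ have the same closed sublattices.
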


\begin{proof}
  By Proposition~\ref{adh-sublat}, the adherences are sublattices.
  Since $\mathrm{u}\eta\le\eta$, we clearly have
  $\overline{Y}^{1,\eta}\subseteq\overline{Y}^{1,\mathrm{u}\eta}$. Let
  \begin{math}
    0\le x\in\overline{Y}^{1,\mathrm{u}\eta}.
  \end{math}
  By Proposition~\ref{adh-sublat}, there exists a net $(y_\alpha)$ in
  $Y_+$ such that $y_\alpha\goesueta x$.
  Proposition~\ref{unb-equiv} yields
  $u\wedge y_\alpha\goeseta u\wedge x$ for every
  $u\in X_+$. For every $y\in Y_+$, we have $y\wedge y_\alpha\in Y$
  for every~$\alpha$; it follows from
  $y\wedge y_\alpha\goeseta y\wedge x$ that
  $y\wedge x\in\overline{Y}^{1,\eta}$. In particular,
  $y_\alpha\wedge x\in \overline{Y}^{1,\eta}$ for every~$\alpha$. It
  now follows from $y_\alpha\wedge x\goeseta x\wedge x=x$ that
  $x\in\overline{Y}^{2,\eta}$. Hence,
  $\overline{Y}^{u\eta}_+\subseteq\overline{Y}^{2,\eta}$.
  Proposition~\ref{adh-sublat} yields
  \begin{math}
    \overline{Y}^{u\eta}
    \subseteq\overline{Y}^{2,\eta}
  \end{math}  

  It follows that $\eta$ and $\mathrm{u}\eta$ have the same closed
  sublattices. Thus, $\overline{Y}^\eta=\overline{Y}^{\mathrm{u}\eta}$.
\end{proof}

\begin{question}
  Lemma~2.6 of~\cite{Gao:18a} provides an example that the first
  inclusion in Proposition~\ref{sublat-adh} may be proper. Is there 
  an example where the second inclusion is proper?
\end{question}

We now present a filter characterization of unbounded
modifications. For $u\in X_+$ and $A\subseteq X$, we write
$\abs{A}\wedge u=\bigl\{\abs{a}\wedge u\mid a\in A\bigr\}$. For a
filter $\mathcal F$ in~$X$, we write $\abs{\mathcal F}\wedge u$ for
the filter generated by $\bigl\{\abs{A}\wedge u\mid A\in\mathcal
F\bigr\}$. Note that $\abs{\mathcal F}\wedge u=f(\mathcal F)$ where
$f\colon X\to X$ is given by $f(x)=\abs{x}\wedge u$. In particular, 
  \begin{math}
    \bigabs{[x_\alpha]}\wedge u=\bigl[\abs{x_\alpha}\wedge u\bigr]
  \end{math}
for every net $(x_\alpha)$ in~$X$.
\begin{proposition}\label{filter-u}
  Let $\eta$ be a locally solid convergence structure on~$X$, $I$ an
  ideal in~$X$, and $\mathcal F$ a filter on~$X$. Then $\mathcal
  F\goesuIe 0$ iff $\abs{\mathcal F}\wedge u\goeseta 0$ for every
  $u\in I_+$.
\end{proposition}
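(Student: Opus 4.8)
The plan is to reduce the statement to the net–filter dictionary set up in the Preliminaries, together with the identity $\abs{[x_\alpha]}\wedge u=\bigl[\abs{x_\alpha}\wedge u\bigr]$ recorded immediately before the proposition. First I would represent $\mathcal F$ as a tail filter: pick a net $(x_\alpha)$ with $\mathcal F=[x_\alpha]$, which is always possible since every filter is the tail filter of some net. All the work then amounts to unwinding the definition of $\mathrm{u}_I\eta$ on nets and translating each side across the correspondence between net and filter convergence.

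Concretely, I would argue via a chain of equivalences. By the definition of filter convergence applied to the structure $\mathrm{u}_I\eta$, the relation $\mathcal F\goesuIe 0$ holds exactly when $x_\alpha\goesuIe 0$ for the representing net; and by the definition of the unbounded modification with respect to $I$, this is exactly the assertion that $\abs{x_\alpha}\wedge u\goeseta 0$ for every $u\in I_+$. The remaining translation concerns the right-hand side only: for a fixed $u\in I_+$, the net statement $\abs{x_\alpha}\wedge u\goeseta 0$ is, again by the net–filter correspondence, equivalent to the filter statement $\bigl[\abs{x_\alpha}\wedge u\bigr]\goeseta 0$, and $\bigl[\abs{x_\alpha}\wedge u\bigr]=\abs{\mathcal F}\wedge u$ by the displayed identity. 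Quantifying over $u\in I_+$ and stringing these equivalences together yields both implications at once, so the forward and backward directions need no separate treatment beyond noting that each equivalence is genuinely an ``iff''.

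There is no genuinely hard step here; the only points requiring care are bookkeeping across the two languages. I would make sure to verify that the image filter $\abs{\mathcal F}\wedge u=f(\mathcal F)$, with $f(x)=\abs{x}\wedge u$, is precisely the tail filter of the net $\bigl(\abs{x_\alpha}\wedge u\bigr)$, so that its $\eta$-convergence is literally the net convergence $\abs{x_\alpha}\wedge u\goeseta 0$; this is guaranteed by $f\bigl([x_\alpha]\bigr)=\bigl[f(x_\alpha)\bigr]$ from the Preliminaries. I would also note that the equivalence is insensitive to the choice of representing net $(x_\alpha)$, since tail-equivalent nets have the same limits and the net–filter correspondence is well defined. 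With these two observations in place, the proof is a direct unwinding of definitions.
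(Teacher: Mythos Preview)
Your proposal is correct and follows essentially the same approach as the paper: represent $\mathcal F$ as a tail filter $[x_\alpha]$ and then run the chain of equivalences $\mathcal F\goesuIe 0 \Leftrightarrow x_\alpha\goesuIe 0 \Leftrightarrow \forall u\in I_+\ \abs{x_\alpha}\wedge u\goeseta 0 \Leftrightarrow \forall u\in I_+\ \bigl[\abs{x_\alpha}\wedge u\bigr]\goeseta 0 \Leftrightarrow \forall u\in I_+\ \abs{\mathcal F}\wedge u\goeseta 0$, using the identity $\abs{[x_\alpha]}\wedge u=\bigl[\abs{x_\alpha}\wedge u\bigr]$. The paper's proof is exactly this chain, written more tersely.
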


\begin{proof}
  Find a net $(x_\alpha)$ with $\mathcal F=[x_\alpha]$. Then
  \begin{multline*}
    \mathcal F\goesuIe 0\qquad\Leftrightarrow\qquad
    x_\alpha\goesuIe 0\qquad\Leftrightarrow\qquad
    \forall u\in I_+\quad\abs{x_\alpha}\wedge u\goeseta 0\\
    \qquad\Leftrightarrow\qquad
    \forall u\in I_+\quad\bigl[\abs{x_\alpha}\wedge u\bigr]\goeseta 0
    \qquad\Leftrightarrow\qquad
    \forall u\in I_+\quad\abs{\mathcal F}\wedge u\goeseta 0.
  \end{multline*}
\end{proof}

Here is a sketch of an alternative proof of
Proposition~\ref{filter-u} in the special case $I=X$. It follows from
Proposition~\ref{unb-equiv} that $\mathrm{u}\eta$ is the weakest
convergence that makes all the functions of the form
$x\mapsto a\vee(x\wedge b)$ continuous as maps to $(X,\eta)$. Thus,
$\mathcal F\goesueta x$ iff $a\vee(\mathcal F\wedge b)\goeseta x$ for
all $a$ and~$b$. The latter is equivalent to
$\abs{\mathcal F}\wedge u\goeseta 0$ for all $u\ge 0$.

\begin{remark}\label{fil-uA}
  Just as we did for nets, one can consider unbounded convergence of filters
  with respect to an arbitrary subset $A$ instead of an ideal. And,
  just as for nets, this is easily seen to be equivalent to
  unbounding with respect to $I(A)$.
\end{remark}

\begin{proposition}\label{ub-filter-mon}
  Let $\mathcal F$ be a filter with a base of solid sets. Then
  $\mathcal F\subseteq\abs{\mathcal F}\wedge u$ for every $u\ge
  0$. If $0\le v\le u$ then
  $\abs{\mathcal F}\wedge u\subseteq\abs{\mathcal F}\wedge v$. 
\end{proposition}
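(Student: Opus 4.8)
The plan is to reduce both assertions to containments between the generating sets of the filters involved, exploiting that $\abs{\mathcal F}\wedge u$ is generated by the base $\bigl\{\abs{A}\wedge u\mid A\in\mathcal F\bigr\}$ and that a filter is upward closed, so that an inclusion $\mathcal G_1\subseteq\mathcal G_2$ follows once every base element of $\mathcal G_1$ contains a base element of $\mathcal G_2$. The single lattice fact that powers everything is the observation: if $S$ is solid and $s\in S$, then $\abs{s}\wedge u\in S$, because $\bigabs{\abs{s}\wedge u}=\abs{s}\wedge u\le\abs{s}$ and $S$ is solid. I would record this observation first.

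For the first inclusion, given $A\in\mathcal F$ I would choose a solid $S\in\mathcal F$ with $S\subseteq A$, using the assumed solid base. The observation gives $\abs{s}\wedge u\in S$ for every $s\in S$, that is, $\abs{S}\wedge u\subseteq S\subseteq A$. Thus $A$ contains the base element $\abs{S}\wedge u$ of $\abs{\mathcal F}\wedge u$, whence $A\in\abs{\mathcal F}\wedge u$; as $A$ was arbitrary, $\mathcal F\subseteq\abs{\mathcal F}\wedge u$.

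For the second inclusion, assume $0\le v\le u$. I would first show that for a solid $S$ one has $\abs{S}\wedge v\subseteq\abs{S}\wedge u$: given $s\in S$, the element $z=\abs{s}\wedge v$ lies in $S$ by the observation, and since $z\le v\le u$ we get $\abs{z}\wedge u=z=\abs{s}\wedge v$, exhibiting $\abs{s}\wedge v$ as a member of $\abs{S}\wedge u$. Now, given a base element $\abs{A}\wedge u$ of $\abs{\mathcal F}\wedge u$, I would pick a solid $S\in\mathcal F$ with $S\subseteq A$; then $\abs{S}\wedge v\subseteq\abs{S}\wedge u\subseteq\abs{A}\wedge u$, so $\abs{A}\wedge u$ contains the base element $\abs{S}\wedge v$ of $\abs{\mathcal F}\wedge v$ and hence belongs to $\abs{\mathcal F}\wedge v$. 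This yields $\abs{\mathcal F}\wedge u\subseteq\abs{\mathcal F}\wedge v$.

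I do not expect a genuine obstacle: there is no analytic content, only the solidity observation and the elementary identities $\abs{s}\wedge v\le\abs{s}$ and $(\abs{s}\wedge v)\wedge u=\abs{s}\wedge v$. The one place to stay careful is the bookkeeping of filter inclusions — remembering that the finer filter is the one with the \emph{smaller} generating sets, so each inclusion must be verified by fitting a base set of the target filter \emph{inside} a base set of the source filter, in the correct direction.
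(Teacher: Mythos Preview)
Your argument is correct and follows the same route as the paper: pick a solid base element $S\subseteq A$, use solidity to get $\abs{S}\wedge u\subseteq S\subseteq A$ for the first inclusion, and for the second use $\abs{S}\wedge v\subseteq\abs{S}\wedge u\subseteq\abs{A}\wedge u$. Your write-up is simply more explicit about why $\abs{S}\wedge v\subseteq\abs{S}\wedge u$ holds (via $z=\abs{s}\wedge v\in S$ and $\abs{z}\wedge u=z$), which the paper leaves to the reader.
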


\begin{proof}
  Let $\mathcal B$ be a base of $\mathcal F$ consisting of solid
  sets. If $A\in\mathcal F$ then there exists $B\in\mathcal B$ with
  $A\supseteq B\supseteq\abs{B}\wedge u\in\abs{\mathcal F}\wedge u$; it
  follows that $A\in\abs{\mathcal F}\wedge u$ and, therefore,
  $\mathcal F\subseteq\abs{\mathcal F}\wedge u$.

  Moreover, since
  $\abs{B}\wedge v\subseteq\abs{B}\wedge u\subseteq\abs{A}\wedge u$
  and $\abs{B}\wedge v\in\abs{\mathcal F}\wedge v$, it follows that
  $\abs{A}\wedge u\in \abs{\mathcal F}\wedge v$ and, therefore,
  $\abs{\mathcal F}\wedge u\subseteq\abs{\mathcal F}\wedge v$.
\end{proof}

A locally solid convergence is said to be
\term{unbounded} if it equals its unbounded modification or,
equivalently, if it is the unbounded modification of some locally
solid convergence. Obviously, uo and un convergences are unbounded.

Let $X$ be a convergence space. We claim that the continuous
convergence on $C(X)$ is unbounded. In fact, a more general statement
is true:

\begin{proposition}
  Let $X$ be a convergence space and $Y$ an unbounded locally solid
  convergence vector lattice. Then continuous convergence on
  $C(X,Y)$ is unbounded.
\end{proposition}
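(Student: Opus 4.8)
The plan is to prove the single nontrivial inclusion. Let $\eta$ denote the convergence on $Y$, so by hypothesis $\mathrm{u}\eta=\eta$, and let $\to$ denote continuous convergence on $C(X,Y)$. Since the unbounded modification of any locally solid convergence is weaker than the convergence itself, continuous convergence always dominates its unbounded modification; hence to see that the two coincide it suffices to show that every net that converges to $0$ in the unbounded modification already converges to $0$ continuously. Unravelling the definition of the unbounded modification on the vector lattice $C(X,Y)$, the assumption is precisely that $\abs{f_\alpha}\wedge g\to 0$ continuously for every $g\in C(X,Y)_+$, where the lattice operations are taken pointwise; the goal is to deduce $f_\alpha\to 0$ continuously.

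First I would fix an arbitrary convergent net $x_\gamma\to x$ in $X$ and aim to show that the double net $\bigl(f_\alpha(x_\gamma)\bigr)_{(\alpha,\gamma)}$ converges to $0$ in $Y$, which is exactly what $f_\alpha\to 0$ demands. The key device is to test the hypothesis against constant functions. For each $u\in Y_+$, let $g_u$ be the constant function with value $u$. Constant functions are continuous on any convergence space (constant nets converge), and $g_u\ge 0$, so $g_u\in C(X,Y)_+$; moreover $\abs{f_\alpha}\wedge g_u$ is the function $z\mapsto\abs{f_\alpha(z)}\wedge u$, which is continuous because lattice operations in $Y$ are continuous (Proposition~\ref{op-cont}). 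The hypothesis then gives $\abs{f_\alpha}\wedge g_u\to 0$ continuously, and evaluating this along the fixed net $x_\gamma\to x$ yields $\abs{f_\alpha(x_\gamma)}\wedge u\goeseta 0$ as a double net in $Y$.

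Since $u\in Y_+$ was arbitrary, the double net $\bigl(f_\alpha(x_\gamma)\bigr)_{(\alpha,\gamma)}$, regarded as a single net in $Y$ indexed by the product directed set, satisfies $\abs{\,\cdot\,}\wedge u\goeseta 0$ for every $u\in Y_+$; by definition this says it $\goesueta$-converges to $0$. As $Y$ is unbounded, $\goesueta$ coincides with $\goeseta$, so $f_\alpha(x_\gamma)\goeseta 0$. Because the net $x_\gamma\to x$ was arbitrary, $f_\alpha\to 0$ continuously, which is the required inclusion.

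I expect the only genuine care to be in the double-net bookkeeping: one must hold the index $(\alpha,\gamma)$ fixed while passing from ``the function $\abs{f_\alpha}\wedge g_u$ converges continuously'' to ``the $Y$-valued double net $\abs{f_\alpha(x_\gamma)}\wedge u$ converges to $0$'', and then recognise that the collection of these statements over all $u\in Y_+$ is exactly the assertion that the single double net $\bigl(f_\alpha(x_\gamma)\bigr)$ converges to $0$ in $\mathrm{u}\eta$. Everything else---continuity of constant functions, preservation of continuity under pointwise lattice operations, and the automatic comparison between a convergence and its unbounded modification---is routine.
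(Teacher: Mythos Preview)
Your proof is correct and follows essentially the same approach as the paper: test the unbounded-convergence hypothesis on $C(X,Y)$ against constant functions $g_u$ with $u\in Y_+$, evaluate along an arbitrary convergent net $x_\gamma\to x$ in $X$ to obtain $\abs{f_\alpha(x_\gamma)}\wedge u\to 0$ in $Y$, and then invoke that $Y$ is unbounded. The only difference is cosmetic---you fix $x_\gamma\to x$ before ranging over $u$, while the paper fixes $y\in Y_+$ first---but the argument is the same.
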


\begin{proof}
  Suppose that $f_\alpha\goesuc 0$ in $C(X,Y)$; we need to show that
  $f_\alpha\goesc 0$. Fix $y\in Y_+$ and let $g\in C(X,Y)$ be the
  constant $y$ function. Then $\abs{f_\alpha}\wedge g\goesc
  0$. Suppose that $x_\gamma\to x$ in~$X$. Then
  $\abs{f_\alpha(x_\gamma)}\wedge y\to 0$. Since $y$ is
  arbitrary and the convergence in $Y$ is unbounded, we conclude that
  $f_\alpha(x_\gamma)\to 0$ and, therefore, $f_\alpha\goesc 0$.
\end{proof}

We defined a locally solid convergence $\eta$ to be unbounded if
$\mathrm{u}\eta=\eta$.
More generally, given a locally solid convergence~$\eta$,
one can ask when $\mathrm{u}_I\eta=\eta$ for
some ideal~$I$. In Section~\ref{sec:u-ideal}, we will provide a
partial answer to this question (Proposition~\ref{uI-unbdd}), as well
as some additional properties of unbounded convergence structures.

\subsection*{Some properties of uo-convergence}
Recall that a sublattice $Y$ of $X$ is regular iff it is a subspace of
$X$ in the sense of the uo-convergence structure, that is,
$x_\alpha\goesuo x$ in $Y$ iff $x_\alpha\goesuo x$ in $X$ for every
net $(x_\alpha)$ in $Y$ and every $x\in Y$; see
\cite[Theorem~7.5]{Papangelou:64} or
\cite[Theorem~3.2]{Gao:17}.

If $X$ is universally complete then
order convergence and unbounded order convergence agree on sequences
in~$X$, see, e.g., Theorem~3.2 in~\cite{Kaplan:97}. However, this
fails for nets, even for double sequences:

\begin{theorem}[\cite{Taylor:TH}]\label{uo-o-fin-dim}
  Uo-convergence and order convergence agree
  iff $X$ is finite-dimensional.
\end{theorem}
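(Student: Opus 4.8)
The plan is to prove both implications, using throughout that order convergence $\goeso$ and its unbounded modification $\goesuo$ are both locally solid with $\mathrm{uo}\le\mathrm{o}$; hence, by local solidity, the two convergences agree as convergence structures exactly when every $\mathrm{uo}$-null net is order null. Since $X$ is Archimedean, $\goeso$ is Hausdorff, and therefore so is $\goesuo$ (as recalled above). For the easy direction, suppose $\dim X<\infty$, so that $X$ is lattice isomorphic to $\mathbb R^n$ with the coordinatewise order. In $\mathbb R^n$ a net is order null iff it is coordinatewise null: domination by some $p_\gamma\downarrow 0$ forces coordinatewise convergence, and conversely a coordinatewise null net is eventually order bounded (only finitely many coordinates), so $p_\gamma:=\sup_{\alpha\ge\gamma}\abs{x_\alpha}$ satisfies $p_\gamma\downarrow 0$ and witnesses order nullity. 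Now if $x_\alpha\goesuo 0$, then meeting with the strong unit $e=(1,\dots,1)$ gives $\abs{x_\alpha}\wedge e\goeso 0$, i.e.\ $\min(\abs{x_\alpha^{(i)}},1)\to 0$, whence $x_\alpha^{(i)}\to 0$ for each $i$; thus $x_\alpha\goeso 0$, and the two convergences coincide.

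For the hard direction I prove the contrapositive: if $\dim X=\infty$, then $\mathrm{uo}\ne\mathrm{o}$. The first step is the structural fact that an infinite-dimensional Archimedean vector lattice contains an infinite disjoint sequence $(e_m)$ of nonzero positive elements. This is classical, but the correct mechanism is repeated disjoint \emph{decomposition} rather than enlarging a maximal disjoint family: for instance, in $C[0,1]$ the single function $\mathbf 1$ already forms a maximal disjoint family, yet bump functions on disjoint intervals produce an infinite disjoint sequence. Given such $(e_m)$, I consider the double sequence $x_{m,n}=n e_m$ indexed by $\mathbb N\times\mathbb N$ with the product order.

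To verify $x_{m,n}\goesuo 0$, I relocate the computation to the ideal $I$ generated by $\{e_m\mid m\in\mathbb N\}$, which is a regular sublattice of $X$; by the characterization of regularity recalled just before the statement, $\mathrm{uo}$-convergence of nets in $I$ is inherited from $X$ and conversely. Every $u\in I_+$ satisfies $0\le u\le\sum_{m\in F}\lambda_m e_m$ for some finite $F$, so $u\perp e_j$ and hence $(ne_j)\wedge u=0$ for all $j\notin F$; thus on the tail $\{(m,n)\mid m>\max F\}$ the net $\abs{x_{m,n}}\wedge u$ is identically zero, so $\abs{x_{m,n}}\wedge u\goeso 0$ in $I$. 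Therefore $x_{m,n}\goesuo 0$ in $I$, and by regularity $x_{m,n}\goesuo 0$ in $X$. On the other hand $x_{m,n}\not\goeso 0$: order nullity would force eventual order boundedness, say $n e_m\le v$ for all $m\ge M$ and $n\ge N$; taking $m=M$ gives $e_M\le\tfrac1n v$ for every $n\ge N$, whence $e_M\le 0$ by the Archimedean property, contradicting $e_M>0$. Since $\goesuo$ is Hausdorff, any order limit of $(x_{m,n})$ would equal its $\mathrm{uo}$-limit $0$, so $(x_{m,n})$ is not order convergent at all. Hence $\mathrm{uo}\ne\mathrm{o}$.

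The main obstacle is twofold. Conceptually, one must recognize that a single disjoint sequence does not suffice: in spaces such as $\mathbb R^{\mathbb N}$ or $c$ every disjoint sequence is order bounded, hence order null, which forces the passage to the double sequence $n e_m$ whose extra parameter supplies order-unboundedness while the meets $\abs{\,\cdot\,}\wedge u$ still vanish. Technically, establishing $\mathrm{uo}$-nullity directly in a vector lattice that is not Dedekind complete is awkward, since the natural dominating suprema $\sup_n (ne_m)\wedge u$ need not exist; the device that removes this difficulty is exactly the relocation into the regular ideal $I$, where every positive element has ``finite support'' relative to $(e_m)$ so that the meets become eventually zero. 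The remaining delicate point is a careful proof of the structural lemma guaranteeing the infinite disjoint sequence in the first place.
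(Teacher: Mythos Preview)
Your proof is correct and shares with the paper the same core device: the double sequence $x_{m,n}=ne_m$ built from an infinite disjoint sequence, which is $\mathrm{uo}$-null but has no order bounded tail. The finite-dimensional direction and the ``not order convergent'' argument are essentially identical to the paper's.

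Where you genuinely diverge is in the verification of $\mathrm{uo}$-nullity. The paper passes to the Dedekind completion $X^\delta$ (using regularity of $X$ in $X^\delta$) and, for an arbitrary $u\ge 0$, dominates $y_{n,m}\wedge u$ by the band projections $P_{x_n}u=\sup_m(u\wedge mx_n)$; these form a disjoint sequence in $[0,u]$, hence are order null. You instead pass to the ideal $I$ generated by $\{e_m\}$ (using regularity of ideals) and observe that every $u\in I_+$ is dominated by a finite sum $\sum_{m\in F}\lambda_m e_m$, so the meets $ne_m\wedge u$ vanish identically once $m>\max F$. Your route is more elementary---no order completion, no band projections, no appeal to ``disjoint order bounded implies order null''---at the cost of restricting the test vectors $u$ to $I$ and invoking regularity to transfer the conclusion back to $X$. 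The paper's route handles every $u\in X_+$ directly (after completion) and exhibits an explicit dominating sequence, which is conceptually cleaner when one later wants quantitative control. Both are valid; yours is shorter.
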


\begin{proof}
  It is easy to see that if $X$ is
  finite-dimensional then the two convergences agree. Suppose that $X$
  is infinite-dimensional. Let $(x_n)$ be a disjoint sequence of
  non-zero vectors. Define a double sequence $y_{n,m}=mx_n$. Since $X$
  is Archimedean, no tail of this net is order bounded, hence it fails
  to converge in order. We claim that it is uo-null.

  Replacing $X$ with~$X^\delta$, we may assume without loss of
  generality that $X$ is order complete.  Fix $u\in X_+$. For
  every~$n$, consider the band projection of $u$ onto the band
  $B_{x_n}$; it is given by $P_{x_n}u=\sup_m(u\wedge mx_n)$.  The
  sequence $(P_{x_n}u)$ is disjoint and order bounded by~$u$, hence
  $P_{x_n}u\goeso 0$. On the other hand, $y_{n,m}\wedge u\le P_{x_n}u$
  for all $n$ and~$m$; it follows that $y_{n,m}\wedge u\goeso 0$ and,
  therefore, $y_{n,m}\goesuo 0$.
\end{proof}

It follows that if $X$ is infinite-dimensional then order convergence
is not unbounded. It was observed in \cite[Theorem~2.3]{Kandic:17}
that the norm topology on a Banach lattice $X$ is unbounded iff $X$
has a strong unit.

Let $X$ be a discrete vector lattice.  For every atom $a$ in~$X_+$,
consider the corresponding band projection~$P_a$. A net $(x_\alpha)$
converges to zero \term{component-wise} (or \term{point-wise}) if
$(P_ax_\alpha)$ converges to zero for every atom $a$ (in the
one-dimensional subspace spanned by $a$). We write $x_\alpha\goesp x$
if $x_\alpha-x$ converges to zero coordinate-wise. It is easy to see
that this is a Hausdorff locally solid convergence.  By
\cite[Theorem~1.78]{Aliprantis:03}, one may view $X$ as an order dense
(hence regular) sublattice of~$\mathbb R^A$, where $A$ is a maximal
disjoint collection of atoms in~$X$. Then coordinate-wise
convergence on $X$ agrees with point-wise convergence in~$\mathbb R^A$.

\begin{proposition}\label{discr-uo-p}
  In a discrete vector lattice, uo-convergence agrees with coordinate-wise
  convergence.
\end{proposition}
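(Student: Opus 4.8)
The plan is to reduce the whole statement to the case $X=\mathbb R^A$ and then verify it there by hand. Since both uo-convergence and coordinate-wise convergence are linear, it suffices to treat limits at~$0$: one has $x_\alpha\goesuo x$ iff $x_\alpha-x\goesuo 0$, and similarly for~$\goesp$. Now recall from the discussion preceding the proposition that $X$ sits as an order dense, hence regular, sublattice of~$\mathbb R^A$, and that coordinate-wise convergence on~$X$ is exactly the restriction of point-wise convergence in~$\mathbb R^A$. Regularity, in the form cited from \cite{Papangelou:64,Gao:17}, means precisely that $x_\alpha\goesuo 0$ in $X$ iff $x_\alpha\goesuo 0$ in~$\mathbb R^A$ for every net $(x_\alpha)$ in~$X$. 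Combining these two facts, the proposition for~$X$ follows once I establish it for $\mathbb R^A$, that is, once I show that in $\mathbb R^A$ uo-convergence to~$0$ coincides with coordinate-wise convergence to~$0$.

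For the implication $\goesuo\Rightarrow\goesp$ in~$\mathbb R^A$, I would fix a coordinate $a\in A$ with atom~$e_a$ and note that $\abs{x_\alpha}\wedge e_a=\bigl(\abs{x_\alpha(a)}\wedge 1\bigr)e_a$ lies in the one-dimensional band $\mathbb R e_a$. Applying the order continuous band projection $P_a$ (equivalently, reading off the $a$-th coordinate) to the order-null net $\abs{x_\alpha}\wedge e_a\goeso 0$ gives $\abs{x_\alpha(a)}\wedge 1\to 0$ in~$\mathbb R$, hence $x_\alpha(a)\to 0$; since $a$ is arbitrary this is coordinate-wise convergence.

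For the reverse implication $\goesp\Rightarrow\goesuo$, fix $u\in(\mathbb R^A)_+$ and set $z_\alpha=\abs{x_\alpha}\wedge u$, so that $0\le z_\alpha\le u$ and, coordinate-wise, $z_\alpha(a)=\abs{x_\alpha(a)}\wedge u(a)\to 0$ for every~$a$. Using that $\mathbb R^A$ is order complete and that suprema and infima of order-bounded families are computed coordinate-wise, I would form $w_\gamma=\sup_{\alpha\ge\gamma}z_\alpha\downarrow$ and compute $\inf_\gamma w_\gamma(a)=\limsup_\alpha z_\alpha(a)=0$ for each~$a$, whence $w_\gamma\downarrow 0$. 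Since $z_\alpha\le w_\gamma$ whenever $\alpha\ge\gamma$, this witnesses $z_\alpha\goeso 0$, i.e. $\abs{x_\alpha}\wedge u\goeso 0$; as $u$ was arbitrary, $x_\alpha\goesuo 0$.

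The routine steps are the translation invariance and the two coordinate-wise computations of the lattice operations; the one place demanding care is the reverse implication, where I must justify that order convergence in~$\mathbb R^A$ is detected by the coordinate-wise $\limsup$, which rests on the order completeness of~$\mathbb R^A$ together with the coordinate-wise evaluation of suprema and infima. The cleanest packaging of the argument is the reduction to $\mathbb R^A$ via regularity, so that the genuinely analytic content is confined to this universally complete model.
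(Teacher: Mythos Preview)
Your proof is correct and follows essentially the same approach as the paper: reduce to $\mathbb R^A$ via regularity, then verify both implications there by coordinate-wise computation, using in the reverse direction the decreasing net $w_\gamma=\sup_{\alpha\ge\gamma}(\abs{x_\alpha}\wedge u)$ and checking $w_\gamma\downarrow 0$ coordinate by coordinate. The only cosmetic difference is that the paper first replaces $(x_\alpha)$ by a positive net, whereas you carry $\abs{x_\alpha}$ throughout.
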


\begin{proof}
  Let $X$ be a discrete vector lattice. We view $X$ as an order dense
  (hence regular) sublattice of~$\mathbb R^A$, where $A$ is a maximal
  disjoint collection of atoms in~$X$. Therefore, it suffices to show
  that uo-convergence agrees with point-wise convergence
  in~$\mathbb R^A$. Let $(x_\alpha)$ be a net in~$\mathbb R^A$; it
  suffices to show that $x_\alpha\goesuo x$ in $\mathbb R^A$ iff
  $(x_\alpha)$ converges to $x$ point-wise. We may assume without loss
  of generality that $x=0$ and $x_\alpha\ge 0$ for all~$\alpha$.

  Suppose that $x_\alpha\goesuo 0$. Fix $a\in A$. Then
  $x_\alpha\wedge\one_{\{a\}}\goeso 0$. However,
  $x_\alpha\wedge\one_{\{a\}}$ is a scalar multiple of~$\one_{\{a\}}$;
  namely, $x_\alpha\wedge\one_{\{a\}}=\bigl(x_\alpha(a)\wedge
  1\bigr)\one_{\{a\}}$. It follows that $x_\alpha(a)\wedge
  1\to 0$, hence $x_\alpha(a)\to 0$.

  Suppose now that $(x_\alpha)$ converges to $0$ point-wise. Fix
  $0\le u\in\mathbb R^A$. We need to show that
  $x_\alpha\wedge u\goeso 0$. It suffices to show that
  $v_\alpha\downarrow 0$, where
  $v_\alpha=\sup_{\beta\ge\alpha}(x_\beta\wedge u)$. This follows
  immediately from the observation that $v_\alpha(a)\downarrow 0$ for
  every $a\in A$.
\end{proof}

Recall that given a convergence vector space $(X,\lambda)$, the
topology given by the collection of all $\lambda$-closed sets is
called the \term{topological modification} of~$\lambda$; we will
denote this topology by $\mathrm{t}(\lambda)$ or
just~$\mathrm{t}\lambda$.  See, for instance Definitions~1.3.1
and~1.3.8 in~\cite{Beattie:02} and \cite[page 249 \& Proposition
3.3]{OBrien:23}.  The topological modification of a locally solid
convergence need not be locally solid: it was shown in Example~10.7
in~\cite{OBrien:23} that $\mathrm{t}(\mathrm{o})$ need not even be
linear.

\begin{proposition}\label{uo-top-un}
  Let $X$ be a vector lattice with the countable supremum property and
  $\tau$ a Hausdorff locally solid order continuous 
  topology on~$X$. Then 
  \begin{math}
    \mathrm{t}(\mathrm{uo})=\mathrm{u}\tau.
  \end{math}
\end{proposition}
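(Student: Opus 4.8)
The plan is to prove that $\mathrm t(\mathrm{uo})$ and $\mathrm u\tau$ coincide by showing that they have exactly the same closed sets. Both are genuine topologies: $\mathrm t(\mathrm{uo})$ by the definition of the topological modification, and $\mathrm u\tau$ because the unbounded modification of a locally solid topology is again topological, as noted at the start of Section~\ref{sec:unbdd}. Since the $\mathrm t(\mathrm{uo})$-closed sets are precisely the $\mathrm{uo}$-closed sets, the task becomes: a set is $\mathrm{uo}$-closed if and only if it is $\mathrm u\tau$-closed.

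For the easy inclusion I would first record that the unbounded modification is monotone, i.e.\ $\eta_1\le\eta_2$ implies $\mathrm u\eta_1\le\mathrm u\eta_2$, directly from the defining condition that $\abs{x_\alpha-x}\wedge u\goeseta 0$ for every $u\in X_+$. As $\tau$ is order continuous we have $\tau\le\mathrm o$, hence $\mathrm u\tau\le\mathrm{uo}$. A weaker convergence has fewer closed sets, so every $\mathrm u\tau$-closed set is $\mathrm{uo}$-closed; equivalently $\mathrm u\tau\le\mathrm t(\mathrm{uo})$. This direction uses neither the countable sup property nor full order continuity in any essential way.

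The substance is the reverse inclusion: every $\mathrm{uo}$-closed set $A$ is $\mathrm u\tau$-closed. Since translations are $\mathrm{uo}$-homeomorphisms (so $\mathrm{uo}$-closedness is translation invariant) and $\mathrm u\tau$ is topological, it suffices to show that $0\in A$ whenever $0$ lies in the $\mathrm u\tau$-closure of $A$; and as $A$ is $\mathrm{uo}$-closed it is enough to put $0$ into the $\mathrm{uo}$-adherence of $A$. The idea is to extract from $A$ an almost disjoint sequence of genuine elements. Using that every $\mathrm u\tau$-neighbourhood of $0$ meets $A$, together with the description of such neighbourhoods coming from Proposition~\ref{filter-u}, I would choose inductively $a_k\in A$ so that, writing $v_k=\abs{a_1}\vee\dots\vee\abs{a_k}$, the overlaps $\abs{a_{k+1}}\wedge v_k$ lie in prescribed small solid $\tau$-neighbourhoods $V_k$. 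Splitting each $a_k$ into a disjoint part plus a remainder dominated by the overlap $\abs{a_k}\wedge v_{k-1}$, the disjoint parts form a disjoint sequence, which is automatically $\mathrm{uo}$-null; hence $(a_k)$ is $\mathrm{uo}$-null as soon as the remainders are. For each fixed $u\in X_+$ the truncated remainder is order bounded by $u$, so $\mathrm{uo}$-nullity of the remainders is an order condition on order bounded pieces, and here both hypotheses enter: order continuity turns $\tau$-smallness of the overlaps into order smallness on these intervals, and the countable sup property (which gives $\mathrm o=\sigma\mathrm o$) lets this smallness be witnessed sequentially, so that the single sequence $(a_k)$ of genuine elements of $A$ is $\mathrm{uo}$-null. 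Its terms lie in $A$, whence $0\in\overline A^{1,\mathrm{uo}}\subseteq A$.

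The main obstacle is exactly this extraction: converting $\mathrm u\tau$-convergence — a topological, ``for all $u$'' condition that need not be first countable — into $\mathrm{uo}$-convergence of a sequence built from genuine elements of $A$ (and one may have to allow the iterated adherence $\overline A^{2,\mathrm{uo}}$ rather than $\overline A^{1,\mathrm{uo}}$). The delicate point is that order continuity only yields the implication order\,$\Rightarrow\tau$, whereas the argument must run this backwards on order bounded sets; such a reversal fails in general and is precisely what the countable sup property repairs, by forcing all the relevant order controls to be arrangeable along one sequence. This is why that assumption is indispensable and why the construction is organized sequentially.
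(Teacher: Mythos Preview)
Your overall plan---compare closed sets, obtain $\mathrm u\tau\le\mathrm{uo}$ from monotonicity of the unbounded modification, and for the reverse direction extract from $A$ a sequence that $\mathrm{uo}$-converges to the target point---matches the paper's structure. The paper, however, does not carry out the extraction itself: it invokes \cite[Theorem~5.6]{Deng:23}, which guarantees that under the stated hypotheses every $\mathrm u\tau$-convergent net contains, among its terms, a sequence $\mathrm{uo}$-converging to the same limit. That single citation dispatches the hard direction.

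Your attempt to perform the extraction by hand has two concrete gaps. First, the ``disjoint part plus remainder'' split does not produce a pairwise disjoint sequence in a general vector lattice: with $d_k=(\abs{a_k}-v_{k-1})^+$ one already has $d_k\wedge v_{k-1}\ne 0$ in $\mathbb R$ (take $\abs{a_k}=2$, $v_{k-1}=1$), so the $d_k$ need not be disjoint from the earlier $d_j\le v_{k-1}$, and without order completeness there is no band projection to peel off a genuinely disjoint piece. Second, the key assertion that ``order continuity turns $\tau$-smallness of the overlaps into order smallness on these intervals'' runs the implication the wrong way and is false even in the presence of the countable sup property: in $L_1[0,1]$ the typewriter sequence is norm-null and contained in $[0,\one]$ yet not order null. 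Your inductive construction only forces the overlaps into prescribed $\tau$-neighbourhoods, and that alone cannot make $(a_k)$ $\mathrm{uo}$-null; the role of the countable sup property in the actual extraction (as in \cite{Deng:23}) is different from what you outline, and your proposal does not identify it.
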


\begin{proof}
  It suffices to prove that uo-closed and $\mathrm{u}\tau$-closed sets
  agree. Since $\tau$ is an order continuous topology,
  $\tau\le \textrm{o}$, and so $\mathrm{u}\tau\le\mathrm{uo}$. It
  follows that every u$\tau$-closed set is uo-closed. To prove the
  converse, suppose that $A$ is uo-closed, $(x_\alpha)$ is a net in
  $A$ and $x_\alpha\xrightarrow{\textrm{u}\tau} x$. We need to show
  that $x\in A$. By \cite[Theorem~5.6]{Deng:23}, there exists a
  sequence $(\alpha_k)$ of indices such that
  $x_{\alpha_k}\xrightarrow{\mathrm{uo}} x$. It follows that $x\in A$.
\end{proof}

\begin{corollary}
  The topological modification of $\mathrm{uo}$-convergence on an order
  continuous Banach lattice is the $\mathrm{un}$-topology.
\end{corollary}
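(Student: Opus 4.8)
The plan is to obtain this corollary as the special case of Proposition~\ref{uo-top-un} in which $\tau$ is the norm topology of the order continuous Banach lattice~$X$. Thus the whole argument reduces to two tasks: verifying that the norm topology satisfies the hypotheses of that proposition, and identifying its unbounded modification $\mathrm{u}\tau$ with the $\mathrm{un}$-topology.

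First I would check the hypotheses on $\tau$. The norm topology of any Banach lattice is a Hausdorff linear topology, and it is locally solid since the norm is a lattice norm, so the solid hulls of the balls form a base of solid neighbourhoods of zero. By definition, $X$ being order continuous means that $x_\alpha\downarrow 0$ implies $\norm{x_\alpha}\to 0$; via Theorem~\ref{ocn-mon} this is precisely the assertion that $\tau$ is order continuous as a locally solid convergence, i.e.\ $\tau\le\mathrm{o}$. The only hypothesis that is not an immediate check is the countable supremum property, but this is a standard feature of order continuous Banach lattices (they are super Dedekind complete), which I would cite from~\cite{Aliprantis:03}. Indeed it follows quickly from order continuity: if $0\le x_\alpha\uparrow x$ then $\norm{x-x_\alpha}\to 0$, and choosing an increasing sequence of indices along which the norm tends to zero produces a countable subfamily with the same supremum.

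With the hypotheses verified, Proposition~\ref{uo-top-un} yields $\mathrm{t}(\mathrm{uo})=\mathrm{u}\tau$. It then remains only to recall that $\mathrm{u}\tau$, the unbounded modification of the norm topology, is exactly the $\mathrm{un}$-topology: $x_\alpha\goesun 0$ means $\bigabs{x_\alpha}\wedge u\goesnorm 0$ for every $u\in X_+$, and, as noted earlier in this section, the unbounded modification of a locally solid topology is again topological. This gives the claimed equality.

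I do not expect a serious obstacle here: the content is entirely carried by Proposition~\ref{uo-top-un}, and the present statement is a matter of recognizing the norm topology as an instance of the order continuous topology~$\tau$ appearing there. The one point that needs an external reference rather than a one-line verification is the countable supremum property of order continuous Banach lattices.
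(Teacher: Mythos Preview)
Your proposal is correct and matches the paper's approach exactly: the corollary is stated in the paper without proof, as an immediate specialization of Proposition~\ref{uo-top-un} with $\tau$ the norm topology, and your verification of the hypotheses (including the countable sup property via order continuity) is the natural way to fill in the details.
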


\begin{remark}\label{uo-top-un-B}
  Under the assumption of Proposition~\ref{uo-top-un}, we may restrict
  both convergences in the proposition to any subset $B$ of~$X$, that is,
  \begin{math}
    \mathrm{t}\bigl((\mathrm{uo})_{|B}\bigr)=(\mathrm{u}\tau)_{|B}.
  \end{math}
  The same proof works.
\end{remark}

% \begin{proof}
%   It suffices to prove that uo-closed and un-closed sets agree. Since
%   $\mathrm{uo}\ge\mathrm{un}$, it is clear that every un-closed set is
%   uo-closed. To prove the converse, suppose that $A$ is uo-closed,
%   $(x_\alpha)$ is a net in $A$ and $x_\alpha\goesun x$; we need to
%   show that $x\in A$. We may assume WLOG that $x=0$. By
%   \cite[Lemma~3.5]{Deng:17}, there exists sequence $(\alpha_k)$ of
%   indices such that $x_{\alpha_k}\goesuo 0$. It follows that $0\in A$.
% \end{proof}

\section{Relative uniform convergence}
\label{sec:ru}

Relative uniform convergence (or just uniform convergence) in an
(Archimedean) vector lattice $X$ was discussed
in~\cite{OBrien:23}. Recall that a net $(x_\alpha)$ \term{converges
  relatively uniformly} to~$x$, written $x_\alpha\goesu x$, if there
exists $e\in X_+$, called a \term{regulator} of convergence, such that
for every $\varepsilon>0$ there exists an index $\alpha_0$ suh that
$\abs{x_\alpha-x}<\varepsilon e$ whenever
$\alpha\ge\alpha_0$. Equivalently, for every $n\in\mathbb N$ there
exists an index $\alpha_0$ such that $\abs{x_\alpha-x}<\frac1n e$
whenever $\alpha\ge\alpha_0$.%
\footnote{Replacing the condition $\abs{x_\alpha-x}<\frac1n e$ with
  \begin{math}
    -\frac1n e\le x_\alpha-x\le\frac1n e,
  \end{math}
  one can extend the concept of relative uniform convergence to
  ordered vector spaces.}
The latter observation
suggests the sequential nature of relative uniform convergence. In the
filter language, it is easy to see that $\mathcal F\goesu 0$ whenever
there exists $e\in X_+$ such that
$[-\varepsilon e,\varepsilon e]\in\mathcal F$ for every
$\varepsilon>0$; equivalently, $[-\frac1n e,\frac1n e]\in\mathcal F$
for every $n\in\mathbb N$.

\begin{proposition}\label{u-strongest}
  Relative uniform convergence is the strongest locally solid
  convergence.
\end{proposition}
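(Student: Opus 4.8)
The claim is that relative uniform convergence $\goesu$ is the strongest locally solid convergence on $X$. This is a two-part assertion: first, that $\goesu$ is itself a locally solid convergence structure, and second, that any other locally solid convergence $\eta$ satisfies $\eta \le {}$ru, i.e., $x_\alpha \goesu x$ implies $x_\alpha \goeseta x$. Since the first part is essentially the content of the preceding discussion (and can be checked against Theorem~\ref{plsc}), I expect the real work to be in the second part, which is where I would concentrate.

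The plan for the second part is to reduce to positive nets converging to zero, as is standard for locally solid convergences: it suffices to show that if $x_\alpha \goesu 0$ then $x_\alpha \goeseta 0$ for an arbitrary locally solid $\eta$. So suppose $x_\alpha \goesu 0$ with regulator $e \in X_+$; then for each $n \in \mathbb{N}$ there is an index $\alpha_n$ with $\abs{x_\alpha} \le \tfrac1n e$ for all $\alpha \ge \alpha_n$. The key observation is that $\tfrac1n e \goeseta 0$ as $n \to \infty$: this is precisely axiom~\eqref{plsc-1n} of Theorem~\ref{plsc}, which every locally solid convergence satisfies (constant-net scalar multiples $\tfrac1n x$ of a fixed positive vector converge to $0$). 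I would then invoke the domination criterion Theorem~\ref{def-ls}\eqref{def-ls-net-dom} (equivalently Lemma~\ref{net-dom}) with the two nets $(x_\alpha)$ and $(\tfrac1n e)_{n \in \mathbb{N}}$: given any $n_0$, taking $\alpha_0 = \alpha_{n_0}$, every $\alpha \ge \alpha_0$ satisfies $\abs{x_\alpha} \le \tfrac{1}{n_0} e = \bigl\lvert \tfrac{1}{n_0} e \bigr\rvert$, so the domination hypothesis holds and we conclude $x_\alpha \goeseta 0$.

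The main obstacle, if any, is matching the quantifier structure of relative uniform convergence to the net-domination condition~\eqref{def-ls-net-dom}. That condition allows the dominating net $(y_\gamma)$ to be indexed differently from $(x_\alpha)$, which is exactly what I need here since the natural dominating net $(\tfrac1n e)$ is indexed by $\mathbb{N}$ while $(x_\alpha)$ has an arbitrary directed index set. One must verify carefully: for every $\gamma_0 = n_0 \in \mathbb{N}$ there exists $\alpha_0$ (namely $\alpha_{n_0}$) such that for every $\alpha \ge \alpha_0$ there exists $\gamma = n \ge n_0$ with $\abs{x_\alpha} \le \tfrac1n e$ — and indeed $n = n_0$ works. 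This confirms the hypotheses of~\eqref{def-ls-net-dom} are met, and local solidity of $\eta$ does the rest. The first part of the statement, that $\goesu$ is locally solid, I would dispatch briefly by checking the four axioms of Theorem~\ref{plsc} directly: sub-net stability and domination are immediate from the regulator definition, sums combine by taking $e = e_1 + e_2$ as a common regulator, and axiom~\eqref{plsc-1n} holds since $\tfrac1n x \goesu 0$ with regulator $x$ itself.
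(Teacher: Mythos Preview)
Your proposal is correct and follows essentially the same route as the paper: reduce to $x_\alpha\goesu 0$, take a regulator $e$, observe $\tfrac1n e\goeseta 0$ by linearity of $\eta$, and conclude $x_\alpha\goeseta 0$ via the domination criterion (Lemma~\ref{net-dom}, equivalently Theorem~\ref{def-ls}\eqref{def-ls-net-dom}). The paper's proof is slightly terser and does not spell out the quantifier check or the verification that $\goesu$ is itself locally solid, but the argument is the same.
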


\begin{proof}
  We need to prove that for every locally solid convergence structure
  $\eta$ on~$X$, if $x_\alpha\goesu 0$ then $x_\alpha\goeseta 0$. For
  sequences, the proof is straightforward. For nets, it requires some
  detail. Let $e$ be a regulator for $x_\alpha\goesu 0$. For every
  $n\in\mathbb N$, there exists $\alpha_0$ such that
  $\abs{x_\alpha}\le\frac1n e$ whenever $\alpha\ge\alpha_0$. Since
  $\frac1n e\goeseta 0$, it follows from
  Lemma~\ref{net-dom} that
  $x_\alpha\goeseta 0$.
\end{proof}

Let $e\in X_+$. Recall that $x$ is in the principal ideal $I_e$ iff
$\abs{x}\le\lambda e$ for some $\lambda>0$. That is, $x\in I_e$ iff
$\norm{x}_e$ is finite, where
\begin{displaymath}
    \norm{x}_e=\inf\bigl\{\lambda\ge 0\mid\abs{x}\le\lambda e\bigr\}.
\end{displaymath}
It is easy to see that $\bigl(I_e,\norm{\cdot}_e\bigr)$ is a normed
lattice. Furthermore, $x_\alpha\goesu x$ iff
$\norm{x_\alpha-x}_e\to 0$ for some $e\in X_+$. The Krein-Kakutani
Representation Theorem asserts that the completion of
$\bigl(I_e,\norm{\cdot}_e\bigr)$ is lattice isometric to $C(K)$ for
some compact Hausdorff space~$K$, with $e$ corresponding
to~$\one$. Thus, a net $(x_\alpha)$ converges relatively uniformly in
$X$ iff it converges in the supremum norm in the appropriate $C(K)$
space.

Recall \cite[page 252]{OBrien:23} that a net $(x_\alpha)_{\alpha\in A}$ in a convergence vector space is
\term{Cauchy} if the double net $(x_\alpha-x_\beta)$ indexed by
$A^2$ converges to zero. A convergence vector space is \term{complete}
(or \term{sequentially complete}) if every Cauchy net (respectively,
sequence) is convergent.

\begin{theorem}\label{ru-compl}
  Let $X$ be a vector lattice. The following are equivalent:
  \begin{enumerate}
  \item\label{ru-compl-ru} $X$ is relatively uniformly complete, i.e.,
    complete with respect to relative uniform convergence;
  \item\label{ru-compl-Ie} $\bigl(I_e,\norm{\cdot}_e\bigr)$ is
    complete for every $e\in X_+$ (and is, therefore, lattice
    isometric to some $C(K)$ space);
  \item\label{ru-compl-sadm} $X$ admits a sequentially complete
    Hausdorff locally solid convergence structure.
  \item\label{ru-compl-adm} $X$ admits a complete
    Hausdorff locally solid convergence structure.    
  \end{enumerate}
\end{theorem}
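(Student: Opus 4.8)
The plan is to establish the cycle \eqref{ru-compl-Ie} $\Rightarrow$ \eqref{ru-compl-ru} $\Rightarrow$ \eqref{ru-compl-adm} $\Rightarrow$ \eqref{ru-compl-sadm} $\Rightarrow$ \eqref{ru-compl-Ie}, leaning throughout on two facts already available: relative uniform convergence is the strongest locally solid convergence (Proposition~\ref{u-strongest}), and in a Hausdorff locally solid convergence space every order interval is closed (Proposition~\ref{Haus}). First I would record that, because $X$ is Archimedean, $\goesu$ is itself Hausdorff: a constant net $x\in X_+\setminus\{0\}$ cannot satisfy $x\goesu 0$, since that would force $\abs{x}\le\frac1n e$ for all $n$; so by Theorem~\ref{plsc} (or Proposition~\ref{Haus}) $\goesu$ is a Hausdorff locally solid convergence. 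Granting this, \eqref{ru-compl-ru} $\Rightarrow$ \eqref{ru-compl-adm} is immediate, since $\goesu$ is then a Hausdorff locally solid convergence that is complete by hypothesis, and \eqref{ru-compl-adm} $\Rightarrow$ \eqref{ru-compl-sadm} is trivial as completeness implies sequential completeness.

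For \eqref{ru-compl-Ie} $\Rightarrow$ \eqref{ru-compl-ru} I would unwind the definition of a relatively uniformly Cauchy net. If the double net $(x_\alpha-x_\beta)$ is $\goesu$-null with regulator $e$, then beyond an $\varepsilon$-dependent threshold one has $\abs{x_\alpha-x_\beta}\le\varepsilon e$; fixing one such index $\alpha_1$ shows the tail of $(x_\alpha)$ lies in the translate $x_{\alpha_1}+I_e$ and is $\norm{\cdot}_e$-Cauchy there. Since a Cauchy net in a complete metric space converges (extract a Cauchy subsequence, then transfer its limit to the whole net), completeness of $(I_e,\norm{\cdot}_e)$ yields a $\norm{\cdot}_e$-limit, hence a $\goesu$-limit by the characterization of $\goesu$ recorded just before the theorem. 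The parenthetical $C(K)$ assertion in \eqref{ru-compl-Ie} is then only the Krein--Kakutani theorem applied to the already-complete space $(I_e,\norm{\cdot}_e)$.

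The crux is \eqref{ru-compl-sadm} $\Rightarrow$ \eqref{ru-compl-Ie}, and here the two quoted facts do the work. Let $\eta$ be a sequentially complete Hausdorff locally solid convergence on $X$, fix $e\in X_+$, and let $(z_n)$ be $\norm{\cdot}_e$-Cauchy in $I_e$. Then the double sequence $z_n-z_m$ is $\goesu$-null, and since relative uniform convergence is stronger than $\eta$ (Proposition~\ref{u-strongest}), it is $\eta$-null; thus $(z_n)$ is $\eta$-Cauchy and, by sequential completeness, $z_n\goeseta z$ for some $z\in X$. Boundedness of $(z_n)$ in $\norm{\cdot}_e$ places the $z_n$ in some order interval $[-Me,Me]$, which is $\eta$-closed by Proposition~\ref{Haus}, so $z\in I_e$; applying the same closedness to the intervals $[z_n-\varepsilon e,z_n+\varepsilon e]$ gives $\norm{z_n-z}_e\le\varepsilon$ for large $n$, i.e.\ $\norm{\cdot}_e$-convergence. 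Hence $(I_e,\norm{\cdot}_e)$ is complete. I expect this final implication to be the main obstacle, chiefly in the net/sequence bookkeeping: sequential completeness controls only sequences, so it is essential that the normed-space completeness in \eqref{ru-compl-Ie} is itself a statement about sequences, while the genuinely net-based notion in \eqref{ru-compl-ru} is reached through the equivalence with \eqref{ru-compl-Ie} proved above.
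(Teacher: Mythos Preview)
Your proof is correct and follows essentially the same cycle and arguments as the paper's proof: the implications \eqref{ru-compl-ru}$\Rightarrow$\eqref{ru-compl-adm}$\Rightarrow$\eqref{ru-compl-sadm} are handled identically, your \eqref{ru-compl-sadm}$\Rightarrow$\eqref{ru-compl-Ie} is the paper's argument verbatim (Cauchy sequence becomes $\eta$-Cauchy via Proposition~\ref{u-strongest}, then closedness of order intervals pins down the $\norm{\cdot}_e$-limit), and your \eqref{ru-compl-Ie}$\Rightarrow$\eqref{ru-compl-ru} differs only cosmetically in that you work in the translate $x_{\alpha_1}+I_e$ where the paper passes to $I_u$ with $u=\abs{x_{\alpha_0}}+e$.
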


\begin{proof}
  \eqref{ru-compl-ru}$\Rightarrow$\eqref{ru-compl-adm}$\Rightarrow$\eqref{ru-compl-sadm}  is trivial.

  \eqref{ru-compl-sadm}$\Rightarrow$\eqref{ru-compl-Ie} Let $\lambda$
  be a sequentially complete Hausdorff locally solid convergence
  structure. Let $e\in X_+$ and $(x_n)$ a Cauchy sequence in
  $\bigl(I_e,\norm{\cdot}_e\bigr)$. Then $(x_n)$ is relatively
  uniformly Cauchy in~$X$. By Proposition~\ref{u-strongest}, $(x_n)$
  is $\lambda$-Cauchy. By assumption, $x_n\goesl x$ for some $x\in X$.

  Fix $\varepsilon>0$. Since $(x_n)$ is $\norm{\cdot}_e$-Cauchy, there
  exists $n_0$ such that $\abs{x_n-x_m}\le\varepsilon e$ for every
  $n,m\ge n_0$ and, therefore, $x_n\in[x_m-\varepsilon
  e,x_m+\varepsilon e]$. Since order intervals are $\lambda$-closed,
  passing to the limit on $n$ we get
  $x\in[x_m-\varepsilon
  e,x_m+\varepsilon e]$. This yields $\norm{x_m-x}_e\le\varepsilon$. It
  follows that $(x_n)$ is $\norm{\cdot}_e$-convergent to~$x$.

  \eqref{ru-compl-Ie}$\Rightarrow$\eqref{ru-compl-ru} Suppose that
  $\bigl(I_e,\norm{\cdot}_e\bigr)$ is complete for every $e\in X_+$.
  Let $(x_\alpha)$ be a relatively uniformly Cauchy net in~$X$. That
  is, the double net $(x_\alpha-x_\beta)_{(\alpha,\beta)}$ converges
  relatively uniformly to zero. Let $e$ be a regulator of this
  convergence. There exists $\alpha_0$ such that for all
  $\alpha\ge\alpha_0$ we have $\norm{x_\alpha-x_{\alpha_0}}_e<1$. It
  follows that $\abs{x_\alpha}\le\abs{x_{\alpha_0}}+e=:u$. Hence, the
  tail $(x_\alpha)_{\alpha\ge\alpha_0}$ is contained in $I_u$ and is
  Cauchy with respect to $\norm{\cdot}_{u}$. Since
  $\bigl(I_u,\norm{\cdot}_u\bigr)$ is complete, it follows that
  $(x_\alpha)$ converges in $\bigl(I_u,\norm{\cdot}_u\bigr)$, hence the
  net converges relatively uniformly in~$X$.
 \end{proof}

% \begin{theorem}
%   A vector lattice $X$ is relatively uniformly
%   complete iff it admits a complete Hausdorff locally solid convergence
%   structure.
% \end{theorem}

% \begin{proof}
%   Let $\lambda$ be a complete Hausdorff locally solid convergence
%   structure on $X$ Let $(x_n)$ be an increasing uniformly Cauchy
%   sequence in $X$. Then $(x_n)$ is $\lambda$-Cauchy, hence
%   $x_n\goesl x$ to some $x\in X$. By Proposition~\ref{MCT},
%   $x=\sup x_n$. Thus, \cite[Lemma~39.2]{Luxemburg:71} implies that
%   $x_n\uparrow x$. Therefore, $X$ is relatively uniformly complete by
%   \cite[Theorem~39.4]{Luxemburg:71}
% \end{proof}

% \begin{remark}
%   Note that it suffices that $\lambda$ is sequentially complete, i.e.,
%   every $\lambda$-Cauchy sequences is $\lambda$-convergent.
% \end{remark}

 Note that relative uniform convergence depends on the ambient
 space. That is, given a sublattice $Y$ of~$X$, the subspace
 convergence structure induced on $Y$ by the relative uniform
 convergence on $X$ need not agree with the relative uniform
 convergence on~$Y$. It is easy to see that if $x_\alpha\goesu x$ in
 $Y$ then $x_\alpha\goesu x$ in~$X$, but the converse may fail. For
 example, the sequence $\frac1n e_n$ converges to zero relatively
 uniformly in~$\ell_\infty$, but not in~$\ell_1$. However, if $Y$ is
 majorizing in $X$ than $x_\alpha\goesu x$ in $Y$ iff
 $x_\alpha\goesu x$ in~$X$, assuming that $(x_\alpha)$ and $x$ are
 in~$Y$. It was observed in~\cite{Taylor:20} that if $Y$ is a
 relatively uniformly closed sublattice of a relatively uniformly
 complete vector lattice $X$ (in particular, if $Y$ is a norm closed
 sublattice of a Banach lattice $X$) then $x_n\goesu 0$ in $Y$ iff
 $x_n\goesu 0$ in $X$ for every sequence $(x_n)$ in~$Y$, yet this may
 fail for nets.
  
\begin{corollary}\label{c0-convs}
  Norm, order, and relative uniform convergences agree for sequences
  in~$c_0$.
\end{corollary}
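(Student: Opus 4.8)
The plan is to establish the three pairwise agreements for sequences in $c_0$ by combining the general hierarchy of locally solid convergences with the specific structure of $c_0$. Since relative uniform convergence is the strongest locally solid convergence by Proposition~\ref{u-strongest}, and both norm and order convergence are locally solid, we automatically have that relative uniform convergence implies each of the other two for sequences. The substance of the corollary is therefore the reverse implications: I must show that norm convergence and order convergence each imply relative uniform convergence for sequences in~$c_0$.

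First I would handle the norm-to-relative-uniform direction. Here $c_0$ is a Banach lattice whose norm is the supremum norm, and the key observation is that $c_0$ is relatively uniformly complete: each principal ideal $I_e$ with its $\norm{\cdot}_e$-norm is lattice isometric to a $C(K)$ space and is complete, so Theorem~\ref{ru-compl} applies. Given a sequence $x_n\goesnorm 0$ in $c_0$, I would produce a regulator by a standard gliding-hump or telescoping argument: pass to a subsequence along which $\norm{x_{n_k}}_\infty$ decreases rapidly (say $\norm{x_{n_k}}_\infty\le 4^{-k}$), set $e=\sum_k 2^{-k}\tfrac{x_{n_k}^+ + x_{n_k}^-}{\text{(norm)}}$ or more simply $e=\sup_k 2^k\abs{x_{n_k}}$, which lies in $c_0$ because the sup of these vectors still tends to zero coordinatewise and is bounded; then $\abs{x_{n_k}}\le 2^{-k}e$ exhibits relative uniform convergence of the subsequence. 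Since this works along every subsequence, the full sequence converges relatively uniformly to zero.

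Next I would treat the order-to-relative-uniform direction. If $x_n\goeso 0$ in $c_0$, there is a sequence (net) $u_m\downarrow 0$ dominating tails of $(\abs{x_n})$. In fact $c_0$ has the countable sup property and is an order continuous Banach lattice, so order convergence of sequences is quite rigid: a sequence $x_n\goeso 0$ is order bounded and dominated by some $u\in c_0^+$, i.e.\ $\abs{x_n}\le u$ for all $n$, and $\abs{x_n}\goeso 0$. I would argue that inside the principal ideal $I_u$, which is lattice isometric to some $C(K)$, order convergence of a sequence dominated by the unit forces uniform convergence, and hence $x_n\goesu 0$ with regulator $u$. Concretely, coordinatewise convergence together with uniform domination by $u\in c_0$ yields, for each $\varepsilon>0$, a finite set of coordinates outside of which $u$ (and hence every $\abs{x_n}$) is below $\varepsilon$, and on the finite set coordinatewise convergence gives an index beyond which $\abs{x_n}<\varepsilon u$; combining gives relative uniform convergence.

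The main obstacle I anticipate is extracting a single regulator $e\in c_0$ in the norm-convergent case, since relative uniform convergence demands one fixed $e$ that works for the whole sequence while norm convergence only gives decay of the supremum norm. The subsequence-and-sup construction resolves this, but one must verify carefully that the constructed $e=\sup_k 2^k\abs{x_{n_k}}$ genuinely lies in $c_0$ (its coordinates still vanish at infinity) and that the argument upgrades from the chosen subsequence to the full sequence via the standard fact that a sequence converges relatively uniformly iff every subsequence has a further relatively uniformly convergent subsequence with the common limit. I would close by noting that all three convergences, being Hausdorff and locally solid, have unique limits, so the agreement of the ``converges to zero'' relations upgrades to agreement of convergence to an arbitrary limit by translation.
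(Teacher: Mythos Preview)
Your route is different from the paper's, which runs the cycle $\mathrm{ru}\Rightarrow\mathrm{o}\Rightarrow\norm{\cdot}\Rightarrow\mathrm{ru}$: the middle implication uses that $c_0$ is order continuous, and the last one goes via the embedding $c_0\hookrightarrow\ell_\infty$ (where norm and $\mathrm{ru}$ coincide because $\one$ is a strong unit) together with the remark just preceding the corollary that, for sequences, $\mathrm{ru}$-convergence passes down to an $\mathrm{ru}$-closed sublattice. This avoids any explicit regulator construction.

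Your direct constructions have two genuine gaps. In the norm-to-$\mathrm{ru}$ step, the ``standard fact'' that a sequence $\mathrm{ru}$-converges whenever every subsequence has a further $\mathrm{ru}$-convergent subsequence is \emph{false}: $\mathrm{ru}$ is not Choquet in general, and in any Banach lattice every norm-null sequence has $\mathrm{ru}$-null subsequences, yet by~\cite{Wirth:75} norm and $\mathrm{ru}$ agree on sequences only in AM-spaces. Your regulator idea is fine, but it must be applied to the whole sequence: pick $N_k\uparrow\infty$ with $\norm{x_n}_\infty\le 4^{-k}$ for $n\ge N_k$, set $\delta_n=2^{-k}$ for $N_k\le n<N_{k+1}$, and take $e=\sup_n\delta_n^{-1}\abs{x_n}$; one checks $e\in c_0$ and $\abs{x_n}\le\delta_n e$.

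In the order-to-$\mathrm{ru}$ step, the order bound $u$ need not serve as a regulator: with $u=(1/k)_k$ and $x_n=\tfrac1n e_n$ one has $\abs{x_n}\le u$ and $x_n\goeso 0$, but $\norm{x_n}_u=1$ for all~$n$. What your argument actually establishes is $\norm{x_n}_\infty\to 0$ (outside a finite coordinate set $\abs{x_n}\le u<\varepsilon$; on the finite set use coordinatewise convergence). That is $\mathrm{o}\Rightarrow\norm{\cdot}$, which together with the repaired $\norm{\cdot}\Rightarrow\mathrm{ru}$ step closes the cycle. So your strategy can be made to work, but both halves need the indicated repairs.
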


\begin{proof}
  It is clear that relative uniform convergence implies order
  convergence. Since $c_0$ is order continuous, order convergence
  implies norm convergence. Suppose now that $x_n\goesnorm 0$ for a
  sequence $(x_n)$ in~$c_0$. Since $c_0$ is a norm closed subspace
  of~$\ell_\infty$, we have $x_n\goesnorm 0$ in~$\ell_\infty$. It
  follows that $x_n\goesu 0$ in~$\ell_\infty$. By the preceding
  discussion, we now have $x_n\goesu 0$ in~$c_0$.
\end{proof}

It was shown in~\cite{Wirth:75} that norm and relative uniform
convergence for sequences in a Banach lattice $X$ agree iff $X$ is
lattice isomorphic to an AM-space.

Every norm convergent sequence in a Banach lattice $X$ has a
relatively uniformly convergent subsequence; see, e.g., Lemma~1.1
in~\cite{Taylor:20}. It follows that relatively uniformly closed sets
in $X$ agree with norm closed sets. Hence, the topological
modification of ru in $X$ is the norm topology. This remains valid in
a locally solid Frechet lattice, as every convergent net in such a
space contains (as a subset) a sequence that is relatively uniformly convergent
to the same limit by Proposition~5.1 of~\cite{Deng:23}. It also
follows that if $X$ is an order continuous Banach lattice then order
closed sets in $X$ agree with norm closed sets and, therefore, the
topological modification of order convergence on $X$ is the norm
topology.

\subsection*{Unbounded modification of ru convergence.}
We write $\mathrm{ru}$ for the relative uniform convergence and
$\mathrm{u}(\mathrm{ru})$ for its unbounded modification.
It follows from Proposition~\ref{u-strongest} that
$\mathrm{u}(\mathrm{ru})$ is the strongest unbounded convergence on~$X$.

Recall that the norm topology on a Banach lattice $X$ is unbounded iff
$X$ has a strong unit.

\begin{proposition}\label{ru-stru}
  If $X$ has a strong unit then $\mathrm{ru}$ convergence is
  unbounded.
\end{proposition}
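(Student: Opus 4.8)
The plan is to exploit the fact that, in the presence of a strong unit $e$, relative uniform convergence is nothing but norm convergence for the unit norm $\norm{\cdot}_e$. Since $X=I_e$, any regulator $g\in X_+$ satisfies $g\le\mu e$ for some $\mu>0$, so a net is relatively uniformly null with respect to some regulator iff it is null with regulator~$e$; that is, $x_\alpha\goesu 0$ iff $\norm{x_\alpha}_e\to 0$. As $\mathrm{u}(\mathrm{ru})\le\mathrm{ru}$ always holds, it remains only to prove the reverse inequality, i.e.\ that $x_\alpha\goesuu 0$ implies $x_\alpha\goesu 0$. By definition of the unbounded modification, $x_\alpha\goesuu 0$ gives in particular $\abs{x_\alpha}\wedge e\goesu 0$, which by the reformulation above is the statement $\norm{\abs{x_\alpha}\wedge e}_e\to 0$. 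So the whole proposition reduces to showing that $\norm{\abs{x_\alpha}\wedge e}_e\to 0$ forces $\norm{x_\alpha}_e\to 0$.

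The key step is the following truncation lemma: if $\abs{x}\wedge e\le\lambda e$ for some scalar $0\le\lambda<1$, then already $\abs{x}\le\lambda e$. I would prove this by a disjointness argument. From $\abs{x}\wedge e\le\lambda e$ and the identity $e-\abs{x}\wedge e=(e-\abs{x})^+$ we obtain $(e-\abs{x})^+\ge(1-\lambda)e$. Since $(\abs{x}-e)^+=(e-\abs{x})^-$ is disjoint from $(e-\abs{x})^+$, and disjointness is preserved under positive scalar multiples, we have $(\abs{x}-e)^+\wedge(1-\lambda)^{-1}(e-\abs{x})^+=0$; as $e\le(1-\lambda)^{-1}(e-\abs{x})^+$, this yields $(\abs{x}-e)^+\wedge e=0$. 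Here is where the strong unit is essential: a strong unit is in particular a weak unit (if $\abs{z}\wedge e=0$ and $\abs{z}\le\mu e$ with $\mu\ge 1$, then $\abs{z}=\abs{z}\wedge\mu e\le\mu(\abs{z}\wedge e)=0$), so $(\abs{x}-e)^+=0$, i.e.\ $\abs{x}\le e$; combined with the hypothesis this gives $\abs{x}\le\lambda e$.

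With the lemma in hand the conclusion is immediate. Given $\varepsilon\in(0,1)$, for large $\alpha$ we have $\norm{\abs{x_\alpha}\wedge e}_e<\varepsilon$, so (using the Archimedean property to realize the infimum) $\abs{x_\alpha}\wedge e\le\norm{\abs{x_\alpha}\wedge e}_e\,e\le\lambda_\alpha e$ with $\lambda_\alpha<\varepsilon<1$; the lemma then gives $\abs{x_\alpha}\le\lambda_\alpha e$ and hence $\norm{x_\alpha}_e<\varepsilon$. Thus $\norm{x_\alpha}_e\to 0$ and $x_\alpha\goesu 0$. I expect the truncation lemma to be the only real obstacle: it is the precise point where capping a vector at the unit level turns out to be lossless below threshold~$1$, and it is exactly the weak-unit property of $e$ (a consequence of its being a strong unit) that rules out the ``mass escaping past the unit'' phenomenon which makes unbounded convergence strictly weaker in general. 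As an alternative to the lattice computation, one could run the same argument through the Krein--Kakutani representation, where $\abs{x}\wedge e$ becomes $\min\bigl(\abs{\widehat x},\one\bigr)$ and the identity $\norm{\min(\abs{f},\one)}_\infty=\min(\norm{f}_\infty,1)$ makes the truncation lemma transparent.
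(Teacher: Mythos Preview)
Your proof is correct and follows essentially the same route as the paper: both reduce to the observation that $\abs{x_\alpha}\wedge e\goesu 0$ with regulator~$e$, so that eventually $\abs{x_\alpha}\wedge e\le\lambda e$ with $\lambda<1$, which forces $\abs{x_\alpha}\le e$ and hence $\abs{x_\alpha}=\abs{x_\alpha}\wedge e\goesu 0$. The only difference is that the paper simply asserts the implication ``$x_\alpha\wedge e\le\tfrac12 e$ implies $x_\alpha\le e$'' without proof, while you supply a careful disjointness argument for the truncation lemma (and note the Krein--Kakutani alternative); your added detail is correct but not a genuinely different approach.
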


\begin{proof}
  Let $e$ be a strong unit in~$X$.
  Suppose that $(x_\alpha)$ is a net in $X_+$ such that
  $x_\alpha\wedge u\goesu 0$ for every $u\in X_+$. In particular,
  $x_\alpha\wedge e\goesu 0$. Since $e$ is a strong unit, one can
  choose $e$ as the regulator for the latter convergence. In
  particular, $x_\alpha\wedge e\le\frac12 e$ for all sufficiently
  large~$\alpha$, which implies $x_\alpha\le e$, so that
  $x_\alpha\wedge e=x_\alpha$. It follows that $x_\alpha\goesu 0$. 
\end{proof}

\begin{question}\label{q:ru-ubdd}
  Is the converse of Proposition~\ref{ru-stru} true? That is, if the
  relative uniform convergence $X$ is unbounded, does this imply that $X$
  has a strong unit?
\end{question}

In order continuous Banach lattices, ru-convergence agrees with order
convergence by, e.,g.,~\cite{Bedingfield:80}, hence
$\mathrm{u}(\mathrm{ru})$ agrees with uo. On the other hand, on $C(K)$
spaces, $\mathrm{u}(\mathrm{ru})=\mathrm{ru}$ by
Proposition~\ref{ru-stru}, and is, generally, different from~$\mathrm{uo}$.

\begin{example}
  \emph{Relative uniform convergence on $c_0$ is not unbounded.}
  Let $(e_n)$ be the standard unit basis of~$c_0$. For every $0\le
  u\in c_0$, we have $e_n\wedge u\goesnorm 0$ and, therefore,
  $e_n\wedge u\goesu 0$ by Corollary~\ref{c0-convs}. Yet, clearly,
  $(e_n)$ does not converge to zero relatively uniformly.
\end{example}

Let $\Omega$ be a Hausdorff locally compact topological space. The
space $C_0(\Omega)$ of continuous real-valued functions on $\Omega$
vanishing at infinity is a Banach lattice under the supremum norm. We
write $C_c(\Omega)$ for the space of all continuous real valued
functions on $\Omega$ with compact support. Clearly, $C_c(\Omega)$ is
an ideal in $C_0(\Omega)$, while the latter is an ideal in
$C(\Omega)$. We say that a net $(f_\alpha)$ in $C(\Omega)$ converges
to some $f\in C(\Omega)$ \term{uniformly on compact sets} and write
$f_\alpha\goesucc f$ if for every $\varepsilon>0$ and every compact
$K\subseteq\Omega$ there exists $\alpha_0$ such that
$\abs{f_\alpha(\omega)-f(\omega)}<\varepsilon$ whenever
$\alpha\ge\alpha_0$ and $\omega\in K$. It is easy to see that
$\mathrm{ucc}$ is a locally solid topological convergence. By
\cite[Corollary~1.5.17]{Beattie:02} or
\cite[Proposition~8.3]{OBrien:23}, $\mathrm{ucc}$ agrees with the
continuous convergence on $C_0(\Omega)$. It was observed in
\cite[Example~20]{Troitsky:04} that $\mathrm{ucc}=\mathrm{un}$ on
$C_0(\Omega)$.

\begin{proposition}\label{C0-uru}
  Let $\Omega$ be a locally compact topological space. Put
  $I=C_c(\Omega)$. The following convergences on $C_0(\Omega)$
  coincide: $\mathrm{u}(\mathrm{ru})$, $\mathrm{u}_I(\mathrm{ru})$,
  $\mathrm{un}$, $\mathrm{u}_I\mathrm{n}$, and~$\mathrm{ucc}$. In
  particular, $\mathrm{ucc}$ is unbounded.
\end{proposition}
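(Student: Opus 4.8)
The plan is to sort the five convergences into a chain that is bounded below by $\mathrm{u}_I\mathrm{n}$ and above by $\mathrm{u}(\mathrm{ru})$, and then collapse the chain by proving the single reverse implication $\mathrm{u}(\mathrm{ru})\le\mathrm{u}_I\mathrm{n}$. For the ordering, since relative uniform convergence is the strongest locally solid convergence (Proposition~\ref{u-strongest}) we have $\mathrm{ru}\ge\mathrm{n}$; as both $\mathrm{u}$ and $\mathrm{u}_I$ are monotone in their argument, this yields $\mathrm{u}(\mathrm{ru})\ge\mathrm{un}$ and $\mathrm{u}_I(\mathrm{ru})\ge\mathrm{u}_I\mathrm{n}$. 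Combining these with the general inequalities $\mathrm{u}_I\eta\le\mathrm{u}\eta$ and with the known identity $\mathrm{un}=\mathrm{ucc}$ on $C_0(\Omega)$, each of the five convergences lies between $\mathrm{u}_I\mathrm{n}$ and $\mathrm{u}(\mathrm{ru})$. Hence it suffices to show that an $\mathrm{u}_I\mathrm{n}$-null net is $\mathrm{u}(\mathrm{ru})$-null; by local solidity and translation invariance I may assume the net converges to~$0$ and work with $\abs{x_\alpha}$ throughout.

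So assume $\bignorm{\abs{x_\alpha}\wedge u}_\infty\to 0$ for every $u\in C_c(\Omega)_+$. First I would upgrade this to uniform convergence on compact sets: given a compact $K\subseteq\Omega$, Urysohn's lemma provides $u\in C_c(\Omega)_+$ with $u\ge 1$ on~$K$, and once $c_\alpha:=\bignorm{\abs{x_\alpha}\wedge u}_\infty<1$ one checks that on $K$ the inequality $\min\bigl(\abs{x_\alpha(\omega)},1\bigr)\le c_\alpha<1$ forces $\abs{x_\alpha(\omega)}\le c_\alpha$, so $\sup_{\omega\in K}\abs{x_\alpha(\omega)}\to 0$. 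Now fix $v\in C_0(\Omega)_+$; the crucial step is to take the regulator $e=\sqrt v$, which lies in $C_0(\Omega)_+$ since $\{\sqrt v\ge\varepsilon\}=\{v\ge\varepsilon^2\}$ is compact. For a given $\delta>0$ I split $\Omega$ into the tail $\{v<\delta^2\}$, where $\abs{x_\alpha}\wedge v\le v\le\delta\sqrt v$, and the compact core $K=\{v\ge\delta^2\}$, where $\sqrt v\ge\delta$ gives $\abs{x_\alpha}\wedge v\le\abs{x_\alpha}\le\delta^{-1}\bigl(\sup_K\abs{x_\alpha}\bigr)\sqrt v$. Since $\sup_K\abs{x_\alpha}\to 0$, eventually both estimates yield $\abs{x_\alpha}\wedge v\le\delta\sqrt v$ on all of $\Omega$, i.e.\ $\norm{\abs{x_\alpha}\wedge v}_{\sqrt v}\le\delta$. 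As $\delta$ is arbitrary, $\abs{x_\alpha}\wedge v\goesu 0$ with regulator $\sqrt v$, and since $v\in C_0(\Omega)_+$ was arbitrary, $x_\alpha\to 0$ in $\mathrm{u}(\mathrm{ru})$. This closes the chain, proving all five coincide; in particular $\mathrm{ucc}=\mathrm{u}(\mathrm{ru})$ is unbounded.

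The main obstacle is precisely the choice of regulator. The naive choice $e=v$ fails, because on the region where $v$ is small while $\abs{x_\alpha}$ remains comparatively large the ratio $(\abs{x_\alpha}\wedge v)/v$ can stay close to~$1$. Taking $e=\sqrt v$ (or any $\phi(v)$ with $\phi$ continuous, $\phi(0)=0$, and $\phi(t)/t\to\infty$ as $t\to 0^+$) resolves this: it enlarges the regulator exactly in the tail so that $v/\sqrt v=\sqrt v\to 0$ there, while on each compact core the hypothesis $\mathrm{u}_I\mathrm{n}$ already forces $\abs{x_\alpha}$ to be uniformly small. Everything else is bookkeeping with the chain of inequalities and the cited identity $\mathrm{un}=\mathrm{ucc}$; the only genuinely non-formal ingredient is this tail/core trade-off enabled by the square-root regulator.
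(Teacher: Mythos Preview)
Your proof is correct and follows essentially the same route as the paper: both arrange the five convergences between $\mathrm{u}_I\mathrm{n}$ and $\mathrm{u}(\mathrm{ru})$, upgrade $\mathrm{u}_I\mathrm{n}$-null to uniform convergence on compacta via a Urysohn bump, and then close the loop with the $\sqrt{v}$ regulator and the tail/core split $\{v<\delta^2\}\cup\{v\ge\delta^2\}$. The only cosmetic difference is that the paper factors the reverse implication through $\mathrm{ucc}$ explicitly (proving $\mathrm{u}_I\mathrm{n}\ge\mathrm{ucc}$ and $\mathrm{ucc}\ge\mathrm{u}(\mathrm{ru})$ as two separate steps) rather than citing the known identity $\mathrm{un}=\mathrm{ucc}$ to place $\mathrm{ucc}$ in the chain, but the actual work is identical.
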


\begin{proof}
  It is straightforward that
  $\mathrm{u}(\mathrm{ru})\ge\mathrm{u}_I(\mathrm{ru})\ge\mathrm{u}_I\mathrm{n}$
  and
  $\mathrm{u}(\mathrm{ru})\ge\mathrm{un}\ge\mathrm{u}_I\mathrm{n}$. It
  therefore suffices to show that
  $\mathrm{u}_I\mathrm{n}\ge\mathrm{ucc}\ge\mathrm{u}(\mathrm{ru})$.

  $\mathrm{u}_I\mathrm{n}\ge\mathrm{ucc}$: Let
  $0\le f_\alpha\xrightarrow{\mathrm{u}_I\mathrm{n}} 0$ and let $K$ be
  a compact set. Find $u\in C_c(\Omega)_+$ such that $u$ equals 1
  on~$K$. It follows from $f_\alpha\wedge u\goesnorm 0$ that
  $f_\alpha$ converges to zero uniformly on~$K$.
  
  $\mathrm{ucc}\ge\mathrm{u}(\mathrm{ru})$: Suppose that
  $0\le f_\alpha\goesucc 0$. Let $u\in C_0(\Omega)_+$. We want to show
  that $f_\alpha\wedge u\goesu 0$. We use $\sqrt{u}$ as a
  regulator. Fix $\varepsilon>0$.
  Since  $u\in C_0(\Omega)_+$,
  the set $K:=\bigl\{\omega\mid u(\omega)\ge\varepsilon^2\bigr\}$
  is compact. Find $\alpha_0$ such that $f_\alpha(\omega)<\varepsilon^2$
  for all $\omega\in K$ whenever $\alpha\ge\alpha_0$. Fix
  $\alpha\ge\alpha_0$ and let $\omega\in\Omega$. If $w\in K$ then
  $f_\alpha(\omega)<\varepsilon^2\le\varepsilon\sqrt{u(\omega)}$. If
  $\omega\notin K$ then $u(\omega)<\varepsilon^2$, so that
  $u(\omega)\le\varepsilon\sqrt{u(\omega)}$. In either case,
  $(f_\alpha\wedge u)(\omega)\le\varepsilon\sqrt{u(\omega)}$.
\end{proof}

Analogously, one can prove the following:

\begin{proposition}
  For a locally compact topological space~$\Omega$,
  $\mathrm{u}(\mathrm{ru})=\mathrm{un}=\mathrm{ucc}$ on
  $C_c(\Omega)$. In particular, $\mathrm{ucc}$ is unbounded.
\end{proposition}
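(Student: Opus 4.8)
The plan is to follow exactly the template of Proposition~\ref{C0-uru}, replacing $C_0(\Omega)$ by $C_c(\Omega)$ throughout. Since we now work inside $C_c(\Omega)$ itself, the ideal $I=C_c(\Omega)$ coincides with the whole space, so $\mathrm{u}_I(\mathrm{ru})=\mathrm{u}(\mathrm{ru})$ and $\mathrm{u}_I\mathrm{n}=\mathrm{un}$ automatically; this collapses the chain of five convergences from the previous proposition to just three. First I would record the trivial direction $\mathrm{u}(\mathrm{ru})\ge\mathrm{un}$, which holds because $\mathrm{ru}\ge\mathrm{n}$ (relative uniform convergence is the strongest locally solid convergence by Proposition~\ref{u-strongest}, and the norm convergence on $C_c(\Omega)$ is locally solid) and unbounding is monotone. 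It then suffices to prove the cycle $\mathrm{un}\ge\mathrm{ucc}\ge\mathrm{u}(\mathrm{ru})$.

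For $\mathrm{un}\ge\mathrm{ucc}$ the argument is the same as before: given $0\le f_\alpha\goesun 0$ and a compact $K\subseteq\Omega$, choose $u\in C_c(\Omega)_+$ with $u\equiv 1$ on $K$ (Urysohn, using local compactness), and deduce uniform convergence to zero on $K$ from $f_\alpha\wedge u\goesnorm 0$. The one point to check is that such a bump function $u$ genuinely lies in $C_c(\Omega)$ and not merely in $C_0(\Omega)$, but that is automatic since Urysohn's lemma for locally compact Hausdorff spaces produces a compactly supported function.

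For $\mathrm{ucc}\ge\mathrm{u}(\mathrm{ru})$ I would reuse the $\sqrt{u}$ regulator trick verbatim. Suppose $0\le f_\alpha\goesucc 0$ and fix $u\in C_c(\Omega)_+$; we want $f_\alpha\wedge u\goesu 0$ with regulator $\sqrt u$. For $\varepsilon>0$ the set $K=\{\omega\mid u(\omega)\ge\varepsilon^2\}$ is compact (it is a closed subset of the compact support of $u$), find $\alpha_0$ with $f_\alpha<\varepsilon^2$ on $K$ for $\alpha\ge\alpha_0$, and then split on whether $\omega\in K$ to obtain $(f_\alpha\wedge u)(\omega)\le\varepsilon\sqrt{u(\omega)}$ pointwise, exactly as in Proposition~\ref{C0-uru}.

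I do not anticipate a genuine obstacle here; the statement is a corollary of the method rather than of the result of Proposition~\ref{C0-uru}, which is why the paper says ``Analogously.'' The only subtlety worth flagging in writing is that $C_c(\Omega)$ need not be norm complete, so one cannot invoke any Banach-lattice machinery; fortunately none of the three implications above uses completeness, only local solidity, the identity $\mathrm{u}_{C_c(\Omega)}=\mathrm{u}$, Urysohn's lemma, and the elementary pointwise estimate. The final clause ``$\mathrm{ucc}$ is unbounded'' then follows immediately, since $\mathrm{ucc}=\mathrm{u}(\mathrm{ru})$ exhibits $\mathrm{ucc}$ as an unbounded modification, hence equal to its own unbounded modification by idempotence of unbounding.
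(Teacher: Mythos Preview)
Your proposal is correct and is exactly the approach the paper intends: it says ``Analogously, one can prove the following'' immediately after Proposition~\ref{C0-uru}, and your adaptation (collapsing the five convergences to three since $I=C_c(\Omega)$ is the whole space, then running the Urysohn bump argument for $\mathrm{un}\ge\mathrm{ucc}$ and the $\sqrt{u}$-regulator estimate for $\mathrm{ucc}\ge\mathrm{u}(\mathrm{ru})$) is precisely that analogy carried out. Your remarks that the bump function and $\sqrt{u}$ both lie in $C_c(\Omega)$, and that no completeness is used, are the only points that need care and you have handled them.
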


\begin{remark}\label{Cc-ru}
  It is easy to see that in $C_c(\Omega)$, we have $f_\alpha\goesu 0$
  iff $f_\alpha\goesnorm 0$ and there exists $\alpha_0$ and a compact
  set $K$ such that $\operatorname{supp}f_\alpha\subseteq K$ for all
  $\alpha\ge\alpha_0$. Observe that, in this case, $f_\alpha\goesnorm 0$ is
  equivalent to $f_\alpha\goesucc 0$.
\end{remark}

We can now prove a somewhat similar result for
$C_0(\Omega)$:

\begin{corollary}\label{C0-ru}
  In $C_0(\Omega)$, we have $f_\alpha\goesu 0$ iff
  $f_\alpha\goesucc 0$ and there exists $\alpha_0$ such that
  $(f_\alpha)_{\alpha\ge\alpha_0}$ is norm bounded and for every
  $\varepsilon>0$ there exists a compact set $K$ such that
  $\abs{f_\alpha(\omega)}<\varepsilon$ whenever $\alpha\ge\alpha_0$ and
  $\omega\notin K$.
\end{corollary}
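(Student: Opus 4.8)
The plan is to characterize relative uniform convergence in $C_0(\Omega)$ by combining the characterization of $\mathrm{ru}$ via principal ideals with the identification $\mathrm{u}(\mathrm{ru})=\mathrm{ucc}$ from Proposition~\ref{C0-uru}. Recall that by Theorem~\ref{ru-compl}, $f_\alpha\goesu 0$ in $C_0(\Omega)$ means precisely that $\norm{f_\alpha-0}_e\to 0$ for some regulator $e\in C_0(\Omega)_+$; that is, there is a single $e$ with the property that for each $\varepsilon>0$ eventually $\abs{f_\alpha}\le\varepsilon e$. The right-hand conditions in the statement (namely $f_\alpha\goesucc 0$ together with the ``uniform vanishing at infinity'' requirement) should be read as an intrinsic reformulation of the existence of such a regulator, so the proof is essentially a matter of extracting the regulator from the uniform-vanishing data and, conversely, deriving the two conditions from a given regulator.

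\textbf{The forward direction ($\Rightarrow$).}
First I would assume $f_\alpha\goesu 0$ with regulator $e$. Since $\mathrm{ru}\ge\mathrm{u}(\mathrm{ru})=\mathrm{ucc}$ (the latter by Proposition~\ref{C0-uru}), we immediately get $f_\alpha\goesucc 0$. For the remaining conditions, fix $\alpha_0$ so that $\abs{f_\alpha}\le e$ for all $\alpha\ge\alpha_0$; then the tail is norm bounded by $\norm e_\infty$. Given $\varepsilon>0$, refine $\alpha_0$ (using the regulator) so that $\abs{f_\alpha}\le\varepsilon e$ eventually, and set $K=\bigl\{\omega\mid e(\omega)\ge\varepsilon\bigr\}$, which is compact because $e\in C_0(\Omega)$. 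For $\omega\notin K$ we then have $\abs{f_\alpha(\omega)}\le\varepsilon e(\omega)<\varepsilon^2$, which (after relabeling $\varepsilon$) gives the uniform-vanishing condition outside a compact set.

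\textbf{The converse direction ($\Leftarrow$).}
This is where the main work lies: I must \emph{construct} a single regulator $e$ out of the $\mathrm{ucc}$-convergence plus the uniform tail bound. The natural idea, mirroring the square-root trick already used in the proof of Proposition~\ref{C0-uru}, is to build $e$ as a suitable majorant of the tail $(f_\alpha)_{\alpha\ge\alpha_0}$ that still lies in $C_0(\Omega)_+$. Concretely, I would exploit the uniform-vanishing condition to select, for each $k$, a compact $K_k$ outside which $\abs{f_\alpha}<2^{-k}$ for all $\alpha\ge\alpha_0$, together with an auxiliary function $g_k\in C_0(\Omega)_+$ that is $1$ on $K_k$ and small off a slightly larger compact set, and assemble $e$ from a norm-convergent series such as $e=\sum_k 2^{-k}\bigl(M g_k+2^{-k}\one\mbox{-type terms}\bigr)$, adjusted so that $e\in C_0(\Omega)$ and $\abs{f_\alpha}\le\lambda_\varepsilon\, e$ holds eventually for arbitrarily small $\lambda_\varepsilon$. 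The delicate point — and the step I expect to be the main obstacle — is arranging that \emph{one and the same} $e$ simultaneously dominates $\abs{f_\alpha}$ on the compact part (where $\mathrm{ucc}$ gives decay but only locally uniformly) and on the tail-at-infinity part (where the uniform-vanishing condition gives global smallness). I would handle the two regions separately: on each $K_k$ use $\mathrm{ucc}$ to make $\abs{f_\alpha}$ as small as desired times $g_k$, and off $K_k$ use the explicit $2^{-k}$ bound, then combine the estimates so the regulator absorbs both. Once $e$ is shown to belong to $C_0(\Omega)_+$ and to regulate the convergence, $f_\alpha\goesu 0$ follows, completing the proof.
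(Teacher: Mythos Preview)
Your forward direction has a small slip: the statement fixes one $\alpha_0$ first and then asks, for each $\varepsilon>0$, for a compact $K_\varepsilon$ that works for \emph{all} $\alpha\ge\alpha_0$. When you ``refine $\alpha_0$'' to get $\abs{f_\alpha}\le\varepsilon e$, your $\alpha_0$ starts depending on $\varepsilon$. The fix is immediate and simpler than what you wrote: keep the original $\alpha_0$ with $\abs{f_\alpha}\le e$, set $K=\{\omega\mid e(\omega)\ge\varepsilon\}$, and observe that for $\omega\notin K$ one has $\abs{f_\alpha(\omega)}\le e(\omega)<\varepsilon$ directly.

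For the converse direction your strategy works, but the paper takes a shorter and more structural route. The paper notes that, since $\mathrm{ru}$ and $\mathrm{u}(\mathrm{ru})$ agree on order bounded sets, one has $f_\alpha\goesu 0$ iff $f_\alpha\xrightarrow{\mathrm{u}(\mathrm{ru})}0$ and a tail of $(f_\alpha)$ is order bounded in $C_0(\Omega)$. The first condition is exactly $f_\alpha\goesucc 0$ by Proposition~\ref{C0-uru}, and the second condition is equivalent to ``a tail is norm bounded and uniformly small at infinity'' by Proposition~3.5 of~\cite{Bilokopytov:22}. That is the whole proof. Your explicit construction of a regulator via a series of Urysohn bumps is essentially rederiving the latter characterization of order-bounded subsets of $C_0(\Omega)$ by hand; once you have any $h\in C_0(\Omega)_+$ dominating the tail, you do not need a further bespoke regulator at all --- the square-root trick you mention (or simply the identity $\abs{f_\alpha}=\abs{f_\alpha}\wedge h$ together with $\mathrm{u}(\mathrm{ru})=\mathrm{ucc}$) finishes immediately. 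So your approach is correct in outline but does more work than necessary; the paper's decomposition ``$\mathrm{ru}=\mathrm{u}(\mathrm{ru})+\text{eventual order boundedness}$'' is the cleaner organizing idea.
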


\begin{proof}
  It is easy to see that $f_\alpha\goesu 0$ iff
  $f_\alpha\xrightarrow{\mathrm{u}(\mathrm{ru})}0$ and the net is
  eventually order bounded. Now combine Proposition~\ref{C0-uru} with
  Proposition~3.5 of~\cite{Bilokopytov:22}.
\end{proof}

We can now completely characterize those $\Omega$ for which the
relative uniform convergence on $C_0(\Omega)$ is unbounded. In
particular, this yields an affirmative answer to
Question~\ref{q:ru-ubdd} for $C_0(\Omega)$ spaces.

\begin{corollary}\label{comp-0-ru}
  A locally compact topological space $\Omega$ is compact iff the
  relative uniform convergence on $C_0(\Omega)$ is unbounded.
\end{corollary}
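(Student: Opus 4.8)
The plan is to reduce the corollary to a comparison of two convergences. A locally solid convergence is unbounded exactly when it coincides with its own unbounded modification, and Proposition~\ref{C0-uru} identifies $\mathrm{u}(\mathrm{ru})$ with $\mathrm{ucc}$ on $C_0(\Omega)$. Hence $\mathrm{ru}$ is unbounded if and only if $\mathrm{ru}=\mathrm{ucc}$ on $C_0(\Omega)$, and it suffices to show that this equality holds precisely when $\Omega$ is compact. For the easy direction, if $\Omega$ is compact then $C_0(\Omega)=C(\Omega)$, which has the constant function $\one$ as a strong unit, so Proposition~\ref{ru-stru} gives that $\mathrm{ru}$ is unbounded. (Equivalently, on a compact $\Omega$ the convergence $\mathrm{ucc}$ is just uniform convergence, which agrees with $\mathrm{ru}$ via the regulator $\one$.)

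For the converse I would argue contrapositively: assuming $\Omega$ is non-compact, I produce a net that is $\mathrm{ucc}$-null but not $\mathrm{ru}$-null. Let $\mathcal K$ be the family of compact subsets of $\Omega$, directed by inclusion; because $\Omega$ is non-compact this is a genuine directed set with no largest element, and for each $K\in\mathcal K$ there is a point $\omega_K\in\Omega\setminus K$. Using the Urysohn lemma for locally compact Hausdorff spaces, choose $f_K\in C_c(\Omega)$ with $0\le f_K\le 1$, $f_K(\omega_K)=1$, and $\operatorname{supp}f_K\subseteq\Omega\setminus K$. I claim $(f_K)_{K\in\mathcal K}$ is the required net. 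For $\mathrm{ucc}$-nullity, fix a compact set $L$; for every $K\supseteq L$ we have $\operatorname{supp}f_K\cap L=\emptyset$, so $f_K$ vanishes on $L$, whence $f_K\goesucc 0$. To see that $f_K\not\goesu 0$, suppose a regulator $e\in C_0(\Omega)_+$ existed; since $e$ is bounded, choosing $\varepsilon$ small would force $\abs{f_K}\le\varepsilon e\le\varepsilon\norm{e}_\infty<1$ for all sufficiently large $K$, contradicting $f_K(\omega_K)=1$. (Alternatively, the failure of $\mathrm{ru}$-convergence follows from Corollary~\ref{C0-ru}, as no fixed compact set can contain $\omega_K$ for all large $K$.) Combining with Proposition~\ref{C0-uru}, the net $(f_K)$ is $\mathrm{u}(\mathrm{ru})$-null but not $\mathrm{ru}$-null, so $\mathrm{ru}$ is not unbounded.

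The main obstacle is to make the construction valid for an arbitrary non-compact locally compact Hausdorff space, which need not be $\sigma$-compact or metrizable. This is exactly why the witnessing family is organized as a net indexed by the directed set $\mathcal K$ of compact sets, rather than as a sequence of bumps escaping to infinity: the inclusion order on $\mathcal K$ encodes ``going to infinity'' in the generality needed, and the tail over any compact $L$ automatically kills $f_K$ on $L$. Once the net is set up, both the $\mathrm{ucc}$-convergence and the impossibility of a single regulator are routine verifications.
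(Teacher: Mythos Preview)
Your proof is correct, but it takes a genuinely different route from the paper's. The paper argues the non-trivial direction indirectly: assuming $\mathrm{u}(\mathrm{ru})=\mathrm{ru}$, it uses Proposition~\ref{C0-uru} to squeeze $\mathrm{ru}\ge\mathrm{n}\ge\mathrm{un}=\mathrm{u}(\mathrm{ru})=\mathrm{ru}$, concludes $\mathrm{n}=\mathrm{un}$, then invokes \cite[Theorem~2.3]{Kandic:17} to obtain a strong unit $u$ in $C_0(\Omega)$, and finally shows via the inequality $\sqrt{u}\le\lambda u$ that $u$ is bounded away from zero, forcing $\Omega$ compact. You instead give a direct contrapositive: for non-compact $\Omega$ you build an explicit net of Urysohn bumps indexed by the compact subsets that is $\mathrm{ucc}$-null but has sup-norm constantly~$1$, hence cannot be $\mathrm{ru}$-null. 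Your argument is more elementary and self-contained---it avoids both the external reference and the $\sqrt{u}$ trick---while the paper's route has the virtue of tying the result to the known characterization of when norm topology is unbounded. Both are clean; yours is the more hands-on of the two.
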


\begin{proof}
  The forward implication follows from
  Proposition~\ref{ru-stru}. Suppose that
  $\mathrm{u}(\mathrm{ru})=\mathrm{ru}$. Proposition~\ref{C0-uru}
  implies that
  $\mathrm{ru}\ge\mathrm{n}\ge\mathrm{un}=\mathrm{u}(\mathrm{ru})=\mathrm{ru}$. It
  follows that $\mathrm{n}=\mathrm{un}$. Using
  \cite[Theorem~2.3]{Kandic:17} we conclude that $C_0(\Omega)$ has a
  strong unit, say,~$u$. Then $u(\omega)>0$ for every
  $\omega\in\Omega$. It follows that $\sqrt{u}\le\lambda u$ for some
  $\lambda>0$. We conclude that $u(\omega)\ge\frac{1}{\lambda^2}$ for
  every $\omega\in\Omega$. This yields that $\Omega$ is compact.
\end{proof}

In the preceding corollary, $C_0(\Omega)$ may be replaced with
$C_c(\Omega)$. Note that the relevant direction of the proof of
Theorem~2.3 in \cite{Kandic:17} remains valid for normed lattices.

\begin{question}
  Characterize vector lattices where $\mathrm{u}(\mathrm{ru})$ is
  topological. Characterize Banach lattices where
  $\mathrm{u}(\mathrm{ru})=\mathrm{un}$; do we have
  $\mathrm{t}\bigl(\mathrm{u}(\mathrm{ru})\bigr)=\mathrm{un}$?
\end{question}

Next, we extend some of the preceding results to the case when
$X=C(\Omega)$, where $\Omega$ is locally compact and $\sigma$-compact.
This is equivalent to $\Omega$ having a \term{compact exhaustion},
i.e., a sequence $(K_n)_{n\in\mathbb N}$ of compact subsets such that
$K_n\subseteq\Int K_{n+1}$ for every $n$ and
$\bigcup_{n=1}^\infty K_n=\Omega$; see, e.g., \cite[3.8.C]{Engelking:89}.
For example, our results will apply
when $\Omega=\mathbb R^N$. Note that in this case every compact
$K\subseteq\Omega$ is contained in $K_n$ for some~$n$. For
convenience, we put $K_0=\varnothing$.
It is easy to see that the ucc
topology is generated by the seminorms
\begin{displaymath}
  \norm{f}_n=\sup\bigl\{\bigabs{f(t)}
  \mid t\in K_n\setminus\Int K_{n-1}\bigr\},
  \ n\in\mathbb N,
\end{displaymath}
so $C(\Omega)$ is a Frechet space. A subset $A$ of $C(\Omega)$ is
bounded with respect to the ucc topology iff
$\sup_{f\in A}\norm{f}_n<\infty$ for every~$n$.

\begin{lemma}\label{exh}
  Let $(K_n)$ be a compact exhaustion of~$\Omega$, and let
  $(m_n)$ be a sequence in~$\mathbb N$. There exists $f\in C(\Omega)$
  such that $f_{|K_n\setminus\Int K_{n-1}}\ge m_n$ for all
  $n\in\mathbb N$.
\end{lemma}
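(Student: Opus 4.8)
The plan is to build $f$ explicitly as a locally finite infinite sum of Urysohn functions that switch from $0$ to $1$ as one crosses successive annuli $K_n\setminus\Int K_{n-1}$, scaled so that the partial sums dominate the $m_n$. First I would record that a locally compact, $\sigma$-compact Hausdorff space is normal (it is regular and Lindel\"of), so that Urysohn's lemma is available for disjoint closed sets. For each $n\ge 2$ the compact set $K_{n-2}$ (with the convention $K_0=\varnothing$) and the closed set $\Omega\setminus\Int K_{n-1}$ are disjoint, since $K_{n-2}\subseteq\Int K_{n-1}$; hence there is a continuous $h_n\colon\Omega\to[0,1]$ with $h_n=0$ on $K_{n-2}$ and $h_n=1$ on $\Omega\setminus\Int K_{n-1}$. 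For $n=1$ I would simply put $h_1\equiv 1$. The candidate is then
\[
  f=\sum_{n=1}^\infty m_n h_n .
\]

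Next I would check that $f$ is well defined and continuous, the crucial point being local finiteness. Every $t\in\Omega$ lies in $\Int K_M$ for some $M\ge 1$, and for $n\ge M+2$ one has $\Int K_M\subseteq K_M\subseteq K_{n-2}$, so $h_n\equiv 0$ on the open neighbourhood $\Int K_M$ of $t$. Thus on $\Int K_M$ the series reduces to the finite sum $\sum_{n=1}^{M+1}m_n h_n$, which is continuous; since the interiors $\Int K_M$ cover $\Omega$, it follows that $f$ is continuous on all of $\Omega$, and in particular $f\in C(\Omega)$.

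Finally I would verify the required lower bound. Fix $N$ and $t\in K_N\setminus\Int K_{N-1}$. For every $n\le N$ we have $\Int K_{n-1}\subseteq\Int K_{N-1}$, so $t\in\Omega\setminus\Int K_{N-1}\subseteq\Omega\setminus\Int K_{n-1}$, whence $h_n(t)=1$. Since all terms of the series are nonnegative and each $m_n\ge 1$, this gives
\[
  f(t)\ge\sum_{n=1}^N m_n h_n(t)=\sum_{n=1}^N m_n\ge m_N ,
\]
which is exactly the claimed estimate on $K_N\setminus\Int K_{N-1}$.

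The step I expect to be the most delicate is the bookkeeping with the indices. One must align the defining sets of the $h_n$ so that, on the annulus $K_N\setminus\Int K_{N-1}$, the functions $h_1,\dots,h_N$ are \emph{all} equal to $1$, which forces the use of $\Omega\setminus\Int K_{n-1}$ (rather than $\Omega\setminus\Int K_n$) in the definition, while simultaneously keeping the tail of the series locally zero, which is precisely what the condition $h_n=0$ on $K_{n-2}$ provides. Getting these offsets right is what makes the lower bound work even in the base case $N=1$, where $h_1\equiv 1$ already handles $K_1=K_1\setminus\Int K_0$, and what guarantees local finiteness at the same time. Once the offsets are correct, the appeal to normality and the elementary inequality $\sum_{n=1}^N m_n\ge m_N$ are routine.
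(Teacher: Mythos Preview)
Your proof is correct, but it proceeds along a genuinely different route from the paper's argument. The paper first replaces $(m_n)$ by its running maximum to make it increasing, then applies Urysohn's lemma \emph{locally} on each compact annulus $K_n\setminus\Int K_{n-1}$ to produce a function $f_n$ interpolating between the value $m_n$ on $\partial K_{n-1}$ and $m_{n+1}$ on $\partial K_n$, and finally glues these pieces together across the shared boundaries. Your approach instead applies Urysohn's lemma \emph{globally} on $\Omega$ to obtain step functions $h_n$ and assembles $f$ as a locally finite series $\sum m_n h_n$; this avoids both the monotonization step and the boundary-matching needed to make the glued function continuous. The trade-off is that the paper's version only needs normality of each compact annulus (automatic from compact Hausdorff), whereas you need normality of all of $\Omega$, which you correctly obtain from locally compact $+$ $\sigma$-compact $+$ Hausdorff via regular $+$ Lindel\"of. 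Both arguments are short; yours is arguably cleaner in that continuity comes for free from local finiteness rather than from checking that the pieces agree on overlaps.
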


\begin{proof}
  We may assume that $(m_n)$ is increasing; otherwise, replace $m_n$
  with $\max\{m_1,\dots,m_n\}$. Using Urysohn's lemma, find $f_n$ in
  $C(K_n\setminus\Int K_{n-1})$ such that $f_n$ equals $m_n$ on
  $\partial K_{n-1}$, $m_{n+1}$ on $\partial K_n$, and is between
  $m_n$ and $m_{n+1}$ on the rest of $K_n\setminus\Int K_{n-1}$ (we
  again put $K_0=\varnothing$). We now define $f$ on $\Omega$ so that
  it agrees with $f_n$ on $K_n\setminus\Int K_{n-1}$. It is easy to
  verify that $f$ satisfies the requirements of the lemma.
\end{proof}

\begin{proposition}\label{sc}
  Suppose that $\Omega$ is locally compact and
  $\sigma$-compact. Then
  \begin{enumerate}
  \item\label{sc-eq} $\mathrm{u}(\mathrm{ru})=\mathrm{ucc}$ on
    $C(\Omega)$;
  \item\label{sc-bdd} A subset of $C(\Omega)$ is order bounded iff it
    is bounded in the ucc topology;
  \item\label{sc-ru} A net in $C(\Omega)$ converges relatively
    uniformly iff it converges ucc and is eventually (order) bounded;
  \item\label{sc-comp} Relative uniform convergence on $C(\Omega)$ is unbounded
    iff $\Omega$ is compact.  
  \end{enumerate}
\end{proposition}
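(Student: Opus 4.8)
The plan is to prove the four assertions in the order \eqref{sc-bdd}, \eqref{sc-ru}, \eqref{sc-eq}, \eqref{sc-comp}, each feeding the next, with the compact exhaustion $(K_n)$, Lemma~\ref{exh}, and the fact that $\mathrm{ru}$ is the strongest locally solid convergence (Proposition~\ref{u-strongest}, so $\mathrm{ru}\ge\mathrm{ucc}$) as the recurring tools. Part~\eqref{sc-bdd} is quick: order boundedness $A\subseteq[-g,g]$ gives $\sup_{f\in A}\norm{f}_n\le\norm{g}_n<\infty$, while conversely, if $c_n:=\sup_{f\in A}\norm{f}_n<\infty$ for all $n$, Lemma~\ref{exh} applied with $m_n=\lceil c_n\rceil$ produces $g\in C(\Omega)_+$ dominating every $f\in A$ on each annulus $K_n\setminus\Int K_{n-1}$, hence $A\subseteq[-g,g]$.

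Part~\eqref{sc-ru} is the technical heart. The forward implication is easy ($\mathrm{ru}\ge\mathrm{ucc}$, and a regulator bounds a tail). For the converse, assume $f_\alpha\goesucc 0$ and $\abs{f_\alpha}\le g$ for $\alpha\ge\alpha_0$ (subtracting the limit). Put $M_n=\sup\{g(t):t\in K_n\setminus\Int K_{n-1}\}$ and use Lemma~\ref{exh} with $m_n=\max\{1,\lceil nM_n\rceil\}$ to build one regulator $e\ge 1$ on $\Omega$ satisfying $e\ge nM_n$, hence $g\le e/n$, on the $n$-th annulus. Given $\varepsilon>0$, pick $N$ with $1/N\le\varepsilon$: on the annuli $n\ge N$ one already has $\abs{f_\alpha}\le g\le e/n\le\varepsilon e$ for $\alpha\ge\alpha_0$, and on the compact set $K_{N-1}$ ucc convergence yields $\alpha_1\ge\alpha_0$ with $\abs{f_\alpha}\le\varepsilon\le\varepsilon e$ for $\alpha\ge\alpha_1$; combining, $\abs{f_\alpha}\le\varepsilon e$ everywhere eventually, i.e.\ $f_\alpha\goesu 0$. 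The crucial design choice is the growth $e\ge nM_n$, which lets a single $\varepsilon$-independent regulator absorb the far annuli.

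Part~\eqref{sc-eq} now follows from~\eqref{sc-ru}. For $\mathrm{ucc}\ge\mathrm{u}(\mathrm{ru})$: if $f_\alpha\goesucc 0$ then for each $w\in C(\Omega)_+$ the net $\abs{f_\alpha}\wedge w$ is ucc-null and order bounded by $w$, so~\eqref{sc-ru} gives $\abs{f_\alpha}\wedge w\goesu 0$; as $w$ is arbitrary this is $f_\alpha\xrightarrow{\mathrm{u}(\mathrm{ru})}0$. For $\mathrm{u}(\mathrm{ru})\ge\mathrm{ucc}$: given compact $K$, choose (Urysohn) $w\in C(\Omega)_+$ with $w\ge 1$ on $K$; then $\abs{f_\alpha}\wedge w\goesu 0$ and hence $\goesucc 0$, and once $\sup_K(\abs{f_\alpha}\wedge w)<1$ the truncation equals $\abs{f_\alpha}$ on $K$, so $f_\alpha\to 0$ uniformly on $K$.

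For~\eqref{sc-comp}, compactness of $\Omega$ makes $\one$ a strong unit, so $\mathrm{ru}$ is unbounded by Proposition~\ref{ru-stru}. The converse is the main obstacle. By~\eqref{sc-eq}, $\mathrm{ru}$ unbounded means $\mathrm{ru}=\mathrm{ucc}$, which by~\eqref{sc-ru} would force every ucc-convergent net to be eventually order bounded; I will contradict this for non-compact $\Omega$. The delicate point is that a ucc-null \emph{sequence} is automatically eventually order bounded, since its tail has finite sup on each $K_n$ and part~\eqref{sc-bdd} applies, so a genuine net is needed. Take disjoint bumps $u_n\in C(\Omega)_+$ peaked at points $\omega_n$ escaping every compact set, and index $g_{(n,m)}=m\,u_n$ by $\mathbb N\times\mathbb N$ under the lexicographic order with the first coordinate dominating. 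Driving the first coordinate to infinity gives $g_{(n,m)}\goesucc 0$, because each $K_N$ is eventually disjoint from the supports; yet every tail contains a whole column $\{m\,u_{n_0}:m\ge m_0\}$, unbounded on any compact set containing $\omega_{n_0}$, so by~\eqref{sc-bdd} no tail is order bounded. By~\eqref{sc-ru} this net is not $\mathrm{ru}$-convergent, contradicting $\mathrm{ru}=\mathrm{ucc}$; hence $\Omega$ is compact.
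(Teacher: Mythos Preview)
Your proof is correct, but it reorganizes the argument compared with the paper. The paper proves \eqref{sc-eq} first by a direct trick: given $f_\alpha\goesucc 0$ and $u\ge\one$ with compact sublevel sets $C_n=\{u\le n\}$, it shows $f_\alpha\wedge u\le\frac1n u^2$ eventually (on $C_n$ by ucc, off $C_n$ since $u\le\frac1n u^2$), so $u^2$ serves as regulator; \eqref{sc-ru} then falls out of \eqref{sc-eq} and \eqref{sc-bdd}. You instead prove \eqref{sc-ru} directly, building a single regulator $e$ via Lemma~\ref{exh} with the growth condition $e\ge nM_n$ on the $n$-th annulus, and then deduce \eqref{sc-eq} from \eqref{sc-ru} by the clean observation that $\abs{f_\alpha}\wedge w$ is automatically ucc-null and order bounded. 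Your route avoids the $u^2$ insight at the cost of a slightly heavier regulator construction; the paper's route is slicker for \eqref{sc-eq} but makes \eqref{sc-ru} look like a corollary rather than the engine. For \eqref{sc-comp} both arguments produce the same doubly-indexed counterexample of scaled disjoint bumps; your choice of lexicographic order works just as well as the paper's product order, and your remark that a \emph{sequence} would not suffice (being automatically eventually ucc-bounded) is a nice clarification the paper leaves implicit.
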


\begin{proof}
  \eqref{sc-eq} Observe that ucc is unbounded because if
  $0\le f_\alpha\xrightarrow{\mathrm{u}(\mathrm{ucc})}0$ then
  $\one\wedge f_\alpha\goesucc 0$; it is then easy to see that
  $f_\alpha\goesucc 0$. It now follows from
  $\mathrm{ru}\ge\mathrm{ucc}$ that
  $\mathrm{u}(\mathrm{ru})\ge\mathrm{ucc}$.

  We are now going to show that
  $\mathrm{u}(\mathrm{ru})\le\mathrm{ucc}$.  If $\Omega$ is compact,
  this is trivial, so we assume now that $\Omega$ is not compact.
  Suppose that $0\le f_\alpha\goesucc 0$. Let $u\ge 0$. We need to
  show that $f_\alpha\wedge u\goesu 0$. We may assume that $u\ge\one$
  and the set
  \begin{math}
    C_n=\bigl\{\omega\in\Omega\mid u(\omega)\le n\bigr\}
  \end{math}
  is compact for every~$n$, because if this is not the case, we may
  replace $u$ with $u\vee\one\vee f$, where $f$ is the function from
  Lemma~\ref{exh} with $m_n=n$. For every~$n$, there is an $\alpha_n$
  such that ${f_\alpha}(\omega)\le\frac1n\le\frac1n u^2(\omega)$ for
  all $\alpha\ge\alpha_n$ and all $\omega\in C_n$. If
  $\omega\notin C_n$ then
  $u(\omega)\le \frac1n u^2(\omega)$. It follows
  that $f_\alpha\wedge u\le\frac1nu^2$ for all $\alpha\ge\alpha_n$.
  We conclude that $f_\alpha\wedge u\goesu 0$.
 
  \eqref{sc-bdd} The forward implication is straightforward. For the
  converse, if $A$ is ucc-bounded, put $m_n=\sup_{f\in A}\norm{f}_n$
  and apply Lemma~\ref{exh}.
  
  \eqref{sc-ru} follows from combining \eqref{sc-eq} and~
  \eqref{sc-bdd}.

  \eqref{sc-comp} If $\Omega$ is compact then ru-convergence is
  unbounded by Proposition~\ref{ru-stru}. Assume that $\Omega$ is not
  compact. Then there is a compact exhaustion $(K_n)$ which does not
  stabilize, that is, $K_n\ne K_{n+1}$ for all $n\in\mathbb N$. Find a
  disjoint sequence $(f_n)$ in $C(\Omega)$ such that $f_n$ equals $1$
  on $\partial K_{2n-1}$ and vanishes outside of
  $\Int K_{2n}\setminus K_{2n-2}$, for $n\ge 2$. Then the double
  sequence $(mf_n)$ is ucc-null but not eventually order bounded. Therefore,
  it is $\mathrm{u}(\mathrm{ru})$-null by~\eqref{sc-eq}, but nor
  ru-null by~\eqref{sc-ru}.
\end{proof}

\begin{question}
  Characterize those locally solid topologies in which topologically bounded
  and order bounded sets agree.
\end{question}

\section{Bounded sets in convergence vector spaces}
\label{sec:bddsets}

Let $(X,\lambda)$ be a convergence vector space over~$\mathbb K$,
where $\mathbb K$ is $\mathbb R$ or~$\mathbb C$. We recall the concept
of a $\lambda$-bounded set; see, e.g.,~\cite{OBrien:23}, for details.
A subset $A$ of $X$ is $\lambda$\term{-bounded} if
$[\mathcal N_0A]\goesl 0$, where $\mathcal N_0$ is the neighbourhood
filter of $0$ in~$\mathbb K$. Equivalently, for every net
$(r_\gamma)_{\gamma\in\Gamma}$ in $\mathbb K$ with $r_\gamma\to 0$ we
have $(r_\gamma x)_{(\gamma,x)}\goesl 0$, where the latter net is
indexed by $\Gamma\times A$ ordered by the first component; we denote
this net by $(r_\gamma A)$. It was observed in~\cite{OBrien:23} that
if $A$ is circled then instead of a general net $(r_\gamma)$ in the
definition of a bounded set, it suffices to take the
sequence~$(\frac1n)$.  In general, it suffices to take $(0,1)$ viewed
as a net decreasing to zero:

\begin{theorem}\label{bdd}
  Let $A$ be a set in a convergence vector space~$X$. The following
  are equivalent:
  \begin{enumerate}
  \item\label{bdd-bd} $A$ is bounded;
  \item\label{bdd-01} $(0,1)A\to 0$, where $(0,1)$ is viewed as a
    decreasing net;
  \item\label{bdd-2nets} If $r_\gamma\to 0$ in $\mathbb K$ and
    $(x_\gamma)$ is a net in $A$ then $r_\gamma x_\gamma\to 0$.
  \end{enumerate}
\end{theorem}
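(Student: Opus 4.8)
The plan is to prove the cycle \eqref{bdd-bd}$\Rightarrow$\eqref{bdd-01}$\Rightarrow$\eqref{bdd-2nets}$\Rightarrow$\eqref{bdd-bd}, so that the two genuinely easy implications flank the single substantial one. The implication \eqref{bdd-bd}$\Rightarrow$\eqref{bdd-01} is immediate: the interval $(0,1)$, regarded as a net decreasing to~$0$, is one particular net in $\mathbb K$ converging to~$0$, so boundedness applied to it is exactly the assertion $(0,1)A\to0$. For \eqref{bdd-2nets}$\Rightarrow$\eqref{bdd-bd} I would argue by reindexing (assuming $A\ne\varnothing$): given a null net $(r_\gamma)_{\gamma\in\Gamma}$ in $\mathbb K$, put $\tilde r_{(\gamma,x)}=r_\gamma$ and $\tilde x_{(\gamma,x)}=x$ on the product $\Gamma\times A$ ordered by the first coordinate. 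Then $(\tilde r_{(\gamma,x)})$ is tail equivalent to $(r_\gamma)$, hence null, and $(\tilde x_{(\gamma,x)})$ is a net in~$A$, so \eqref{bdd-2nets} gives $\tilde r_{(\gamma,x)}\tilde x_{(\gamma,x)}=r_\gamma x\to0$, which is precisely $(r_\gamma A)\to0$.

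The substance is \eqref{bdd-01}$\Rightarrow$\eqref{bdd-2nets}. I would first isolate the real scalar case: if $X$ is a real convergence vector space, $B\subseteq X$, and $(0,1)B\to0$, then $s_\gamma y_\gamma\to0$ for every null net $(s_\gamma)$ in $\mathbb R$ and every net $(y_\gamma)$ in~$B$. The idea is to symmetrize and adjoin~$0$: set $B'=B\cup(-B)\cup\{0\}$. Continuity of negation turns the null net $(0,1)B$ into the null net $(0,1)(-B)$, and the constant net $(0,1)\{0\}$ is null; since a tail of $(0,1)B'$ is $(0,t_0]B'=(0,t_0]B\cup(0,t_0](-B)\cup\{0\}$, the tail filter of $(0,1)B'$ is exactly the intersection of the three tail filters. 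Each converges to~$0$, so by the axiom that a finite intersection of convergent filters converges, $(0,1)B'\to0$. Finally $(s_\gamma y_\gamma)$ is a quasi-subnet of $(0,1)B'$: discarding a tail we may assume $\abs{s_\gamma}<1$, and for each $t_0\in(0,1)$ choosing $\gamma_0$ with $\abs{s_\gamma}\le t_0$ for $\gamma\ge\gamma_0$ places every $s_\gamma y_\gamma=\abs{s_\gamma}\cdot(\pm y_\gamma)$ (or $0$) inside $(0,t_0]B'$. Since quasi-subnets of a convergent net converge, the claim follows.

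For the complex case I would reduce to the real one. Viewing $X$ as a real convergence vector space, the hypothesis $(0,1)A\to0$ persists, and continuity of multiplication by the fixed scalar $i$ gives $(0,1)(iA)\to0$ as well. Given a null net $(r_\gamma)$ in $\mathbb C$ and a net $(x_\gamma)$ in~$A$, write $r_\gamma=a_\gamma+ib_\gamma$ with $a_\gamma,b_\gamma\to0$ in $\mathbb R$, so that $r_\gamma x_\gamma=a_\gamma x_\gamma+b_\gamma(ix_\gamma)$. Applying the real case to $B=A$ and to $B=iA$ yields $a_\gamma x_\gamma\to0$ and $b_\gamma(ix_\gamma)\to0$, and continuity of addition gives $r_\gamma x_\gamma\to0$. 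The main obstacle is precisely this passage to $\mathbb C$: one cannot simply symmetrize over the whole unit circle, since that would require an infinite intersection of the convergent filters $(0,1)(uA)$, $\abs u=1$, whereas the filter axioms only grant finite intersections. Splitting a complex scalar into its real and imaginary parts and invoking the real case twice is what circumvents this.
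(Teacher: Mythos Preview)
Your proof is correct. The implications \eqref{bdd-bd}$\Rightarrow$\eqref{bdd-01} and \eqref{bdd-2nets}$\Rightarrow$\eqref{bdd-bd} match the paper essentially verbatim. For the key step \eqref{bdd-01}$\Rightarrow$\eqref{bdd-2nets}, however, the paper uses a square-root factorization: writing $r_\gamma x_\gamma = \bigl(r_\gamma/\sqrt{\abs{r_\gamma}}\bigr) \cdot \bigl(\sqrt{\abs{r_\gamma}}\,x_\gamma\bigr)$ (interpreting $0/0$ as $0$), it observes that the second factor is a quasi-subnet of $(0,1)A$ and hence null, that the first factor is null in $\mathbb K$, and then invokes joint continuity of scalar multiplication. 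This handles $\mathbb R$ and $\mathbb C$ uniformly in two lines. Your symmetrization-and-split argument is longer but more elementary in that it uses only continuity of addition and of multiplication by the \emph{fixed} scalars $-1$ and $i$, never the joint continuity of $(r,x)\mapsto rx$; it also explicitly handles the edge case $r_\gamma=0$ by adjoining $0$ to the set, a point the paper's trick glosses over (strictly speaking, $\sqrt{\abs{r_\gamma}}\,x_\gamma=0$ need not lie in any tail $(0,t_0]A$ unless $0\in A$).
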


\begin{proof}
  \eqref{bdd-bd}$\Rightarrow$\eqref{bdd-01} is trivial.

  \eqref{bdd-01}$\Rightarrow$\eqref{bdd-2nets} The net
  \begin{math}
    \bigl(\sqrt{\abs{r_\gamma}}x_\gamma\bigr)
  \end{math}
  is a quasi-subnet of $(0,1)A$, hence it converges to zero. Also, the
  net
  \begin{math}
    \bigl(\frac{r_\gamma}{\sqrt{\abs{r_\gamma}}}\bigr)
  \end{math}
  converges to zero (if $r_\gamma=0$ we interpret the quotient as zero).
  By the continuity of scalar multiplication, $r_\gamma x_\gamma\to 0$.
  
  \eqref{bdd-2nets}$\Rightarrow$\eqref{bdd-bd}
  Let $(r_\gamma)_{\gamma\in\Gamma}$ be a net in
  $\mathbb K$ with $r_\gamma\to 0$. Define a new index set
  $\Lambda=\Gamma\times A$, ordered by the first component. Define two
  nets indexed by $\Lambda$: for $(\gamma,a)\in\Lambda$, let
  $s_{(\gamma,a)}=r_\gamma$ and $x_{(\gamma,a)}=a$. It is easy to see
  that $(s_{(\gamma,a)})$ is tail equivalent to~$(r_\gamma)$. It
  follows that $s_{(\gamma,a)}\to 0$. Therefore,
  $s_{(\gamma,a)}x_{(\gamma,a)}\to 0$. Since the
  net $(s_{(\gamma,a)}x_{(\gamma,a)})$ is tail equivalent to
  $(r_\gamma A)$, we have $r_\gamma A\to 0$ and, therefore, $A$ is
  bounded.
\end{proof}

\begin{example}
  It is easy to see that for topological vector
  spaces, one may replace nets with sequences in
  Theorem~\ref{bdd}\eqref{bdd-2nets}.  However, this is not true for
  general linear convergences. Recall from~\cite{OBrien:23} that order
  bounded sets in a vector lattice are precisely the sets that are
  bounded with respect to order convergence. The set
  $\{e_n\}_{n\in\mathbb N}$ in $c_0$ is not order bounded, yet for
  every sequence $(r_k)$ in $\mathbb R$ with $r_k\to 0$ and for every
  sequence $(n_k)$ in~$\mathbb N$, the sequence $(r_ke_{n_k})$ is
  order null.
\end{example}

\begin{remark}
  It is claimed in \cite{Beattie:02,OBrien:23} that the convex hull of
  a bounded set in a convergence vector space is again bounded. This
  may, actually, fail even in the case of a topological convergence if
  the space is not locally convex. Indeed,
  let $A$ be the unit ball in $\ell_p$ with $0<p<1$. Then $A$ is
  bounded with respect to the quasi-norm $\norm{\cdot}_p$, hence with
  respect to the corresponding convergence, yet its convex hull is unbounded.
\end{remark}

Recall that a subset $A$ of a vector lattice $X$ is \term{dominable}
iff $A$ is order bounded as a subset of the universal completion
$X^u$ of~$X$.

\begin{theorem}\label{uo-bdd-dom}
  A subset $A$ of a vector lattice $X$ is uo-bounded iff it is dominable.
\end{theorem}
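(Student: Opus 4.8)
The plan is to transport everything into the universal completion $X^u$ and reduce both properties to the existence of a single supremum. Since $X$ is order dense, hence regular, in $X^u$, uo-convergence of nets from $X$ is computed identically in $X$ and in $X^u$; as the witnessing net $(0,1)A$ of Theorem~\ref{bdd} lies in $X\subseteq X^u$, the set $A$ is uo-bounded in $X$ iff it is uo-bounded in $X^u$. By definition, dominable means order bounded in $X^u$, and replacing each $x$ by $\abs{x}$ affects neither property (since $\abs{tx}=t\abs{x}$ and uo-convergence depends only on moduli). So I may assume $A\subseteq X^u_+$ and work entirely in the universally complete lattice $Y:=X^u$, proving that $A$ is uo-bounded iff $A$ is order bounded in $Y$. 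Because $Y$ is laterally complete, the supremum $e$ of a maximal disjoint subset of $Y_+$ exists and is a weak order unit.

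Next I reformulate both sides through the truncated suprema $\sigma(v):=\sup_{x\in A}(x\wedge v)$, which exist for every $v\in Y_+$ since $x\wedge v\le v$ and $Y$ is Dedekind complete. On one hand, using that the band generated by $e$ is all of $Y$, so that $\sup_n(y\wedge ne)=y$ for every $y\in Y_+$, one sees that $A$ is order bounded iff $\sup_n\sigma(ne)$ exists: if $s:=\sup_n\sigma(ne)$ exists then $x=\sup_n(x\wedge ne)\le s$ for every $x\in A$, while conversely any upper bound of $A$ dominates each $\sigma(ne)$. On the other hand, for fixed $u\in Y_+$ the net $\bigl((tx)\wedge u\bigr)$ over $(0,1)\times A$ is order bounded by $u$ and increasing in $t$, so it order-converges to $0$ iff its tail suprema $q^u_t:=\sup_{x\in A}\bigl((tx)\wedge u\bigr)$ satisfy $q^u_t\downarrow 0$ as $t\downarrow 0$; since $(tx)\wedge u=t\,(x\wedge u/t)$, this reads $t\,\sigma(u/t)\downarrow 0$. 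Thus $A$ is uo-bounded iff $t\,\sigma(u/t)\downarrow 0$ for every $u\in Y_+$.

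The two directions now split cleanly. For dominable $\Rightarrow$ uo-bounded I take $w:=\sup_{x\in A}x$; then $\sigma(v)\le w$ for all $v$, so $0\le t\,\sigma(u/t)\le tw\downarrow 0$ by the Archimedean property, which is the uo-bounded criterion for every $u$. For the converse it suffices to feed $u=e$ into the criterion and evaluate along the cofinal sequence $t=1/n$: uo-boundedness forces the decreasing sequence $\tfrac1n\sigma(ne)\downarrow 0$, and I must deduce that $\sup_n\sigma(ne)$ exists. The crucial structural input is the truncation identity $\sigma(me)\wedge ne=\sigma(ne)$ for $m\ge n$, which follows from the infinite distributive law $(\sup_\alpha y_\alpha)\wedge z=\sup_\alpha(y_\alpha\wedge z)$ together with $(x\wedge me)\wedge ne=x\wedge ne$.

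This last implication is the main obstacle: the bare inequalities (increasing, bounded by $ne$, with $\tfrac1n\sigma(ne)\downarrow 0$) do not by themselves force boundedness, and one must exploit the truncation identity. The cleanest way to finish is to invoke the Maeda--Ogasawara representation $Y\cong C^\infty(K)$ with $K$ extremally disconnected: there $\sigma(ne)$ is the continuous regularization of the pointwise function $\min(g,n)$ with $g:=\sup_{x\in A}x$, the condition $\tfrac1n\sigma(ne)\downarrow 0$ says precisely that $\{g=\infty\}$ is nowhere dense, and this is exactly the statement that $g=\sup_n\sigma(ne)$ belongs to $C^\infty(K)$, i.e.\ that $A$ is order bounded. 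A representation-free alternative is to argue by contraposition, using lateral completeness of $Y$ to produce from an unbounded family $\{\sigma(ne)\}$ a disjoint sequence witnessing non-domination, along which one checks directly that $\tfrac1n\sigma(ne)$ cannot order-converge to $0$.
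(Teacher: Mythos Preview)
Your reduction to the universally complete lattice $Y=X^u$ and the reformulation through the truncated suprema $\sigma(v)=\sup_{x\in A}(x\wedge v)$ match the paper exactly; the paper's key sequence $x_n=\sup_{a\in A}\tfrac1n\abs{a}\wedge e$ is precisely your $\tfrac1n\sigma(ne)$, and both arguments arrive at $x_n\downarrow 0$. The divergence is only in the final step, where one must extract an upper bound for $A$ from $x_n\downarrow 0$. The paper does this intrinsically via band projections: it sets $s_n=P_{e-x_n}e$, shows $s_n\uparrow e$, forms the disjoint increments $d_n=s_n-s_{n-1}$, puts $h=\bigvee_n nd_n$ (using lateral completeness), and verifies $P_{d_n}a\le nd_n$ for each $0\le a\in A$, whence $a=\sup_m P_{s_m}a\le h$. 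Your route through the Maeda--Ogasawara representation $Y\cong C^\infty(K)$ is a legitimate alternative and is in fact the same construction read pointwise: the $s_n$ become indicators of the clopen sets $\overline{\{x_n<1\}}$, and $h$ is a step-function approximation of the pointwise supremum. One imprecision worth fixing: the function whose finiteness you must establish is $h(k):=\sup_n\sigma(ne)(k)$, not the raw pointwise $g(k)=\sup_{x\in A}x(k)$; since $\sigma(ne)$ is the \emph{regularization} of $\min(g,n)$ one only has $h\ge g$ in general. The truncation identity gives $\sigma(ne)=\min(h,n)$ and hence that $h$ is continuous as a $[0,\infty]$-valued map, and then $\tfrac1n\sigma(ne)\downarrow 0$ is equivalent to $\{h=\infty\}$ being nowhere dense; it is this $h\in C^\infty(K)$ that bounds $A$. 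With that correction your argument goes through, though it imports the representation theorem where the paper's band-projection proof stays self-contained. Your ``representation-free alternative'' is only a gesture as written.
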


\begin{proof}
  Replacing $A$ with its solid hull, we may assume without loss of
  generality that $A$ is solid. In particular, it is circled. Thus,
  $A$ is uo-bounded in $X$ iff $\frac1n A\goesuo 0$ in~$X$.  Since $X$
  is regular in~$X^u$, $\frac1n A\goesuo 0$ in $X$ iff
  $\frac1n A\goesuo 0$ in~$X^u$, hence $A$ is uo-bounded in $X$ iff
  $A$ is uo-bounded in~$X^u$. Thus, without loss of generality, $X$ is
  universally complete. It now suffices to prove that if $A$ is
  uo-bounded then it is order bounded. The converse implication is
  trivial as $\mathrm{uo}\le\mathrm{o}$.

  Suppose that $A$ is uo-bounded. Being universally complete, $X$ has
  a weak unit, say,~$e$.
  Since $\frac1n A\goesuo 0$, the net
  \begin{math}
    \bigl(\abs{\frac1n a}\wedge e),
  \end{math}
  which is indexed by $\mathbb N\times A$, ordered by the first
  component, is order null. Hence
  \begin{displaymath}
    0=\inf_{(n,a)}\sup_{(m,b)\ge(n,a)}\abs{\tfrac1m b}\wedge e
    =\inf_{n}\sup_{m\ge n,b\in A}\tfrac1m\abs{b}\wedge e
    =\inf_n\sup_{a\in A}\tfrac1n\abs{a}\wedge e.
  \end{displaymath}
  Put $x_n=\sup_{a\in A}\tfrac1n\abs{a}\wedge e$. Then $0\le x_n\le e$
  and $x_n\downarrow 0$. Put $s_n=P_{e-x_n}e$ for all $n\in\mathbb
  N$. Then $e\ge s_n=\sup_me\wedge m(e-x_n)\ge e-x_n$.  It follows
  that $0\le s_n\uparrow e$. Let $d_1=s_1$ and $d_n=s_n-s_{n-1}$ for
  $n>1$; then $(d_n)$ is disjoint and $\sum_{n=1}^md_n=s_m$ for
  every~$m$. Put $h=\bigvee_n nd_n$; the supremum exists because $X$
  is universally complete. Let $0\le a\in A$; we will show that
  $a\le h$ and, therefore, $A\subseteq[-h,h]$.

  Fix~$n$. Clearly, $(\frac1n a-e)^+\perp(e-\frac1n a)^+$.
  It follows from $(e-\frac1n a)^+\ge
  (e-x_n)^+=e-x_n$ and the fact that $d_n$ is in the band generated by
  $e-x_n$ that $(\frac1n a-e)^+\perp
  d_n$. Therefore, $0=P_{d_n}(\frac1n
  a-e)^+=(\frac1n P_{d_n}a-P_{d_n}e)^+$, so that $P_{d_n}a\le
  nP_{d_n}e=nd_n$. We conclude that
  \begin{displaymath}
    P_{s_m}a=\sum_{n=1}^mP_{d_n}a\le\sum_{n=1}^mnd_n\le h.
  \end{displaymath}
  By Theorem~1.48(ii) in~\cite{Aliprantis:06}, it follows from
  $s_m\uparrow e$ that $P_{s_m}a\uparrow P_ea=a$
  because $e$ is a weak unit. This yields $a\le h$. 
\end{proof}

\begin{question}
  Characterize un-bounded subsets of a Banach lattice.
\end{question}

Recall that a linear convergence is \term{locally bounded} if every
convergent net has a bounded tail or, equivalently, if every
convergent filter contains a bounded set. It is well known that a
locally convex topology is locally bounded iff it is normable. It is
easy to see that order and relative uniform convergences are locally
bounded.

\begin{proposition}
  In a vector lattice~$X$,
  $\mathrm{uo}$-convergence is locally bounded iff $X$ is finite-dimensional.
\end{proposition}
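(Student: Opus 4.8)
The plan is to reduce everything to two facts established earlier: the characterization of uo-bounded sets as dominable sets (Theorem~\ref{uo-bdd-dom}) and the dichotomy between uo-convergence and order convergence (Theorem~\ref{uo-o-fin-dim}). Since local boundedness requires every convergent net to possess a bounded tail, and here ``bounded'' means uo-bounded, Theorem~\ref{uo-bdd-dom} lets me replace ``bounded tail'' with ``dominable tail'', i.e.\ a tail that is order bounded in the universal completion $X^u$. The whole argument then becomes a search for a uo-null net with no dominable tail.

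For the easy direction, suppose $X$ is finite-dimensional. By Theorem~\ref{uo-o-fin-dim}, uo-convergence coincides with order convergence, and order convergence is locally bounded (as noted above). Hence uo-convergence is locally bounded.

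For the converse I would argue by contraposition: assuming $X$ is infinite-dimensional, I exhibit a uo-null net with no dominable tail, reusing the net from the proof of Theorem~\ref{uo-o-fin-dim}. Pick a disjoint sequence $(x_n)$ of nonzero positive vectors (replace $x_n$ by $\abs{x_n}$ if necessary) and set $y_{n,m}=mx_n$, a net indexed by $\mathbb N\times\mathbb N$ with the product order. That proof already shows $y_{n,m}\goesuo 0$. It then remains to check that no tail is dominable: a tail has the form $\{mx_n\mid n\ge n_0,\ m\ge m_0\}$ and therefore contains $\{mx_{n_0}\mid m\ge m_0\}$; since $x_{n_0}>0$ and $X^u$ is Archimedean, this subset is not order bounded in $X^u$, so the tail is not dominable, hence not uo-bounded by Theorem~\ref{uo-bdd-dom}. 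Thus uo-convergence is not locally bounded.

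The step I expect to require the most care is confirming that every product-order tail of the double sequence contains an order-unbounded slice; everything hinges on the Archimedean property of $X^u$ preventing $\{mx_{n_0}\mid m\ge m_0\}$ from being dominated, which is precisely what rules out a bounded tail and so closes the contrapositive.
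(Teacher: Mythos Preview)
Your proof is correct and follows essentially the same route as the paper: the paper simply says to follow the proof of Theorem~\ref{uo-o-fin-dim} and observe that tails of the double sequence $(y_{n,m})$ there are not dominable, which is precisely what you do, with the added detail that each tail contains a ray $\{mx_{n_0}\mid m\ge m_0\}$ unbounded in the Archimedean lattice~$X^u$.
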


\begin{proof}
  Follow the proof of Theorem~\ref{uo-o-fin-dim}; observe that tails
  of the double sequence $(y_{n,m})$ there are not dominable.
\end{proof}

\section{Bornological convergences}
\label{sec:born-conv}
Let $X$ be a set. A collection $\mathcal B$ of subsets of $X$ is a
\term{bornology} if it is closed under taking subsets and finite
unions and $X=\bigcup\mathcal B$. Members of a bornology are
referred to as \term{bounded} sets. A subset $\mathcal B_0$ of a
bornology $\mathcal B$ is a \term{base} of $\mathcal B$ if every set
in $\mathcal B$ is contained in a set from~$\mathcal
B_0$. It is easy to see that a collection $\mathcal B_0$ of subsets of
$X$ is a base for some bornology iff $X=\bigcup\mathcal B_0$ and
for every $A_1,A_2\in\mathcal B_0$ there exists $A\in\mathcal B_0$
such that $A_1\cup A_2\subseteq A$.

A bornology $\mathcal B$ on a vector space $X$ is said to be a
\term{linear} or \term{vector bornology} if
\begin{enumerate}
\item $A_1,A_2\in\mathcal B$ implies $A_1+A_2\in\mathcal B$;
\item if $A\in\mathcal B$ then $\alpha A\in\mathcal B$ for every
  non-zero scalar~$\alpha$;
\item if $A\in\mathcal B$ then the balanced (or circled) hull of $A$
  is in~$\mathcal B$.
\end{enumerate}
A vector space equipped with a linear bornology is said to be a
\term{bornological vector space}. A linear operator $T\colon X\to Y$
between two bornological vector spaces is said to be \term{bounded} if
it maps bounded sets to bounded sets. We refer the reader
to~\cite{Hogbe-Nleng:77} for further details on bornological spaces.

\begin{example}
  In a topological vector space, the bounded sets form a linear
  bornology. In the case of a normed space, the set of all balls
  centred at the origin of positive integer radius forms a countable
  base for this bornology.
\end{example}

\begin{example}
  In a vector lattice, all order bounded sets form a linear
  bornology. The set of all order intervals of the form $[-u,u]$ with
  $u\ge 0$ is a base of this bornology.
\end{example}

\begin{example}
  Let $(X,\lambda)$ be a convergence vector space. All
  $\lambda$-bounded sets form a linear bornology on~$X$. If
  $T\colon X\to Y$ is a continuous linear operator between two
  convergence vector spaces then $T$ is clearly bounded with respect to the
  induced bornologies.
\end{example}

\begin{example}\label{findim-born}
  Let $X$ be a vector space, then the set of all bounded subsets of
  finite-dimensional subspaces of $X$ is a linear bornology in~$X$.
\end{example}

Let $(X,\mathcal B)$ be a bornological vector space. We define the
\term{bornological convergence} structure induced by $\mathcal B$ on
$X$ as follows: $x_\alpha\goesmuB 0$ if there exists $A\in\mathcal B$
such that for every $\varepsilon>0$ the set $\varepsilon A$ contains a
tail of the net; we define $x_\alpha\goesmuB x$ if $x_\alpha-x\goesmuB
0$. It is clear that in the preceding definition one can replace the
bornology $\mathcal B$ with any base of it. In particular, it suffices
to consider only circled sets in~$\mathcal B$. In terms of filters,
$\mathcal F\goesmuB 0$ if $\mathcal F\supseteq[\mathcal N_0A]$ for
some $A\in\mathcal B$; here $\mathcal N_0$ stands for the usual base
of zero neighbourhoods in the scalar field. Indeed, for a net
$(x_\alpha)$ and a circled set $A\in\mathcal B$, $\varepsilon A$
contains a tail of the net for every $\varepsilon$ iff
$\mathcal N_0A\subseteq[x_\alpha]$.

\begin{example}
  If $\mathcal B$ is the bornology of norm bounded sets in a normed
  space then $\mu_{\mathcal B}$ is norm convergence. If $\mathcal B$
  is the bornology of all order bounded sets in a vector lattice then
  $\mu_{\mathcal B}$ is relative uniform convergence.
\end{example}

In the following theorem, we collect a few basic properties of~$\mu_{\mathcal B}$.

\begin{theorem}\label{muB-conv}
  Let $(X,\mathcal B)$ be a bornological vector space. Then
  \begin{enumerate}
  \item\label{muB-conv-lin} $\mu_{\mathcal B}$ is a linear
    convergence;
  \item\label{muB-conv-Haus} $\mu_{\mathcal B}$ is Hausdorff iff
  $\mathcal B$ contains no non-trivial subspaces;
  \item\label{muB-conv-bdd} a set $A$ belongs to $\mathcal B$ iff it
    is $\mu_{\mathcal B}$-bounded;
  \item\label{muB-conv-lbdd}  $\mu_{\mathcal B}$ is locally bounded;
  \item\label{muB-conv-str} $\mu_{\mathcal
        B}$ is the strongest linear convergence structure for which
      every set in $\mathcal B$ is bounded.
% \item A linear functional $f\colon X\to\mathbb K$ is $\mathcal
%    B$-bounded iff it is $\mu_{\mathcal B}$-continuous.
  \end{enumerate}
\end{theorem}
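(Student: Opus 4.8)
The plan is to establish the five assertions in the stated order, but to prove \eqref{muB-conv-bdd} as soon as \eqref{muB-conv-lin} is in hand, since \eqref{muB-conv-lbdd} and \eqref{muB-conv-str} both lean on it. For \eqref{muB-conv-lin} I would first verify the three convergence axioms: constant nets are null because any circled $A\in\mathcal B$ contains $0$; quasi-subnets inherit nullity since every tail of the original net, hence each $\varepsilon A$, still contains a tail of the quasi-subnet; and the selection axiom follows by passing to the union $A_1\cup A_2\in\mathcal B$ of the two witnessing sets, so that $\varepsilon(A_1\cup A_2)$ eventually contains the interleaved net. Continuity of addition is then routine after the same-index-set reduction: if $x_\alpha\goesmuB 0$ and $y_\alpha\goesmuB 0$ are witnessed by circled $A_1,A_2$, then $A_1+A_2\in\mathcal B$ witnesses $x_\alpha+y_\alpha\goesmuB 0$.

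The delicate point, which I expect to be the main obstacle, is continuity of scalar multiplication. Given $r_\alpha\to r$ in $\mathbb K$ and $x_\alpha\goesmuB x$, I would split $r_\alpha x_\alpha-rx=r_\alpha(x_\alpha-x)+(r_\alpha-r)x$. For the second summand a circled $A\in\mathcal B$ containing the single vector $x$ suffices: once $\abs{r_\alpha-r}\le\varepsilon$, circledness gives $(r_\alpha-r)x\in\varepsilon A$. For the first summand one uses that a convergent scalar net is eventually bounded, say $\abs{r_\alpha}\le M$; if $A$ witnesses $x_\alpha-x\goesmuB 0$ then $x_\alpha-x\in\tfrac{\varepsilon}{M}A$ eventually, and circledness yields $r_\alpha(x_\alpha-x)\in\varepsilon A$. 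Summing the two null nets via the addition continuity just established gives $r_\alpha x_\alpha\goesmuB rx$; the care needed in tracking circled hulls and the factor $M$ is the technical heart of the argument.

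For \eqref{muB-conv-bdd} I would test boundedness against the net $(0,1)A$ using Theorem~\ref{bdd}\eqref{bdd-01}. If $A\in\mathcal B$, take its circled hull (again in $\mathcal B$) as the witnessing set: for $\delta>0$, every $\varepsilon a$ with $\varepsilon\le\delta$ lies in $\delta$ times that hull by circledness, so $(0,1)A\goesmuB 0$ and $A$ is $\mu_{\mathcal B}$-bounded. Conversely, if $A$ is $\mu_{\mathcal B}$-bounded then $(0,1)A\goesmuB 0$ is witnessed by some $B\in\mathcal B$; reading off the case $\delta=1$ shows $\varepsilon' A\subseteq B$ for some $\varepsilon'>0$, whence $A\subseteq\tfrac1{\varepsilon'}B\in\mathcal B$ and $A\in\mathcal B$ by closure under nonzero scalars and subsets.

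The remaining parts are short. For \eqref{muB-conv-Haus}, linearity reduces Hausdorffness to the absence of a nonzero constant null net (if $x_\alpha\goesmuB a$ and $x_\alpha\goesmuB b$ then the constant net $a-b$ is null, and conversely); and a constant net $c$ satisfies $c\goesmuB 0$ exactly when $\mathbb K c\subseteq A$ for some $A\in\mathcal B$, which by closure under subsets is equivalent to $\mathcal B$ containing the nontrivial subspace $\mathbb K c$. Part \eqref{muB-conv-lbdd} is immediate from \eqref{muB-conv-bdd}: if $x_\alpha\goesmuB 0$ with witness $A$, the tail contained in $A$ is a subset of a member of $\mathcal B$, hence bounded. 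Finally, for \eqref{muB-conv-str}, assertion \eqref{muB-conv-bdd} shows every $A\in\mathcal B$ is $\mu_{\mathcal B}$-bounded; and if $\lambda$ is any linear convergence making each $A\in\mathcal B$ $\lambda$-bounded, then for $x_\alpha\goesmuB 0$ the filter description gives $[x_\alpha]\supseteq[\mathcal N_0 A]$ for some $A\in\mathcal B$, while $\lambda$-boundedness of $A$ means $[\mathcal N_0 A]\goesl 0$; the filter axiom that larger filters inherit convergence then forces $[x_\alpha]\goesl 0$, i.e.\ $\mu_{\mathcal B}\ge\lambda$.
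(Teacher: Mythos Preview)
Your proof is correct and follows essentially the same approach as the paper: the same splitting $r_\alpha x_\alpha-rx=r_\alpha(x_\alpha-x)+(r_\alpha-r)x$ for scalar multiplication, the same use of $A+B\in\mathcal B$ for addition, and the same filter argument $[x_\alpha]\supseteq[\mathcal N_0 A]$ for \eqref{muB-conv-str}. The only cosmetic differences are that you explicitly verify the three convergence-structure axioms (the paper leaves these implicit), you invoke Theorem~\ref{bdd}\eqref{bdd-01} rather than an arbitrary null scalar net for the converse in \eqref{muB-conv-bdd}, and your Hausdorff argument reduces first to constant null nets before characterizing them---all of which are minor and equally valid.
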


\begin{proof}
  \eqref{muB-conv-lin}
  Suppose that $x_\alpha\goesmuB x$ and $y_\alpha\goesmuB y$; we need
  to check that $x_\alpha+y_\alpha\goesmuB x+y$. Replacing $x_\alpha$
  and $y_\alpha$ with $x_\alpha-x$ and $y_\alpha-y$, we may assume that
  $x=y=0$. Find $A,B\in\mathcal B$ such that for every
  $\varepsilon>0$ there exist $\alpha_0$ such that
  $(x_\alpha)_{\alpha\ge\alpha_0}\subseteq \varepsilon A$ and
  $(y_\alpha)_{\alpha\ge\alpha_0}\subseteq \varepsilon B$. It follows
  that for all $\alpha\ge\alpha_0$ we have 
  $x_\alpha+y_\alpha\in \varepsilon A+\varepsilon
  B=\varepsilon(A+B)$. Since $A+B\in\mathcal B$, we have
  $x_\alpha+y_\alpha\goesmuB 0$.

  Suppose now that $x_\alpha\goesmuB x$ and $r_\alpha\to r$. Find a
  circled set $A\in\mathcal B$ such that for every $\varepsilon>0$
  there exists $\alpha_0$ such that $x_\alpha-x\in\varepsilon A$ for
  all $\alpha>0$. Let $B$ be the circled hull of~$\{x\}$. Then $B$ is
  bounded and so is $A+B$.  Fix $\varepsilon>0$. Find $\alpha_0$ such
  that $\abs{r_\alpha-r}<\varepsilon$ for all $\alpha\ge\alpha_0$. In
  particular, $\abs{r_\alpha}\le\abs{r}+\varepsilon$. Passing to a
  further tail, we may assume that
  $x_\alpha-x\in\frac{\varepsilon}{\abs{r}+\varepsilon}A$ for all
  $\alpha\ge\alpha_0$.  We then have
  \begin{math}
    r_\alpha(x_\alpha-x)\in
    r_\alpha\frac{\varepsilon}{\abs{r}+\varepsilon}A
    \subseteq\varepsilon A
  \end{math}
  and
  \begin{math}
    (r_\alpha-r)x\in\varepsilon B.
  \end{math}
  It follows that 
  \begin{math}
    r_\alpha x_\alpha-rx=r_\alpha(x_\alpha-x)+(r_\alpha-r)x\in\varepsilon(A+B).
  \end{math}

  \eqref{muB-conv-Haus} Suppose that $\mathcal B$ contains a
  non-trivial subspace. In particular, it contains $\Span\{a\}$ for
  some non-zero $a\in X$. Then the constant $a$ sequence converges
  both to $a$ and to $0$ in~$\mu_{\mathcal B}$, hence
  $\mu_{\mathcal B}$ fails to be Hausdorff. Conversely, suppose that
  $\mu_{\mathcal B}$ is not Hausdorff. Then there exists a net
  $(x_\alpha)$ such that $x_\alpha\goesmuB a$ and $x_\alpha\goesmuB b$
  for some $a\ne b$. Subtracting $b$ from everything, we may assume
  that $b=0$. It follows that there is a circled set $A\in\mathcal B$
  such that for every $\varepsilon>0$ the set $\varepsilon A$ contains
  both $x_\alpha$ and $x_\alpha-a$ for all sufficiently
  large~$\alpha$. This yields $a\in 2\varepsilon A$. Since
  $\varepsilon$ is arbitrary, we have $\Span\{a\}\subseteq A$, hence
  $\Span\{a\}\in\mathcal B$.

  \eqref{muB-conv-bdd} If $A\in\mathcal B$ then $[\mathcal
  N_0A]\goesmuB 0$, which means that $A$ is  $\mu_{\mathcal B}$-bounded.
  To prove the converse, suppose that $A$ is
  $\mu_{\mathcal B}$-bounded. Let $r_\alpha\to 0$ in
  $\mathbb K\setminus\{0\}$, then
  $r_\alpha A\xrightarrow{\mu_{\mathcal B}}0$. It follows that there
  exists $B\in\mathcal B$ such that $r_\alpha A\subseteq B$ for all
  sufficiently large~$\alpha$; this yields $A\in\mathcal B$.

  \eqref{muB-conv-lbdd} follows immediately from~\eqref{muB-conv-bdd}. 

  \eqref{muB-conv-str} Let $\lambda$ be a linear convergence structure
  for which every set in $\mathcal B$ is bounded. If
  $\mathcal F\goesmuB 0$ then $[\mathcal N_0A]\subseteq\mathcal F$ for
  some $A\in\mathcal B$. Since $A$ is $\lambda$-bounded, we have
  $\mathcal F\goesl 0$.
\end{proof}

Summarizing, every linear bornology $\mathcal B$ yields a linear
convergence structure~$\mu_{\mathcal B}$, while every linear
convergence structure $\lambda$ yields a linear bornology, namely, the
bornology of all $\lambda$-bounded sets. These two operations are not
quite inverses of each other. If we start with a linear bornology
$\mathcal B$ and form the associated convergence~$\mu_{\mathcal B}$,
then by Proposition~\ref{muB-conv}\eqref{muB-conv-bdd}, the bounded
sets for $\mu_{\mathcal B}$ are exactly~$\mathcal B$, so we get back
where we started.

However, if we start with a linear convergence~$\lambda$, and put
$\mathcal B$ to be the bornology of all $\lambda$-bounded sets, then
the convergence $\mu_{\mathcal B}$ is the \term{Mackey modification}
$\lambda_m$ of~$\lambda$. We generally have $\lambda\le\lambda_m$. A
linear convergence structure $\lambda$ is said to be \term{Mackey} if
$\lambda=\lambda_m$. See~\cite{Beattie:02,OBrien:23} for further
information about Mackey convergences and Mackey modifications;
including Proposition~3.7.16 in~\cite{Beattie:02}, which provides an
intrinsic characterization of Mackey convergences.

In an (Archimedean) vector lattice~$X$, order bounded sets are exactly
the sets that are bounded with respect to order convergence; it
follows that the Mackey modification of order convergence is relative
uniform convergence and therefore order bounded sets are exactly the
sets that are bounded with respect to relative uniform convergence.
Combining this with Theorem~\ref{uo-bdd-dom}, we immediately get the
following:

\begin{proposition}
  The Mackey modification of uo-convergence on a vector lattice $X$ is
  the restriction to $X$ of the relative uniform convergence on~$X^u$.
\end{proposition}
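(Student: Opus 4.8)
The plan is to unwind both sides into the language of bornological convergence and then match them directly. By definition, the Mackey modification of $\mathrm{uo}$ is $\mu_{\mathcal B}$, where $\mathcal B$ is the bornology of all $\mathrm{uo}$-bounded subsets of $X$; by Theorem~\ref{uo-bdd-dom} these are exactly the dominable sets, i.e.\ the subsets of $X$ that are order bounded in $X^u$. On the other side, relative uniform convergence on $X^u$ is governed by order intervals: $x_\alpha\goesu 0$ in $X^u$ means there is a regulator $e\in(X^u)_+$ with $\abs{x_\alpha}\le\varepsilon e$ eventually, for every $\varepsilon>0$. Since both $\mu_{\mathcal B}$ and relative uniform convergence are linear and $X$ is a linear subspace of $X^u$, it suffices to compare the two convergences on nets in $X$ converging to~$0$.

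For the forward inclusion I would suppose $x_\alpha\goesmuB 0$, witnessed by a dominable set $A\in\mathcal B$; thus $A\subseteq[-u,u]$ in $X^u$ for some $u\in(X^u)_+$, and for each $\varepsilon>0$ a tail of the net lies in $\varepsilon A\subseteq[-\varepsilon u,\varepsilon u]$. Hence $\abs{x_\alpha}\le\varepsilon u$ eventually for every $\varepsilon>0$, so $u$ is a regulator witnessing $x_\alpha\goesu 0$ in $X^u$.

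For the reverse inclusion, suppose $x_\alpha\goesu 0$ in $X^u$ with regulator $e\in(X^u)_+$. Here the natural witness, the order interval $[-e,e]$, need not lie in $X$ when $e\notin X$, so I would instead take $A=\{x\in X\mid\abs{x}\le e\}=[-e,e]\cap X$. This set is order bounded by $e$ in $X^u$, hence dominable, so $A\in\mathcal B$. Because $X$ is a subspace, scaling commutes with intersecting with $X$, giving $\varepsilon A=\{x\in X\mid\abs{x}\le\varepsilon e\}$ for every $\varepsilon>0$. Since $\abs{x_\alpha}\le\varepsilon e$ eventually and each $x_\alpha\in X$, a tail of the net is contained in $\varepsilon A$; thus $A$ witnesses $x_\alpha\goesmuB 0$.

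The only real obstacle is this last step: transporting a regulator that lives in $X^u\setminus X$ back to a genuinely $X$-valued dominable witness. Intersecting the order interval with $X$ resolves it precisely because $X$ is a linear subspace, so that $\varepsilon([-e,e]\cap X)=[-\varepsilon e,\varepsilon e]\cap X$; everything else is the routine translation reduction to $0$ together with the characterization of dominable sets from Theorem~\ref{uo-bdd-dom}.
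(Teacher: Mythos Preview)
Your proof is correct and follows the same route as the paper: both use Theorem~\ref{uo-bdd-dom} to identify the uo-bounded sets with the dominable sets, and then match the resulting bornological convergence $\mu_{\mathcal B}$ with relative uniform convergence on $X^u$ restricted to~$X$. The paper simply asserts the conclusion as immediate from this identification, whereas you spell out both inclusions explicitly (including the minor point that the regulator $e\in (X^u)_+$ need not lie in $X$, handled by taking $A=[-e,e]\cap X$).
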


It follows from the preceding discussion that for a linear
bornology~$\mathcal B$, the Mackey modification of $\mu_{\mathcal B}$
is again~$\mu_{\mathcal B}$. Now Proposition~3.9 of~\cite{OBrien:23}
yields the following:

\begin{proposition}\label{born-cont}
  For a linear operator $T\colon X\to Y$  between bornological
  spaces, $T$ is bounded iff it is continuous with respect to the
  convergences induced by the bornologies.
\end{proposition}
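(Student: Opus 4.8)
The plan is to reduce both implications to Theorem~\ref{muB-conv}\eqref{muB-conv-bdd}, which identifies the members of a linear bornology $\mathcal B$ with the $\mu_{\mathcal B}$-bounded sets. Throughout, linearity of $T$ lets me reduce continuity to the preservation of null nets: since $Tx_\alpha-Tx=T(x_\alpha-x)$, the map $T$ is continuous precisely when $x_\alpha\goesmuB 0$ in $X$ implies $Tx_\alpha\goesmuB 0$ in $Y$.

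For the forward implication, suppose $T$ is bounded and let $x_\alpha\goesmuB 0$ in $X$. By definition there is some $A\in\mathcal B_X$ such that for each $\varepsilon>0$ the set $\varepsilon A$ contains a tail of $(x_\alpha)$. Since $T$ is linear, $T(\varepsilon A)=\varepsilon T(A)$, so applying $T$ to such a tail shows that $\varepsilon T(A)$ contains a tail of $(Tx_\alpha)$ for every $\varepsilon>0$. As $T$ is bounded, $T(A)\in\mathcal B_Y$, and therefore $Tx_\alpha\goesmuB 0$ in $Y$. By the reduction above, $T$ is continuous.

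For the converse, suppose $T$ is continuous and let $A\in\mathcal B_X$. By Theorem~\ref{muB-conv}\eqref{muB-conv-bdd}, $A$ is $\mu_{\mathcal B_X}$-bounded. A continuous linear operator between convergence vector spaces maps bounded sets to bounded sets (as noted in the example preceding Theorem~\ref{muB-conv}), so $T(A)$ is $\mu_{\mathcal B_Y}$-bounded. Applying Theorem~\ref{muB-conv}\eqref{muB-conv-bdd} in $Y$ gives $T(A)\in\mathcal B_Y$, so $T$ is bounded.

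There is no serious obstacle here; the only point requiring care is the bookkeeping with tails in the forward direction, where one must check that $\varepsilon T(A)$ captures a tail of $(Tx_\alpha)$ uniformly in $\varepsilon$. I note that the same conclusion follows abstractly from the authors' remark that $\mu_{\mathcal B}$ is its own Mackey modification together with Proposition~3.9 of~\cite{OBrien:23}; the direct argument above simply unwinds that machinery using part~\eqref{muB-conv-bdd} of Theorem~\ref{muB-conv}.
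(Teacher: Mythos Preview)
Your argument is correct. Both directions are sound: in the forward direction the linearity of $T$ indeed gives $T(\varepsilon A)=\varepsilon T(A)$, so the single set $T(A)\in\mathcal B_Y$ witnesses $Tx_\alpha\goesmuB 0$; in the converse you correctly combine Theorem~\ref{muB-conv}\eqref{muB-conv-bdd} with the fact (recorded among the examples preceding that theorem) that continuous linear maps between convergence vector spaces preserve boundedness.

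The paper does not argue directly: it observes that $\mu_{\mathcal B}$ is Mackey (i.e., equals its own Mackey modification) and then invokes Proposition~3.9 of~\cite{OBrien:23}, which characterizes continuity of linear maps between Mackey convergence spaces via boundedness. Your proof is the elementary unwinding of that citation, carried out entirely with Theorem~\ref{muB-conv}\eqref{muB-conv-bdd} and the basic fact about continuous maps preserving bounded sets; you already note this connection in your final paragraph. The gain of your route is self-containment within the present paper, at the modest cost of writing out two short paragraphs instead of a one-line reference.
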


\subsection*{Locally solid bornologies.} A linear bornology on a
vector lattice is said to be \term{locally solid} if it has a base of
solid sets. It is easy to see that, in this case, the solid hull of
every bounded set is bounded. It is clear that the bornology of all
norm bounded sets in a normed lattice is locally solid. The bornology
of all order bounded sets in a vector lattice is locally solid;
moreover, it is the least locally solid bornology. It is also easy to
see that dominable sets form a locally solid bornology.  It is
straightforward that if $\mathcal B$ is a locally solid bornology then
$\mu_{\mathcal B}$ is a locally solid convergence.

\begin{proposition}
  Let $(X,\eta)$ be a locally solid convergence space. The collection
  of all $\eta$-bounded sets is a locally solid bornology. Every order
  bounded set is $\eta$-bounded.
\end{proposition}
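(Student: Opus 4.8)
The plan is to prove two claims: first that the family of $\eta$-bounded sets forms a locally solid bornology, and second that every order bounded set is $\eta$-bounded. I will use the characterization of boundedness from Theorem~\ref{bdd}\eqref{bdd-2nets}, namely that $A$ is $\eta$-bounded iff $r_\gamma x_\gamma \goeseta 0$ whenever $r_\gamma\to 0$ in $\mathbb K$ and $(x_\gamma)$ is a net in~$A$, together with the local solidity of~$\eta$ in the form of Theorem~\ref{def-ls}\eqref{def-ls-net}.

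First I would verify that the $\eta$-bounded sets form a linear bornology; this is recorded earlier in the excerpt (the third example in this section: all $\lambda$-bounded sets form a linear bornology), so I may simply cite it. It remains to show the bornology has a base of solid sets, for which it suffices to prove that the solid hull $\Sol(A)$ of an $\eta$-bounded set $A$ is again $\eta$-bounded. So let $(z_\gamma)$ be a net in $\Sol(A)$ and $r_\gamma\to 0$. By definition of the solid hull, for each $\gamma$ there is some $x_\gamma\in A$ with $\abs{z_\gamma}\le\abs{x_\gamma}$, whence $\abs{r_\gamma z_\gamma}\le\abs{r_\gamma x_\gamma}$. Since $A$ is $\eta$-bounded, $r_\gamma x_\gamma\goeseta 0$, and by local solidity (Theorem~\ref{def-ls}\eqref{def-ls-net}) it follows that $r_\gamma z_\gamma\goeseta 0$. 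By Theorem~\ref{bdd}, $\Sol(A)$ is $\eta$-bounded. Thus the solid hulls of bounded sets are bounded, giving a base of solid sets.

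For the second claim, it suffices to show that each order interval $[-u,u]$ with $u\in X_+$ is $\eta$-bounded, since order bounded sets are contained in such intervals and the bornology is closed under subsets. Let $(x_\gamma)$ be a net in $[-u,u]$ and $r_\gamma\to 0$ in~$\mathbb K$. Then $\abs{x_\gamma}\le u$, so $\abs{r_\gamma x_\gamma}\le\abs{r_\gamma}\,u$. Now $\abs{r_\gamma}\,u\goeseta 0$: indeed $\abs{r_\gamma}\to 0$ in $\mathbb K$ and, by linearity (continuity of scalar multiplication), $\abs{r_\gamma}\,u = \abs{r_\gamma}\cdot u\goeseta 0\cdot u=0$. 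Applying local solidity once more to the domination $\abs{r_\gamma x_\gamma}\le\abs{r_\gamma}u$, we obtain $r_\gamma x_\gamma\goeseta 0$, so $[-u,u]$ is $\eta$-bounded by Theorem~\ref{bdd}.

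I do not anticipate a serious obstacle here; the argument is a routine combination of the net characterization of bounded sets with the defining property of local solidity. The only point requiring mild care is keeping the index conventions straight when invoking Theorem~\ref{def-ls}\eqref{def-ls-net}, which demands a common index set for the dominating and dominated nets; in both parts above the relevant pairs of nets are indexed by the same $\Gamma$, so the hypothesis applies directly without recourse to the more elaborate Theorem~\ref{def-ls}\eqref{def-ls-net-dom}.
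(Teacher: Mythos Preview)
Your argument is correct and follows the same strategy as the paper: show that the solid hull of a bounded set is bounded by invoking local solidity, and verify the second claim directly. The only difference is presentational: the paper works in filter language, using the definition $[\mathcal N_0A]\goeseta 0$ together with Theorem~\ref{def-ls}\eqref{def-ls-fil-hull} and the identity $\Sol[\mathcal N_0A]=[\mathcal N_0\Sol(A)]$, whereas you work in net language via Theorem~\ref{bdd}\eqref{bdd-2nets} and Theorem~\ref{def-ls}\eqref{def-ls-net}; the paper also leaves the second claim as ``straightforward'' while you spell it out.
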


\begin{proof}
  Suppose that $A$ is $\eta$-bounded. Then $[\mathcal N_0A]\goeseta
  0$. Since $\eta$ is locally solid, $\Sol(\mathcal N_0A)\goeseta
  0$. It is easy to see that
  $\Sol(\mathcal N_0A)=\bigl[\mathcal N_0\Sol(A)\bigr]$. Hence,
  $\bigl[\mathcal N_0\Sol(A)\bigr]\goeseta 0$ and, therefore, $\Sol(A)$ is
  $\eta$-bounded. The second claim is straightforward.
\end{proof}

\begin{question}
  Recall that a subset $A$ in a normed lattice $X$ is \term{almost
    order bounded} if for every $\varepsilon>0$ there exists
  $u\in X_+$ such that $A\subseteq[-u,u]+\varepsilon B_X$. It is easy
  to see that almost order bounded sets form a locally solid
  bornology.  It would be interesting to describe the bornological
  convergence generated by the bornology of almost order bounded
  sets. Since this bornology contains all order bounded sets and is
  contained in the bornology of norm bounded sets, the corresponding
  bornological convergence is weaker than relative uniform
  convergence but stronger than norm convergence. Note that the
  bornology of almost order bounded sets may be generalized to locally
  solid topological vector lattices.
\end{question}

\begin{example}\label{ex:CK-equi}
  The collection of all equicontinuous sets in
  $C(\Omega)$, for a Tychonoff space~$\Omega$, is a linear
  bornology. We claim that if this bornology is locally solid then
  $\Omega$ is discrete (and then every set in $C(\Omega)$ is
  equicontinuous). Indeed, the solid hull of $\one$ is $[-\one,\one]$ and it
  should be equicontinuous. Fix $t\in\Omega$ and find its open
  neighborhood $U$ such that $\bigabs{f(s)-f(t)}<\frac{1}{2}$, for every
  $s\in U$ and every $-\one\le f\le \one$. If there were
  $s\in U\setminus\{t\}$, we could then find $f\in C(\Omega)$ with
  $-\one\le f\le \one$, $f(t)=1$ and $f(s)=-1$, which would lead to a
  contradiction. Hence, $U=\{t\}$.

  More generally, let $X$ be a convergence space and $A\subseteq
  C(X)$. We say that $A$ is \term{equicontinuous} if $x_\alpha\to x$
  in $X$ and $\varepsilon>0$ then there exists $\alpha_0$ such that
  $\bigabs{f(x_\alpha)-f(x)}<\varepsilon$ for all $\alpha\ge\alpha_0$
  and $f\in A$. It can be easily verified that this is consistent with
  the filter definition of equicontinuity in Definition~2.4.1
  of~\cite{Beattie:02}. It is also easy to see that equicontinuous
  sets form a linear bornology in $C(X)$.

  Suppose now that $X$ is a convergence vector space. Let
  $\mathcal L_cX$ be the space of all continuous linear functionals
  on~$X$. Equicontinuous subsets in $\mathcal L_cX$ form a linear
  bornology because it is a restriction of the bornology of
  equicontinuous sets in $C(X)$ to $\mathcal L_cX$.
\end{example}

\subsection*{Finite-dimensional convergence.}

Let $X$ be a vector space. Define \term{finite-dimensional
  convergence} on $X$ as follows: $x_\alpha\goesfd 0$ if a tail of
$(x_\alpha)$ is contained in a finite-dimensional subspace of $X$ and
converges to zero there in the usual sense; put $x_\alpha\goesfd x$
when $x_\alpha-x\goesfd 0$.

Let $(e_i)_{i\in I}$ be a Hamel basis in~$X$; we write $P_ix$ for the
$i$-th coordinate of~$x$. It is easy to see that if
$x_\alpha\goesfd x$ then $P_ix_\alpha\to P_ix$ for every~$i$, i.e.,
$(x_\alpha)$ converges to $x$ coordinate-wise. The converse is false:
let $X$ be a vector space with a countable Hamel basis
$(e_i)_{i\in\mathbb N}$, then, viewed as a sequence, $(e_i)$ converges
to zero coordinate-wise, but not in~$\mathrm{fd}$. 

Here we collect a few basic facts about this
convergence:

\begin{proposition}
  Finite-dimensional convergence on a vector space $X$ is Hausdorff
  and complete. It is the strongest linear convergence on~$X$. It
  agrees with the convergence induced by the bornology of all bounded
  subsets of finite-dimensional subspaces.
\end{proposition}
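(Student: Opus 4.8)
The plan is to pin down $\mathrm{fd}$ by identifying it with $\mu_{\mathcal B}$, where $\mathcal B$ is the bornology of Example~\ref{findim-born}, and then read off the remaining properties. First I would prove the equality $\mathrm{fd}=\mu_{\mathcal B}$ directly, in both directions. For $\mathrm{fd}\ge\mu_{\mathcal B}$: if $x_\alpha\goesfd 0$, a tail lies in a finite-dimensional subspace $F$ and converges to $0$ there, so for any fixed norm on $F$ the set $A=\{v\in F:\norm{v}\le 1\}$ belongs to $\mathcal B$, and for each $\varepsilon>0$ the tail eventually lies in $\varepsilon A$; hence $x_\alpha\goesmuB 0$. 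For $\mu_{\mathcal B}\ge\mathrm{fd}$: if $x_\alpha\goesmuB 0$, there is $A\in\mathcal B$, bounded inside a finite-dimensional subspace $F$, with $\varepsilon A$ containing a tail for every $\varepsilon$; taking $\varepsilon=1$ places a whole tail inside $F$, and the bound $A\subseteq\{\norm{v}\le M\}$ together with the containments $x_\alpha\in\varepsilon A$ forces $\norm{x_\alpha}\le\varepsilon M$ eventually, so the tail converges to $0$ in $F$ and $x_\alpha\goesfd 0$. Once the identity holds, linearity of $\mathrm{fd}$ is Theorem~\ref{muB-conv}\eqref{muB-conv-lin}, and the Hausdorff property is Theorem~\ref{muB-conv}\eqref{muB-conv-Haus}: a nontrivial subspace, even $\Span\{a\}$, is unbounded in any finite-dimensional subspace, so $\mathcal B$ contains no nontrivial subspace and $\mu_{\mathcal B}$ is Hausdorff.

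For the claim that $\mathrm{fd}$ is the strongest linear convergence, I would argue directly rather than through boundedness. Let $\lambda$ be any linear convergence on $X$ and suppose $x_\alpha\goesfd 0$. A tail lies in a finite-dimensional subspace $F$ with basis $f_1,\dots,f_n$ and converges to $0$ there, so each coordinate satisfies $c_i(\alpha)\to 0$ in $\mathbb K$. By continuity of scalar multiplication the net $c_i(\alpha)f_i$ is $\lambda$-null for each $i$, and by continuity of addition the finite sum $x_\alpha=\sum_i c_i(\alpha)f_i$ is $\lambda$-null; thus $x_\alpha\goesl 0$, giving $\mathrm{fd}\ge\lambda$. (Equivalently, this same computation shows every member of $\mathcal B$ is $\lambda$-bounded, so the statement also follows from Theorem~\ref{muB-conv}\eqref{muB-conv-str}.)

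For completeness I would work directly from the definition of Cauchy net, and this is the step requiring the most care because of the double-net bookkeeping. Assume $(x_\alpha)_{\alpha\in A}$ is $\mathrm{fd}$-Cauchy, i.e.\ the double net $(x_\alpha-x_\beta)_{(\alpha,\beta)\in A^2}$ is $\mathrm{fd}$-null: a tail of it lies in some finite-dimensional subspace $F$ and converges to $0$ there. Since $A$ is directed, pick $\gamma_0$ dominating both indices of that tail, so that $x_\alpha-x_\beta\in F$ for all $\alpha,\beta\ge\gamma_0$. Fixing $\beta=\gamma_0$ shows $x_\alpha\in x_{\gamma_0}+F$ for all $\alpha\ge\gamma_0$, whence the tail $(x_\alpha)_{\alpha\ge\gamma_0}$ lies in the finite-dimensional subspace $G=\Span\bigl(F\cup\{x_{\gamma_0}\}\bigr)$. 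Restricted to $G$ with its (unique) Euclidean topology, the condition $x_\alpha-x_\beta\to 0$ is exactly the Cauchy condition, and a finite-dimensional normed space is complete, so Cauchy nets converge there; hence $x_\alpha\to x$ in $G$ for some $x\in G$. Then $x_\alpha-x\in G$ eventually and converges to $0$ there, so $x_\alpha\goesfd x$, proving completeness. The main obstacle, as noted, is the net-level extraction in this last paragraph: the analogous statements for sequences are routine, but here one must correctly convert the double-net tail condition into a single finite-dimensional subspace capturing a tail of $(x_\alpha)$ before invoking finite-dimensional completeness.
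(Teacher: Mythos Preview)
Your proof is correct and follows essentially the same approach as the paper's: identify $\mathrm{fd}$ with $\mu_{\mathcal B}$, deduce Hausdorff from Theorem~\ref{muB-conv}\eqref{muB-conv-Haus}, argue the strongest-linear-convergence claim via coordinates in a finite-dimensional subspace, and prove completeness by trapping a tail in a finite-dimensional subspace and invoking its completeness. You supply considerably more detail than the paper does---in particular, the paper simply asserts that a tail of a Cauchy net lies in a finite-dimensional subspace, whereas you carefully derive this from the double-net definition---but the structure is the same.
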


\begin{proof}
  It is easy to see that $\mathrm{fd}$ convergence is exactly the
  convergence induced by the bornology of all bounded subsets of
  finite-dimensional subspaces,
  cf.~Example~\ref{findim-born}. Therefore, it is Hausdorff by
  Theorem~\ref{muB-conv}\eqref{muB-conv-Haus}. It is the strongest
  linear convergence on $X$ because every linear
  convergence a on finite-dimensional vector spaces is weaker than
  coordinate-wise convergence.

  To show that $\mathrm{fd}$ convergence is complete, suppose that
  $(x_\alpha)$ is a Cauchy net in~$X$. Then a tail of this net is
  contained in a finite-dimensional subspace, and is Cauchy there in
  the usual sense. Hence this tail is convergent (in the subspace and,
  therefore, with respect to $\mathrm{fd}$ convergence).
\end{proof}

In an ordered vector space, finite dimensional convergence can be
used to characterize the Archimedean property.

\begin{proposition}
  Let $X$ be an ordered vector space. The following are equivalent:
  \begin{enumerate}
  \item\label{a-A} $X$ is Archimedean;
  \item\label{a-ru} $X_+$ is relatively uniformly closed;
  \item\label{a-a} $X_+$ is $\mathrm{fd}$-closed.
  \end{enumerate}
\end{proposition}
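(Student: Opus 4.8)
The plan is to prove the cycle \eqref{a-A}$\Rightarrow$\eqref{a-ru}$\Rightarrow$\eqref{a-a}$\Rightarrow$\eqref{a-A}. Throughout I use the extension of relative uniform convergence to ordered vector spaces from the footnote in Section~\ref{sec:ru}: $x_\alpha\goesu x$ means there is a regulator $e\in X_+$ such that for every $n$ there is $\alpha_0$ with $-\frac1n e\le x_\alpha-x\le\frac1n e$ for all $\alpha\ge\alpha_0$. I also use the Archimedean property in the form: if $na\le b$ for all $n$ and some $b\in X$, then $a\le 0$.

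For \eqref{a-A}$\Rightarrow$\eqref{a-ru}, assume $X$ is Archimedean and let $(x_\alpha)$ be a net in $X_+$ with $x_\alpha\goesu x$ via a regulator $e$. The idea is to exploit positivity of the $x_\alpha$ together with the upper estimate. From $x_\alpha-x\le\frac1n e$ and $x_\alpha\ge 0$ I get $x\ge x_\alpha-\frac1n e\ge-\frac1n e$, hence $-x\le\frac1n e$ for every $n$. Multiplying by $n$ gives $n(-x)\le e$ for all $n$, so the Archimedean property yields $-x\le 0$, i.e.\ $x\in X_+$. Thus $X_+$ is relatively uniformly closed.

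The implication \eqref{a-ru}$\Rightarrow$\eqref{a-a} is the short step and rests entirely on the preceding proposition: since finite-dimensional convergence is the strongest linear convergence on $X$, we have $\mathrm{fd}\ge\mathrm{ru}$, that is, $x_\alpha\goesfd x$ implies $x_\alpha\goesu x$. Hence any net $(x_\alpha)$ in the ru-closed set $X_+$ with $x_\alpha\goesfd x$ also satisfies $x_\alpha\goesu x$, forcing $x\in X_+$; this is exactly fd-closedness of $X_+$. For \eqref{a-a}$\Rightarrow$\eqref{a-A} I argue by contraposition. If $X$ is not Archimedean, there are $a,b\in X$ with $na\le b$ for all $n$ but $a\not\le 0$. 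Put $w_n=\frac1n b-a=\frac1n(b-na)$; since $b-na\ge 0$ and $\frac1n>0$, each $w_n$ lies in $X_+$. On the other hand $w_n=-a+\frac1n b$ lies in the finite-dimensional subspace $\Span\{a,b\}$ and converges to $-a$ there in the usual sense, so $w_n\goesfd -a$. As $a\not\le 0$ gives $-a\notin X_+$, the set $X_+$ fails to be fd-closed, completing the contrapositive.

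I expect the construction in \eqref{a-a}$\Rightarrow$\eqref{a-A} to be the crux: one must manufacture, out of a mere failure of the Archimedean property, a net that is genuinely fd-convergent (a tail sitting inside a fixed finite-dimensional subspace and converging there) to a point outside the cone, and the scaling $w_n=\frac1n b-a$ is exactly what keeps the terms positive while letting the limit $-a$ escape $X_+$. A secondary point worth checking explicitly is that relative uniform convergence really is a \emph{linear} convergence structure on an ordered vector space, since this is what legitimizes invoking the ``strongest linear convergence'' property to obtain $\mathrm{fd}\ge\mathrm{ru}$ in step \eqref{a-ru}$\Rightarrow$\eqref{a-a}.
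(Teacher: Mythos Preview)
Your proof is correct and follows the same cycle as the paper with essentially identical arguments; the paper does \eqref{a-a}$\Rightarrow$\eqref{a-A} directly rather than by contraposition, but your $w_n=\frac1n b-a$ is exactly the paper's $\frac{y}{n}-x$. Your closing caveat is apt---in a general ordered vector space $\mathrm{ru}$ need not be linear (the cone may fail to generate $X$)---and the paper glosses over this too; the gap is harmless here because for a net in $X_+$ one can take as regulator $e=x_{\alpha_1}+\cdots+x_{\alpha_k}$ where the $x_{\alpha_i}\in X_+$ span the finite-dimensional subspace carrying the tail, so $\mathrm{fd}$-convergence of positive nets does imply $\mathrm{ru}$-convergence.
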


\begin{proof}
  \eqref{a-A}$\Rightarrow$\eqref{a-ru}
  Suppose $X$ is Archimedean, and $x_\alpha\goesu x$ for some net
  $(x_\alpha)$ in~$X^+$. It follows that there exists a regulator $u\ge
  0$ such that for every $n\in\mathbb N$ there
  exists $\alpha$ such that
  \begin{math}
    x_\alpha-\frac{u}{n}\le x\le x_\alpha+\frac{u}{n}.
  \end{math}
  Then $x\ge -\frac{u}{n}$ for every~$n$. It follows that
  $x\ge 0$, hence $x\in X_+$.

  \eqref{a-ru}$\Rightarrow$\eqref{a-a} is trivial as finite dimensional
  convergence is the strongest linear convergence on~$X$.

  \eqref{a-a}$\Rightarrow$\eqref{a-A} Suppose that $x\in X$ and
  $y\in X_+$ satisfy $nx\le y$ for all $n\in\mathbb N$. It follows
  that $x-\frac{y}{n}\le 0$ for all~$n$. Since the sequence
  $\bigl(x-\frac{y}{n}\bigr)$ is contained in the linear span of $x$
  and~$y$, passing to the $\rm{fd}$-limit, we conclude that $x\le 0$.
\end{proof}

\begin{proposition}
  Let $X$ be an ordered vector space and $e\in X_+$. The following are
  equivalent:
  \begin{enumerate}
  \item\label{e-e} $e$ is a strong order unit;
  \item\label{e-ru} $X_+$ is an $\mathrm{ru}$-neighborhood of~$e$;
  \item\label{e-a}  $X_+$ is an $\mathrm{fd}$-neighborhood of~$e$.
  \end{enumerate}
\end{proposition}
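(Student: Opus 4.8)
The plan is to establish the cycle \eqref{e-e}$\Rightarrow$\eqref{e-ru}$\Rightarrow$\eqref{e-a}$\Rightarrow$\eqref{e-e}, relying throughout on the fact that in a convergence space a set $U$ is a neighborhood of a point $x$ precisely when every net converging to $x$ has a tail inside $U$.

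For \eqref{e-e}$\Rightarrow$\eqref{e-ru} I would take a net with $x_\alpha\goesu e$, fix a regulator $w\ge 0$ of the convergence, and use the strong-unit property to bound $w\le\lambda e$ for some $\lambda>0$. Choosing $\varepsilon=\frac{1}{2\lambda}$, the relative-uniform estimate $e-\varepsilon w\le x_\alpha$ forces $x_\alpha\ge\frac12 e\ge 0$ on a tail, so that tail lies in $X_+$; this is exactly what it means for $X_+$ to be an $\mathrm{ru}$-neighborhood of $e$.

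The step \eqref{e-ru}$\Rightarrow$\eqref{e-a} I expect to be essentially free: since $\mathrm{fd}$ is the strongest linear convergence on $X$ and $\mathrm{ru}$ is a (locally solid, hence linear) convergence, we have $\mathrm{fd}\ge\mathrm{ru}$, so every $\mathrm{fd}$-convergent net is $\mathrm{ru}$-convergent. Consequently any $\mathrm{ru}$-neighborhood of $e$ is automatically an $\mathrm{fd}$-neighborhood of $e$. For \eqref{e-a}$\Rightarrow$\eqref{e-e} I would, given $x\in X$, test the $\mathrm{fd}$-neighborhood $X_+$ against the two sequences $e+\frac1n x$ and $e-\frac1n x$; each lies in the finite-dimensional span of $\{e,x\}$ and converges to $e$ there, hence $\goesfd e$. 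Neighborhood membership then yields an index $N$ with $e\pm\frac1N x\ge 0$, i.e.\ $-Ne\le x\le Ne$, exhibiting $e$ as a strong order unit.

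None of the steps presents a genuine obstacle; the only point needing care is the precise meaning of ``neighborhood'' in a convergence (rather than topological) space, after which the comparison $\mathrm{fd}\ge\mathrm{ru}$ trivializes the middle implication and the probe sequences $e\pm\frac1n x$ close the cycle.
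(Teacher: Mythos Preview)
Your proof is correct and follows essentially the same cycle as the paper's own argument. The only cosmetic difference is that in \eqref{e-e}$\Rightarrow$\eqref{e-ru} the paper observes one may simply take $e$ itself as the regulator (since $e$ is a strong unit), whereas you bound an arbitrary regulator $w$ by $\lambda e$; the two arguments are equivalent.
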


\begin{proof}
  \eqref{e-e}$\Rightarrow$\eqref{e-ru} Suppose that
  $x_\alpha\goesu e$. Without loss of generality, we may take $e$ as a
  regulator of this convergence. Then for all sufficiently large
  $\alpha$ we have $-\frac{e}{2}\le x_\alpha-e\le \frac{e}{2}$ and,
  therefore, $x_\alpha \ge\frac{e}{2}\ge 0$.

  \eqref{e-ru}$\Rightarrow$\eqref{e-a} is trivial.

  \eqref{e-a}$\Rightarrow$\eqref{e-e} Let $x\in X$. It follows from
  $\frac{x}{n}+e\goesfd e$ that for some sufficiently large $n$ we have
  $\frac{x}{n}+e\ge 0$ and, therefore, $x\ge -ne$. Applying the same
  argument to~$-x$, we find $m$ such that $x\le me$.
\end{proof}

Since fd is the strongest linear convergence, the two preceding
propositions yield the following:

\begin{corollary}
  Let $X$ be an ordered vector space equipped with a linear
  convergence. If $X_+$ is closed then $X$ is Archimedean. If $X_+$ is
  a neighborhood of $e$ then $e$ is a strong unit.
\end{corollary}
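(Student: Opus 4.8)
The plan is to deduce both assertions directly from the two preceding propositions, using that finite-dimensional convergence is the strongest linear convergence on $X$. The only genuine content is a general monotonicity principle relating the strength of a linear convergence to its closed sets and to its neighborhood filters, which I would record first.

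First I would observe that if $\lambda$ and $\eta$ are convergences with $\lambda\ge\eta$, then every $\eta$-closed set is $\lambda$-closed and every $\eta$-neighborhood of a point is a $\lambda$-neighborhood of that point. For the closed-set half: if $(x_\alpha)$ lies in an $\eta$-closed set $A$ and $x_\alpha\goesl x$, then $x_\alpha\goeseta x$ since $\lambda\ge\eta$, so $x\in A$; hence $A$ is $\lambda$-closed. For the neighborhood half, interpreting a neighborhood of $x$ as a set having a tail of every net converging to $x$ (equivalently, a member of every filter converging to $x$): if $N$ is an $\eta$-neighborhood of $x$ and $x_\alpha\goesl x$, then $x_\alpha\goeseta x$, so $(x_\alpha)$ is eventually in $N$, and therefore $N$ is a $\lambda$-neighborhood of $x$.

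Next I would apply this with $\lambda=\mathrm{fd}$ and $\eta$ the given linear convergence, invoking $\mathrm{fd}\ge\eta$. For the first statement, if $X_+$ is $\eta$-closed then it is $\mathrm{fd}$-closed, and the proposition characterizing the Archimedean property yields that $X$ is Archimedean. For the second statement, if $X_+$ is an $\eta$-neighborhood of $e$ then it is an $\mathrm{fd}$-neighborhood of $e$, and the proposition characterizing strong order units yields that $e$ is a strong unit.

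I expect the only point requiring care to be the direction of the monotonicity: a stronger convergence has fewer convergent nets, hence more closed sets and more neighborhoods, so the implications must run from $\eta$ to $\mathrm{fd}$ and not the reverse. Once that direction is fixed and the convergence-space notion of neighborhood is used (rather than a topological one, since no topology is assumed), the argument is routine.
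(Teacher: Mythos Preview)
Your proof is correct and takes essentially the same approach as the paper: the corollary is deduced from the two preceding propositions via the fact that $\mathrm{fd}$ is the strongest linear convergence. You simply spell out explicitly the monotonicity principle (stronger convergence implies more closed sets and more neighborhoods) that the paper leaves implicit.
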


\section{Bounded modification}
\label{sec:bddmod}

Let $(X,\lambda)$ be a convergence space and $\mathcal B$ a bornology
on~$X$. One can define a new convergence $\lambda_\mathcal B$ as
follows: $x_\alpha\goeslB x$ if $x_\alpha\goesl x$ and a tail of
$(x_\alpha)$ is contained in a set in~$\mathcal B$.  In terms of
filters, we define $\mathcal F\goeslB x$ if $\mathcal F\goesl x$ and
$\mathcal F\cap\mathcal B\ne\varnothing$. It is easy to see that
$x_\alpha\goeslB x$ iff $[x_\alpha]\goeslB x$. We call this
convergence the \term{bounded modification} of $\lambda$
by~$\mathcal B$, or the $\mathcal B$-modification of~$\lambda$.  In
\cite[p.~3]{Beattie:02}, this convergence is called the
\term{specified sets convergence} for $\lambda$ with respect
to~$\mathcal B$.  This concept is motivated by the notion of a mixed
topology, see, e.g.,~\cite{Cooper:87,Conradie:06}. The notation
$\lambda_{\mathcal B}$ should not be confused
with~$\mu_{\mathcal B}$, which we have used to denote the bornological
convergence associated with the bornological vector space
$(X,\mathcal B)$. We will use $\mu_{\mathcal B}$ only for bornological
convergences.

The following properties of
$\lambda_{\mathcal B}$ are straightforward:

\begin{proposition}\label{bm}
  Let $(X,\lambda)$ be a convergence space and
  $\mathcal B$ a bornology on~$X$. Then
  \begin{enumerate}
  \item $\lambda_{\mathcal B}$ is the strongest convergence on $X$
    that agrees with $\lambda$ on sets in~$\mathcal B$.
  \item\label{bm-final} $\lambda_{\mathcal B}$ is the
  strongest convergence $\eta$ on $X$ for which the
  embedding $(A,\lambda_{|A})\hookrightarrow (X,\eta)$ is continuous
  for every $A\in\mathcal B$.
\item $(\lambda_{\mathcal B})_{\mathcal B}=\lambda_{\mathcal B}$.
  If $\mathcal C$ is another bornology on $X$ then
    $(\lambda_{\mathcal B})_{\mathcal C}=\lambda_{\mathcal
      B\cap\mathcal C}$.
  \item If $f\colon X\to Y$ is a function from $X$ to another
    convergence space~$Y$, then $f$ is $\lambda_{\mathcal
      B}$-continuous iff its restriction to $A$ is $\lambda$-continuous
    for every $A\in\mathcal B$.
  \end{enumerate}
\end{proposition}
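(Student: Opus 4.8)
The plan is to reduce all four parts to a single observation about bornologies: since $X=\bigcup\mathcal B$ and $\mathcal B$ is closed under subsets, every singleton lies in $\mathcal B$, and since $\mathcal B$ is closed under finite unions, whenever a tail of a net is contained in some $A\in\mathcal B$ and $x$ is an arbitrary point of $X$, the set $A\cup\{x\}$ is again in $\mathcal B$. This lets us absorb a prospective limit point into a bounded tail-set whenever it is needed, which is the only slightly non-obvious move in the argument.

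For (i) and (ii) I would first verify that $\lambda_{\mathcal B}$ itself lies in the relevant class. If $(x_\alpha)$ is a net in some $A\in\mathcal B$ with $\lambda$-limit $x\in A$, then the whole net lies in $A\in\mathcal B$, so the tail condition in the definition of $\lambda_{\mathcal B}$ holds automatically; hence $x_\alpha\goeslB x$ iff $x_\alpha\goesl x$. This shows both that $\lambda_{\mathcal B}$ agrees with $\lambda$ on every $A\in\mathcal B$ (membership for (i)) and that each embedding $(A,\lambda_{|A})\hookrightarrow(X,\lambda_{\mathcal B})$ is continuous (membership for (ii)). For maximality, suppose $\eta$ agrees with $\lambda$ on $\mathcal B$-sets or, under the weaker hypothesis of (ii), that every embedding $(A,\lambda_{|A})\hookrightarrow(X,\eta)$ is continuous. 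Given $x_\alpha\goeslB x$, we have $x_\alpha\goesl x$ and a tail contained in some $A\in\mathcal B$; put $A'=A\cup\{x\}\in\mathcal B$. The tail lies in $A'$ and $\lambda$-converges to $x\in A'$, so by the hypothesis on $\eta$ the tail $\eta$-converges to $x$, whence $x_\alpha\goeseta x$. Thus $\lambda_{\mathcal B}\ge\eta$, proving both parts. Note (ii) is formally the stronger statement, since its hypothesis on $\eta$ is only one-directional, and (i) is then immediate from it.

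For (iii), the first identity is immediate from the definition: $x_\alpha\xrightarrow{(\lambda_{\mathcal B})_{\mathcal B}}x$ requires $x_\alpha\goeslB x$ together with a tail in $\mathcal B$, but the latter is already part of $x_\alpha\goeslB x$. For the second identity, unfolding both sides, $x_\alpha\xrightarrow{(\lambda_{\mathcal B})_{\mathcal C}}x$ means $x_\alpha\goesl x$ with a tail in some $B\in\mathcal B$ and a tail in some $C\in\mathcal C$, while $x_\alpha\xrightarrow{\lambda_{\mathcal B\cap\mathcal C}}x$ means $x_\alpha\goesl x$ with a tail in some $D\in\mathcal B\cap\mathcal C$. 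One direction is clear. For the other, choose an index past both thresholds using directedness; the resulting tail lies in $B\cap C$, which belongs to both $\mathcal B$ and $\mathcal C$ since each is closed under subsets, so $B\cap C\in\mathcal B\cap\mathcal C$. Part (iv) follows the same pattern: the forward implication uses that a net living in $A\in\mathcal B$ with $\lambda$-limit in $A$ is automatically $\lambda_{\mathcal B}$-convergent, so $\lambda_{\mathcal B}$-continuity of $f$ forces continuity of $f_{|A}$; conversely, given $x_\alpha\goeslB x$ we again pass to $A'=A\cup\{x\}\in\mathcal B$ and apply continuity of $f_{|A'}$ to the tail.

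The arguments are routine, and the author's word ``straightforward'' is justified. The only step requiring care, and the single idea powering (i), (ii), and (iv), is the enlargement $A\mapsto A\cup\{x\}$, which is legitimate precisely because singletons are bounded and $\mathcal B$ is closed under finite unions; this is what allows the treatment of nets whose limit falls outside the bounded tail-set.
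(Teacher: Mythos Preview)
Your proof is correct and complete. The paper itself provides no proof beyond the word ``straightforward'', so your argument is exactly the kind of routine verification the authors had in mind; in particular, the enlargement $A\mapsto A\cup\{x\}$ using the bornology axioms is the natural (and only) way to handle the case where the limit point falls outside the bounded tail-set, and your observation that (ii) formally subsumes (i) is apt.
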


If $X$ is a vector space and both $\lambda$ and $\mathcal B$ are
linear then so is $\lambda_{\mathcal B}$
(cf. \cite[p.~86]{Beattie:02}).
Indeed, if $x_\gamma\goeslB x$ and
$y_\gamma\goeslB y$ then $x_\gamma\goesl x$, $y_\gamma\goesl y$ and
the nets have tails in some sets $A$ and $B$ from~$\mathcal B$,
respectively. Then $x_\gamma+y_\gamma\goesl x+y$ and the net
$(x_\gamma+y_\gamma)$ has a tail in $A+B$, hence
$x_\gamma+y_\gamma\goeslB x+y$. The proof for scalar multiplication is
similar.

\begin{proposition}\label{bdd-bdd}
  Let $(X,\lambda)$ be a convergence vector space and $\mathcal B$ a
  linear bornology on~$X$. A set $A$ is $\lambda_{\mathcal B}$-bounded
  iff $A$ is $\lambda$-bounded and $A\in\mathcal B$.
\end{proposition}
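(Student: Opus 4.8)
The plan is to recognize that $\lambda_{\mathcal B}$ is a linear convergence (as observed in the paragraph preceding the statement), so that the boundedness criteria of Theorem~\ref{bdd} apply verbatim to it. In particular, by Theorem~\ref{bdd}\eqref{bdd-01}, a set $C$ is $\lambda_{\mathcal B}$-bounded iff $(0,1)C\goeslB 0$, where $(0,1)C$ is the net $(sc)_{(s,c)}$ indexed by $(0,1)\times C$ with $(0,1)$ viewed as decreasing to $0$ and ordered by the first component. Unwinding the definition of $\goeslB$, the condition $(0,1)A\goeslB 0$ says precisely that $(0,1)A\goesl 0$ \emph{and} that some tail of $(0,1)A$ lies in a member of $\mathcal B$. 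The whole proof amounts to decoding these two clauses.

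For the forward implication, I would assume $A$ is $\lambda_{\mathcal B}$-bounded, so $(0,1)A\goeslB 0$. The first clause, $(0,1)A\goesl 0$, says exactly that $A$ is $\lambda$-bounded, again by Theorem~\ref{bdd}\eqref{bdd-01}. For the second clause, fix a tail of $(0,1)A$ contained in some $B\in\mathcal B$; since $(0,1)$ is ordered by first component, a tail indexed above $(\varepsilon_0,\cdot)$ is $\bigcup_{0<s\le\varepsilon_0}sA$, which in particular contains $\varepsilon_0 A$. Thus $\varepsilon_0 A\subseteq B$, and since $\mathcal B$ is a linear bornology (closed under multiplication by the nonzero scalar $\varepsilon_0^{-1}$ and under passing to subsets), we get $A\subseteq \varepsilon_0^{-1}B\in\mathcal B$, i.e.\ $A\in\mathcal B$.

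For the converse, I would assume $A$ is $\lambda$-bounded and $A\in\mathcal B$. Then $(0,1)A\goesl 0$ by Theorem~\ref{bdd}\eqref{bdd-01}. Let $\widetilde A$ be the circled hull of $A$; it belongs to $\mathcal B$ because $\mathcal B$ is a linear bornology. Since every term $sa$ of the net $(0,1)A$ satisfies $0<s<1$ and $a\in A$, it lies in $\widetilde A$; hence the entire net, and so every one of its tails, is contained in $\widetilde A\in\mathcal B$. Combined with $(0,1)A\goesl 0$, this gives $(0,1)A\goeslB 0$, so $A$ is $\lambda_{\mathcal B}$-bounded.

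The only genuinely delicate point is the bookkeeping of the tail sets of $(0,1)A$ together with the passage from $\varepsilon_0 A\subseteq B$ to $A\in\mathcal B$ via the scalar-multiplication and subset axioms of a linear bornology; once this is arranged, both directions are immediate. Alternatively, one could run the same argument through Theorem~\ref{bdd}\eqref{bdd-2nets}, testing against an arbitrary scalar net $r_\gamma\to 0$ and an arbitrary net in $A$, but the $(0,1)A$ formulation keeps the bornology bookkeeping the most transparent.
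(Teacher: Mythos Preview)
Your proof is correct and follows essentially the same approach as the paper. The paper's one-line proof simply asserts that for a scalar net $r_\gamma\to 0$ one has $r_\gamma A\goeslB 0$ iff $r_\gamma A\goesl 0$ and $A\in\mathcal B$; your argument is a careful unpacking of exactly this, specialized to the net $(0,1)A$ via Theorem~\ref{bdd}\eqref{bdd-01}, with the tail and bornology bookkeeping made explicit.
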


\begin{proof}
  Let $r_\gamma\to 0$ in~$\mathbb K$. It is easy to see
  that $r_\gamma A\goeslB 0$ iff $r_\gamma A\goesl 0$ and
  $A\in\mathcal B$.
\end{proof}

Given a convergence structure $\lambda$ and a bornology~$\mathcal B$,
we say that $\lambda$ is \term{locally $\mathcal B$-bounded} if every
$\lambda$-convergent net has a $\mathcal B$-bounded tail or,
equivalently, if $\lambda=\lambda_{\mathcal B}$.
A topological space is locally compact iff its convergence structure
is locally $\mathcal B$-bounded, where $\mathcal B$ is the bornology
of relatively compact sets.  A linear convergence $\lambda$ is locally
bounded precisely when it is locally $\mathcal B$-bounded, where
$\mathcal B$ is the bornology of all $\lambda$-bounded sets.  If
$(X,\tau)$ is a topological vector space and $\mathcal B$ is a linear
bornology on~$X$, $\tau$ is locally $\mathcal B$-bounded iff every
point has a $\mathcal B$-bounded neighbourhood. Indeed, let
$\mathcal N_x$ be the set of all neighborhoods of~$x$. If
$\tau=\tau_{\mathcal B}$ then $\mathcal N_x\goestau x$ implies
$\mathcal N_x\xrightarrow{\tau_{\mathcal B}}x$ and, therefore,
$\mathcal N_x\cap\mathcal B\ne\varnothing$; the converse is trivial.

Proposition~\ref{bdd-bdd} yields the following:

\begin{corollary}\label{lB-bdd-B}
  Given a linear bornology $\mathcal B$ and a locally
  $\mathcal B$-bounded linear convergence~$\lambda$, every
  $\lambda$-bounded set is in~$\mathcal B$.
\end{corollary}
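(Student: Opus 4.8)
The plan is to read the corollary off directly from Proposition~\ref{bdd-bdd}, the only subtlety being to invoke the correct reformulation of the hypothesis. By definition, saying that the linear convergence $\lambda$ is locally $\mathcal B$-bounded is equivalent to saying that $\lambda = \lambda_{\mathcal B}$; this is the form of the hypothesis I would use, since it converts a statement about tails of convergent nets into an identity of convergence structures.

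The single substantive step is then as follows. Let $A$ be a $\lambda$-bounded set. Since $\lambda = \lambda_{\mathcal B}$, the notions of $\lambda$-boundedness and $\lambda_{\mathcal B}$-boundedness coincide, so $A$ is $\lambda_{\mathcal B}$-bounded. Now Proposition~\ref{bdd-bdd} asserts that a set is $\lambda_{\mathcal B}$-bounded precisely when it is both $\lambda$-bounded and a member of $\mathcal B$. Extracting the second conjunct yields $A \in \mathcal B$, which is exactly the desired conclusion.

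I do not anticipate any genuine obstacle here: the corollary is essentially a one-line specialization of Proposition~\ref{bdd-bdd}. The only point deserving a moment's care is remembering that ``locally $\mathcal B$-bounded'' has already been recast as the identity $\lambda = \lambda_{\mathcal B}$ in the preceding paragraph, which is what allows the proposition to be applied verbatim once the two boundedness notions are identified.
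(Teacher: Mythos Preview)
Your proposal is correct and matches the paper's approach exactly: the paper simply states that Proposition~\ref{bdd-bdd} yields the corollary, and your argument spells out precisely how --- via the identification $\lambda=\lambda_{\mathcal B}$ from the definition of locally $\mathcal B$-bounded, a $\lambda$-bounded set is $\lambda_{\mathcal B}$-bounded, hence in $\mathcal B$ by Proposition~\ref{bdd-bdd}.
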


Let $(X,\lambda)$ be a convergence vector space.  Recall that
$\lambda_m$ stands for the Mackey modification of~$\lambda$; we always
have $\lambda\le\lambda_m$. Recall also that $\mu_{\mathcal B}$ stands
for the bornological convergence induced by
a bornology $\mathcal B$ as in Section~\ref{sec:born-conv}.

\begin{proposition}\label{compat}
  Let $(X,\lambda)$ be a convergence vector space, and $\mathcal B$ a
  linear bornology on $X$ such that every set in $\mathcal B$ is
  $\lambda$-bounded. Then
  \begin{enumerate}
  \item\label{compat-bdd} A subset $A$ of $X$ is
    $\lambda_{\mathcal B}$-bounded iff $A\in\mathcal B$;
  \item\label{compat-lbdd} $\lambda_{\mathcal B}$ is locally bounded;
  \item\label{compat-dom} $\lambda\le\lambda_{\mathcal
      B}\le\mu_{\mathcal B}$ and $\lambda_m\le \mu_{\mathcal B}$;
  \item\label{compat-cont} Every $\lambda_{\mathcal B}$-continuous
    functional on $X$ is $\mathcal B$-bounded.
  \end{enumerate}
\end{proposition}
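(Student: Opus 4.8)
The plan is to derive all four parts from Proposition~\ref{bdd-bdd}, Theorem~\ref{muB-conv}, and the definitions, the key observation being that the hypothesis ``every member of $\mathcal B$ is $\lambda$-bounded'' collapses the two conditions appearing in Proposition~\ref{bdd-bdd} into a single one. First I would dispatch \eqref{compat-bdd}: by Proposition~\ref{bdd-bdd}, a set $A$ is $\lambda_{\mathcal B}$-bounded iff $A$ is $\lambda$-bounded and $A\in\mathcal B$, but under the standing hypothesis membership in $\mathcal B$ already forces $\lambda$-boundedness, so these conditions reduce to $A\in\mathcal B$. Part \eqref{compat-lbdd} is then immediate: if $x_\alpha\goeslB x$ then by definition a tail of $(x_\alpha)$ lies in some $A\in\mathcal B$, and by \eqref{compat-bdd} this $A$ is $\lambda_{\mathcal B}$-bounded; thus every $\lambda_{\mathcal B}$-convergent net has a bounded tail, which is exactly local boundedness.

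For \eqref{compat-dom}, the inequality $\lambda\le\lambda_{\mathcal B}$ is read straight off the definition of $\lambda_{\mathcal B}$, which demands $\lambda$-convergence outright. To prove $\lambda_{\mathcal B}\le\mu_{\mathcal B}$ I would take $x_\alpha\goesmuB 0$ (the general limit reduces to $0$ by linearity) and pick $A\in\mathcal B$ witnessing this convergence; the instance $\varepsilon=1$ gives a tail of the net inside $A$, supplying the bounded-tail requirement, while the filter inclusion $[\mathcal N_0A]\subseteq[x_\alpha]$, combined with the $\lambda$-boundedness of $A$ (that is, $[\mathcal N_0A]\goesl 0$) and the filter-convergence axiom that a superset of a convergent filter converges, forces $x_\alpha\goesl 0$; hence $x_\alpha\goeslB 0$. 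Finally, recall that $\lambda_m=\mu_{\mathcal C}$, where $\mathcal C$ is the bornology of all $\lambda$-bounded sets; since $\mathcal B\subseteq\mathcal C$ by hypothesis, any $A\in\mathcal B$ witnessing $\mu_{\mathcal B}$-convergence also lies in $\mathcal C$ and therefore witnesses $\mu_{\mathcal C}$-convergence, so $\mu_{\mathcal B}\ge\mu_{\mathcal C}=\lambda_m$, i.e.\ $\lambda_m\le\mu_{\mathcal B}$.

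Part \eqref{compat-cont} combines \eqref{compat-bdd} with the fact that a continuous linear map between convergence vector spaces carries bounded sets to bounded sets: a $\lambda_{\mathcal B}$-continuous functional $f$ maps every $\lambda_{\mathcal B}$-bounded set, in particular every $A\in\mathcal B$ by \eqref{compat-bdd}, to a bounded subset of $\mathbb K$, which is precisely the statement that $f$ is $\mathcal B$-bounded.

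The only step demanding genuine care is the verification of the $\lambda$-convergence half of $\lambda_{\mathcal B}\le\mu_{\mathcal B}$: one must pass from the purely scaling condition defining $\mu_{\mathcal B}$ to honest $\lambda$-convergence, and this is exactly where the hypothesis that members of $\mathcal B$ are $\lambda$-bounded enters, through the filter inclusion $[\mathcal N_0A]\subseteq[x_\alpha]$. Everything else is a formal consequence of Proposition~\ref{bdd-bdd} and the definitions.
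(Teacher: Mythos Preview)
Your proof is correct and essentially matches the paper's. The only cosmetic differences are that for \eqref{compat-dom} you unpack directly what the paper obtains by citing Theorem~\ref{muB-conv}\eqref{muB-conv-str}, and for \eqref{compat-cont} you argue directly via ``continuous $\Rightarrow$ bounded'' combined with \eqref{compat-bdd}, whereas the paper routes through \eqref{compat-dom} and Proposition~\ref{born-cont}; both variants are one-liners.
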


\begin{proof}
  \eqref{compat-bdd} follows from Proposition~\ref{bdd-bdd}.

  \eqref{compat-lbdd} If $x_\gamma\goeslB 0$ then it has a tail in
  some $A\in\mathcal B$, which is $\lambda_{\mathcal B}$-bounded
  by~\eqref{compat-bdd}.

  \eqref{compat-dom} $\lambda\le\lambda_{\mathcal B}$ is
  immediate. The rest follows from  \eqref{compat-bdd},
  Proposition~\ref{muB-conv}\eqref{muB-conv-str}, and the fact that
  every set in $\mathcal B$ is $\lambda$-bounded.
  % Suppose $x_\alpha\goesmuB 0$. Then there exists a circled
  % set $A\in\mathcal B$ such that for every $\varepsilon>0$, a tail of
  % $(x_\alpha)$ is contained in $\varepsilon A$. On one hand, this
  % implies that a a tail of $(x_\alpha)$ is contained in $A$. On the
  % other hand, since $A$ is $\lambda$-bounded, it implies that
  % $x_\alpha\xrightarrow{\lambda_m}0$. Since $\lambda_m\ge\lambda$, we
  % conclude that $x_\alpha\goesl 0$ and, therefore,
  % $x_\alpha\goeslB 0$.

  \eqref{compat-cont} Let $f\colon X\to\mathbb K$ be a
  $\lambda_{\mathcal B}$-continuous linear functional. It follows from
  \eqref{compat-dom} that $f$ is $\mu_{\mathcal B}$-continuous. Now
  Proposition~\ref{born-cont} yields that $f$ is $\mathcal B$-bounded.
\end{proof}

\begin{corollary}\label{BC-locbdd}
  Let $(X,\lambda)$ be a convergence vector space, $\mathcal C$ the
  bornology of $\lambda$-bounded sets and $\mathcal B$ a vector
  bornology in $X$ such that $\mathcal B\subseteq \mathcal C$. Then
  $\lambda_\mathcal B=\lambda$ iff $\lambda$ is locally bounded and
  $\mathcal B=\mathcal C$.
\end{corollary}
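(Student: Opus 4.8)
The plan is to reduce everything to two facts established earlier: the characterization of local boundedness as $\lambda=\lambda_{\mathcal C}$ (recorded in the definition of local $\mathcal B$-boundedness preceding Corollary~\ref{lB-bdd-B}, with $\mathcal B$ taken to be the bornology of all $\lambda$-bounded sets), and Proposition~\ref{compat}\eqref{compat-bdd}, which applies precisely because the hypothesis $\mathcal B\subseteq\mathcal C$ says every set in $\mathcal B$ is $\lambda$-bounded. With these in hand both implications become short, and the argument naturally splits the forward direction into two independent pieces.

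For the reverse implication, suppose $\lambda$ is locally bounded and $\mathcal B=\mathcal C$. Local boundedness means exactly that $\lambda=\lambda_{\mathcal C}$, and substituting $\mathcal B=\mathcal C$ yields $\lambda_{\mathcal B}=\lambda_{\mathcal C}=\lambda$. This direction requires no computation.

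For the forward implication, assume $\lambda_{\mathcal B}=\lambda$ and establish the two conclusions separately. To see that $\lambda$ is locally bounded, take any $\lambda$-convergent net $(x_\alpha)$. Since $\lambda=\lambda_{\mathcal B}$, the net is $\lambda_{\mathcal B}$-convergent, so by definition a tail of it lies in some $A\in\mathcal B$; as $\mathcal B\subseteq\mathcal C$ and bornologies are closed under taking subsets, this tail is $\lambda$-bounded. Thus every $\lambda$-convergent net has a $\lambda$-bounded tail, which is exactly local boundedness. To see that $\mathcal B=\mathcal C$, note that $\mathcal B\subseteq\mathcal C$ is part of the hypothesis, so only $\mathcal C\subseteq\mathcal B$ remains. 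By Proposition~\ref{compat}\eqref{compat-bdd}, a subset $A$ of $X$ lies in $\mathcal B$ iff it is $\lambda_{\mathcal B}$-bounded; but $\lambda_{\mathcal B}=\lambda$ makes $\lambda_{\mathcal B}$-boundedness coincide with $\lambda$-boundedness, i.e.\ with membership in $\mathcal C$. Hence $A\in\mathcal B$ iff $A\in\mathcal C$, giving $\mathcal B=\mathcal C$.

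The proof presents no genuine obstacle; the one point to get right is the chain of equivalences ``$A\in\mathcal B\Leftrightarrow A$ is $\lambda_{\mathcal B}$-bounded $\Leftrightarrow A$ is $\lambda$-bounded $\Leftrightarrow A\in\mathcal C$'' in the last step, where the middle equivalence uses the standing assumption $\lambda_{\mathcal B}=\lambda$ while the outer two invoke Proposition~\ref{compat}\eqref{compat-bdd} and the definition of $\mathcal C$, respectively. It is worth emphasizing in the writeup that both conclusions are genuinely needed and that neither alone suffices, which is precisely why the forward direction divides into the two arguments above.
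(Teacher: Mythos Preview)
Your proof is correct and is exactly what the paper intends: the corollary is stated without proof immediately after Proposition~\ref{compat}, and your argument unpacks precisely that dependence, using Proposition~\ref{compat}\eqref{compat-bdd} for the identification of $\mathcal B$ with the $\lambda_{\mathcal B}$-bounded sets and the definition of local boundedness as $\lambda=\lambda_{\mathcal C}$.
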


\begin{example}\label{lllB}
  Let~$\lambda$, $\mathcal B$, and $\mathcal C$ be as in the preceding
  corollary. Even if $\lambda$ is locally bounded, we need not have
  $\lambda=\lambda_{\mathcal B}$ if $\mathcal B\ne\mathcal C$. Indeed,
  take $X=\ell_1$, let $\lambda$ be norm convergence on~$X$, and
  $\mathcal B$ the bornology of all order bounded sets. It is easy to
  see that $\lambda<\lambda_{\mathcal B}$.
\end{example}

\begin{example}
  We observed in Proposition~\ref{compat} that in general
  $\lambda\le\lambda_{\mathcal B}$ and $\lambda\le\lambda_m$. What is
  the relationship between $\lambda_m$ and $\lambda_{\mathcal B}$? If
  $\lambda$ is Mackey (i.e., $\lambda_m=\lambda$), it is locally
  bounded by \cite[Proposition~3.7.16]{Beattie:02}, so if we take
  $\mathcal B$ to be the bornology of all $\lambda$-bounded sets then
  we have $\lambda_m=\lambda=\lambda_{\mathcal B}$ by Corollary~\ref{BC-locbdd}

  On the other hand, in Example~\ref{lllB}, we have
  $\lambda_m=\lambda<\lambda_{\mathcal B}$.  Furthermore, if we let
  $\lambda$ be order convergence on an infinite-dimensional vector
  lattice, and $\mathcal B$ the bornology of all order bounded (that
  is, $\lambda$-bounded) sets, then
  $\lambda_{\mathcal B}=\lambda$ while  $\lambda_m$ is relative
  uniform convergence, so $\lambda_m>\lambda_{\mathcal B}$.
\end{example}

\begin{example}\label{ex:tvs-equi}
  (Cf.~Example~\ref{ex:CK-equi}.)
  Let $X$ be a topological vector space. Let $\mathcal B$ be the
  vector bornology on $X^*$ consisting of all equicontinuous sets. It
  is easy to see that a subset $G$ of $X^*$ is equicontinuous iff it
  is contained in a polar of a neighborhood of zero in~$X$, hence such
  polars form a base of~$\mathcal B$.

  We write $\mathrm{w}^*$ for weak*-convergence on~$X^*$, i.e.,
  $\mathrm{w}^*=\sigma(X^*,X)$; we write $\mathrm{w}^*_{\mathcal B}$
  for its bounded modification by~$\mathcal B$. Every equicontinuous
  set in $X^*$ is weak*-bounded, so that the assumptions of
  Proposition~\ref{compat} are satisfied. Therefore,
  $\mathrm{w}^*_{\mathcal B}$-bounded sets are exactly the sets
  in~$\mathcal B$. By Proposition~8.4 in~\cite{OBrien:23},
  $\mathrm{w}^*_{\mathcal B}$ is continuous convergence on~$X^*$.

  Suppose now that, in addition, $X$ is a Hausdorff locally convex
  topological vector space. Then the space of all weak*-continuous
  linear functionals on $X^*$ can be identified with~$X$. Moreover,
  the set of all linear functionals on $X^*$ which are
  $\mathrm{w}^*$-continuous on equicontinuous subsets of $X^*$ equals
  the space of all $\mathrm{w}^*_{\mathcal B}$-continuous functionals
  on~$X^*$, which in turn equal to the second dual of $X$ in the sense
  of \cite{Beattie:02} and \cite{OBrien:23}. By \cite[Theorem
  4.3.20]{Beattie:02} or \cite[Theorem 8.12]{OBrien:23}, this second
  dual may be identified with the completion of~$X$; thus we have
  recovered Grothendieck's completion theorem, see, e.g., Theorem~2 in
  Chapter~III, Section~3.6 in~\cite{Bourbaki:87}.
\end{example}

\begin{example}
  The collection of all finite-dimensional subspaces of a vector space
  $X$ is a base for a bornology; let $\mathcal B$ be the resulting
  bornology. It is easy to see that the bounded modification of any
  Hausdorff linear convergence on $X$ by $\mathcal B$ is
  finite-dimensional convergence because all linear Hausdorff
  convergences agree on finite dimensional subspaces (see
  \cite[Theorem~3.3.19]{Beattie:02}).
\end{example}

The following result shows that a bounded
modification of a topological convergence is typically
non-topological.

\begin{proposition}\label{top-B}
  Let $(X,\tau)$ be a topological space and $\mathcal B$ a bornology
  on~$X$. If the bounded modification $\tau_{\mathcal B}$ of
  $\tau$-convergence by $\mathcal B$ is topological, then for every
  $x\in X$ there exists $B\in\mathcal B$ such that every
  eventually $\mathcal B$-bounded net that $\tau$-converges to $x$
  is eventually in~$B$.
\end{proposition}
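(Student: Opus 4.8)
The plan is to reduce the statement to a fact about neighbourhood filters and then invoke the filter form of the bounded modification. First I would observe that the hypothesis and conclusion are most naturally phrased in terms of $\tau_{\mathcal B}$-convergence itself: by the very definition of the bounded modification, a net $\tau$-converges to $x$ and is eventually $\mathcal B$-bounded if and only if it $\tau_{\mathcal B}$-converges to $x$. Hence the assertion to be proved is precisely that, for each $x\in X$, there is a single $B\in\mathcal B$ such that every net $\tau_{\mathcal B}$-converging to $x$ is eventually in~$B$.

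Next I would use the assumption that $\tau_{\mathcal B}$ is topological. Let $\mathcal N_x$ denote the filter of $\tau_{\mathcal B}$-neighbourhoods of~$x$. Since $\tau_{\mathcal B}$ coincides with its topological modification, $\mathcal N_x$ is itself a $\tau_{\mathcal B}$-convergent filter, $\mathcal N_x\xrightarrow{\tau_{\mathcal B}}x$ (this is a standard feature of topological convergence; see \cite{Beattie:02,OBrien:23}). Applying the filter description of the bounded modification recorded after Proposition~\ref{bm}, namely that $\mathcal F\xrightarrow{\tau_{\mathcal B}}x$ means $\mathcal F\goestau x$ and $\mathcal F\cap\mathcal B\ne\varnothing$, we conclude that $\mathcal N_x$ contains some $B\in\mathcal B$. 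Equivalently, this $B$ is a $\tau_{\mathcal B}$-neighbourhood of~$x$.

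Finally, because the convergence is topological, any net $\tau_{\mathcal B}$-converging to $x$ is eventually in every $\tau_{\mathcal B}$-neighbourhood of~$x$, and in particular eventually in~$B$. Translating back through the equivalence from the first step yields exactly the desired conclusion. I expect the single nonroutine ingredient to be the appeal to topological convergence in the second and third steps — that the neighbourhood filter converges and that convergence to $x$ is characterised by eventual membership in every neighbourhood. Rather than reprove this, I would cite it from the theory of the topological modification of a convergence structure; everything else is a direct unwinding of the definitions.
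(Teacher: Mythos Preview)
Your argument is correct and is essentially identical to the paper's proof: both take the $\tau_{\mathcal B}$-neighbourhood filter of $x$, use that it $\tau_{\mathcal B}$-converges to $x$ to extract a $B\in\mathcal B$ that is a $\tau_{\mathcal B}$-neighbourhood, and conclude that every $\tau_{\mathcal B}$-convergent net is eventually in~$B$. Your explicit reduction in the first step is a helpful clarification but not a substantive departure.
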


\begin{proof}
  Suppose that $\tau_{\mathcal B}$ is topological. Let $\mathcal F$ be
  the filter of all neighborhoods of $x$ in this topology. Then
  $\mathcal F\xrightarrow{\tau_{\mathcal B}}x$, hence there exists a
  $B\in\mathcal B$ which is a neighborhood of $x$
  in~$\tau_{\mathcal B}$.  If $(x_\alpha)$ is net such that
  $x_\alpha\xrightarrow{\tau}x$ and such that a tail of it is in
  $\mathcal B$ then $x_\alpha\xrightarrow{\tau_{\mathcal B}}x$, hence
  $(x_\alpha)$ has a tail in~$B$.
\end{proof}

For a normed space~$X$, we write $S_X$ for the unit sphere of~$X$.

\begin{corollary}\label{top-Bn}
  Let $X$ be a normed space and $\tau$ a linear topology on~$X$. The
  bounded modification $\tau_{\mathcal B}$ of $\tau$ by the bornology
  $\mathcal B$ of all norm bounded sets is not topological provided
  that $0\in\overline{S_X}^\tau$.
\end{corollary}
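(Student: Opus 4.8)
The plan is to derive the result from the contrapositive of Proposition~\ref{top-B}, applied at the point $x=0$. Since $\tau$ is a linear topology, it is in particular topological, so the closure $\overline{S_X}^\tau$ coincides with the set of limits of nets in $S_X$; thus the hypothesis $0\in\overline{S_X}^\tau$ produces a net $(x_\alpha)$ lying in the unit sphere $S_X$ with $x_\alpha\goestau 0$. This single net, after rescaling, will be the source of all the counterexamples we need.

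Toward a contradiction, I would assume that $\tau_{\mathcal B}$ is topological. Proposition~\ref{top-B} at $x=0$ then yields a norm-bounded set $B\in\mathcal B$ with the property that every eventually norm-bounded net $\tau$-converging to $0$ is eventually in~$B$. Because $B$ is norm bounded, there is $R>0$ with $\norm{b}\le R$ for all $b\in B$. Now consider the net $\bigl((R+1)x_\alpha\bigr)_\alpha$. Continuity of scalar multiplication for the linear topology $\tau$ gives $(R+1)x_\alpha\goestau 0$, and since $\norm{(R+1)x_\alpha}=R+1$ for all $\alpha$, the net lies in a single norm ball and is therefore (eventually) $\mathcal B$-bounded. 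On the other hand $\norm{(R+1)x_\alpha}=R+1>R\ge\norm{b}$ for every $b\in B$, so no term of the net belongs to~$B$; in particular the net is not eventually in~$B$. This contradicts the conclusion of Proposition~\ref{top-B}, so $\tau_{\mathcal B}$ cannot be topological.

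There is no genuine obstacle here beyond the bookkeeping: the one point to verify is that the rescaled net simultaneously meets both requirements imposed by Proposition~\ref{top-B} — it must be eventually $\mathcal B$-bounded (which holds because scaling by a fixed constant keeps it inside one ball) while still escaping $B$ (which holds because its norm is a constant strictly larger than the radius bounding $B$). The assumption $0\in\overline{S_X}^\tau$ is exactly what makes both possible at once, since it lets us approximate $0$ in $\tau$ by vectors of a prescribed, fixed norm.
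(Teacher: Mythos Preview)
Your proof is correct and follows essentially the same approach as the paper: both apply Proposition~\ref{top-B} at $x=0$ and, for an arbitrary (respectively, the promised) bounded set~$B$, rescale a $\tau$-null net in $S_X$ to land on a sphere of radius exceeding that of~$B$. The only cosmetic difference is that the paper argues the contrapositive directly (for each $B\in\mathcal B$ it exhibits the bad net), whereas you phrase it as a proof by contradiction; the underlying construction is identical.
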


\begin{proof}
  Let $B\in\mathcal B$. Take $r>0$ so that $B\cap
  (rS_X)=\varnothing$. Since $0\in\overline{S_X}^\tau$, one can find a
  net $(x_\alpha)$ is $rS_X$ such $x_\alpha\goestau 0$. Then
  $(x_\alpha)$ is bounded, yet misses~$B$. By Proposition~\ref{top-B},
  $\tau_{\mathcal B}$ fails to be topological.
\end{proof}

\begin{example}\label{ex:norm-equi}
  (Cf. Example~\ref{ex:tvs-equi}.)  Let $X$ be an infinite-dimensional
  normed space with dual~$X^*$. Let $\mathcal B$ be the bornology of
  all norm bounded sets in~$X^*$. These are the equicontinuous sets
  in~$X^*$; it follows that $\mathrm{w}^*_{\mathcal B}$ coincides with
  continuous convergence on~$X^*$. Since $0$ belongs to the
  weak*-closure of $S_{X^*}$, Corollary~\ref{top-Bn} implies that the
  bounded modification $\mathrm{w}^*_{\mathcal B}$ of the
  weak*-topology on $X^*$ by $\mathcal B$ is not topological.
 
  A similar argument shows that if we consider the bornology $\mathcal
  B$ of norm-bounded sets in $X$ then the bounded modification of the
  weak convergence on $X$ by $\mathcal B$ is not topological.

  Note that since every weakly convergent sequence is norm-bounded,
  $\mathrm{w}=\mathrm{w}_{\mathcal B}$ on sequences. This fails for
  nets as there are weakly null nets in $X$ that are not eventually
  norm bounded.
\end{example}

Next, we study the relationship between bounded and topological
modifications. Recall that the topological modification $\mathrm{t}\lambda$
of a convergence structure $\lambda$ is the strongest topology (whose
convergence is) weaker than~$\lambda$; see, e.g., Proposition~3.5
in~\cite{OBrien:23}.

\begin{proposition}\label{top-bor-mod} 
  Let $\mathcal B$ be a bornology on a set~$X$.
  \begin{enumerate}
  \item\label{top-bor-mod-conv} If $\lambda$ is a convergence
    structure on $X$ then $\mathrm{t}(\lambda_{\mathcal B})$ is the
    strongest topology on $X$ that is weaker that $\lambda$ on sets
    from~$\mathcal B$.
  \item\label{top-bor-mod-top} If $\tau$ is a topology on $X$ then
    $\mathrm{t}(\tau_{\mathcal B})$ is the strongest topology on $X$ that
    agrees with $\tau$ on sets from~$\mathcal B$.
  \end{enumerate}
\end{proposition}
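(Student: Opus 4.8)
The plan is to reduce everything to the universal (final-convergence) description of $\lambda_{\mathcal B}$ given in Proposition~\ref{bm}, together with the defining property of the topological modification as the strongest topology weaker than a given convergence. Throughout, I read ``weaker than $\lambda$ on sets from $\mathcal B$'' as $\sigma_{|A}\le\lambda_{|A}$ for every $A\in\mathcal B$, and ``agrees with $\tau$ on sets from $\mathcal B$'' as $\sigma_{|A}=\tau_{|A}$ for every $A\in\mathcal B$.

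For part~\eqref{top-bor-mod-conv}, I would first observe that, for a net lying in $A$ whose limit also lies in $A$, convergence in the subspace topology $\sigma_{|A}$ is the same as convergence in $(X,\sigma)$; hence $\sigma_{|A}\le\lambda_{|A}$ for every $A\in\mathcal B$ is exactly the statement that each embedding $(A,\lambda_{|A})\hookrightarrow(X,\sigma)$ is continuous. By Proposition~\ref{bm}\eqref{bm-final}, a convergence makes all these embeddings continuous precisely when it is weaker than $\lambda_{\mathcal B}$. Thus a topology is weaker than $\lambda$ on sets from $\mathcal B$ if and only if it is weaker than $\lambda_{\mathcal B}$, and since $\mathrm{t}(\lambda_{\mathcal B})$ is by definition the strongest topology weaker than $\lambda_{\mathcal B}$, it is the strongest topology weaker than $\lambda$ on sets from $\mathcal B$.

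For part~\eqref{top-bor-mod-top}, I would apply part~\eqref{top-bor-mod-conv} with $\lambda=\tau$, so that $\mathrm{t}(\tau_{\mathcal B})$ is already known to be the strongest topology weaker than $\tau$ on sets from $\mathcal B$. Since a topology that agrees with $\tau$ on sets from $\mathcal B$ is in particular weaker than $\tau$ there, it suffices to check that $\mathrm{t}(\tau_{\mathcal B})$ itself agrees with $\tau$ on every $A\in\mathcal B$; then $\mathrm{t}(\tau_{\mathcal B})$ both lies in and dominates the family of topologies agreeing with $\tau$ on $\mathcal B$, which is the claim. The key step is the sandwich $\tau\le\mathrm{t}(\tau_{\mathcal B})\le\tau_{\mathcal B}$: the right inequality is the definition of the topological modification, while the left holds because $\tau$ is a topology weaker than $\tau_{\mathcal B}$ and $\mathrm{t}(\tau_{\mathcal B})$ is the strongest such topology. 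Restricting to $A\in\mathcal B$ and using that $\tau_{\mathcal B}$ agrees with $\tau$ on $A$ (the defining property in Proposition~\ref{bm}(i), i.e.\ $(\tau_{\mathcal B})_{|A}=\tau_{|A}$) collapses the sandwich to $\tau_{|A}\le(\mathrm{t}(\tau_{\mathcal B}))_{|A}\le\tau_{|A}$, forcing $(\mathrm{t}(\tau_{\mathcal B}))_{|A}=\tau_{|A}$.

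The work here is conceptual rather than computational, and the one point demanding care is the reading of ``on sets from $\mathcal B$'' at the level of subspace structures, and in particular the passage between $\sigma_{|A}\le\lambda_{|A}$ and continuity of the embeddings when the limit of a net in $A$ need not lie in $A$. This is harmless because $\mathcal B$ contains every singleton --- as $X=\bigcup\mathcal B$ and $\mathcal B$ is closed under subsets --- so one may replace $A$ by $A\cup\{x\}\in\mathcal B$ to absorb the limit; this is precisely the mechanism behind Proposition~\ref{bm}\eqref{bm-final}, so once that proposition is invoked no separate argument is needed.
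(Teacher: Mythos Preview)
Your proposal is correct and follows essentially the same approach as the paper. The paper dismisses part~\eqref{top-bor-mod-conv} as ``straightforward'' while you spell it out via the final-convergence description of Proposition~\ref{bm}\eqref{bm-final}; for part~\eqref{top-bor-mod-top}, your sandwich argument $\tau\le\mathrm{t}(\tau_{\mathcal B})\le\tau_{\mathcal B}$ restricted to sets in~$\mathcal B$ is exactly what the paper does.
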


\begin{proof}
  \eqref{top-bor-mod-conv} is straightforward.

  % On one
  % hand, $\mathrm{t}(\lambda_{\mathcal B})$ is weaker that $\lambda$ on sets
  % from $\mathcal B$. To see this, let $B\in\mathcal B$ and
  % $(x_\alpha)$ a net in $B$ such that $x_\alpha\goesl x$. Then
  % $x_\alpha\xrightarrow{\lambda_{\mathcal B}}x$; it follows from
  % $\mathrm{t}(\lambda_{\mathcal B})\le\lambda_{\mathcal B}$ that
  % $x_\alpha\xrightarrow{\mathrm{t}(\lambda_{\mathcal B})}0$.

  % To show that $\mathrm{t}(\lambda_{\mathcal B})$ is the strongest such
  % topology, let $\tau$ be a topology on $X$ which is weaker than
  % $\lambda$ on sets from $\mathcal B$. If
  % $x_\alpha\xrightarrow{\lambda_{\mathcal B}}0$ then
  % $x_\alpha\goesl 0$ and is eventually in some set $B\in\mathcal
  % B$. It follows that $x_\alpha\xrightarrow{\tau}0$. Therefore,
  % $\tau\le\lambda_{\mathcal B}$. It follows that
  % $\tau\le \mathrm{t}(\lambda_{\mathcal B})$.

  \eqref{top-bor-mod-top} First, we will show that $\mathrm{t}(\tau_{\mathcal
    B})$ agrees with $\tau$ on sets from~$\mathcal B$.
  By the discussion before the proposition,
  $\tau_{\mathcal B}\ge \mathrm{t}(\tau_{\mathcal B})$. It follows
  from~\eqref{top-bor-mod-conv} that $\mathrm{t}(\tau_{\mathcal
    B})\ge\tau$. Hence, $\tau_{\mathcal B}\ge \mathrm{t}(\tau_{\mathcal B})\ge\tau$.
  Since $\tau$ and $\tau_{\mathcal B}$ agree on 
  sets from~$\mathcal B$, all the three convergences agree there.

  Let $\sigma$ be a topology on $X$ which agrees with $\tau$ on sets
  from~$\mathcal B$. It follows from~\eqref{top-bor-mod-conv} that
  $\sigma\le \mathrm{t}(\tau_{\mathcal B})$. 
\end{proof}

\begin{proposition}\label{lb-clos}
  Let $(X,\lambda)$ be a convergence space and $\mathcal B$ a
  bornology on~$X$; let $A\subseteq X$. Then
  \begin{enumerate}
  \item\label{lb-clos-lb} $A$ is $\lambda_{\mathcal B}$-closed iff
    $\overline{A\cap B}^{1,\lambda}\subseteq A$ for every
    $B\in\mathcal B$;
  \item\label{lb-clos-tlb} $A$ is $(\mathrm{t}\lambda)_{\mathcal B}$-closed iff
    $\overline{A\cap B}^\lambda\subseteq A$ for every
    $B\in\mathcal B$.  
  \end{enumerate}
\end{proposition}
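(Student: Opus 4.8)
The plan is to prove \eqref{lb-clos-lb} directly from the definition of the bounded modification and then to obtain \eqref{lb-clos-tlb} by applying \eqref{lb-clos-lb} with the convergence structure $\mathrm t\lambda$ substituted for~$\lambda$. The only genuinely delicate point is the interplay between the single adherence $\overline{\,\cdot\,}^{1}$ and the full closure $\overline{\,\cdot\,}$, which is precisely where the two parts differ.

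For \eqref{lb-clos-lb}, I would establish both inclusions using that $x_\alpha\goeslB x$ means $x_\alpha\goesl x$ together with a tail of $(x_\alpha)$ lying in some member of~$\mathcal B$. For the forward direction, suppose $A$ is $\lambda_{\mathcal B}$-closed, fix $B\in\mathcal B$, and take $x\in\overline{A\cap B}^{1,\lambda}$; then there is a net $(y_\gamma)$ in $A\cap B$ with $y_\gamma\goesl x$. Since the whole net lies in $B\in\mathcal B$, we have $y_\gamma\goeslB x$, and as $(y_\gamma)$ is a net in the $\lambda_{\mathcal B}$-closed set~$A$, its limit $x$ lies in~$A$; hence $\overline{A\cap B}^{1,\lambda}\subseteq A$ for every $B$. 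Conversely, assume $\overline{A\cap B}^{1,\lambda}\subseteq A$ for every $B\in\mathcal B$, and let $(x_\alpha)$ be a net in $A$ with $x_\alpha\goeslB x$. By definition some tail of $(x_\alpha)$ is contained in a set $B\in\mathcal B$; this tail is a net in $A\cap B$ that $\lambda$-converges to $x$ (tails of $\lambda$-convergent nets share the limit), so $x\in\overline{A\cap B}^{1,\lambda}\subseteq A$, and $A$ is $\lambda_{\mathcal B}$-closed.

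For \eqref{lb-clos-tlb}, I would apply \eqref{lb-clos-lb} verbatim with $\mathrm t\lambda$ in place of~$\lambda$ (a topology is in particular a convergence structure), obtaining that $A$ is $(\mathrm t\lambda)_{\mathcal B}$-closed iff $\overline{A\cap B}^{1,\mathrm t\lambda}\subseteq A$ for every $B\in\mathcal B$. It then remains to identify $\overline{A\cap B}^{1,\mathrm t\lambda}$ with $\overline{A\cap B}^{\lambda}$, and this rests on two facts. First, the $\mathrm t\lambda$-closed sets are by definition exactly the $\lambda$-closed sets, so the least closed set containing a given set is the same in both senses and $\overline{S}^{\mathrm t\lambda}=\overline{S}^{\lambda}$. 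Second, in a topological space the single adherence already equals the closure, so $\overline{S}^{1,\mathrm t\lambda}=\overline{S}^{\mathrm t\lambda}$. Combining these gives $\overline{A\cap B}^{1,\mathrm t\lambda}=\overline{A\cap B}^{\lambda}$, and \eqref{lb-clos-tlb} follows.

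The step I expect to require the most care is this last identification: for a general convergence $\lambda$ one only has $\overline{\,\cdot\,}^{1,\lambda}\subseteq\overline{\,\cdot\,}^{\lambda}$, with possibly transfinitely many iterated adherences needed to reach the closure (cf.\ Proposition~\ref{k-adh-clos}), so the passage from adherence to closure genuinely exploits that $\mathrm t\lambda$ is topological. Every other step is merely an unwinding of the definition of $\lambda_{\mathcal B}$, and the degenerate case $A\cap B=\varnothing$ is automatic since the adherence of the empty set is empty.
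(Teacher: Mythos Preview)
Your proposal is correct and follows essentially the same route as the paper: part~\eqref{lb-clos-lb} is unwound directly from the definition of $\lambda_{\mathcal B}$, and part~\eqref{lb-clos-tlb} is obtained by applying \eqref{lb-clos-lb} to $\mathrm t\lambda$ together with the identity $\overline{C}^{1,\mathrm t\lambda}=\overline{C}^{\mathrm t\lambda}=\overline{C}^{\lambda}$. The paper merely declares \eqref{lb-clos-lb} ``straightforward'' and records this identity, whereas you spell out the details, but there is no substantive difference.
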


\begin{proof}
  \eqref{lb-clos-lb} is straightforward; \eqref{lb-clos-tlb} follows
  from \eqref{lb-clos-lb} and the observation that
  $\overline{C}^{1,\mathrm{t}\lambda}=\overline{C}^{\mathrm{t}\lambda}=\overline{C}^\lambda$
  for every set~$C$.
\end{proof}

Clearly,
$\mathrm{t}(\lambda_{\mathcal
  B})=\mathrm{t}\bigl((\mathrm{t}\lambda)_{\mathcal B}\bigr)$ iff
$\lambda_{\mathcal B}$ and $(\mathrm{t}\lambda)_{\mathcal B}$ have the
same closed sets. By Proposition~\ref{lb-clos}, this boils down to the
difference between the $\lambda$-closure and the $\lambda$-adherence
of certain sets. We now present an example to show that we may have
$\mathrm{t}(\lambda_{\mathcal
  B})\ne\mathrm{t}\bigl((\mathrm{t}\lambda)_{\mathcal B}\bigr)$.

\begin{example}
  Put $X=[0,1]^{2}$, let $\tau$ be the Euclidean topology on~$X$. Let
  $\mathcal D$ be the bornology on $X$ consisting of subsets of all
  sets of the form
  \begin{math}
    \bigl([0,1]\times\{0\}\bigr)\cup\bigl(F\times[0,1]\bigr),
  \end{math}
  where $F$ is a finite subset of $[0,1]$. Let
  $\lambda:=\tau_{\mathcal D}$, and $\mathcal{B}$ the bornology of all
  sets whose intersection with $\mathbb Q\times\{0\}$ is finite. Put
  $A:=\bigl(\mathbb Q\cap[0,1]\bigr)\times[0,1]$. Then $A$ satisfies
  the condition \eqref{lb-clos-lb} in Proposition \ref{lb-clos}, but
  fails \eqref{lb-clos-tlb} for $B=(0,1]\times[0,1]$ because
  $\overline{A\cap B}^{1,\lambda}$ contains
  $\bigl(\mathbb Q\cap[0,1]\bigr)\times\{0\}$, so that
  $\overline{A\cap B}^{2,\lambda}$ contains $[0,1]\times\{0\}$ and,
  therefore, $\overline{A\cap B}^{2,\lambda}$ is not contained in~$A$.
\end{example}

\begin{proposition}\label{tt}
  Let $(X,\lambda)$ be a convergence space and $\mathcal B$ a
  bornology on $X$ with a base $\mathcal B_0$ of $\lambda$-closed
  sets. Then $\mathrm{t}(\lambda_{\mathcal
    B})=\mathrm{t}\bigl((\mathrm{t}\lambda)_{\mathcal B}\bigr)$.
\end{proposition}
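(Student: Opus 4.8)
The plan is to use, as the paragraph preceding the proposition already records, that $\mathrm{t}(\lambda_{\mathcal B})=\mathrm{t}\bigl((\mathrm{t}\lambda)_{\mathcal B}\bigr)$ holds exactly when $\lambda_{\mathcal B}$ and $(\mathrm{t}\lambda)_{\mathcal B}$ have the same closed sets, so the whole task is to show that the $\lambda_{\mathcal B}$-closed sets coincide with the $(\mathrm{t}\lambda)_{\mathcal B}$-closed sets. By Proposition~\ref{lb-clos}, a set $A$ is $\lambda_{\mathcal B}$-closed iff $\overline{A\cap B}^{1,\lambda}\subseteq A$ for every $B\in\mathcal B$, while $A$ is $(\mathrm{t}\lambda)_{\mathcal B}$-closed iff $\overline{A\cap B}^{\lambda}\subseteq A$ for every $B\in\mathcal B$. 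Since always $\overline{A\cap B}^{1,\lambda}\subseteq\overline{A\cap B}^{\lambda}$, every $(\mathrm{t}\lambda)_{\mathcal B}$-closed set is $\lambda_{\mathcal B}$-closed; this gives one inclusion of the topologies for free, without using the hypothesis on $\mathcal B_0$.

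First I would record two harmless reductions. By monotonicity of the adherence and of the closure, together with the fact that every $B\in\mathcal B$ is contained in some $B'\in\mathcal B_0$, both closedness conditions need only be checked as $B$ ranges over the base $\mathcal B_0$: indeed $A\cap B\subseteq A\cap B'$ forces $\overline{A\cap B}^{\lambda}\subseteq\overline{A\cap B'}^{\lambda}$, and likewise for the first adherence. Then I would fix $B\in\mathcal B_0$ and set $C=A\cap B$. Because $B$ is $\lambda$-closed we have $\overline{B}^{1,\lambda}=B$, and monotonicity of the adherence yields $\overline{C}^{1,\lambda}\subseteq\overline{B}^{1,\lambda}=B$.

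The key step is then almost automatic. Assuming $A$ is $\lambda_{\mathcal B}$-closed, Proposition~\ref{lb-clos}\eqref{lb-clos-lb} gives $\overline{C}^{1,\lambda}\subseteq A$; combining this with $\overline{C}^{1,\lambda}\subseteq B$ gives $\overline{C}^{1,\lambda}\subseteq A\cap B=C$. As $C\subseteq\overline{C}^{1,\lambda}$ always, we get $\overline{C}^{1,\lambda}=C$, i.e.\ $C$ is $\lambda$-closed, whence $\overline{C}^{\lambda}=C\subseteq A$. Since $B\in\mathcal B_0$ was arbitrary, Proposition~\ref{lb-clos}\eqref{lb-clos-tlb} shows $A$ is $(\mathrm{t}\lambda)_{\mathcal B}$-closed. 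Together with the free inclusion from the first paragraph, the two families of closed sets coincide, so $\mathrm{t}(\lambda_{\mathcal B})=\mathrm{t}\bigl((\mathrm{t}\lambda)_{\mathcal B}\bigr)$.

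The only genuine content is the observation that closedness of $B$ collapses the adherence of $A\cap B$ onto itself the moment the first adherence already lands inside $A$, so I do not anticipate a real obstacle; in particular the transfinite closure hierarchy of Proposition~\ref{k-adh-clos} is never invoked, because the closure of $A\cap B$ is attained already at the first adherence. The one point to handle with care is keeping the inclusions in the two reductions to $\mathcal B_0$ pointing the right way, and not silently identifying $\overline{\,\cdot\,}^{1,\lambda}$ with $\overline{\,\cdot\,}^{\lambda}$ except at the single place where the $\lambda$-closedness of $B$ licenses it.
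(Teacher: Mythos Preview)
Your proof is correct, but it takes a genuinely different route from the paper's. The paper argues categorically: it identifies $\lambda_{\mathcal B}$ as the final convergence for the inclusions $(B,\lambda_{|B})\hookrightarrow X$ with $B\in\mathcal B_0$, then invokes two results from~\cite{Beattie:02} --- that topological modification commutes with final structures, and that for $\lambda$-closed $B$ the restriction commutes with topological modification --- to rewrite $\mathrm{t}(\lambda_{\mathcal B})$ as the final convergence for $(B,(\mathrm{t}\lambda)_{|B})\hookrightarrow X$, which is $\mathrm{t}\bigl((\mathrm{t}\lambda)_{\mathcal B}\bigr)$ by Proposition~\ref{top-bor-mod}. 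Your argument is instead a direct closed-set comparison via Proposition~\ref{lb-clos}: the single observation that $\overline{A\cap B}^{1,\lambda}\subseteq B$ whenever $B$ is $\lambda$-closed lets you collapse the adherence of $A\cap B$ onto itself and read off $\lambda$-closedness of $A\cap B$, so that its $\lambda$-closure is already contained in $A$. Your approach is more elementary and entirely self-contained within the paper, avoiding the external citations; the paper's approach, in exchange, situates the result within the general machinery of final structures and makes transparent why closedness of the base sets is exactly the hypothesis needed (it is what makes $\mathrm{t}(\lambda_{|B})=(\mathrm{t}\lambda)_{|B}$).
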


\begin{proof}
  Observe that $\lambda_{\mathcal B}$ is the final convergence for the
  family of inclusions $(B,\lambda_{|B})\hookrightarrow X$, where
  $B$ runs over $\mathcal B$ and $\lambda_{|B}$ is the restriction of $\lambda$
  to~$B$. Moreover, since $\mathcal B_0$ is a base of~$\mathcal B$, it
  suffices to consider all $B\in\mathcal B_0$.

  Proposition~1.3.14
  of~\cite{Beattie:02} asserts that topological modification commutes
  with taking the final convergence structure; it follows that
  $\mathrm{t}(\lambda_{\mathcal B})$ is the final convergence structure for the
  inclusions
  $\bigl(B,\mathrm{t}(\lambda_{|B})\bigr)\hookrightarrow X$ with
  $B\in\mathcal B_0$. 

  Let $B\in\mathcal B_0$. Since $B$ is closed, the restriction
  operation commutes with topological modification, i.e.,
  $\mathrm{t}(\lambda_{|B})=(\mathrm{t}\lambda)_{|B}$ by Proposition~1.3.10
  of~\cite{Beattie:02}. It follows that $\mathrm{t}(\lambda_{\mathcal B})$ is
  the final convergence structure for the inclusions
  $\bigl(B,(\mathrm{t}\lambda)_{|B}\bigr)\hookrightarrow X$ as
  $B\in\mathcal B_0$. Applying
  Proposition~\ref{top-bor-mod}\eqref{top-bor-mod-top} with
  $\tau=\mathrm{t}\lambda$, we conclude that the latter final convergence is
  $\mathrm{t}\bigl((\mathrm{t}\lambda)_{\mathcal B}\bigr)$.
\end{proof}

\begin{example}\label{ex:bw*}
  Let $X$ be an infinite-dimensional normed space. As in
  Example~\ref{ex:norm-equi}, the convergence
  $\mathrm{w}^*_{\mathcal B}$ coincides with continuous convergence
  on~$X^*$, and is not topological. This implies that
  $\mathrm{w}^*_{\mathcal B}$ is strictly stronger than
  $\mathrm{bw}^*:=\mathrm{t}(\mathrm{w}^*_{\mathcal B})$. The latter
  is the mixed topology, which is defined as the strongest topology
  that agrees with the weak* topology on norm bounded sets. By
  Banach-Dieudonn\'{e} theorem this topology is the restriction of the
  topology of uniform convergence on compact sets. In particular,
  $\mathrm{bw}^*$ is a linear topology; see, e.g.,
  \cite[I.2.A]{Cooper:87}, Section~3.5 in Chapter~IV
  of~\cite{Bourbaki:87} or \cite{Bruguera:03}.
\end{example}

\subsection*{The locally solid case}

\begin{proposition}
  Let $X$ be a vector lattice, $\lambda$ a locally solid convergence
  on~$X$, and $\mathcal B$ a locally solid bornology on~$X$. Then the
  bounded modification $\lambda_{\mathcal B}$ is again locally
  solid. If, in addition, $\lambda$ is order continuous then so
  is~$\lambda_{\mathcal B}$.
\end{proposition}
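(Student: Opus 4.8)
The plan is to verify each assertion using the net-level criteria established earlier. For local solidity I would invoke the characterization in Theorem~\ref{def-ls}\eqref{def-ls-net}: it suffices to show that $y_\alpha\goeslB 0$ together with $\abs{x_\alpha}\le\abs{y_\alpha}$ for every $\alpha$ forces $x_\alpha\goeslB 0$. The definition of $\lambda_{\mathcal B}$ splits this into two independent tasks. First, $y_\alpha\goeslB 0$ gives $y_\alpha\goesl 0$, and since $\lambda$ is itself locally solid, the domination $\abs{x_\alpha}\le\abs{y_\alpha}$ yields $x_\alpha\goesl 0$. Second, $y_\alpha\goeslB 0$ also supplies some index $\alpha_0$ and some $B\in\mathcal B$ with $\{y_\alpha\mid\alpha\ge\alpha_0\}\subseteq B$. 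Here I would use local solidity of~$\mathcal B$: the solid hull $\Sol(B)$ again belongs to~$\mathcal B$, and for $\alpha\ge\alpha_0$ the inequality $\abs{x_\alpha}\le\abs{y_\alpha}$ with $y_\alpha\in B$ places $x_\alpha\in\Sol(B)$. Thus the same index $\alpha_0$ works, a tail of $(x_\alpha)$ lies in $\Sol(B)\in\mathcal B$, and combining the two parts gives $x_\alpha\goeslB 0$.

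For the order continuity claim I would appeal to Theorem~\ref{ocn-mon}, which (now that $\lambda_{\mathcal B}$ is known to be locally solid) reduces order continuity to the single requirement that $x_\alpha\downarrow 0$ imply $x_\alpha\goeslB 0$. So suppose $x_\alpha\downarrow 0$. Order continuity of $\lambda$ immediately gives $x_\alpha\goesl 0$. For the boundedness condition, fix any $\alpha_0$; monotonicity yields $0\le x_\alpha\le x_{\alpha_0}$ for all $\alpha\ge\alpha_0$, so this tail is contained in the order interval $[0,x_{\alpha_0}]$. Since the bornology of order bounded sets is the least locally solid bornology, every order interval belongs to~$\mathcal B$, and hence a tail of $(x_\alpha)$ is $\mathcal B$-bounded. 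Combining this with $x_\alpha\goesl 0$ gives $x_\alpha\goeslB 0$, and Theorem~\ref{ocn-mon} completes the argument.

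The argument is essentially routine, and the only point requiring care is the alignment of tails in the first part: one must ensure that the same index $\alpha_0$ placing $(y_\alpha)$ inside $B$ also places $(x_\alpha)$ inside $\Sol(B)$. This is precisely where local solidity of the bornology enters, and I would record explicitly that it is exactly what guarantees $\Sol(B)\in\mathcal B$ (equivalently, that the solid hull of a bounded set is bounded). Without this, the domination $\abs{x_\alpha}\le\abs{y_\alpha}$ alone would not suffice to keep the tail of $(x_\alpha)$ inside~$\mathcal B$, so this is the single hypothesis doing the real work.
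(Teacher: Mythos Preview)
Your proof is correct and follows essentially the same approach as the paper. The paper dismisses the local solidity of $\lambda_{\mathcal B}$ as ``straightforward'' without details, so your explicit verification via Theorem~\ref{def-ls}\eqref{def-ls-net} and the solid-hull closure of $\mathcal B$ is exactly the intended routine argument; for order continuity the paper works directly with an arbitrary order-null net (whose tail is automatically order bounded) rather than reducing via Theorem~\ref{ocn-mon} to decreasing nets, but the underlying idea---order intervals belong to any locally solid bornology---is identical.
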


\begin{proof}
  The first claim is straightforward. Suppose that $\lambda$ is order
  continuous and $x_\alpha\goeso 0$. Then $x_\alpha\goesl 0$. Passing
  to a tail, we may assume that $(x_\alpha)$ is contained in an order
  interval. Since every order interval is contained in~$\mathcal B$,
  we conclude that $x_\alpha\goeslB 0$.
\end{proof}

\begin{example}\label{cru-bm}
  Remark~\ref{Cc-ru}, Corollary~\ref{C0-ru}, and Theorem~\ref{sc} may
  be interpreted as follows: relative uniform convergence on the appropriate
  spaces of continuous functions may be obtained as a bounded
  modification of ucc convergence. In particular, Theorem~\ref{sc}
  says that if $\Omega$ is locally compact and $\sigma$-compact,
  then relative uniform convergence on $C(\Omega)$ is the
  bounded modification of ucc convergence by the bornology of all ucc
  (or, equivalently, order) bounded sets.
\end{example}

\begin{example}\label{uob-o}
  It is easy to see that the bounded modification of
  uo-convergence by the bornology of all order bounded sets is
  order convergence.
\end{example}

\begin{example}\label{top-bdd-nontop}
  In the preceding example, suppose that $X$ is discrete. In this case,
  by Proposition~\ref{discr-uo-p}, uo-convergence agrees with
  point-wise convergence, hence is topological. On the other hand, 
  order convergence is not topological unless $X$ is
  finite-dimensional; see~\cite{Dabboorasad:20} or Corollary~10.11
  in~\cite{OBrien:23}. This yields another example of a topological
  convergence, which has a non-topological bounded modification.
  (See also Proposition~\ref{top-B}.)
\end{example}

\begin{example}
  Let $X$ be a normed lattice, and consider uo-convergence on~$X$. The
  set of all uo-continuous linear functionals on $X$ is rather small;
  it consists only of linear combinations of the coordinate
  functionals of atoms in~$X$; see, e.g., Proposition~2.2
  in~\cite{Gao:18}. Let $\mathcal B$ be the bornology of all norm
  bounded sets in~$X$. The bounded modification $\rm{uo}_{\mathcal B}$
  yields a richer dual space. It was shown in Theorem~2.3
  in~\cite{Gao:18} that the dual $X^\sim_{\mathrm{uo}}$ of $X$ with
  respect to this convergence is the order continuous part of the
  order continuous dual of~$X$.

  In this example, the condition in Proposition~\ref{compat} that
  every set in $\mathcal B$ should be $\lambda$-bounded is not
  satisfied. It follows from Theorem~\ref{uo-bdd-dom} and
  Proposition~\ref{bdd-bdd} that $(\rm{uo})_{\mathcal B}$-bounded sets
  in $X$ are the sets that are both dominable and norm bounded.
\end{example}

\begin{proposition}\label{uo-utau}
  Let $X$ be a vector lattice with the countable supremum property,
  $\tau$ a Hausdorff locally solid order continuous topology on~$X$,
  and $\mathcal B$ a locally solid bornology on~$X$. Then
  \begin{enumerate}
  \item\label{uo-utau-top} The topological modifications of $(\mathrm{u}\tau)_{\mathcal B}$ and of
    $(\rm{uo})_{\mathcal B}$ are equal and agree with the strongest
    topology coinciding with $\mathrm{u}\tau$ on members of~$\mathcal
    B$;
  \item\label{uo-utau-func} A linear functional on $X$ is
    $(\rm{uo})_{\mathcal B}$-continuous iff it is
    $(\rm{u}\tau)_{\mathcal B}$-continuous.
  \end{enumerate}
\end{proposition}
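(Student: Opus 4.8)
The plan is to reduce both parts to the single identity $\mathrm{t}\bigl((\mathrm{uo})_{\mathcal B}\bigr)=\mathrm{t}\bigl((\mathrm{u}\tau)_{\mathcal B}\bigr)$, proved by running the argument of Proposition~\ref{tt} but feeding in Remark~\ref{uo-top-un-B} in place of a closedness hypothesis. First I would record the two structural inputs that make $\mathrm{u}\tau$ usable as a topology: since $\tau$ is a locally solid topology, its unbounded modification $\mathrm{u}\tau$ is again topological, and since $\tau$ is order continuous we have $\mathrm{t}(\mathrm{uo})=\mathrm{u}\tau$ by Proposition~\ref{uo-top-un}. Crucially, Remark~\ref{uo-top-un-B} upgrades this to $\mathrm{t}\bigl((\mathrm{uo})_{|B}\bigr)=(\mathrm{u}\tau)_{|B}$ for every subset $B\subseteq X$, with no closedness assumption on~$B$.

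For part~\eqref{uo-utau-top}, I would use Proposition~\ref{bm}\eqref{bm-final} to view $(\mathrm{uo})_{\mathcal B}$ as the final convergence structure for the inclusions $(B,(\mathrm{uo})_{|B})\hookrightarrow X$ with $B\in\mathcal B$, and likewise $(\mathrm{u}\tau)_{\mathcal B}$ as the final convergence for the inclusions $(B,(\mathrm{u}\tau)_{|B})\hookrightarrow X$. Since topological modification commutes with the formation of final convergence structures (\cite[Proposition~1.3.14]{Beattie:02}), $\mathrm{t}\bigl((\mathrm{uo})_{\mathcal B}\bigr)$ is the final topology for the inclusions $\bigl(B,\mathrm{t}((\mathrm{uo})_{|B})\bigr)\hookrightarrow X$, which by Remark~\ref{uo-top-un-B} are exactly $\bigl(B,(\mathrm{u}\tau)_{|B}\bigr)\hookrightarrow X$. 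On the other hand, $(\mathrm{u}\tau)_{|B}$ is a subspace topology, hence equals its own topological modification, so $\mathrm{t}\bigl((\mathrm{u}\tau)_{\mathcal B}\bigr)$ is the final topology for the very same family $\bigl(B,(\mathrm{u}\tau)_{|B}\bigr)\hookrightarrow X$. The two topological modifications therefore coincide. Finally, applying Proposition~\ref{top-bor-mod}\eqref{top-bor-mod-top} with the topology $\mathrm{u}\tau$ identifies $\mathrm{t}\bigl((\mathrm{u}\tau)_{\mathcal B}\bigr)$ with the strongest topology agreeing with $\mathrm{u}\tau$ on members of~$\mathcal B$, giving the displayed description.

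For part~\eqref{uo-utau-func}, I would invoke the universal property of topological modification: since the scalar field $\mathbb K$ carries a topology, a linear functional $f\colon X\to\mathbb K$ is $\mu$-continuous for a convergence $\mu$ iff it is $\mathrm{t}\mu$-continuous (one direction is $\mathrm{t}\mu\le\mu$; for the other, $f^{-1}$ of a closed set is $\mu$-closed, hence $\mathrm{t}\mu$-closed, and $\mathbb K$ is topological). Applying this with $\mu=(\mathrm{uo})_{\mathcal B}$ and with $\mu=(\mathrm{u}\tau)_{\mathcal B}$, and using the equality of topological modifications from part~\eqref{uo-utau-top}, shows that the two continuity requirements coincide.

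The main obstacle — and the reason one cannot simply quote Proposition~\ref{tt} — is that a locally solid bornology $\mathcal B$ need not have a base of $\mathrm{uo}$-closed sets (solid sets are generally not $\mathrm{uo}$-closed, and their $\mathrm{uo}$-closures may leave $\mathcal B$). Proposition~\ref{tt} relies on \cite[Proposition~1.3.10]{Beattie:02}, that restriction commutes with topological modification on closed sets; here that step is replaced by the stronger Remark~\ref{uo-top-un-B}, which yields $\mathrm{t}((\mathrm{uo})_{|B})=(\mathrm{u}\tau)_{|B}$ for arbitrary $B$ and thereby removes the need for any base of closed sets. I would double-check only that the hypotheses of Proposition~\ref{uo-top-un} (countable supremum property; $\tau$ Hausdorff, locally solid, order continuous) are genuinely inherited by the restricted statement, which is precisely what Remark~\ref{uo-top-un-B} asserts.
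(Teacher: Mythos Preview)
Your proposal is correct and follows essentially the same route as the paper: both arguments hinge on Remark~\ref{uo-top-un-B} (that $\mathrm{t}\bigl((\mathrm{uo})_{|B}\bigr)=(\mathrm{u}\tau)_{|B}$ for every $B$) to identify the two topological modifications, and both deduce part~\eqref{uo-utau-func} from part~\eqref{uo-utau-top} via the principle that continuity into a topological space depends only on the topological modification of the source. The only difference is presentational: you run the final-convergence machinery of Proposition~\ref{tt} explicitly, whereas the paper shortcuts through Proposition~\ref{top-bor-mod}\eqref{top-bor-mod-conv}, observing directly that the strongest topology weaker than $\mathrm{uo}$ on each $B\in\mathcal B$ coincides with the strongest topology weaker than $\mathrm{u}\tau$ on each~$B$.
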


\begin{proof}
  \eqref{uo-utau-top} By
  Proposition~\ref{top-bor-mod}\eqref{top-bor-mod-conv}, the
  topological modifications of $(\mathrm{uo})_{\mathcal B}$ and of
  $(\mathrm{u}\tau)_{\mathcal B}$ are the strongest topologies on $X$
  that are weaker than $\mathrm{uo}$ and~$\mathrm{u}\tau$,
  respectively, on sets in~$\mathcal B$. But these two topologies
  agree because by Remark~\ref{uo-top-un-B}, for every
  $B\in\mathcal B$, a topology on $B$ is weaker than
  $(\mathrm{uo})_{|B}$ iff it is weaker than $(\mathrm{u}\tau)_{|B}$.
  
  %   By
  % Proposition~\ref{uo-top-un},
  % $\mathrm{u}\tau=\mathrm{t}(\mathrm{ou})$. In particular,
  % $\mathrm{uo}$- and $\mathrm{u}\tau$-closed sets agree.  Applying
  % Proposition~\ref{tt} with $\lambda=\mathrm{uo}$, we conclude that
  % \begin{math}
  %   \mathrm{t}\bigl((\mathrm{u}\tau)_{\mathcal B}\bigr)
  %   =\mathrm{t}\Bigl(\bigl(\mathrm{t}(\mathrm{uo})\bigr)_{\mathcal B}\Bigr)
  %   =\mathrm{t}\bigl((\mathrm{uo})_{\mathcal B}\bigr).
  % \end{math}
  % It follows from Proposition~\ref{top-bor-mod}\eqref{top-bor-mod-top}
  % that $\mathrm{t}\bigl((\mathrm{u}\tau)_{\mathcal B}\bigr)$ is the
  % strongest topology on $X$ which agrees with $\mathrm{u}\tau$ on
  % members of $\mathcal B$. 
   
  \eqref{uo-utau-func} follows from \eqref{uo-utau-top} and
  Proposition~1.3.9(iii) of~\cite{Beattie:02}.
\end{proof}

% Old proof; now redundant.

% \begin{proof}
%   Since $\tau$ is order continuous, we have $\mathrm{o}\ge\tau$, hence
%   $\mathrm{uo}\ge\mathrm{u}\tau$ and, therefore,
%   $\mathrm{uo}_{\mathcal B}\ge\mathrm{u}\tau_{\mathcal B}$. It follows
%   that if $\varphi$ is $\mathrm{u}\tau_{\mathcal B}$-continuous then
%   it is $\mathrm{uo}_{\mathcal B}$-continuous. Suppose now that
%   $\varphi$ is $\mathrm{uo}_{\mathcal B}$-continuous but not
%   $\mathrm{u}\tau_{\mathcal B}$-continuous. Then there exists a net
%   $(x_\alpha)$ such that
%   $x_\alpha\xrightarrow{\mathrm{u}\tau_{\mathcal B}}0$ but
%   $\varphi(x_\alpha)\not\to 0$. Hence
%   $x_\alpha\xrightarrow{\textrm{u}\tau} 0$ and, after passing to a
%   tail, we may assume that $(x_\alpha)$ is contained in a set in
%   $\mathcal B$. There exists an $\varepsilon>0$ and a subnet
%   $(y_\beta)$ of $(x_\alpha)$ such that
%   $\bigabs{\varphi(y_\beta)}>\varepsilon$ for all $\beta$. Since
%   $y_\beta\xrightarrow{\textrm{u}\tau}0$, by Theorem 5.6
%   in~\cite{Deng:23}, there exists an increasing sequence of indices
%   $(\beta_k)$ such that $y_{\beta_k}\goesuo 0$. It follows that
%   $y_{\beta_k}\xrightarrow{\mathrm{uo}_{\mathcal B}}0$. Therefore,
%   $\varphi(y_{\beta_k})\to 0$. This is a contradiction.
% \end{proof}

Analogously to $X^\sim_{\mathrm{uo}}$, we write
$X^\sim_{\mathrm{un}}$ for the dual of $X$ with respect to
$\mathrm{un}_{\mathcal B}$, where $\mathcal B$ is the bornology of the
norm bounded sets on a Banach lattice $X$
and $\mathrm{n}$ is convergence with
respect to the norm on~$X$.

\begin{corollary}
  Let $X$ be an order continuous Banach lattice. Then
  $X^\sim_{\mathrm{uo}}=X^\sim_{\mathrm{un}}$.
\end{corollary}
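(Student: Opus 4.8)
The plan is to deduce the corollary as a direct instance of Proposition~\ref{uo-utau}\eqref{uo-utau-func}, taking $\tau$ to be the norm topology $\mathrm n$ on $X$ and $\mathcal B$ to be the bornology of all norm bounded subsets of~$X$. With these choices $\mathrm u\tau=\mathrm{un}$, so $(\mathrm u\tau)_{\mathcal B}=\mathrm{un}_{\mathcal B}$, and the conclusion of that proposition---that a linear functional is $(\mathrm{uo})_{\mathcal B}$-continuous iff it is $(\mathrm u\tau)_{\mathcal B}$-continuous---is exactly the equality $X^\sim_{\mathrm{uo}}=X^\sim_{\mathrm{un}}$ of the two dual spaces.

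The work is therefore to check the three standing hypotheses of Proposition~\ref{uo-utau} for this $X$, $\tau$, and~$\mathcal B$. The norm topology of a Banach lattice is Hausdorff and locally solid because the norm is a lattice norm, and it is order continuous precisely by the hypothesis on~$X$ (order continuity of the norm is the statement that $x_\alpha\downarrow 0$ implies $\norm{x_\alpha}\to 0$, i.e.\ $\mathrm n\le\mathrm o$). The bornology $\mathcal B$ is locally solid since $\abs x\le\abs y$ forces $\norm x\le\norm y$, so the solid hull of a norm bounded set is norm bounded and $\mathcal B$ has a base of solid sets.

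The one hypothesis that is not immediate from the lattice-norm structure is the countable supremum property; I would record that every order continuous Banach lattice has it. This is standard: after replacing a set $D$ with $\sup D=s$ by its upward-directed set of finite suprema, order continuity gives $d\to s$ in norm along this net, so a sequence $(d_n)$ with $\norm{s-d_n}\to 0$ can be extracted, and since the positive cone is norm closed one checks that $\sup_n d_n=s$.

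Once the hypotheses are in place, the corollary follows verbatim from Proposition~\ref{uo-utau}\eqref{uo-utau-func}. I do not expect a genuine obstacle: all the real content is already carried by Proposition~\ref{uo-utau} (which rests on Remark~\ref{uo-top-un-B} and on topological modification commuting with final convergences), and the only point needing a moment's care is the verification of the countable supremum property noted above.
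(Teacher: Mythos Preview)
Your proposal is correct and is exactly the intended argument: the paper states the corollary immediately after Proposition~\ref{uo-utau} without proof, leaving the reader to apply part~\eqref{uo-utau-func} with $\tau=\mathrm{n}$ and $\mathcal B$ the norm-bounded sets, and your verification of the hypotheses (including the countable sup property, which is the only nonimmediate one) is precisely what is needed to make that application go through.
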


\begin{example}\label{ex:L01}
  One may ask whether, under the assumptions of
  Proposition~\ref{uo-utau}, the topological modification of
  $(\mathrm{uo})_\mathcal B$ is always a linear topology. The following example
  shows that the answer is ``no''.

  Let $X=L_\infty[0,1]$, $\tau$ be the topology of convergence in
  measure, and $\mathcal B$ the bornology of all order (or,
  equivalently, norm) bounded sets in~$X$. Then $X$, $\tau$, and
  $\mathcal B$ satisfy the assumptions of the
  proposition. It is easy to see that $\tau$ is unbounded and that
  $(\mathrm{uo})_{\mathcal B}=\mathrm{o}$. Hence, it follows from the
  proposition that
  $\mathrm{t}(\mathrm{o})$ is the strongest topology coinciding
  with $\tau$ on bounded sets. Suppose, for the sake of contradiction,
  that $\mathrm{t}(\mathrm{o})$ is
  linear. Then it would be the strongest linear topology coinciding
  with $\tau$ on bounded sets; it would then equal the Mackey topology
  $\tau(L_\infty,L_1)$ by Theorem~5 of~\cite{Nowak:89}; see also a
  discussion before Corollary~3.8 in~\cite{Conradie:06}. However, it
  was shown in Theorem~16 in~\cite{Abela:22} that
  $\tau(L_\infty,L_1)\ne\mathrm{t}(\mathrm{o})$ on~$X$.
\end{example}

\begin{example}
  Let $\tau$ be a locally solid order continuous topology on a vector
  lattice~$X$, let $\mathcal B$ be the bornology of all order bounded
  sets in~$X$. It is easy to see that
  $\tau_{\mathcal B}=(\mathrm{u}\tau)_{\mathcal B}$. Since all
  Hausdorff order continuous locally solid topologies agree on order
  bounded sets (see \cite{Aliprantis:03}, Theorem 4.22),
  $\tau_{\mathcal B}$ is independent of the topology chosen. It
  follows from $\tau\le\mathrm{o}$ that
  $\tau_{\mathcal B}\le\mathrm{o}$. We will see that, in general,
  $\tau_{\mathcal B}$ need not agree with order convergence.

  To see this, let $X=L_p[0,1]$ with $1\le p<\infty$, and let $\tau$
  be the norm topology of~$X$. In this case, $\mathrm{u}\tau$ is the
  topology of convergence in measure. The typewriter sequence as in, e.g.,
  \cite[Example~10.6]{OBrien:23} is an order
  bounded sequence in $X$ which converges in measure to zero, and
  hence also with respect to~$\tau_{\mathcal B}$, but is not order
  convergent to zero.

  In the preceding example, $\mathrm{u}\tau$ is the topology of
  convergence in measure and, therefore, does not depend
  on~$p$. However, $\tau_{\mathcal B}$ does. Indeed, let
  $f_n=\sqrt{n}\one_{[\frac{1}{n+1},\frac1n]}$. Then $(f_n)$ is order
  bounded in $L_1[0,1]$; since it converges to zero in measure, we
  have $f_n\xrightarrow{\tau_{\mathcal B}}0$ in $L_1[0,1]$. Yet
  $(f_n)$ is not order bounded in $L_2[0,1]$, hence it fails to
  $\tau_{\mathcal B}$-converge to zero in $L_2[0,1]$.
\end{example}

% \begin{example}
%   Let $\eta_p$ be the bounded modification of norm
%   convergence on $L_p(\mu)$ ($1\le p<\infty$) by the bornology of all
%   order bounded sets. This convergence is clearly order continuous. It
%   does not agree with order convergence: the typewriter sequence
%   converges to zero in $\eta_p$ but not in order. Since on order
%   intervals, norm convergence agrees with un, we may also view
%   $\eta_p$ as the bounded modification of convergence in measure
%   (un-convergence) by the same bornology as before. 
%   Unlike convergence in measure, this convergence depends on
%   $p$. Indeed, let $f_n=\sqrt{n}\one_{[\frac{1}{n+1},\frac1n]}$. Then
%   $(f_n)$ is order bounded in $L_1[0,1]$; since it converges to zero
%   in measure, we have $f_n\xrightarrow{\eta_1}0$ in $L_1[0,1]$. Yet
%   $(f_n)$ is not order bounded in $L_2[0,1]$, hence it fails to
%   $\eta_2$-converge to zero in $L_2[0,1]$.
% \end{example}

% \begin{example}
%   Let $X=L_p(\mu)$ for some $1\le p<\infty$ and some $\sigma$-finite
%   measure $\mu$. By Amemiya's Theorem, $\norm{\cdot}_p$-convergence and
%   $\norm{\cdot}_1$-convergence agree on order intervals of $X$. That is, if
%   $\mathcal B$ is the bornology of all order bounded sets in $X$ then
%   the bounded modifications of $\norm{\cdot}_p$ and of
%   $\norm{\cdot}_1$ with respect to $\mathcal B$ agree.
%   Similarly, by the ``weak'' version of Amemiya's Theorem, the bounded
%   modifications of the weak topologies of $L_p(\mu)$ and $L_1(\mu)$ by
%   $\mathcal B$ agree. 
% \end{example}

Example~\ref{uob-o} suggests a natural conjecture
regarding the interaction between locally solid convergences and
bornologies. Let $\lambda$ be a locally solid convergence on a vector
lattice~$X$, and $\mathrm{u}\lambda$ be its unbounded
modification. When can one recover $\lambda$ from $\mathrm{u}\lambda$
by taking the bounded modification of $\mathrm{u}\lambda$ by an
appropriate bornology, i.e., when do we have
$(\mathrm{u}\lambda)_{\mathcal B}=\lambda$? Two natural bornologies in
this context appear to be the bornology of all order bounded sets and
of all $\lambda$-bounded sets.

\begin{proposition}\label{bddrecover}
 Let $\lambda$ a locally solid convergence on a vector lattice~$X$.
 \begin{enumerate}
 \item\label{bddrecover-le} If $\mathcal B$ is the bornology of
   $\lambda$-bounded sets in $X$ and $\lambda$ is locally bounded,
   then $(\mathrm{u}\lambda)_\mathcal B\le\lambda$.
 \item\label{bddrecover-ge} If $\mathcal B$ is the bornology of order
   bounded sets in~$X$, then
   $ \lambda\le (\mathrm{u}\lambda)_\mathcal B$.
   \end{enumerate}
\end{proposition}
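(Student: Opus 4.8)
The plan is to prove each inequality separately by unwinding the definition of the bounded modification $\goeslB$ applied to $\mathrm{u}\lambda$, using two facts recorded earlier: that $\mathrm{u}\lambda\le\lambda$ always holds, and that $\lambda$ and $\mathrm{u}\lambda$ coincide on order bounded nets.

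For \eqref{bddrecover-le}, I would show directly that every $\lambda$-convergent net is $(\mathrm{u}\lambda)_{\mathcal B}$-convergent to the same limit. Suppose $x_\alpha\goesl x$. Since $\mathrm{u}\lambda\le\lambda$, we have $x_\alpha\xrightarrow{\mathrm{u}\lambda}x$. Since $\lambda$ is locally bounded, the net $(x_\alpha)$ has a $\lambda$-bounded tail, that is, a tail lying in a member of $\mathcal B$. By the definition of the bounded modification, these two facts together give $x_\alpha\xrightarrow{(\mathrm{u}\lambda)_{\mathcal B}}x$, which is exactly $(\mathrm{u}\lambda)_{\mathcal B}\le\lambda$. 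Here the local boundedness hypothesis is used precisely to supply the bounded tail, and no other property of $\mathcal B$ is needed.

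For \eqref{bddrecover-ge}, I would show the reverse implication: every $(\mathrm{u}\lambda)_{\mathcal B}$-convergent net is $\lambda$-convergent. Suppose $x_\alpha\xrightarrow{(\mathrm{u}\lambda)_{\mathcal B}}x$. By definition this means $x_\alpha\xrightarrow{\mathrm{u}\lambda}x$ together with an index $\alpha_0$ and a vector $v\in X_+$ such that $\abs{x_\alpha}\le v$ for all $\alpha\ge\alpha_0$. The key move is to take $u=v+\abs{x}\in X_+$. For $\alpha\ge\alpha_0$ we have $\abs{x_\alpha-x}\le\abs{x_\alpha}+\abs{x}\le u$, so $\abs{x_\alpha-x}\wedge u=\abs{x_\alpha-x}$ on this tail. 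From $x_\alpha\xrightarrow{\mathrm{u}\lambda}x$ we know $\abs{x_\alpha-x}\wedge u\goesl 0$; restricted to the tail $\alpha\ge\alpha_0$ this reads $\abs{x_\alpha-x}\goesl 0$, and since a net and its tails are tail equivalent and hence share their $\lambda$-limits, we conclude $\abs{x_\alpha-x}\goesl 0$ for the whole net, i.e.\ $x_\alpha\goesl x$.

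Neither direction poses a genuine difficulty once the definitions are unwound. The only point requiring care is the passage from the tail to the whole net in \eqref{bddrecover-ge}: the identity $\abs{x_\alpha-x}\wedge u=\abs{x_\alpha-x}$ holds only for $\alpha\ge\alpha_0$, so one must invoke tail equivalence (equivalently, the fact that $\lambda$ and $\mathrm{u}\lambda$ agree on the order bounded tail) to transfer $\lambda$-convergence from the tail back to the original net.
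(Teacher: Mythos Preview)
Your proof is correct and proceeds exactly as one would expect from the definitions; the paper in fact states this proposition without proof, treating both parts as immediate from the facts that $\mathrm{u}\lambda\le\lambda$ and that $\lambda$ and $\mathrm{u}\lambda$ agree on order bounded sets. Your write-up simply makes these routine verifications explicit.
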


% \begin{proof}
%  \begin{enumerate}
%   \item This follows at once from the fact that u$\lambda\leq \lambda$ and the definition of local boundednes.
%   \item If $x_\alpha\xrightarrow{\mathrm{u}\lambda)_\mathcal B} 0$, $\{x_\alpha:\alpha\geq \alpha_0\}\subseteq [-u,u]$ for some $\alpha_0\in A, u\in X_+$. Then $|x_\alpha|=|x_\alpha|\wedge u\xrightarrow{\lambda} 0$
%   \item  If  every $\lambda$-bounded set is order bounded, then $ \lambda= (\mathrm{u}\lambda)_\mathcal B$ follows from (i) and (ii). Conversely, suppose $ \lambda= (\mathrm{u}\lambda)_\mathcal B$ and let  $B$ be any $\lambda$-bounded set. Then $\frac1n B\xrightarrow{\lambda} 0$ and hence $\frac1n B\xrightarrow{(\textrm{u}\lambda)_\mathcal B} 0$. We can therefore find an $n_0$ and a $u\in X_+$ such that $\{\frac1n B:n\geq n_0\}\subseteq [-u,u]$, from which it follows that $B$ is order-bounded.
%  \end{enumerate}
% \end{proof}

A locally solid convergence on a vector lattice is said to be
\term{locally order bounded} if it is locally $\mathcal B$-bounded,
where $\mathcal B$ is the bornology of all order bounded sets.

\begin{proposition}\label{lo-lB}
  Let $\lambda$ be a locally solid convergence on a vector lattice~$X$,
  and $\mathcal B$ the bornology of all order bounded sets. The
  following are equivalent:
  \begin{enumerate}
  \item\label{lo-lB-lo} $\lambda$ is locally order bounded;
  \item\label{lo-lB-u} $\lambda=(\mathrm{u}\lambda)_{\mathcal B}$;
  \item\label{lo-lB-lB} $\lambda$ is locally bounded, and every
    $\lambda$-bounded set is order bounded.
  \end{enumerate}
\end{proposition}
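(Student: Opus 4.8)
The plan is to establish the two equivalences \eqref{lo-lB-lo}$\Leftrightarrow$\eqref{lo-lB-u} and \eqref{lo-lB-lo}$\Leftrightarrow$\eqref{lo-lB-lB}, harvesting almost all of the work from results already proved in this section. Throughout, $\mathcal B$ denotes the bornology of order bounded sets. I first record the reformulation of \eqref{lo-lB-lo}: by the definition of local $\mathcal B$-boundedness, $\lambda$ is locally order bounded precisely when every $\lambda$-null net has an order bounded tail. Passing between $\lambda$-convergent and $\lambda$-null nets is harmless here, since subtracting the limit preserves both $\lambda$-convergence and the order boundedness of a tail (a shift of an order interval is again an order interval).

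For \eqref{lo-lB-lo}$\Leftrightarrow$\eqref{lo-lB-u}, I lean on Proposition~\ref{bddrecover}\eqref{bddrecover-ge}, which already gives $\lambda\le(\mathrm{u}\lambda)_{\mathcal B}$ unconditionally; the conceptual point is that this is the only inequality that comes for free, so \eqref{lo-lB-u} reduces to the reverse inequality. To prove \eqref{lo-lB-lo}$\Rightarrow$\eqref{lo-lB-u}, I take $x_\alpha\goesl 0$; then $x_\alpha\xrightarrow{\mathrm{u}\lambda}0$ since $\mathrm{u}\lambda\le\lambda$, and by \eqref{lo-lB-lo} the net has an order bounded tail, so $x_\alpha\xrightarrow{(\mathrm{u}\lambda)_{\mathcal B}}0$; this supplies the missing inequality and yields $\lambda=(\mathrm{u}\lambda)_{\mathcal B}$. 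For the converse \eqref{lo-lB-u}$\Rightarrow$\eqref{lo-lB-lo}, if $x_\alpha\goesl 0$ then \eqref{lo-lB-u} forces $x_\alpha\xrightarrow{(\mathrm{u}\lambda)_{\mathcal B}}0$, and by the very definition of a bounded modification this means a tail of $(x_\alpha)$ lies in $\mathcal B$; hence $\lambda$ is locally order bounded.

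For \eqref{lo-lB-lo}$\Leftrightarrow$\eqref{lo-lB-lB}, the forward direction uses two earlier facts. Since every order bounded set is $\lambda$-bounded, an order bounded tail is in particular a $\lambda$-bounded tail, so local order boundedness immediately yields local boundedness. Moreover, Corollary~\ref{lB-bdd-B}, applied with this $\mathcal B$ and the locally $\mathcal B$-bounded convergence $\lambda$, shows that every $\lambda$-bounded set is order bounded. Conversely, assuming \eqref{lo-lB-lB}, local boundedness provides every $\lambda$-null net with a $\lambda$-bounded tail, and the hypothesis that $\lambda$-bounded sets are order bounded upgrades this to an order bounded tail, which is exactly \eqref{lo-lB-lo}.

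I do not anticipate a genuine obstacle: the statement is an assembly of Proposition~\ref{bddrecover}, Corollary~\ref{lB-bdd-B}, and the inclusion of order bounded sets among the $\lambda$-bounded sets. The only place demanding care is bookkeeping the direction of the convergence comparisons—specifically, recognizing that $\lambda\le(\mathrm{u}\lambda)_{\mathcal B}$ holds automatically, so that condition \eqref{lo-lB-u} is equivalent to the single extra requirement that $\lambda$-null nets have order bounded tails, which is precisely \eqref{lo-lB-lo}.
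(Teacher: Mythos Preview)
Your proof is correct and follows essentially the same route as the paper's. The only cosmetic difference is that for \eqref{lo-lB-lo}$\Rightarrow$\eqref{lo-lB-u} the paper invokes in one breath that $\lambda$ and $\mathrm{u}\lambda$ agree on order bounded sets (hence $\lambda_{\mathcal B}=(\mathrm{u}\lambda)_{\mathcal B}$), whereas you split the equality into two inequalities and cite Proposition~\ref{bddrecover}\eqref{bddrecover-ge} for one of them; the content is the same.
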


\begin{proof}
  \eqref{lo-lB-u}$\Rightarrow$\eqref{lo-lB-lo} and
  \eqref{lo-lB-lB}$\Rightarrow$\eqref{lo-lB-lo} are trivial.
  \eqref{lo-lB-lo}$\Rightarrow$\eqref{lo-lB-u} because $\lambda$ and
  $\mathrm{u}\lambda$ agree on order bounded
  sets. \eqref{lo-lB-lo}$\Rightarrow$\eqref{lo-lB-lB} because every
  $\lambda$-bounded set is order bounded by Corollary~\ref{lB-bdd-B}.
\end{proof}

\begin{corollary}
  Let $X$ be a normed lattice, $\mathrm{n}$ convergence in norm, and
  $\mathcal B$ the bornology of all order bounded sets. Then
  $(\mathrm{un})_{\mathcal B}=\mathrm{n}$ iff $X$ has a strong unit.
\end{corollary}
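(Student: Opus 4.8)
The plan is to specialize Proposition~\ref{lo-lB} to $\lambda=\mathrm{n}$ and then translate the resulting order-boundedness condition into the existence of a strong unit. First I would record that norm convergence is always locally bounded: if $x_\alpha\goesnorm x$ then $\norm{x_\alpha}\le\norm{x}+1$ for all large $\alpha$, so $(x_\alpha)$ has a norm-bounded tail. Hence, taking $\mathcal B$ to be the bornology of order bounded sets, condition~\eqref{lo-lB-lB} of Proposition~\ref{lo-lB} reduces for $\lambda=\mathrm{n}$ to the single requirement that \emph{every norm-bounded subset of $X$ be order bounded}. Since the norm-bounded sets are precisely the subsets of the balls $rB_X$, this is equivalent to the closed unit ball $B_X$ being order bounded. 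Thus Proposition~\ref{lo-lB} already yields $(\mathrm{un})_{\mathcal B}=\mathrm{n}$ iff $B_X$ is order bounded, and it remains only to identify this with the existence of a strong unit.

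For the forward implication I would take $u\in X_+$ with $B_X\subseteq[-u,u]$. Then for any nonzero $x$ the normalized vector $x/\norm{x}$ lies in $B_X$, whence $\abs{x}\le\norm{x}\,u$; as $x$ is arbitrary this exhibits $u$ as a strong unit (and $u>0$ as long as $X\ne\{0\}$). Conversely, if $X$ has a strong unit $e$, the goal is $B_X\subseteq[-\lambda e,\lambda e]$ for some $\lambda>0$, which makes $B_X$ order bounded and closes the loop. Combining the two halves with the first paragraph gives the stated equivalence.

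The main obstacle is precisely the converse half of this last equivalence. Extracting a strong unit from an order-bounded ball is immediate, but deducing that a strong unit forces $B_X$ to be order bounded is where the precise reading of ``strong unit'' in a possibly incomplete normed lattice becomes delicate: an order unit $e$ need not have $[-e,e]$ (or any $[-\lambda e,\lambda e]$) absorb the unit ball. I would therefore make explicit that the relevant input is that the order interval $[-e,e]$ is a norm-neighbourhood of $0$; this is exactly what lets one choose $\lambda$ with $B_X\subseteq[-\lambda e,\lambda e]$. When $X$ is norm complete this is automatic from $e$ being an order unit, via the Baire-category argument applied to $X=\bigcup_{n}[-ne,ne]$ (some interval then contains a ball, and solidity upgrades this to $B_X\subseteq[-\lambda e,\lambda e]$). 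With this understood, the reduction through Proposition~\ref{lo-lB} together with the two elementary implications for strong units completes the proof.
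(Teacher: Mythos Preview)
Your proposal is correct and follows exactly the paper's route: the corollary is presented there without a separate proof, as an immediate consequence of Proposition~\ref{lo-lB} applied with $\lambda=\mathrm{n}$. Your caution that the implication ``strong unit $\Rightarrow B_X$ order bounded'' genuinely needs norm completeness is well taken and in fact sharper than the paper's treatment---for instance, the polynomials on $[0,1]$ equipped with the $L_1$-norm have strong unit~$\one$, yet $q_n=\sqrt{n}\,(1-x)^n$ is norm-null but not eventually order bounded, so $(\mathrm{un})_{\mathcal B}\ne\mathrm{n}$ there.
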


% \begin{proof}
%   If $(\mathrm{u}\eta)_{\mathcal B}=\eta$ then the filter generated by
%   scalar multiples of the unit ball contains an order bounded set; it
%   follows that the ball itself is order bounded, hence $X$ has a
%   strong unit. The converse implication is trivial.
% \end{proof}

\begin{proposition}
  Let $X$ be a normed lattice without a strong unit and $\mathcal B$
  the bornology of all norm bounded sets in~$X$. The bounded
  modification $(\mathrm{un})_{\mathcal B}$ of
  $\mathrm{un}$-convergence by $\mathcal B$ fails to be
  topological. In particular,
  $(\mathrm{un})_{\mathcal B}\ne\mathrm{n}$.
\end{proposition}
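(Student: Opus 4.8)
The plan is to apply Corollary~\ref{top-Bn} with $\tau=\mathrm{un}$. This is legitimate: since $\mathrm{un}$ is the unbounded modification of the (locally solid) norm topology, it is itself a locally solid linear topology, and $\mathcal B$ is exactly the bornology of norm bounded sets. Thus it suffices to verify the hypothesis of that corollary, namely $0\in\overline{S_X}^{\mathrm{un}}$; equivalently, to exhibit a net on the unit sphere that is $\mathrm{un}$-null.

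To produce such a net I would first use the hypothesis that $X$ has no strong unit. By the relevant direction of \cite[Theorem~2.3]{Kandic:17} --- which, as noted after Corollary~\ref{comp-0-ru}, remains valid for normed lattices --- the norm topology is not unbounded, that is, $\mathrm{un}\ne\mathrm{n}$. Since always $\mathrm{un}\le\mathrm{n}$, this gives $\mathrm{un}<\mathrm{n}$, so there is a net $(y_\alpha)$ and a point $y$ with $y_\alpha\goesun y$ but $y_\alpha\not\goesnorm y$. Translating by $y$ produces a net $(w_\alpha)$ with $w_\alpha\goesun 0$ while $(w_\alpha)$ is not norm-null.

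The core step is then to normalize $(w_\alpha)$ onto the unit sphere without destroying $\mathrm{un}$-nullity. Since $(w_\alpha)$ is not norm-null, there is $\delta>0$ with $\norm{w_\alpha}\ge\delta$ for cofinally many $\alpha$; restricting to the corresponding cofinal index set yields a quasi-subnet, still $\mathrm{un}$-null by the quasi-subnet axiom, along which $\norm{w_\alpha}\ge\delta$ for every $\alpha$. Put $x_\alpha=w_\alpha/\norm{w_\alpha}$, so that $x_\alpha\in S_X$. For any $u\in X_+$, the bound $\abs{x_\alpha}\le\tfrac1\delta\abs{w_\alpha}$ gives $\abs{x_\alpha}\wedge u\le\tfrac1\delta\bigl(\abs{w_\alpha}\wedge\delta u\bigr)$, whence $\bignorm{\abs{x_\alpha}\wedge u}\le\tfrac1\delta\bignorm{\abs{w_\alpha}\wedge\delta u}\to 0$, because $\delta u\in X_+$ and $w_\alpha\goesun 0$. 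Therefore $x_\alpha\goesun 0$, which shows $0\in\overline{S_X}^{\mathrm{un}}$.

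With the hypothesis of Corollary~\ref{top-Bn} verified, the corollary gives that $(\mathrm{un})_{\mathcal B}$ is not topological. Finally, since $\mathrm{n}$ is a topology, the equality $(\mathrm{un})_{\mathcal B}=\mathrm{n}$ would force $(\mathrm{un})_{\mathcal B}$ to be topological; this contradiction yields $(\mathrm{un})_{\mathcal B}\ne\mathrm{n}$. I expect the main obstacle to be precisely the normalization step: one must pass to a quasi-subnet bounded away from zero in norm and confirm that dividing by the (variable) norms preserves $\mathrm{un}$-nullity, which is exactly what the displayed lattice inequality secures.
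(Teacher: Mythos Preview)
Your proposal is correct and follows essentially the same route as the paper: use \cite[Theorem~2.3]{Kandic:17} to get $\mathrm{un}<\mathrm{n}$, produce a $\mathrm{un}$-null net on $S_X$, and invoke Corollary~\ref{top-Bn}. The paper simply asserts the existence of such a net on $S_X$, whereas you supply the normalization details; your lattice inequality $\abs{x_\alpha}\wedge u\le\tfrac1\delta(\abs{w_\alpha}\wedge\delta u)$ is exactly what is needed, and the passage to a quasi-subnet with norms bounded below is routine (one may need to re-index the cofinal set to guarantee directedness, but this is standard).
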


\begin{proof}
  By \cite[Theorem~2.3]{Kandic:17}, we have
  $\mathrm{un}<\mathrm{n}$. It follows that there exists a net
  $(x_\alpha)$ in $S_X$ such that $x_\alpha\goesun 0$. Using
  Corollary~\ref{top-Bn}, we get that $(\mathrm{un})_{\mathcal B}$
  fails to be topological.
%  In particular,
%  $(\mathrm{un})_{\mathcal B}\ne\mathrm{n}$.
\end{proof}

\begin{example}
  The following example shows that both
  $(\mathrm{u}\lambda)_\mathcal B<\lambda$ and
  $(\mathrm{u}\lambda)_\mathcal B>\lambda$ are possible in
  Proposition~\ref{bddrecover}. Let $\lambda$ be norm convergence on
  $L_1[0,1]$. Then $\mathrm{u}\lambda$ is un-convergence. Let
  $f_n=n\one_{[0,\frac1n]}$. Clearly, $\norm{f_n}=1$ for all~$n$,
  hence $(f_n)$ fails to $\lambda$-converge to 0. However, if
  $\mathcal B$ is the bornology of all $\lambda$-bounded sets (that
  is, of all norm bounded sets) then
  $f_n\xrightarrow{(\mathrm{un})_{\mathcal B}}0$, so
  $(\mathrm{u}\lambda)_{\mathcal B}\ne\lambda$. Since
  $(\mathrm{u}\lambda)_{\mathcal B}\le\lambda$ by
  Proposition~\ref{bddrecover}\eqref{bddrecover-le}, we get
  $(\mathrm{u}\lambda)_{\mathcal B}<\lambda$.

  Now let $g_n=n\one_{[\frac{1}{n},\frac{1}{n-1}]}$, and let $\mathcal
  B$ be the bornology of all order bounded sets. Then $g_n\goesnorm
  0$, but no tail of $(g_n)$ is order bounded, hence $(g_n)$ fails to
  converge to zero in  $(\mathrm{u}\lambda)_{\mathcal B}$. Hence, again,
  $(\mathrm{u}\lambda)_{\mathcal B}\ne\lambda$. Since
  $(\mathrm{u}\lambda)_{\mathcal B}\ge\lambda$
  by Proposition~\ref{bddrecover}\eqref{bddrecover-ge},
  we get
  $(\mathrm{u}\lambda)_{\mathcal B}>\lambda$. 
\end{example}

\begin{proposition}\label{lob-top-su}
  Let $\tau$ be a locally solid locally order bounded linear topology
  on a vector lattice~$X$. Then $X$ has a strong unit and $\tau$ is
  relative uniform convergence.
\end{proposition}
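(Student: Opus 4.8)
The plan is to first extract a strong unit from local order boundedness, and then to squeeze $\tau$ between relative uniform convergence on both sides. The first step is to produce a solid order‑bounded neighborhood of zero. Since $\tau$ is locally order bounded, the characterization of local $\mathcal B$-boundedness for topological vector spaces recorded after the definition (``$\tau$ is locally $\mathcal B$-bounded iff every point has a $\mathcal B$-bounded neighbourhood'') shows that $0$ has an order‑bounded $\tau$-neighborhood $U_0$; alternatively one may invoke Proposition~\ref{lo-lB}, by which $\tau$ is locally bounded with every $\tau$-bounded set order bounded. Because $\tau$ is locally solid, it has a base of solid neighborhoods, so I may choose a solid neighborhood $U\subseteq U_0$; as a subset of an order‑bounded set, $U$ is itself order bounded, hence $U\subseteq[-e,e]$ for some $e\in X_+$.

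Next I would verify that $e$ is a strong unit. In any topological vector space every neighborhood of zero is absorbing, so for each $x\in X$ there is $\lambda>0$ with $x\in\lambda U\subseteq\lambda[-e,e]=[-\lambda e,\lambda e]$, giving $\abs{x}\le\lambda e$. Thus $e$ is a strong unit and $I_e=X$. This already lets me identify relative uniform convergence concretely: since $e$ is a strong unit, any regulator $f\in X_+$ satisfies $f\le\mu e$ for some $\mu>0$, whence $\norm{\cdot}_e\le\mu\norm{\cdot}_f$; consequently $x_\alpha\goesu x$ is equivalent to $\norm{x_\alpha-x}_e\to 0$, and $\mathrm{ru}$ coincides with the (topological) norm convergence of $\bigl(X,\norm{\cdot}_e\bigr)$, whose closed unit ball is exactly $[-e,e]$.

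Finally I would prove $\tau=\mathrm{ru}$ by two inclusions. The inequality $\tau\le\mathrm{ru}$ is immediate from Proposition~\ref{u-strongest}, as $\mathrm{ru}$ is the strongest locally solid convergence and $\tau$ is locally solid. For the reverse, observe that the sets $\varepsilon U$ with $\varepsilon>0$ are $\tau$-neighborhoods of zero (scalar multiplication is a $\tau$-homeomorphism) and satisfy $\varepsilon U\subseteq\varepsilon[-e,e]$, while $\bigl\{\varepsilon[-e,e]\bigr\}_{\varepsilon>0}$ is a base of $\norm{\cdot}_e$-neighborhoods of zero; hence every $e$-norm neighborhood of zero contains a $\tau$-neighborhood, so $\tau$ is finer than the $e$-norm topology, i.e.\ $\tau\ge\mathrm{ru}$. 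Combining the two gives $\tau=\mathrm{ru}$. I expect the only genuinely delicate points to be the passage to a \emph{solid} order‑bounded neighborhood (needing local solidity) and the clean identification of $\mathrm{ru}$ with the $e$-norm topology in the presence of a strong unit; the absorbing‑neighborhood argument for the strong unit is routine.
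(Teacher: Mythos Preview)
Your proof is correct and follows essentially the same route as the paper: extract a strong unit from an order-bounded neighborhood of zero, then sandwich $\tau$ between $\mathrm{ru}$ via Proposition~\ref{u-strongest} on one side and the $\norm{\cdot}_e$-topology on the other. One small simplification: you do not need local solidity to manufacture a solid neighborhood $U\subseteq[-e,e]$; since the neighborhood filter $\mathcal N_0$ is a filter and contains an order-bounded set, it already contains the superset $[-e,e]$, so $[-e,e]$ itself is a $\tau$-neighborhood and the $\varepsilon[-e,e]$ are $\tau$-neighborhoods directly.
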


\begin{proof}
  Since $\tau$ is locally order bounded, the neighborhood filter
  of zero,~$\mathcal N_0$, contains an order bounded set, hence an order
  interval $[-e,e]$ for some $e\in X_+$. It follows that $e$ is a
  strong unit. It follows from $[-e,e]\in\mathcal N_0$ that $\tau$ is
  stronger than the topology induced by the norm
  $\norm{\cdot}_e$. On the other hand,
  the latter is the strongest locally solid convergence on~$X$.
\end{proof}

We now look at the locally solid aspect of Examples~\ref{ex:tvs-equi},
\ref{ex:norm-equi}, and~\ref{ex:bw*}.

\begin{example}\label{loc-sol-equi}
  Let $X$ be a locally solid topological vector lattice. By
  Theorem~2.22 in~\cite{Aliprantis:03}, its topological dual $X^*$ is
  an ideal in its order dual~$X^\sim$, hence $X^*$ is a vector
  lattice. We claim that the linear bornology $\mathcal B$ of all
  equicontinuous sets in $X^*$ is locally solid. Indeed, $\mathcal B$
  has a base formed by polars of neighborhoods of zero
  in~$X$. Therefore, for every $B\in\mathcal B$ there exists a
  neighborhood $U$ of zero in $X$ such that $B\subseteq
  U^\circ$. Since $X$ is locally solid, we can find a solid
  neighborhood $V$ of zero in $X$ such that $V\subseteq U$; it follows
  that $V^\circ\supseteq U^\circ$, hence $B\subseteq V^\circ$. Note
  that $V^\circ$ is solid; see, e.g., the paragraph after Theorem~6.1
  in~\cite{Aliprantis:03}.
\end{example}

\begin{example}
  Let $X=\ell_p$ with $1\le p\le \infty$, let $Y$ be $c_{00}$
  considered as a subspace of~$\ell_q$, where $p^{-1}+q^{-1}=1$, and
  let $Z=\ell_q$ (for $p=1$ put $Z:=c_{0}$). We know that
  uo-convergence and coordinate-wise convergence on $X$ agree; see
  Proposition~\ref{discr-uo-p}. Moreover, this convergence is equal to
  $\sigma(X,Y)$, and so is weak* when $X$ is considered a dual
  of~$Y$. Since $Y$ is dense in~$Z$, it follows that $\sigma(X,Z)$ and
  $\sigma(X,Y)$ agree on the norm-bounded subsets of~$X$.  Consider
  the bounded modification of this convergence by the bornology
  $\mathcal B$ of all norm bounded sets. This is exactly the
  $\mathrm{w}^*_{\mathcal B}$ convergence described in
  Example~\ref{ex:norm-equi} (in this case, we may view $X=Y^*$ or
  $X=Z^*$).  This convergence is not topological, but its topological
  modification is $\mathrm{bw}^*$, which is a linear topology (see
  Example \ref{ex:bw*}). When $1<p<\infty$, $\sigma(X,Z)$ is the weak
  topology, hence for sequences $\mathrm{w}^*_{\mathcal B}$ agrees
  with weak convergence. When $p=\infty$, $\mathcal B$ is the
  bornology of order bounded sets, and so
  $\mathrm{bw}^*=\mathrm{t}(\mathrm{w}^*_{\mathcal
    B})=\mathrm{t}(\mathrm{uo}_{\mathcal
    B})=\mathrm{t}(\mathrm{o})$. Applying the same argument to
  $\ell_{\infty}(\Omega)$, for an arbitrary set~$\Omega$, we conclude
  that the topological modification of order convergence on
  $\ell_{\infty}(\Omega)$ is linear.
\end{example}

While the topological modification of order convergence on
$\ell_{\infty}(\Omega)$ is linear, this is not the case for
$L_\infty[0,1]$, as we proved in Example~\ref{ex:L01}. It is also not
the case for $C[0,1]$, according to \cite[Example
10.7]{OBrien:23}. Since both $\ell_{\infty}(\Omega)$ and
$L_\infty[0,1]$ are isomorphic to $C(K)$ spaces, this motivates
the following question.

\begin{question}
  Is it true that if $\mathrm{t}(\mathrm{o})$ is linear on $C(K)$,
  then $K$ has a dense set of isolated points?
\end{question}

% \begin{example}
% \marg{New example, based on Jurie's suggestion.}
%   Let $X=L_p(\mu)$ where $1\le p<\infty$ and $\mu$ is a finite
%   non-atomic measure. We know that un-topology on $X$ agrees with
%   the topology of convergence in measure. Onw can easily construct a
%   sequence $(f_n)$ in $S_X$ such that $f_n\goesun 0$. It follows from
%   Corollary~\ref{top-Bn} that the bounded modification of
%   un-convergence by the bornology of norm bounded sets is not topological. 
% \end{example}

% \marg{The rest of the section is imported from Jurie's comments.} 
% \begin{proposition} Let $X$ be a vector lattice with the countable
%   supremum property which admits a Hausdorff order-continuous locally
%   solid topology $\tau$ and let $\mathcal{B}$ be a locally solid
%   bornology on $X$. Then the topological modification of
%   $(\mathrm{uo})_\mathcal B$ is the strongest topology agreeing with the
%   unbounded topology $\mathrm{u}\tau$  on the sets in $\mathcal
%   B$.
% \end{proposition}

% \begin{proof}
%   By Proposition~\ref{uo-top-un}, $\mathrm{u}\tau$ is the topological
%   modification of uo. The convergence $(\mathrm{uo})_\mathcal B$ is a
%   final convergence by Proposition~\ref{bm}\eqref{bm-final}; its
%   topological modification is the strongest topology agreeing with the
%   topological modification of uo on the sets in $\mathcal B$ by
%   \cite{Beattie:02}, Proposition~1.3.14.
% \end{proof}

\section{Choquet modification}
\label{sec:Choquet}

Recall that a filter $\mathcal U$ is an \term{ultrafilter} if it is
maximal, i.e., not properly contained in another filter. Equivalently,
for every set~$A$, either $A\in\mathcal U$ or
$X\setminus A\in\mathcal U$. Furthermore, a filter $\mathcal U$ is an
ultrafilter iff it is \term{prime}, i.e., if
$\mathcal F\cap\mathcal G\subseteq\mathcal U$ for some filters
$\mathcal F$ and $\mathcal G$ then $\mathcal F\subseteq\mathcal U$ or
$\mathcal G\subseteq\mathcal U$. Every filter is contained in an
ultrafilter.

Let $(X,\eta)$ be a convergence space. We say that it is
\term{Choquet} if $\mathcal F\goeseta x$ whenever
$\mathcal U\goeseta x$ for every ultrafilter
$\mathcal U\supseteq\mathcal F$; see pp.~17--18
in~\cite{Beattie:02}. It is easy to see that every topological
convergence is Choquet. Given an arbitrary convergence structure
$(X,\eta)$, its \term{Choquet modification} $\mathrm{c}\eta$ is
defined as follows: $\mathcal F\goesceta x$ if $\mathcal U\goeseta x$
for every ultrafilter $\mathcal U$ containing~$\mathcal F$. Using the
fact that every ultrafilter is prime, it is easy to see that
$\mathrm{c}\eta$ is a Choquet convergence structure and
$\mathrm{c}\eta\le\eta$.

We will now ``translate'' these two terms into the net language.  We
write $(y_\beta)\preceq(x_\alpha)$ to indicate that $(y_\beta)$ is a
quasi-subnet of~$(x_\alpha)$.

\begin{lemma}\label{Ch-lem}
  For a net $(x_\alpha)$ in a convergence space $X$ and $x\in X$,
  the following are equivalent:
  \begin{enumerate}
  \item\label{Ch-lem-fil} Every ultrafilter containing $[x_\alpha]$
    converges to~$x$;
  \item\label{Ch-lem-net} Every quasi-subnet of $(x_\alpha)$ has a further
    quasi-subnet that converges to~$x$.
  \end{enumerate}
\end{lemma}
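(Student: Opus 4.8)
The plan is to pass back and forth between the filter and net formulations using the basic dictionary from Section~\ref{sec:prelim}. The crucial translation I would record first is that, for two nets, $(y_\beta)$ is a quasi-subnet of $(x_\alpha)$ precisely when $[x_\alpha]\subseteq[y_\beta]$: indeed, $(y_\beta)\preceq(x_\alpha)$ says every tail set of $(x_\alpha)$ contains a tail set of $(y_\beta)$, which, since tail filters are upward closed and tail sets form a base, is exactly the statement that every tail set of $(x_\alpha)$ lies in $[y_\beta]$, i.e. $[x_\alpha]\subseteq[y_\beta]$. I would state this alongside the three standard facts I will lean on: every filter is contained in an ultrafilter, ultrafilters are maximal, and a filter $\mathcal F$ converges to $x$ iff $\mathcal F=[w_\delta]$ for some net with $w_\delta\to x$.

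For \eqref{Ch-lem-fil}$\Rightarrow$\eqref{Ch-lem-net}, let $(y_\beta)$ be a quasi-subnet of $(x_\alpha)$, so $[x_\alpha]\subseteq[y_\beta]$. I would extend $[y_\beta]$ to an ultrafilter $\mathcal U$; then $\mathcal U\supseteq[y_\beta]\supseteq[x_\alpha]$, so by hypothesis $\mathcal U\to x$. Writing $\mathcal U=[w_\delta]$ with $w_\delta\to x$, the inclusion $[y_\beta]\subseteq\mathcal U=[w_\delta]$ says exactly that $(w_\delta)$ is a further quasi-subnet of $(y_\beta)$, and it converges to $x$, as required.

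For the converse \eqref{Ch-lem-net}$\Rightarrow$\eqref{Ch-lem-fil}, let $\mathcal U$ be an ultrafilter containing $[x_\alpha]$ and write $\mathcal U=[z_\gamma]$. Since $[x_\alpha]\subseteq\mathcal U=[z_\gamma]$, the net $(z_\gamma)$ is a quasi-subnet of $(x_\alpha)$, so by hypothesis it has a further quasi-subnet $(w_\delta)$ with $w_\delta\to x$. Then $\mathcal U=[z_\gamma]\subseteq[w_\delta]$, and here is the one point that genuinely uses that $\mathcal U$ is an \emph{ultra}filter: since $\mathcal U$ is maximal and $[w_\delta]$ is a proper filter containing it, we must have $\mathcal U=[w_\delta]$, whence $\mathcal U\to x$.

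The only subtlety — and the place I expect a careless argument to go astray — is the direction of filter inclusion in the final step. Convergence is inherited by \emph{finer} filters, so from $[w_\delta]\to x$ one cannot in general conclude that the coarser filter $\mathcal U$ converges; what rescues the argument is precisely the maximality of the ultrafilter, which collapses the inclusion $\mathcal U\subseteq[w_\delta]$ to an equality. Apart from this, everything is a routine application of the net--filter dictionary.
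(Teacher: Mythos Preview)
Your proof is correct and follows essentially the same approach as the paper: both directions use the dictionary ``$(y_\beta)\preceq(x_\alpha)$ iff $[x_\alpha]\subseteq[y_\beta]$'', extend to an ultrafilter in one direction, and invoke maximality of the ultrafilter to collapse $\mathcal U\subseteq[w_\delta]$ to equality in the other. Your emphasis on why maximality is genuinely needed in the converse is exactly the point the paper's argument hinges on as well.
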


\begin{proof}
  \eqref{Ch-lem-fil}$\Rightarrow$\eqref{Ch-lem-net} Let $(y_\beta)$ be
  a quasi-subnet of~$(x_\alpha)$. Note that $[y_\beta]$ is contained
  in some ultrafilter, say,~$\mathcal U$. Find a net $(z_\gamma)$ such
  that $\mathcal U=[z_\gamma]$. Observe that
  $[x_\alpha]\subseteq[y_\beta]\subseteq[z_\gamma]$. It follows that
  $(z_\gamma)\preceq(y_\beta)$ and by~\eqref{Ch-lem-fil},
  $\mathcal U\to x$. Therefore, $z_\gamma\to x$.

  \eqref{Ch-lem-net}$\Rightarrow$\eqref{Ch-lem-fil} Let $\mathcal U$
  be an ultrafilter such that $[x_\alpha]\subseteq\mathcal U$. Find a
  net $(y_\beta)$ with $[y_\beta]=\mathcal U$. Then
  $(y_\beta)\preceq(x_\alpha)$. By \eqref{Ch-lem-net}, there exists a
  net $(z_\gamma)$ such that $(z_\gamma)\preceq(y_\beta)$ and
  $z_\gamma\to x$. It follows that $[z_\gamma]\to x$. Finally, it
  follows from $\mathcal U=[y_\beta]\subseteq[z_\gamma]$ and the maximality of
  $\mathcal U$ that $[z_\gamma]=\mathcal U$, hence $\mathcal U\to x$.
\end{proof}

In \eqref{Ch-lem-net}, one may replace ``quasi-subnets'' with
``subnets'', in either one or in both occurrences. This is because it
was observed in \cite[Proposition~2.2]{OBrien:23} that every subnet is
a quasi-subnet, and every subnet is tail equivalent to a quasi-subnet.

The following two propositions are immediate consequences of
Lemma~\ref{Ch-lem}. 

\begin{proposition}
  Let $X$ be a convergence space. $X$ is Choquet iff for every net
  $(x_\alpha)$ in $X$ and $x\in X$, if every (quasi-)subnet of
  $(x_\alpha)$ has a further (quasi-)subnet that converges to~$x$, then
  $x_\alpha\to x$.
\end{proposition}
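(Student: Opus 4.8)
The plan is to simply unwind the filter-theoretic definition of a Choquet convergence and then translate it into the language of nets via Lemma~\ref{Ch-lem}; no new ideas are needed beyond that lemma and the filter/net dictionary.

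First I would recall the definition: $(X,\eta)$ is Choquet exactly when, for every filter $\mathcal F$ on $X$ and every $x\in X$, the condition that $\mathcal U\to x$ for every ultrafilter $\mathcal U\supseteq\mathcal F$ forces $\mathcal F\to x$. The task is to identify this with the stated net condition. To do so I would invoke the basic correspondence set up in Section~\ref{sec:prelim}: every filter $\mathcal F$ is the tail filter $[x_\alpha]$ of some net $(x_\alpha)$, and $\mathcal F\to x$ iff $x_\alpha\to x$. Hence quantifying over all filters $\mathcal F$ is the same as quantifying over all nets $(x_\alpha)$, taking $\mathcal F=[x_\alpha]$.

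Next, applying Lemma~\ref{Ch-lem} with $\mathcal F=[x_\alpha]$, the premise ``every ultrafilter containing $[x_\alpha]$ converges to $x$'' (condition \eqref{Ch-lem-fil}) is precisely equivalent to ``every quasi-subnet of $(x_\alpha)$ has a further quasi-subnet converging to $x$'' (condition \eqref{Ch-lem-net}). Substituting this equivalence into the definition of Choquet gives exactly the asserted statement: $X$ is Choquet iff, for every net $(x_\alpha)$ and every $x\in X$, whenever every quasi-subnet of $(x_\alpha)$ has a further quasi-subnet converging to $x$, then $x_\alpha\to x$. Finally, the parenthetical ``(quasi-)'' is justified by the remark following Lemma~\ref{Ch-lem}: every subnet is a quasi-subnet and every subnet is tail equivalent to a quasi-subnet, so either word may be used in either occurrence.

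I do not expect a genuine obstacle here, since this is a bare translation. The only point requiring a moment's care is the alignment of quantifiers, namely that restricting attention to tail filters loses nothing because every filter on $X$ is the tail filter of some net, so the passage $\mathcal F=[x_\alpha]$ makes condition \eqref{Ch-lem-fil} of the lemma coincide verbatim with the clause ``every ultrafilter $\mathcal U\supseteq\mathcal F$ converges to $x$'' in the definition of a Choquet space.
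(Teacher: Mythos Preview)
Your proposal is correct and matches the paper's approach exactly: the paper states this proposition as an immediate consequence of Lemma~\ref{Ch-lem} without further proof, and what you have written is precisely the unwinding of that lemma together with the filter/net dictionary from Section~\ref{sec:prelim}. The remark you cite for the ``(quasi-)'' parenthetical is also the paper's justification.
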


\begin{proposition}
  Let $(X,\eta)$ be a convergence space. Then $x_\alpha\goesceta x$
  iff every (quasi-)subnet of $(x_\alpha)$ has a further
  (quasi-)subnet that $\eta$-converges to~$x$.
\end{proposition}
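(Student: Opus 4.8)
The plan is to read the statement off directly from Lemma~\ref{Ch-lem} once the definition of the Choquet modification has been translated into net language. First I would recall that, under the net--filter correspondence set up in the preliminaries, a net $\mathrm{c}\eta$-converges to $x$ exactly when its tail filter does; thus $x_\alpha\goesceta x$ is equivalent to $[x_\alpha]\goesceta x$. Next I would unwind the definition of $\mathrm{c}\eta$: by construction, $[x_\alpha]\goesceta x$ means precisely that every ultrafilter $\mathcal U$ containing $[x_\alpha]$ satisfies $\mathcal U\goeseta x$. This is verbatim condition~\eqref{Ch-lem-fil} of Lemma~\ref{Ch-lem}.

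The second step is simply to invoke Lemma~\ref{Ch-lem}, which asserts that \eqref{Ch-lem-fil} is equivalent to \eqref{Ch-lem-net}, namely that every quasi-subnet of $(x_\alpha)$ has a further quasi-subnet that $\eta$-converges to $x$. Chaining the two equivalences yields that $x_\alpha\goesceta x$ holds iff every quasi-subnet of $(x_\alpha)$ has a further quasi-subnet converging to $x$, which is exactly the claimed statement.

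To justify the parenthetical ``(quasi-)subnet'' in both occurrences, I would appeal to the remark following Lemma~\ref{Ch-lem}: since every subnet is a quasi-subnet and every quasi-subnet is tail equivalent to a subnet by \cite[Proposition~2.2]{OBrien:23}, one may freely replace ``quasi-subnet'' with ``subnet'' in either (or both) positions without altering the condition.

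There is essentially no obstacle here: all the genuine content is carried by Lemma~\ref{Ch-lem}, and the proposition is merely its net-theoretic reformulation together with the filter definition of $\mathrm{c}\eta$. The only point that requires a moment's care is ensuring that the definition of $\mathrm{c}\eta$ is applied to the tail filter $[x_\alpha]$ and correctly matched with condition~\eqref{Ch-lem-fil}; the rest is bookkeeping.
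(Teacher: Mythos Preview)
Your proposal is correct and matches the paper's approach exactly: the paper states this proposition as an immediate consequence of Lemma~\ref{Ch-lem}, and your argument simply unwinds the filter definition of $\mathrm{c}\eta$ to recognize condition~\eqref{Ch-lem-fil} and then invokes the lemma, with the parenthetical handled by the same remark you cite.
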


It is easy to see that if $\eta$ is Hausdorff then so is
$\mathrm{c}\eta$. It was observed in
\cite[Proposition~1.3.25(ii)]{Beattie:02} that if a function
$f\colon X\to Y$ between two convergence spaces $(X,\eta)$ and
$(Y,\lambda)$ is $\eta$-to-$\lambda$ continuous then it is also
$\mathrm{c}\eta$-to-$\mathrm{c}\lambda$ continuous. If $\eta$ is
linear then so is~$\mathrm{c}\eta$; see
\cite[Proposition~3.1.4]{Beattie:02}.

\medskip

Let $\mathcal B$ be a bornology on a set~$X$; suppose that
$\mathcal B$ is proper in the sense that
$\mathcal B\ne \mathcal P(X)$. Then $\mathcal B$ is a proper ideal in
$\mathcal P(X)$. Let $\mathcal F$ be a filter on~$X$. If
$\mathcal F\cap\mathcal B=\varnothing$ then there exists an
ultrafilter $\mathcal U$ containing $\mathcal F$ such that
$\mathcal U\cap\mathcal B=\varnothing$, see, e.g., Lemma~2.1
in~\cite{Aviles:comp-int}. That is, $\mathcal F$ contains a bounded set
iff every ultrafilter containing $\mathcal F$ contains a bounded set. 
It follows that Choquet modification commutes with bornological
modification:

\begin{proposition}\label{Choq-born}
  Let $(X,\eta)$ be a convergence space, and $\mathcal B$ a bornology
  on~$X$. Then
  $(\mathrm{c}\eta)_{\mathcal B}=\mathrm{c}(\eta_{\mathcal B})$. In
  particular, if $\eta$ is Choquet then so is~$\eta_{\mathcal B}$.
\end{proposition}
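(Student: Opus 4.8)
The plan is to work entirely in the filter language and simply unwind the two definitions, since the essential combinatorial input---that a filter contains a bounded set if and only if every ultrafilter refining it does---has already been recorded in the discussion preceding the statement. First I would dispose of the degenerate case $\mathcal B=\mathcal P(X)$: here every filter contains the bounded set $X$, so $\eta_{\mathcal B}=\eta$ and $(\mathrm{c}\eta)_{\mathcal B}=\mathrm{c}\eta=\mathrm{c}(\eta_{\mathcal B})$, and there is nothing to prove. Thus I may assume $\mathcal B$ is a proper ideal in $\mathcal P(X)$, which is exactly the hypothesis under which the cited ultrafilter fact holds (a bornology is proper as an ideal precisely when $\mathcal B\ne\mathcal P(X)$).

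For the main case, fix a filter $\mathcal F$ and a point $x$. Unwinding the definition of $\mathrm{c}(\eta_{\mathcal B})$, the relation $\mathcal F\xrightarrow{\mathrm{c}(\eta_{\mathcal B})}x$ says that every ultrafilter $\mathcal U\supseteq\mathcal F$ satisfies $\mathcal U\xrightarrow{\eta_{\mathcal B}}x$, i.e.\ $\mathcal U\goeseta x$ \emph{and} $\mathcal U\cap\mathcal B\ne\varnothing$. The key step is to observe that this conjunction, quantified over all $\mathcal U\supseteq\mathcal F$, splits as the conjunction of two separate universally quantified statements: first, that $\mathcal U\goeseta x$ for every ultrafilter $\mathcal U\supseteq\mathcal F$, which is precisely $\mathcal F\goesceta x$; and second, that $\mathcal U\cap\mathcal B\ne\varnothing$ for every such $\mathcal U$. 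By the fact recalled before the proposition, the latter is equivalent to $\mathcal F\cap\mathcal B\ne\varnothing$.

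Putting these together, $\mathcal F\xrightarrow{\mathrm{c}(\eta_{\mathcal B})}x$ holds exactly when $\mathcal F\goesceta x$ and $\mathcal F\cap\mathcal B\ne\varnothing$, which is the definition of $\mathcal F\xrightarrow{(\mathrm{c}\eta)_{\mathcal B}}x$. This gives the desired equality of convergence structures. Finally, the ``in particular'' clause is immediate: if $\eta$ is Choquet then $\mathrm{c}\eta=\eta$, so the identity just proved reads $\mathrm{c}(\eta_{\mathcal B})=(\mathrm{c}\eta)_{\mathcal B}=\eta_{\mathcal B}$, which is exactly the assertion that $\eta_{\mathcal B}$ is Choquet.

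I do not anticipate a genuine obstacle, since the only nontrivial ingredient---producing, from a filter missing $\mathcal B$, an ultrafilter still missing $\mathcal B$---is supplied by the cited lemma. The one point demanding care is the clean separation of the ``convergence'' condition from the ``boundedness'' condition inside the universal quantifier over ultrafilters refining $\mathcal F$; this separation is legitimate precisely because both conditions are independently universally quantified over the same family of ultrafilters, so no interaction between them can occur.
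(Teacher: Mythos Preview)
Your proof is correct and follows essentially the same route as the paper: both arguments work in filter language, unwind the definitions, and use the ultrafilter fact about proper bornologies to split the universally quantified conjunction into the convergence part and the boundedness part. The only cosmetic difference is that you start from the $\mathrm{c}(\eta_{\mathcal B})$ side while the paper starts from $(\mathrm{c}\eta)_{\mathcal B}$, and you explicitly dispose of the degenerate case $\mathcal B=\mathcal P(X)$, which the paper leaves implicit.
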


\begin{proof}
  For a filter~$\mathcal F$,
  $\mathcal F\xrightarrow{(\mathrm{c}\eta)_{\mathcal B}}x$ iff
  $\mathcal F\xrightarrow{\mathrm{c}\eta}x$ and $\mathcal F$
  meets~$\mathcal B$. By the remark preceding the proposition, this is
  equivalent to saying that for every ultrafilter $\mathcal U$
  containing $\mathcal F$ we have $\mathcal U\goeseta x$ and
  $\mathcal U$ meets~$\mathcal B$, hence
  $\mathcal U\xrightarrow{\eta_{\mathcal B}}x$. This means
  $\mathcal F\xrightarrow{\mathrm{c}(\eta_{\mathcal B})}x$.
\end{proof}

\begin{proposition}
  Let $\mathcal B$ be a bornology on a set~$X$, and $(x_\alpha)$ a net
  in~$X$. Every quasi-subnet of $(x_\alpha)$ in turn has a bounded
  quasi-subnet iff $(x_\alpha)$ is eventually bounded.
\end{proposition}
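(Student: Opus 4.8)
The plan is to translate everything into the filter language and mirror the proof of Proposition~\ref{Choq-born}. Recall that a net $(x_\alpha)$ is eventually bounded precisely when its tail filter $[x_\alpha]$ contains a member of~$\mathcal B$, and that a quasi-subnet $(z_\gamma)$ is bounded iff $[z_\gamma]$ meets~$\mathcal B$. Since a tail of a quasi-subnet is again a quasi-subnet, ``has a bounded quasi-subnet'' coincides with ``has an eventually bounded quasi-subnet,'' so I may work with tail filters throughout. If $\mathcal B$ is not proper, i.e.\ $X\in\mathcal B$, then every net is bounded and both sides of the equivalence hold trivially; so I assume $\mathcal B\ne\mathcal P(X)$, which makes $\mathcal B$ a proper ideal in~$\mathcal P(X)$.

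For the forward implication, I would suppose $(x_\alpha)$ is eventually bounded, say a tail $T=\{x_\alpha\mid\alpha\ge\alpha_0\}$ is contained in some $B\in\mathcal B$. Given any quasi-subnet $(y_\beta)$, the tail $T$ contains a tail $T'$ of $(y_\beta)$, so $T'\subseteq B$ and the restriction of $(y_\beta)$ to that tail is a bounded quasi-subnet of~$(x_\alpha)$. Thus every quasi-subnet already admits a bounded quasi-subnet, with no extra hypotheses on~$\mathcal B$.

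For the converse I would argue by contraposition. If $(x_\alpha)$ is not eventually bounded, then $[x_\alpha]\cap\mathcal B=\varnothing$. By Lemma~2.1 in~\cite{Aviles:comp-int}, there is an ultrafilter $\mathcal U\supseteq[x_\alpha]$ with $\mathcal U\cap\mathcal B=\varnothing$. Choosing a net $(y_\beta)$ with $[y_\beta]=\mathcal U$, the inclusion $[x_\alpha]\subseteq[y_\beta]$ says $(y_\beta)\preceq(x_\alpha)$. Any quasi-subnet $(z_\gamma)$ of $(y_\beta)$ satisfies $[z_\gamma]\supseteq[y_\beta]=\mathcal U$, and the maximality of the ultrafilter forces $[z_\gamma]=\mathcal U$, whence $[z_\gamma]\cap\mathcal B=\varnothing$; that is, $(z_\gamma)$ is not eventually bounded. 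Hence $(y_\beta)$ is a quasi-subnet of $(x_\alpha)$ with no bounded quasi-subnet, which is exactly the negation of the left-hand condition.

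The one place where the structure of $\mathcal B$ genuinely enters—and the step I expect to be the main obstacle—is this converse: producing an ``escaping'' quasi-subnet amounts to extending $[x_\alpha]$ to an ultrafilter that still avoids~$\mathcal B$, which is precisely the ultrafilter lemma for proper ideals cited above, and it is the maximality of that ultrafilter that propagates the unbounded tail filter to \emph{every} further quasi-subnet. Everything else is the routine net--filter dictionary already recorded in Lemma~\ref{Ch-lem} and the preliminaries.
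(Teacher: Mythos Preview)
Your proof is correct and follows essentially the same route as the paper's: the easy direction is dismissed as straightforward in the paper and handled exactly as you do, and for the substantive direction both you and the paper pass to an ultrafilter $\mathcal U\supseteq[x_\alpha]$ avoiding~$\mathcal B$, realize it as the tail filter of a quasi-subnet, and use maximality of~$\mathcal U$ to show every further quasi-subnet has the same tail filter and hence fails to meet~$\mathcal B$. The only cosmetic difference is that the paper phrases it as a proof by contradiction while you phrase it as contraposition, and you make the trivial case $\mathcal B=\mathcal P(X)$ explicit.
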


\begin{proof}
  Suppose $(x_\alpha)$ is not eventually bounded. Then
  $[x_\alpha]\cap\mathcal B=\varnothing$. As before, find an
  ultrafilter $\mathcal U$ containing $[x_\alpha]$ such that $\mathcal
  U\cap\mathcal B=\varnothing$. Then $\mathcal U=[y_\beta]$ for some
  quasi-subnet $(y_\beta)$ of $(x_\alpha)$. By assumption, there
  exists a bounded net $(z_\gamma)$ such that
  $(z_\gamma)\preceq(y_\beta)$. Since $[y_\beta]\subseteq[z_\gamma]$,
  maximality of $\mathcal U$ yields $[y_\beta]=[z_\gamma]$ and,
  therefore, $[z_\gamma]\cap\mathcal B=\varnothing$, which is a
  contradiction.

  The converse implication is straightforward.
\end{proof}

\begin{remark}\label{lb-Choq}
  Let $\eta$ be a linear convergence and $\mathcal B$ a linear
  bornology on a vector space~$X$. If $\eta$ is locally
  $\mathcal B$-bounded, i.e., $\eta=\eta_{\mathcal B}$, then it
  follows from Proposition~\ref{Choq-born} that
  $(\mathrm{c}\eta)_{\mathcal B}=\mathrm{c}(\eta_{\mathcal
    B})=\mathrm{c}\eta$, hence $\mathrm{c}\eta$ is also locally
  $\mathcal B$-bounded.

  Suppose now that $\eta$ is locally bounded, that is, it is locally
  $\mathcal B$-bounded where $\mathcal B$ is the bornology of all
  $\eta$-bounded sets. By the preceding paragraph, $\mathrm{c}\eta$ is
  also locally $\mathcal B$-bounded. By Corollary~\ref{lB-bdd-B},
  every $\mathrm{c}\eta$-bounded set is $\eta$-bounded. On the other
  hand, since $\mathrm{c}\eta\le\eta$, $\eta$-bounded sets are
  $\mathrm{c}\eta$-bounded. Thus, if $\eta$ is locally bounded then
  $\eta$ and $\mathrm{c}\eta$ have the same bounded sets.
\end{remark}

\begin{proposition}\label{Ch-ls}
  Let $\eta$ be a convergence structure on a vector lattice~$X$. If
  $\eta$ is locally solid then so is~$\mathrm{c}\eta$. Moreover,
  $\eta$ and $\mathrm{c}\eta$ agree on monotone nets.
\end{proposition}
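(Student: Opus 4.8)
The plan is to treat the two assertions separately, disposing of the statement about monotone nets first since it is short. Since $\mathrm{c}\eta\le\eta$ always holds, only the implication $x_\alpha\goesceta x\Rightarrow x_\alpha\goeseta x$ requires proof, and only for monotone $(x_\alpha)$. Given $x_\alpha\goesceta x$, the net characterization of the Choquet modification (the proposition following Lemma~\ref{Ch-lem}) tells us that $(x_\alpha)$, being a quasi-subnet of itself, has a further quasi-subnet that $\eta$-converges to~$x$. Thus the monotone net $(x_\alpha)$ possesses an $\eta$-convergent quasi-subnet, and since $\eta$ is locally solid, Corollary~\ref{qsnet-mon-conv} forces the entire net to $\eta$-converge to the same limit~$x$. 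This settles the second claim.

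For local solidity of $\mathrm{c}\eta$, note first that $\mathrm{c}\eta$ is linear (as $\eta$ is), so Theorem~\ref{def-ls} is available; I would verify the filter condition~\eqref{def-ls-fil-hull}, namely that $\mathcal F\goesceta 0$ implies $\Sol(\mathcal F)\goesceta 0$. Unwinding the definition of $\mathrm{c}\eta$, this amounts to showing that every ultrafilter $\mathcal V\supseteq\Sol(\mathcal F)$ satisfies $\mathcal V\goeseta 0$, given that every ultrafilter $\mathcal U\supseteq\mathcal F$ satisfies $\mathcal U\goeseta 0$. The bridge between the two is the following: for such a $\mathcal V$, I would produce an ultrafilter $\mathcal U\supseteq\mathcal F$ with $\Sol(\mathcal U)\subseteq\mathcal V$. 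Once this is in hand the chain closes at once: $\mathcal U\goeseta 0$ by hypothesis, hence $\Sol(\mathcal U)\goeseta 0$ by local solidity of $\eta$ (Theorem~\ref{def-ls}\eqref{def-ls-fil-hull}), and then $\mathcal V\goeseta 0$ because $\mathcal V$ is a larger filter.

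The construction of $\mathcal U$ is where the work lies, and it is the main obstacle: one must lift an ultrafilter living ``below the solid hull'' back to one refining $\mathcal F$. The device I would use is the set $\Omega=\{(v,w)\in X^2\mid\abs{v}\le\abs{w}\}$ with its coordinate projections $\pi_1,\pi_2$. For $V\in\mathcal V$ and $F\in\mathcal F$, put $\widehat V=\{(v,w)\in\Omega\mid v\in V\}$ and $\check F=\{(v,w)\in\Omega\mid w\in F\}$. The key computation is that $\widehat V\cap\check F\ne\varnothing$: since $\Sol(F)\in\Sol(\mathcal F)\subseteq\mathcal V$, the set $V\cap\Sol(F)$ is nonempty, and any $v$ in it dominates some $w\in F$, yielding a point of $\widehat V\cap\check F$. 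Hence these sets form a filter base on~$\Omega$; extending it to an ultrafilter $\mathcal W$ on $\Omega$ and using that pushforwards of ultrafilters are ultrafilters, I set $\mathcal U=\pi_2(\mathcal W)$ and check that $\pi_1(\mathcal W)=\mathcal V$ (by maximality of $\mathcal V$) and $\mathcal U\supseteq\mathcal F$. Finally, for $B\in\mathcal U$ one has $\check B\in\mathcal W$ and $\pi_1(\check B)\subseteq\Sol(B)$, so $\Sol(B)\in\pi_1(\mathcal W)=\mathcal V$; as $B$ was arbitrary, $\Sol(\mathcal U)\subseteq\mathcal V$, which is exactly what was needed.
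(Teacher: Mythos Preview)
Your treatment of the monotone-net clause is identical to the paper's. For local solidity, however, the paper takes a shorter route: it invokes Proposition~\ref{mod-u-cont}\eqref{uc-full} and checks that $\mathrm{c}\eta$ is locally full directly in the net language. Given $0\le y_\alpha\le x_\alpha\goesceta 0$, any subnet $(y_{\alpha_\beta})$ has a companion subnet $(x_{\alpha_\beta})$; the Choquet hypothesis gives a further subnet $(x_{\alpha_{\beta_\gamma}})$ with $x_{\alpha_{\beta_\gamma}}\goeseta 0$, and then local solidity of $\eta$ forces $y_{\alpha_{\beta_\gamma}}\goeseta 0$. Continuity of the modulus for $\mathrm{c}\eta$ comes for free from the general fact that Choquet modification preserves continuity of maps. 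This avoids any filter lifting and stays entirely within the subnet picture.

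Your filter argument is also correct, and is a genuinely different (and instructive) route: you show directly that every ultrafilter above $\Sol(\mathcal F)$ contains $\Sol(\mathcal U)$ for some ultrafilter $\mathcal U\supseteq\mathcal F$, via the auxiliary space $\Omega$ and a pushforward trick. One small slip: when you write ``any $v$ in it dominates some $w\in F$'', you mean the reverse---$v\in\Sol(F)$ means $\abs{v}\le\abs{w}$ for some $w\in F$, so $v$ is \emph{dominated} by $w$. The pair $(v,w)$ then lands in $\widehat V\cap\check F$ as you intend, and the rest goes through cleanly. The payoff of your approach is a reusable lemma (ultrafilters above $\Sol(\mathcal F)$ lift to ultrafilters above $\mathcal F$ along $\Sol$), at the cost of a more elaborate construction; the paper's approach is quicker here but less portable.
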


\begin{proof}
  We will use Proposition~\ref{mod-u-cont}\eqref{uc-full}. Since the
  modulus operation is $\eta$-to-$\eta$ continuous, it is also
  $\mathrm{c}\eta$-to-$\mathrm{c}\eta$ continuous. So it suffices to
  verify that $\mathrm{c}\eta$ is locally full. Suppose that $0\le
  y_\alpha\le x_\alpha\goesceta 0$. Let $(y_{\alpha_\beta})$ be a
  subnet of~$(y_\alpha)$. Consider the corresponding subnet
  $(x_{\alpha_\beta})$ of~$(x_\alpha)$. Since $x_\alpha\goesceta 0$,
  there exists a further subnet
  $(x_{\alpha_{\beta_\gamma}})$ of $(x_{\alpha_\beta})$
  such that $x_{\alpha_{\beta_\gamma}}\goeseta 0$. Since
  $\eta$ is locally solid, we have
  $y_{\alpha_{\beta_\gamma}}\goeseta 0$ and, therefore,
  $y_\alpha\goesceta 0$.

  For the ``moreover'' clause, we always have
  $\mathrm{c}\eta\le\eta$. Suppose that $x_\alpha\downarrow$ and
  $x_\alpha\goesceta x$. Then there exists a quasi-subnet $(y_\beta)$
  of $(x_\alpha)$ such that $y_\beta\goeseta x$. Then
  $x_\alpha\goeseta x$ by Corollary~\ref{qsnet-mon-conv}.
\end{proof}

It was shown in Theorem 1.5.5. in \cite{Beattie:02} that continuous
convergence on $C(\Omega)$ is Choquet. By Example~1.5.26(i) and
Theorem~4.3.9 in \cite{Beattie:02}, if $\Omega$ is not locally compact then
this provides an example of a
locally solid convergence which is Choquet but not topological.

Recall that if $f\colon X\to Y$ is a function and $\mathcal F$ is a
filter on $X$ then we write $f(\mathcal F)$ for the filter generated
by $\bigl\{f(A)\mid A\in\mathcal F\bigr\}$. It is easy to see that if
$\mathcal F$ is an ultrafilter then so is $f(\mathcal F)$. If $X$ is a
vector lattice and $u\in X_+$ then $\abs{\mathcal F}\wedge u$ equals
$f(\mathcal F)$ where $f(x)=\abs{x}\wedge u$. In particular, if
$\mathcal F$ is an ultrafilter then so is $\abs{\mathcal F}\wedge u$.

The following proposition shows that the unbounded modification
commutes with the Choquet modification.

\begin{proposition}
  Let $(X,\eta)$ be a locally solid convergence space. Then
  \begin{math}
    \mathrm{u}(\mathrm{c}\eta)=
        \mathrm{c}(\mathrm{u}\eta).
  \end{math}
\end{proposition}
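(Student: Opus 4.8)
The plan is to work entirely with filters, exploiting the ultrafilter description of the Choquet modification together with the filter description of the unbounded modification in Proposition~\ref{filter-u}. Since both $\mathrm{u}(\mathrm{c}\eta)$ and $\mathrm{c}(\mathrm{u}\eta)$ are locally solid — $\mathrm{c}\eta$ is locally solid by Proposition~\ref{Ch-ls} and hence so is its unbounded modification, while $\mathrm{u}\eta$ is locally solid and hence so is its Choquet modification — both are in particular linear, so by translation it suffices to compare convergence of a filter $\mathcal F$ to~$0$. For $u\in X_+$ I would write $f_u\colon X\to X$ for the map $f_u(x)=\abs{x}\wedge u$, so that $\abs{\mathcal F}\wedge u=f_u(\mathcal F)$; recall from the discussion preceding the statement that $f_u$ carries ultrafilters to ultrafilters.

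First I would unwind each side into an ultrafilter condition. Applying Proposition~\ref{filter-u} to the locally solid convergence $\mathrm{c}\eta$, the filter $\mathcal F$ converges to $0$ in $\mathrm{u}(\mathrm{c}\eta)$ iff $f_u(\mathcal F)\goesceta 0$ for every $u\in X_+$, that is, iff
\[
  \text{(L)}\qquad
  \text{for every } u\in X_+ \text{ and every ultrafilter } \mathcal V\supseteq f_u(\mathcal F),\quad \mathcal V\goeseta 0.
\]
On the other side, by the definition of the Choquet modification and then Proposition~\ref{filter-u} for $\eta$, the filter $\mathcal F$ converges to $0$ in $\mathrm{c}(\mathrm{u}\eta)$ iff every ultrafilter $\mathcal U\supseteq\mathcal F$ satisfies $\mathcal U\goesueta 0$, that is, iff
\[
  \text{(R)}\qquad
  \text{for every ultrafilter } \mathcal U\supseteq\mathcal F \text{ and every } u\in X_+,\quad f_u(\mathcal U)\goeseta 0.
\]
Thus the proposition reduces to the equivalence of (L) and (R). Note that $f_u(\mathcal U)$ is again an ultrafilter and, since $\mathcal U\supseteq\mathcal F$, we have $f_u(\mathcal U)\supseteq f_u(\mathcal F)$.

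The implication (L)$\Rightarrow$(R) is then immediate: given an ultrafilter $\mathcal U\supseteq\mathcal F$ and $u\in X_+$, the ultrafilter $\mathcal V=f_u(\mathcal U)$ contains $f_u(\mathcal F)$, so (L) yields $f_u(\mathcal U)\goeseta 0$. The real content is the reverse implication (R)$\Rightarrow$(L), and the main obstacle is the following surjectivity lemma: \emph{every ultrafilter $\mathcal V\supseteq f_u(\mathcal F)$ is of the form $\mathcal V=f_u(\mathcal U)$ for some ultrafilter $\mathcal U\supseteq\mathcal F$.} Granting this, (R)$\Rightarrow$(L) follows at once, since any $\mathcal V$ as in (L) is such an $f_u(\mathcal U)$ and (R) forces $f_u(\mathcal U)\goeseta 0$.

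To prove the lemma I would first check that $\mathcal F\cup f_u^{-1}(\mathcal V)$ has the finite intersection property. Reducing to a single set from each family, take $A\in\mathcal F$ and $B\in\mathcal V$; the generator $f_u(A)$ lies in $f_u(\mathcal F)\subseteq\mathcal V$, so $f_u(A)\cap B\in\mathcal V$ is nonempty, and any point of it has a preimage lying in $A\cap f_u^{-1}(B)$. Extending the filter generated by $\mathcal F\cup f_u^{-1}(\mathcal V)$ to an ultrafilter $\mathcal U$, we get $\mathcal U\supseteq\mathcal F$ and $f_u^{-1}(B)\in\mathcal U$ for every $B\in\mathcal V$; hence $B\supseteq f_u\bigl(f_u^{-1}(B)\bigr)\in f_u(\mathcal U)$, giving $\mathcal V\subseteq f_u(\mathcal U)$. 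As $f_u(\mathcal U)$ is an ultrafilter, maximality forces $\mathcal V=f_u(\mathcal U)$, completing the lemma and hence the proof.
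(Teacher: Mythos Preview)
Your proof is correct and, for the direction $\mathrm{u}(\mathrm{c}\eta)\le\mathrm{c}(\mathrm{u}\eta)$ (your (L)$\Rightarrow$(R)), it matches the paper's filter argument almost verbatim: pass to an ultrafilter $\mathcal U\supseteq\mathcal F$, use that $f_u(\mathcal U)$ is an ultrafilter containing $f_u(\mathcal F)$, and read off $\eta$-convergence.

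For the reverse direction, however, you take a genuinely different route. The paper argues with nets and exploits a structural fact specific to the unbounded modification: given $0\le x_\alpha\xrightarrow{\mathrm{c}(\mathrm{u}\eta)}0$ and $u\ge 0$, any quasi-subnet of $(u\wedge x_\alpha)$ has a further quasi-subnet that is $\mathrm{u}\eta$-null, and since that quasi-subnet is order bounded by~$u$, its $\mathrm{u}\eta$-convergence upgrades to $\eta$-convergence for free. You instead prove a purely set-theoretic ultrafilter lifting lemma --- every ultrafilter above $f_u(\mathcal F)$ is the $f_u$-image of an ultrafilter above $\mathcal F$ --- and never invoke order boundedness at all. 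Your lemma holds for an arbitrary map between sets, so your argument is more abstract and would transfer to any situation where a modification is defined via pushforward by a fixed family of maps; the paper's argument is shorter here because it leans on the lattice-theoretic coincidence $\mathrm{u}\eta=\eta$ on order intervals. Both approaches are clean; yours isolates the general mechanism, the paper's isolates the vector-lattice content.
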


\begin{proof}
  Suppose that
  \begin{math}
    0\le x_\alpha\xrightarrow{\mathrm{c}(\mathrm{u}\eta)}0.
  \end{math}
  Fix $u\ge 0$. Take any quasi-subnet $(y_\beta)$ of $(u\wedge
  x_\alpha)$.  It follows from 
   \begin{math}
    u\wedge x_\alpha\xrightarrow{\mathrm{c}(\mathrm{u}\eta)}0
  \end{math}
  that there exists a quasi-subnet $(z_\gamma)$ of $(y_\beta)$ such
  that $z_\gamma\goesueta 0$. Since the net $(z_\gamma)$ is eventually
  order bounded, we have $z_\gamma\goeseta 0$. It follows that
  $u\wedge x_\alpha\goesceta 0$ and, therefore,
  \begin{math}
    0\le x_\alpha\xrightarrow{\mathrm{u}(\mathrm{c}\eta)}0.
  \end{math}

  For the converse, we will use filter language. Suppose that
  \begin{math}
    \mathcal F\xrightarrow{\mathrm{u}(\mathrm{c}\eta)}0.
  \end{math}
  Let $\mathcal U$ be an ultrafilter with
  $\mathcal F\subseteq\mathcal U$. Fix $u\ge 0$. Then
  $\abs{\mathcal F}\wedge u\subseteq\abs{\mathcal U}\wedge u$. By the
  comment preceding the proposition, $\abs{\mathcal U}\wedge u$ is an
  ultrafilter. It follows from 
  \begin{math}
    \mathcal F\xrightarrow{\mathrm{u}(\mathrm{c}\eta)}0
  \end{math}
  that $\abs{\mathcal F}\wedge u\goesceta 0$, hence
  $\abs{\mathcal U}\wedge u\goeseta 0$ and, therefore, $\mathcal
  U\goesueta 0$. We now conclude that 
  \begin{math}
    \mathcal F\xrightarrow{\mathrm{c}(\mathrm{u}\eta)}0.
  \end{math}
\end{proof}

\begin{remark}
  Using the same proof, one can prove that
  \begin{math}
    \mathrm{u}_I(\mathrm{c}\eta)=
        \mathrm{c}(\mathrm{u}_I\eta)
  \end{math}
  for every ideal~$I$.
\end{remark}

\begin{proposition}\label{ceta-top}
  Let $\eta$ be a locally solid locally order bounded convergence on
  $X$ such that $\mathrm{c}\eta$ is topological. Then $X$ has a strong
  unit and $\eta$ is relative uniform convergence.
\end{proposition}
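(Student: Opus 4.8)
The plan is to push all three relevant properties of $\eta$ through to its Choquet modification $\mathrm{c}\eta$, observe that $\mathrm{c}\eta$ is then a \emph{topological} convergence to which Proposition~\ref{lob-top-su} applies, and finally use the extremal property of relative uniform convergence to upgrade the conclusion from $\mathrm{c}\eta$ back to $\eta$.

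First I would verify that $\mathrm{c}\eta$ inherits the hypotheses of Proposition~\ref{lob-top-su}. Since $\eta$ is locally solid, $\mathrm{c}\eta$ is locally solid by Proposition~\ref{Ch-ls}. Since $\eta$ is linear, so is $\mathrm{c}\eta$ (as noted after Lemma~\ref{Ch-lem}, via Proposition~3.1.4 of~\cite{Beattie:02}). Most importantly, writing $\mathcal B$ for the bornology of order bounded sets, the hypothesis that $\eta$ is locally order bounded says precisely $\eta=\eta_{\mathcal B}$; by Remark~\ref{lb-Choq} this passes to the Choquet modification, giving $(\mathrm{c}\eta)_{\mathcal B}=\mathrm{c}(\eta_{\mathcal B})=\mathrm{c}\eta$, so $\mathrm{c}\eta$ is again locally order bounded.

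By assumption $\mathrm{c}\eta$ is topological; being also linear and locally solid, it is a locally solid linear topology, and we have just shown it is locally order bounded. Proposition~\ref{lob-top-su} therefore applies to $\mathrm{c}\eta$ and yields that $X$ has a strong unit and that $\mathrm{c}\eta$ coincides with relative uniform convergence, $\mathrm{c}\eta=\mathrm{ru}$.

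It remains to replace $\mathrm{c}\eta$ by $\eta$ in the last conclusion. On the one hand $\mathrm{c}\eta\le\eta$ always holds. On the other hand, $\eta$ is locally solid, so by Proposition~\ref{u-strongest} we have $\eta\le\mathrm{ru}$. Combining these with $\mathrm{ru}=\mathrm{c}\eta$ gives $\mathrm{ru}=\mathrm{c}\eta\le\eta\le\mathrm{ru}$, forcing $\eta=\mathrm{ru}$; the strong unit is a property of $X$ already established, so both assertions follow. I do not anticipate a genuine obstacle here: the entire content lies in recognizing that local solidity, linearity, and local order boundedness are all preserved under the Choquet modification, so that the purely topological Proposition~\ref{lob-top-su} becomes applicable, with the final squeeze supplied by the maximality of $\mathrm{ru}$ among locally solid convergences.
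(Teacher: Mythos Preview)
Your proof is correct and follows essentially the same approach as the paper: pass local solidity and local order boundedness from $\eta$ to $\mathrm{c}\eta$, apply Proposition~\ref{lob-top-su} to the topological $\mathrm{c}\eta$, and then sandwich $\eta$ between $\mathrm{c}\eta=\mathrm{ru}$ and $\mathrm{ru}$ via Proposition~\ref{u-strongest}. Your citation of Proposition~\ref{Ch-ls} for local solidity of $\mathrm{c}\eta$ is in fact slightly more precise than the paper's own reference.
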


\begin{proof}
  By Remark~\ref{lb-Choq} and Proposition~\ref{Choq-born},
  $\mathrm{c}\eta$ is locally solid and locally order
  bounded. By Proposition~\ref{lob-top-su}, $X$ has a strong unit and
  $\mathrm{c}\eta=\mathrm{ru}$. It now follows from
  \begin{math}
    \mathrm{ru}=\mathrm{c}\eta\le\eta\le\mathrm{ru}
  \end{math}
  that $\eta=\mathrm{ru}$.
\end{proof}

\begin{example}
  Since every norm convergent sequence in a Banach
  lattice has a subsequence that converges relatively uniformly, one
  may naturally conjecture that the same may be true for nets. This
  is, generally, not true. Indeed, this would imply that
  $\mathrm{c}(\mathrm{ru})$ is norm convergence, hence $X$ has a
  strong unit by Proposition~\ref{ceta-top}. Below, we provide a
  specific example of a norm null net in a Banach lattice with no
  order bounded subnets.

  Fix an uncountable set~$\Omega$. Put $X=\ell_1(\Omega)$. Let
  $\Gamma$ be the set of all finite non-empty subsets of~$\Omega$,
  ordered by inclusion. For $\gamma\in\Gamma$, put
  $x_\gamma=\frac{1}{\abs{\gamma}^2}\one_\gamma$, where $\abs{\gamma}$
  is the cardinality of~$\gamma$. Clearly,
  $\norm{x_\gamma}=\frac{1}{\abs{\gamma}}\to 0$.

  Suppose that there is a subnet $(x_{\gamma_\alpha})_{\alpha\in A}$
  of $(x_\gamma)_{\gamma\in\Gamma}$ which is order bounded by some
  $u\in \ell_1(\Omega)_+$. Since the set
  $\{\gamma_\alpha\mid\alpha\in A\}$ is co-final in~$\Gamma$, it
  follows that for every $t\in\Omega$, $\{t\}\subseteq\gamma_\alpha$
  for some~$\alpha$; we conclude that $u(t)\ge
  x_{\gamma_\alpha}(t)>0$. This means that $u$ has uncountable
  support, which is impossible in $\ell_1(\Omega)$.
\end{example}

In the rest of this section, we will discuss when order and uo
convergences are Choquet.

\begin{lemma}\label{CK-Choq}
  Let $K$ be a compact topological space with no isolated points, and
  $F$ a norm dense sublattice of $C(K)$ such that $\one\in F$.  There
  exists a net in $F\cap[0,\one]$ which, viewed as a net in $C(K)$, is
  $\mathrm{co}$-null but not order null.
\end{lemma}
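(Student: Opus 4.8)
The plan is to build the net first in $C(K)$ and then push it into $F$ using norm density together with the truncation $g\mapsto(g\vee 0)\wedge\one$, which stays inside $F\cap[0,\one]$ since $F$ is a sublattice containing $\one$. Throughout I regard $K$ as compact Hausdorff and use its unique uniformity. First I would index by triples $\gamma=(E,t,n)$, where $E$ is a symmetric entourage, $t\in K$, and $n\in\mathbb N$, ordered by $(E,t,n)\le(E',t',n')$ iff $E'\subseteq E$ and $n'\ge n$ (the point coordinate is free); this is directed. For each $\gamma$ fix by Urysohn a bump $b_\gamma\in C(K)$ with $0\le b_\gamma\le\one$, $b_\gamma(t)=1$, and $\operatorname{supp}b_\gamma\subseteq E[t]$. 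By density pick $f_\gamma\in F$ with $\norm{f_\gamma-b_\gamma}\le\tfrac1n$ and, after truncating, $0\le f_\gamma\le\one$; then $f_\gamma(t)\ge 1-\tfrac1n$ and $f_\gamma\le b_\gamma+\tfrac1n\one$. This is the candidate net in $F\cap[0,\one]$.

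For \emph{not order null}: suppose $f_\gamma\goeso 0$, witnessed by $w_\beta\downarrow 0$ in $C(K)$ with $f_\gamma\le w_\beta$ on a tail. Fix $\beta$ and $\eta>0$. In that tail the peak points $t$ arising from indices with $n\ge\eta^{-1}$ are dense in $K$: for any nonempty open $O$ and any $E_0,n_0$ one may take $t\in O$, $E\subseteq E_0$, and $n\ge\max\{n_0,\eta^{-1}\}$. At such a peak $w_\beta(t)\ge f_\gamma(t)\ge 1-\eta$, so $w_\beta\ge(1-\eta)\one$ by continuity and density; letting $\eta\to 0$ gives $w_\beta\ge\one$ for every $\beta$, contradicting $w_\beta\downarrow 0$. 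It is precisely the density of the peaks in every tail that blocks order convergence.

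For \emph{co-null} (i.e. $\mathrm{c}(\mathrm o)$-null) I must produce, from any quasi-subnet, a further quasi-subnet that order-converges to $0$. Along any quasi-subnet $(f_{\gamma_\delta})$ the entourages $E_{\gamma_\delta}$ are eventually inside any fixed entourage and $n_{\gamma_\delta}\to\infty$, since a quasi-subnet is eventually in every tail of the original. By compactness the points $t_{\gamma_\delta}$ cluster at some $t_0$; pass to a subnet with $t_{\gamma_\delta}\to t_0$. A routine entourage estimate (choose symmetric $E$ with $E^2[t_0]\subseteq W$, then $E_{\gamma_\delta}[t_{\gamma_\delta}]\subseteq E[t_{\gamma_\delta}]\subseteq E^2[t_0]\subseteq W$) shows $\operatorname{supp}b_{\gamma_\delta}$ is eventually inside any neighborhood $W$ of $t_0$. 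The key sublemma is that such a support-shrinking net in $[0,\one]$ is order null: I would dominate it by the net of all $u\in C(K)$ with $0\le u\le\one$, $u\equiv 1$ on a neighborhood of $t_0$, and $\operatorname{supp}u$ contained in a neighborhood of $t_0$, ordered by $\ge$; this net is directed via $u_1\wedge u_2$, dominates each $b_{\gamma_\delta}$ from some point on, and has infimum $0$ in $C(K)$ because $\{u=0\}\supseteq K\setminus\{t_0\}$ is dense. This is exactly where \emph{no isolated points} enters. Finally $0\le f_{\gamma_\delta}\le b_{\gamma_\delta}+\tfrac{1}{n_{\gamma_\delta}}\one$, where the first term is order null by the sublemma and the second is order null because $n_{\gamma_\delta}\to\infty$; by linearity of order convergence and the squeeze law (Proposition~\ref{mod-u-cont}\eqref{uc-full}), $f_{\gamma_\delta}\goeso 0$.

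The main obstacle is the sublemma that supports shrinking to a single, possibly non-$G_\delta$, point force order convergence, together with transporting order-nullity from the continuous bumps to their $F$-approximants. I expect both to be handled cleanly by dominating with the explicit decreasing net of ``unit-near-$t_0$'' functions (directedness from $\wedge$, infimum $0$ from density of $K\setminus\{t_0\}$), and by absorbing the approximation error into the order-null term $\tfrac1n\one$, which is why the smallness parameter $n$ is built into the index set.
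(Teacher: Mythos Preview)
Your argument is correct and follows the same overall skeleton as the paper---build bump functions peaking at a free point coordinate, show every tail has supremum $\one$, and for any quasi-subnet use compactness to pin down a limit point $t_0$ for the peaks and then argue the supports collapse to $t_0$---but the implementation differs in two respects.

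First, the paper indexes by $K\times\prod_{s\in K}\mathcal N_s$, carrying an entire neighborhood \emph{family} so that, once a subnet has peaks near $t_0$, one can manufacture an index $\alpha_0$ forcing $U^{\alpha}_{t_\alpha}\subseteq W$ whenever $t_\alpha\in W$. Your use of the unique uniformity on compact Hausdorff $K$ replaces this product trick by a single entourage coordinate, which is cleaner: the support $E[t]$ is uniformly small independent of $t$, so the collapse to $t_0$ is immediate from $E_{\gamma_\delta}\subseteq E$ and $t_{\gamma_\delta}\to t_0$.

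Second, the paper gets exact $F$-bumps with $f_\alpha(t_\alpha)=1$ via an external result (\cite{Bilokopytov:23b}, Proposition~4.1), and then invokes the order-convergence criterion for $C(K)$ from \cite{Bilokopytov:22} to finish. You instead approximate $C(K)$-bumps to within $1/n$ (building $n$ into the index) and give a self-contained dominating-net argument for the sublemma; the $\tfrac1n\one$ error is then absorbed as an extra order-null summand. Your route avoids both external references at the cost of the extra $n$-coordinate and the truncation step. Either way, the ``no isolated points'' hypothesis enters at the same place: ensuring the infimum of the dominating functions (equivalently, that one can find a nonempty open set disjoint from any neighborhood of $t_0$) is zero.

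One cosmetic point: your description of the dominating collection includes ``$\operatorname{supp}u$ contained in a neighborhood of $t_0$'', which is vacuous as stated; what you actually use (and what makes the infimum zero) is just that the collection contains, for each $s\ne t_0$, some $u$ with $u(s)=0$. You might tighten the wording there.
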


\begin{proof}
  We start by construction an index set. For each $s\in K$, let
  $\mathcal N_s$ be the set of all neighborhoods of~$s$. Put
  \begin{math}
    A=K\times\prod_{s\in K}\mathcal N_s.
  \end{math}
  That is, for every $\alpha\in A$, the first component is some point
  of~$K$, which we will denote by $t_\alpha$, and the second component
  is a family $(U^{\alpha}_s)_{s\in K}$, where
  $U^{\alpha}_s\in\mathcal N_s$ for every~$s$. We pre-order $A$ as
  follows $\alpha_1\le\alpha_2$ if
  $U^{\alpha_2}_s\subseteq U^{\alpha_1}_s$ for every $s\in K$. For
  every $\alpha\in A$, use, e.g., Proposition~4.1 in~\cite{Bilokopytov:23b}
  to find a function $f_\alpha\in F$ such
  that $0\le f_\alpha\le\one$, $f_\alpha(t_\alpha)=1$, and $f_\alpha$
  vanishes outside of $U^{\alpha}_{t_\alpha}$. It is easy to see that
  for every $t\in K$, in every tail of the net there exists an index
  $\alpha$ such that $f_\alpha(t)=1$; it follows that the supremum of
  every tail is $\one$ and, therefore, $f_\alpha$ fails to converge to
  zero in order.

  We will show that $f_\alpha\xrightarrow{\mathrm{co}}0$. Let
  $(f_{\alpha_\beta})$ be a subnet of~$(f_\alpha)$. By compactness
  of~$K$, there is a further subnet $(f_{\alpha_{\beta_\gamma}})$ such
  that $(t_{\alpha_{\beta_\gamma}})$ converges to some~$t_0$. We will
  show that $f_{\alpha_{\beta_\gamma}}\goeso 0$; this will complete
  the proof.  Fix an open non-empty subset $U$
  of~$K$. By Theorem~3.2 in~\cite{Bilokopytov:22}, it suffices to show
  that some tail of $(f_{\alpha_{\beta_\gamma}})$ vanishes on some
  open non-empty subset $V$ of~$U$. Since $K$ has no isolated points,
  we can find $W\in\mathcal N_{t_0}$ and an open non-empty subset $V$
  of $U$ such that $W\cap V=\varnothing$. Define $\alpha_0$ as
  follows: $t_{\alpha_0}=t_0$, and let $U^{\alpha_0}_s$ be $W$ if
  $s\in W$ and $K$ otherwise. Passing to a tail, we may assume that
  $\alpha_{\beta_\gamma}\ge\alpha_0$ and
  $t_{\alpha_{\beta_\gamma}}\in W$ for all~$\gamma$. Then
  $U^{\alpha_{\beta_\gamma}}_{t_{\alpha_{\beta_\gamma}}}\subseteq
  W$. Since $f_{\alpha_{\beta_\gamma}}$ vanishes outside of
  $U^{\alpha_{\beta_\gamma}}_{t_{\alpha_{\beta_\gamma}}}$, it vanishes
  on~$V$.
\end{proof}

\begin{theorem}\label{o-Choq}
 For an Archimedean vector lattice $X$ the following are equivalent:
 \begin{enumerate}
 \item\label{o-Choq-disc} $X$ is discrete;
  \item\label{o-Choq-o} Order convergence on $X$ is Choquet;
  \item\label{o-Choq-uo} Unbounded order convergence on $X$ is Choquet;
  \item\label{o-Choq-top} Unbounded order convergence on $X$ is topological.
 \end{enumerate}
\end{theorem}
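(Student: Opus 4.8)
The plan is to establish the cycle (i)$\Rightarrow$(iv)$\Rightarrow$(iii)$\Rightarrow$(i) together with the equivalence (ii)$\Leftrightarrow$(iii); the two genuinely easy links are (i)$\Rightarrow$(iv) and (iv)$\Rightarrow$(iii). For (i)$\Rightarrow$(iv): if $X$ is discrete then by Proposition~\ref{discr-uo-p} uo-convergence coincides with coordinate-wise convergence, which is nothing but the restriction to $X$ of the product topology on $\mathbb R^A$ and is therefore topological. For (iv)$\Rightarrow$(iii) I would simply invoke the fact, recalled above, that every topological convergence is Choquet.

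For the equivalence (ii)$\Leftrightarrow$(iii) I would use the proposition asserting that unbounded and Choquet modifications commute, applied to $\eta=\mathrm{o}$, so that $\mathrm{u}(\mathrm{c}\,\mathrm{o})=\mathrm{c}(\mathrm{uo})$. If order convergence is Choquet, i.e.\ $\mathrm{c}\,\mathrm{o}=\mathrm{o}$, then $\mathrm{c}(\mathrm{uo})=\mathrm{u}(\mathrm{c}\,\mathrm{o})=\mathrm{u}\,\mathrm{o}=\mathrm{uo}$, giving (iii). Conversely, if uo is Choquet then $\mathrm{u}(\mathrm{c}\,\mathrm{o})=\mathrm{c}(\mathrm{uo})=\mathrm{uo}=\mathrm{u}\,\mathrm{o}$, so every $\mathrm{c}\,\mathrm{o}$-null net is uo-null. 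The extra input needed to upgrade this to o-nullity is that a $\mathrm{c}\,\mathrm{o}$-null net is eventually order bounded: each of its quasi-subnets has a further quasi-subnet that is o-null, hence eventually order bounded, hence possesses an order-bounded quasi-subnet, so by the proposition characterising eventually bounded nets the net itself is eventually order bounded. On its order-bounded tail uo- and o-convergence coincide, so uo-null together with eventual order boundedness yields o-nullity, giving $\mathrm{c}\,\mathrm{o}=\mathrm{o}$, i.e.\ (ii).

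The main work is (iii)$\Rightarrow$(i), which I would prove in contrapositive form: assuming $X$ is not discrete, produce a net that is $\mathrm{c}(\mathrm{uo})$-null but not uo-null. Since $X$ is not discrete, the band generated by its atoms is proper, so its disjoint complement is a nonzero atomless band; choose $0<e$ in it. The principal ideal $I_e$ then has no atoms, and by the Krein--Kakutani theorem its $\norm{\cdot}_e$-completion is $C(K)$ with $e\mapsto\one$. A routine truncation-and-density argument shows that $I_e$ is order dense in $C(K)$, and order density forces $K$ to have no isolated points: an isolated point $p$ would make $\one_{\{p\}}$ an atom of $C(K)$, and order density would place a nonzero element of $I_e$ below it, necessarily an atom of $I_e$. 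Now apply Lemma~\ref{CK-Choq} with $F=I_e$ to obtain a net in $I_e\cap[0,e]$ that, viewed in $C(K)$, is $\mathrm{co}$-null but not order null.

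Finally I would transfer this net back to $X$. The net is order bounded by $e$, lies in the ideal $I_e$ (which is regular in $X$), and $I_e$ is order dense, hence regular, in $C(K)$; since uo-convergence is preserved by regular sublattices and coincides with order convergence for order-bounded nets, order and uo convergence of this net agree in $C(K)$, in $I_e$, and in $X$. Thus its being $\mathrm{co}$-null in $C(K)$ means every subnet has a further subnet that is uo-null in $X$, i.e.\ the net is $\mathrm{c}(\mathrm{uo})$-null in $X$, while its failure to be order null in $C(K)$ means it is not uo-null in $X$. Hence $\mathrm{c}(\mathrm{uo})\ne\mathrm{uo}$ and uo is not Choquet. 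The delicate point — and the one I expect to be the main obstacle — is precisely this transfer: justifying that order-boundedness of the net plus regularity of $I_e$ in both $X$ and $C(K)$ makes order (equivalently uo) convergence insensitive to the ambient space, so that the co-null but not order-null dichotomy of Lemma~\ref{CK-Choq} survives the passage from $C(K)$ back to $X$.
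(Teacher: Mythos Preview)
Your proposal is correct and follows essentially the same route as the paper: the easy implications (i)$\Rightarrow$(iv)$\Rightarrow$(iii) are identical, and the core argument---using Lemma~\ref{CK-Choq} on $C(K)$ after representing an atomless principal ideal $I_e$ via Krein--Kakutani, then transferring the $\mathrm{co}$-null but not order-null net back to $X$ through regularity---matches the paper exactly. The only minor difference is in the link (ii)$\Leftrightarrow$(iii): the paper obtains (iii)$\Rightarrow$(ii) in one line from Example~\ref{uob-o} ($\mathrm{o}=(\mathrm{uo})_{\mathcal B}$) and Proposition~\ref{Choq-born}, whereas you route through the commutation $\mathrm{u}(\mathrm{c}\eta)=\mathrm{c}(\mathrm{u}\eta)$ together with the eventual-boundedness proposition; both arguments are correct and draw on the same pool of commutation results.
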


\begin{proof}
 \eqref{o-Choq-disc}$\Rightarrow$\eqref{o-Choq-top} If $X$ is discrete
 then uo convergence is topological by Proposition~\ref{discr-uo-p}.

 \eqref{o-Choq-top}$\Rightarrow$\eqref{o-Choq-uo} is trivial.
 
 \eqref{o-Choq-uo}$\Rightarrow$\eqref{o-Choq-o} follows from
 Example~\ref{uob-o} and Proposition~\ref{Choq-born}.

 \eqref{o-Choq-o}$\Rightarrow$\eqref{o-Choq-disc}
 Assume that $X$ is not discrete. Then there exists $0<e\in X$ such
 that $I_e$ is atomless. By Krein-Kakutani's Representation Theorem,
 there exists a lattice isomorphic embedding $T\colon I_e\to C(K)$ for
 some compact Hausdorff space~$K$, such that $F:=\Range T$ is a norm
 dense sublattice and $\one\in F$. Since $F$ is atomless, $K$ has no
 isolated points; see, e.g., Lemma~4.2 in~\cite{Aviles:comp-int}. Let
 $(f_\alpha)$ be a net as in Lemma~\ref{CK-Choq}.

 It is easy to see that $F$ is order dense and, therefore, regular in
 $C(K)$. It follows that order convergences in $F$ and in $C(K)$ agree
 for order bounded nets; see, e.g., Corollary~2.12
 in~\cite{Gao:17}. Therefore, $(f_\alpha)$ is $\mathrm{co}$-null but
 not order null not only in $C(K)$ but also in~$F$. Put
 $x_\alpha=T^{-1}f_\alpha$. Since $T$ is a lattice isomorphism,
 $(x_\alpha)$ is $\mathrm{co}$-null but not order null in~$I_e$; it is
 also order bounded there. Since $I_e$ is regular, we conclude that
 $(x_\alpha)$ is $\mathrm{co}$-null but not order null in~$X$. It
 follows that order convergence on $X$ fails to be Choquet.
\end{proof}

\begin{remark}
  The fact that uo-convergence is topological iff the
  vector lattice is discrete is known: it was shown in
  Theorem~7.5 in~\cite{Taylor:19} that uo-convergence agrees with
  the convergence of a locally convex-solid topology iff $X$ is
  discrete; in the general case, this fact may be deduced by combining
  results of \cite{Ellis:68,Papangelou:65,Winberg:62}. A direct proof
  may be found in~\cite{Taylor:TH} and in~\cite{Aviles:comp-int}.
\end{remark}

\begin{question}
  Characterize Archimedean vector lattices on which relative uniform
  convergence or $\sigma$-order convergence is Choquet.
\end{question}

Some partial answers to this question are already contained in this
article. First, if $X$ has a strong unit, then $\mathrm{ru}$ is given
via a norm, and so in particular it is Choquet. In an order continuous
Banach lattice we have $\mathrm{o}=\mathrm{ru}$ (see,
e.g.,~\cite{Bedingfield:80}), hence in
this case the latter is Choquet if and only if $X$ is
discrete. In Example~\ref{cru-bm}, we observed that relative uniform convergence on the appropriate spaces
of continuous functions may be obtained as a bounded modification of
ucc convergence; since ucc is topological and, therefore, Choquet, we
conclude by Proposition~\ref{Choq-born} that $\mathrm{ru}$ on these
spaces is Choquet as well. Hence, neither
discreteness nor having strong unit are necessary conditions. Is
discreteness sufficient?

If $X$ has countable supremum property, then
$\mathrm{o}=\sigma\mathrm{o}$, and so the latter is Choquet if and
only if $X$ is discrete. Discreteness is not necessary due to
existence of the following class of examples. Recall that a compact
Hausdorff space $K$ is called a $P^\prime$ space or an almost $P$
space (see~\cite{Veksler:73}) if every non-empty closed $G_\delta$ set
has a nonempty interior, or equivalently, $\sigma\mathrm{o}$ on $C(K)$
is given via the supremum norm. Clearly, in this case
$\sigma\mathrm{o}$ is Choquet, but $K$ does not necessarily have a
dense set of isolated points: for a counterexample, take
$K=\beta\mathbb N\setminus\mathbb N$, which has no isolated points.

\section{Weakest and minimal locally solid convergence structures}
\label{sec:minimal}

Let $X$ be a set. All convergence structures on $X$ are naturally
ordered by the ``weaker/stronger'' relation. So it makes sense to ask
whether a family $\mathcal A$ of convergence structures on $X$ has a
least element (the weakest convergence in the family), a greatest
element (the strongest convergence), whether it has minimal elements
(an element of $\mathcal A$ is minimal if no other element of
$\mathcal A$ is weaker than it) or maximal elements (defined
analogously).  It may also be of interest to know whether
$\inf\mathcal A$ and $\sup\mathcal A$ exist. Naturally, when
$\mathcal A$ consists of linear (or locally solid) Hausdorff
convergences, we may want to find $\inf\mathcal A$ and
$\sup\mathcal A$ with respect to the same class.  

We start with the set of all convergence structures on~$X$. The
strongest convergence on $X$ is discrete (only eventually constant
nets are convergent), while the weakest convergence is trivial (every
net converges to every point). Every collection of convergence
structures $\mathcal A$ has a supremum given as follows: a net (or a
filter) converges if it converges with respect to every convergence in
the collection. It follows that if the elements of $\mathcal A$ are
Hausdorff, linear, or locally solid then so is $\sup\mathcal A$.  In
\cite[Definition~1.2.1]{Beattie:02}, this convergence structure is called the
\term{initial convergence structure with respect to the family
  $\mathcal A$}.

Since every set of convergences has a supremum, the set of all
convergence structures on $X$ is a complete lattice. This implies that
$\inf\mathcal A$ always exists. In the special case when $\mathcal A$
is directed downwards, $\inf\mathcal A$ can be easily described as
follows: a net converges with respect to $\inf\mathcal A$ if it
converges with respect to \emph{some} convergence in~$\mathcal A$. It
follows that in this case if elements of $\mathcal A$ are Hausdorff,
linear, or locally solid then so is $\inf\mathcal A$.

Describing $\inf\mathcal A$ for a general $\mathcal A$ requires more
effort. Furthermore, $\inf\mathcal A$ need not preserve whatever nice
properties that elements of $\mathcal A$ may
have. Using~\cite[p.~8]{Beattie:02}, one may characterize $\inf\mathcal A$ in
the language of filters as follows:
$\mathcal F\to x$ whenever
$\mathcal F\supseteq\mathcal F_1\cap\dots\cap\mathcal F_n$, where each
$\mathcal F_i$ converges to $x$ with respect to some convergence in
$\mathcal A$ (dependent on $i$).

Next, we present a net version of this characterization. We use the
concept of a mixing of a family of nets, introduced
in~\cite{OBrien:23}. We recall from~\cite{OBrien:23} that if a net $M$
is the mixing of a family of nets $\Sigma$ then the tail filter of $M$
equals the intersection of the tail filters of the nets
in~$\Sigma$. Furthermore, in a convergence space, if $\Sigma$ is
finite and each net in $\Sigma$ converges to $x$ then so does~$M$.

\begin{proposition}\label{inf}
  Let $X$ be a set, $\mathcal A$ a collection of convergence
  structures on~$X$, and $\lambda=\inf\mathcal A$. Then $x_\alpha\goesl x$ iff
  $(x_\alpha)$ is a quasi-subnet of the mixing of a finite collection
  $S_1,\dots,S_n$ of nets, where each $S_i$ converges to $x$ with
  respect to some convergence in~$\mathcal A$.
\end{proposition}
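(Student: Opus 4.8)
The plan is to treat the statement as a pure dictionary translation between the filter-language description of $\inf\mathcal A$ recalled just before the proposition and the net language, with the mixing construction serving as the bridge. Throughout I would use the one-to-one correspondence between net and filter convergence, the fact (from~\cite{OBrien:23}, recalled above) that the tail filter of a mixing equals the intersection of the tail filters of its constituents, and the elementary observation that $(y_\beta)$ is a quasi-subnet of $(x_\alpha)$ precisely when $[x_\alpha]\subseteq[y_\beta]$: a tail set of $(x_\alpha)$ contains a tail set of $(y_\beta)$ iff it lies in $[y_\beta]$, and conversely. I would record this equivalence once and use it in both directions, being careful about which tail filter sits inside which.

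First I would prove the ``if'' direction. Suppose $(x_\alpha)$ is a quasi-subnet of the mixing $M$ of a finite family $S_1,\dots,S_n$, where each $S_i$ converges to $x$ in some $\eta_i\in\mathcal A$. Then $[M]=[S_1]\cap\dots\cap[S_n]$ and each $[S_i]\xrightarrow{\eta_i}x$, so the recalled filter characterization of $\inf\mathcal A$ gives $[M]\goesl x$, that is, $M\goesl x$. Since $(x_\alpha)$ is a quasi-subnet of $M$, axiom~(2) for a net convergence structure yields $x_\alpha\goesl x$.

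For the ``only if'' direction, I would start from $x_\alpha\goesl x$, i.e.\ $[x_\alpha]\goesl x$. The filter characterization produces filters $\mathcal F_1,\dots,\mathcal F_n$ with $[x_\alpha]\supseteq\mathcal F_1\cap\dots\cap\mathcal F_n$, where each $\mathcal F_i$ converges to $x$ with respect to some $\eta_i\in\mathcal A$. Choosing for each $i$ a net $S_i$ with $[S_i]=\mathcal F_i$, we get $S_i\xrightarrow{\eta_i}x$, since the $\eta_i$-convergence of a net is determined by its tail filter (tail-equivalent nets share limits). Letting $M$ be the mixing of $\{S_1,\dots,S_n\}$, we have $[M]=[S_1]\cap\dots\cap[S_n]=\mathcal F_1\cap\dots\cap\mathcal F_n\subseteq[x_\alpha]$, and the inclusion $[M]\subseteq[x_\alpha]$ is exactly the assertion that $(x_\alpha)$ is a quasi-subnet of $M$, which is the desired representation.

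Since the argument is a translation, no step is a serious obstacle; the points needing care are getting the quasi-subnet relation identified with the inclusion $[M]\subseteq[x_\alpha]$ in the correct direction, and checking that the choice of net representing each $\mathcal F_i$ is immaterial. The one fact I would want to pin down explicitly is that the recalled filter description of $\inf\mathcal A$ is an exact characterization, not merely a sufficient condition: the collection of all filters containing a finite intersection of $x$-convergent filters from $\mathcal A$ is already upward-closed and closed under finite intersection, hence satisfies the filter-convergence axioms and therefore coincides with the infimum. Granting this, both implications close cleanly.
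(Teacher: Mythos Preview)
Your proof is correct and essentially identical to the paper's: both directions pass through the filter characterization of $\inf\mathcal A$ via the tail-filter/mixing identity $[M]=[S_1]\cap\dots\cap[S_n]$ and the equivalence between quasi-subnet and tail-filter inclusion. The only cosmetic difference is that in the ``if'' direction the paper first observes $S_i\goesl x$ (since $\lambda\le\eta_i$) and then invokes the general mixing-convergence fact, whereas you feed $[M]$ directly into the filter description of $\inf\mathcal A$; both are one-line deductions.
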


\begin{proof}
  Let $(x_\alpha)$ be a quasi-subnet of~$M$, where $M$ is the mixing of a
  finite collection $S_1,\dots,S_n$ of nets, where each $S_i$
  converges to $x$ with respect to some convergence in~$\mathcal A$. Then
  clearly $S_i\goesl x$ for each~$i$; it follows that $M\goesl x$ and,
  therefore, $x_\alpha\goesl x$.

  Conversely, suppose that $x_\alpha\goesl x$. Then
  $[x_\alpha]\goesl x$. It follows that
  $[x_\alpha]\supseteq \mathcal F_1\cap\dots\cap\mathcal F_n$, where
  each $\mathcal F_i$ converges with respect to some convergence
  in~$\mathcal A$. For each $i=1,\dots,n$, find a net $S_i$ such that
  $\mathcal F=[S_i]$, and let $M$ be the mixing of $S_1,\dots,
  S_n$. Then each $S_i$ converges to $x$ with respect to some
  convergence structure in~$\mathcal A$. We also have
  \begin{displaymath}
    [x_\alpha]\supseteq \mathcal F_1\cap\dots\cap\mathcal F_n
    =[S_1]\cap\dots\cap[S_n]=[M],
  \end{displaymath}
  so that $(x_\alpha)$ is a quasi-subnet of~$M$.
\end{proof}

Suppose now that $X$ is a vector space. The strongest linear
convergence on $X$ is finite-dimensional convergence, while the weakest is
the trivial one. Let $\mathcal A$ be a set of linear convergence
structures on~$X$. The infimum of $\mathcal A$ in the set of all
convergence structures described above need not be linear. However,
since $\sup\mathcal A$ is linear for every such~$\mathcal A$, the
collection of all linear convergence structures is itself a complete
lattice. Therefore, every collection $\mathcal A$ of linear
convergence structures has an infimum in the set of all linear
convergence structures. In the filter language, this infimum was
described in~\cite[Corollary~3.3.7]{Beattie:02} as follows: a filter
converges to $0$ if it contains $\mathcal F_1+\dots+\mathcal F_n$,
where each $\mathcal F_i$ converges to $0$ with respect to some
convergence in~$\mathcal A$. Analogously to
Proposition~\ref{inf}, we can translate this to the language of nets
as follows:

\begin{proposition}
  A net converges to zero in the infimum of $\mathcal A$ within the
  lattice of linear convergences iff it is a quasi-subnet of
  $S_1+\dots+S_n$, where each $S_i$ converges to $0$ with respect to
  some convergence in~$\mathcal A$.
\end{proposition}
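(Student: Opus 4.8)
The plan is to mirror the proof of Proposition~\ref{inf}, replacing the mixing of nets with their sum and the intersection of tail filters with the sum of filters. Write $\lambda=\inf\mathcal A$ for the infimum taken in the complete lattice of linear convergences, so that $\lambda$ is itself linear and $\lambda\le\mu$ for every $\mu\in\mathcal A$. Given nets $S_i=(s^{(i)}_{\alpha_i})_{\alpha_i\in A_i}$ for $i=1,\dots,n$, let $M=S_1+\dots+S_n$ denote the net indexed by $A_1\times\dots\times A_n$, ordered componentwise, whose value at $(\alpha_1,\dots,\alpha_n)$ is $s^{(1)}_{\alpha_1}+\dots+s^{(n)}_{\alpha_n}$. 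The key preliminary observation I would record is that the tail sets of $M$ are exactly the sets $T_1+\dots+T_n$ with each $T_i$ a tail set of $S_i$; since these form a base of the sum filter $[S_1]+\dots+[S_n]$, we obtain $[M]=[S_1]+\dots+[S_n]$. This is the net-theoretic analogue of the fact, recalled from~\cite{OBrien:23}, that the tail filter of a mixing is the intersection of the tail filters.

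For the backward implication, suppose $(x_\alpha)$ is a quasi-subnet of $M=S_1+\dots+S_n$ with each $S_i$ converging to $0$ in some $\mu_i\in\mathcal A$. Since $\lambda\le\mu_i$, each $S_i\goesl 0$. Using linearity of $\lambda$ I would then conclude $M\goesl 0$, and hence $x_\alpha\goesl 0$ by the quasi-subnet axiom. For the forward implication, assume $x_\alpha\goesl 0$, so $[x_\alpha]\goesl 0$. By the filter description of this infimum from~\cite[Corollary~3.3.7]{Beattie:02} quoted above, $[x_\alpha]\supseteq\mathcal F_1+\dots+\mathcal F_n$ with each $\mathcal F_i$ converging to $0$ in some $\mu_i\in\mathcal A$. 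Choosing nets $S_i$ with $[S_i]=\mathcal F_i$ and forming $M=S_1+\dots+S_n$, the preliminary observation gives $[M]=\mathcal F_1+\dots+\mathcal F_n\subseteq[x_\alpha]$, which is precisely the statement that $(x_\alpha)$ is a quasi-subnet of $M$.

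The step I expect to require the most care is showing that $M=S_1+\dots+S_n\goesl 0$ from $S_i\goesl 0$, because the summands are indexed by different directed sets and the continuity of addition is a statement about the product convergence on $X\times X$. The fix is the reindexing device used in the remark after Proposition~\ref{unif-cont-nets}: for two nets $(u_\alpha)_{\alpha\in A}$ and $(v_\beta)_{\beta\in B}$ one sets $\tilde u_{(\alpha,\beta)}=u_\alpha$ and $\tilde v_{(\alpha,\beta)}=v_\beta$ on $A\times B$, notes that $[\tilde u]=[u_\alpha]$ and $[\tilde v]=[v_\beta]$ so that $\tilde u\goesl 0$ and $\tilde v\goesl 0$, and then applies continuity of addition to obtain $\tilde u+\tilde v=(u_\alpha+v_\beta)_{(\alpha,\beta)}\goesl 0$. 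Iterating this handles the $n$-fold sum. Everything else is a routine transcription of Proposition~\ref{inf} and its proof.
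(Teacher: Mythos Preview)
Your proposal is correct and follows exactly the approach the paper indicates: the paper does not give a separate proof of this proposition but simply says it is obtained ``analogously to Proposition~\ref{inf}'' by translating the filter description from \cite[Corollary~3.3.7]{Beattie:02} into net language, with sums of filters replacing intersections and sums of nets replacing mixings. Your key observation $[S_1+\dots+S_n]=[S_1]+\dots+[S_n]$ is the correct analogue of the mixing/intersection identity, and the rest of your argument transcribes the proof of Proposition~\ref{inf} as intended.
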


The situation with local solidity is similar. Let $X$ be a vector
lattice. The strongest locally solid convergence is relative
unifrom convergence, while the weakest is the trivial one. The
supremum of a collection of locally solid convergences is locally
solid, and so the set of all locally solid convergences is a complete
lattice. It follows that every set $\mathcal A$ of locally solid
convergences has an infimum in this lattice. In fact, it coincides
with the infimum of $\mathcal A$ in the lattice of linear
convergences. Indeed, denote the latter infimum by~$\to$. If
$\mathcal F\to 0$ then it contains $\mathcal F_1+\dots+\mathcal F_n$,
where each $\mathcal F_i$ and, therefore, also $\Sol(\mathcal F_i)$,
converges to zero with respect to some convergence in~$\mathcal
A$. Hence, $\mathcal F$ contains
$\Sol(\mathcal F_1)+\dots+\Sol(\mathcal F_n)\to 0$. Therefore, $\to$
is locally solid.

The following example implies that the infimum of all Hausdorff
locally solid convergences on $X$ need not be
Hausdorff. That is, there may be no weakest Hausdorff locally solid
convergence on~$X$.

\begin{example}\label{no-least}
  Let $X=C[0,1]$. Consider two convergence structures on $X$: let
  $\mu$ be convergence in measure and let $\eta$ be pointwise
  convergence on $\mathbb Q\cap[0,1]$. It can be easily verified that
  both are locally solid Hausdorff convergences. Let $(q_n)$ be an
  enumeration of $\mathbb Q\cap[0,1]$.  One can easily construct a
  sequence $(x_n)$ in $X$ such that $x_n$ takes the value $1$ at the
  points $q_1,\dots,q_n$, but the measure of the support of $x_n$
  tends to zero. It follows that $x_n\xrightarrow{\mu}0$ but
  $x_n\goeseta\one$. Hence, for every convergence $\lambda$ that is weaker
  than both $\mu$ and $\eta$ we have both $x_n\xrightarrow{\lambda}0$
  and $x_n\xrightarrow{\lambda}\one$, so that any such $\lambda$ fails
  to be Hausdorff.
\end{example}

\begin{proposition}
  Let $X$ be a discrete vector lattice.  Then coordinate-wise
  convergence is the weakest locally solid Hausdorff convergence on~$X$.
\end{proposition}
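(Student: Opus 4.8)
The plan is to check both halves of the statement. That coordinate-wise convergence $\mathrm{p}$ is itself locally solid and Hausdorff was already recorded when $\mathrm{p}$ was introduced, so the real content is to show that $\mathrm{p}$ is weaker than \emph{every} locally solid Hausdorff convergence $\eta$ on $X$; that is, $x_\alpha\goeseta 0$ implies $x_\alpha\goesp 0$. By the definition of coordinate-wise convergence, it suffices to fix an atom $a\in X_+$ and prove that $P_ax_\alpha\to 0$ in the one-dimensional band $\mathbb Ra$ spanned by $a$.

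First I would bring a single coordinate under the control of $\eta$. The band projection $P_a$ satisfies $0\le P_a\le I$, so $\abs{P_ax}=P_a\abs{x}\le\abs{x}$ for every $x\in X$. Consequently, if $x_\alpha\goeseta 0$, then applying local solidity in the form of Theorem~\ref{def-ls}\eqref{def-ls-net} to the pair $(P_ax_\alpha)$ and $(x_\alpha)$ (same index set, $\abs{P_ax_\alpha}\le\abs{x_\alpha}$) yields $P_ax_\alpha\goeseta 0$.

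It then remains to upgrade $\eta$-convergence of $(P_ax_\alpha)$ inside the chain $\mathbb Ra$ to genuine convergence of the scalar coordinates. Write $P_ax_\alpha=c_\alpha a$ with $c_\alpha\in\mathbb R$, and suppose for contradiction that $c_\alpha\not\to 0$. Then there is $\varepsilon>0$ and a quasi-subnet along which $\abs{c_\beta}\ge\varepsilon$; this quasi-subnet still $\eta$-converges to $0$. Continuity of the modulus (Proposition~\ref{op-cont}) gives $\abs{c_\beta}\,a\goeseta 0$, and joint continuity of $\wedge$ (same proposition) gives $(\abs{c_\beta}\,a)\wedge\varepsilon a\goeseta 0$. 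But $\abs{c_\beta}\ge\varepsilon$ forces $(\abs{c_\beta}\,a)\wedge\varepsilon a=\varepsilon a$, so the constant net $\varepsilon a$ $\eta$-converges to $0$; since $\varepsilon a>0$ and $\eta$ is Hausdorff, this contradicts Proposition~\ref{Haus}. Hence $c_\alpha\to 0$ for every atom $a$, which is exactly $x_\alpha\goesp 0$, and therefore $\mathrm{p}$ is the least element among locally solid Hausdorff convergences on $X$.

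The essential point—and the only place where Hausdorffness is used—is the last step: a nonzero constant net cannot be $\eta$-null, which is precisely what prevents a coordinate from escaping to a nonzero limit. As an alternative to the explicit meet computation, once $P_ax_\alpha\goeseta 0$ has been established one may instead observe that the restriction of $\eta$ to $\mathbb Ra$ is a linear Hausdorff convergence on a one-dimensional space, hence coincides with the usual convergence by \cite[Theorem~3.3.19]{Beattie:02}, giving $c_\alpha\to 0$ immediately.
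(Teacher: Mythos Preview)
Your proof is correct and follows the same approach as the paper: use $\abs{P_ax_\alpha}\le\abs{x_\alpha}$ together with local solidity to get $P_ax_\alpha\goeseta 0$, then conclude that the scalar coordinates converge. The paper compresses the second step into a single clause (``it follows \dots\ that $P_ax_\alpha\to 0$''), implicitly using that a Hausdorff linear convergence on a one-dimensional space is the usual one; you spell this out explicitly, either via the meet-with-$\varepsilon a$ contradiction or via \cite[Theorem~3.3.19]{Beattie:02}, which is a welcome clarification rather than a different argument.
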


\begin{proof}
  Let $\eta$ be a locally solid Hausdorff convergence on~$X$. If
  $0\le x_\alpha\goeseta 0$, it follows from $P_ax_\alpha\le x_\alpha$
  that $P_ax_\alpha\to 0$ for every atom $a$ in $X$ and, therefore,
  $x_\alpha\goesp 0$.
\end{proof}

Note that for a discrete vector lattice, the weakest Hausdorff locally
solid convergence is also the weakest Hausdorff and locally solid topology.

It is known that on an order continuous discrete Banach lattice, uo
and un convergences agree with coordinate-wise convergence; see, e.g.,
Lemma~3.1 and Theorem~3.4 in~\cite{Dabboorasad:20}.  We now generalize
this as follows.

\begin{corollary}\label{u-ocont-p}
  Let $X$ be a discrete vector lattice and $\lambda$ a Hausdorff order
  continuous locally solid convergence on~$X$. Then
  $\mathrm{u}\lambda=\mathrm{p}$.
\end{corollary}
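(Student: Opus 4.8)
The plan is to squeeze $\mathrm{u}\lambda$ between $\mathrm{p}$ from both sides, using two facts established earlier: that $\mathrm{p}$ is the weakest Hausdorff locally solid convergence on a discrete lattice, and that $\mathrm{p}=\mathrm{uo}$ on such a lattice by Proposition~\ref{discr-uo-p}.

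First I would establish $\mathrm{u}\lambda\le\mathrm{p}$. Since $\lambda$ is order continuous we have $\lambda\le\mathrm{o}$. The unbounded modification is monotone in its argument: if $\eta\le\eta'$ then $\mathrm{u}\eta\le\mathrm{u}\eta'$, because $\abs{x_\alpha-x}\wedge u\xrightarrow{\eta'}0$ for every $u\in X_+$ forces the same convergence in the weaker $\eta$, so $x_\alpha\xrightarrow{\mathrm{u}\eta'}x$ implies $x_\alpha\xrightarrow{\mathrm{u}\eta}x$. Applying this to $\lambda\le\mathrm{o}$ gives $\mathrm{u}\lambda\le\mathrm{uo}$, and Proposition~\ref{discr-uo-p} identifies $\mathrm{uo}$ with $\mathrm{p}$ on the discrete lattice $X$; hence $\mathrm{u}\lambda\le\mathrm{p}$.

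For the reverse inequality $\mathrm{p}\le\mathrm{u}\lambda$, I would observe that $\mathrm{u}\lambda$ is itself a Hausdorff locally solid convergence. It is locally solid by construction (Section~\ref{sec:unbdd}), and it is Hausdorff because $\lambda$ is Hausdorff: this is the case $I=X$ of the proposition characterizing when $\mathrm{u}_I\eta$ is Hausdorff, where $X$ is trivially order dense in itself. By the proposition immediately preceding this corollary, $\mathrm{p}$ is the weakest Hausdorff locally solid convergence on the discrete lattice $X$, so every such convergence---in particular $\mathrm{u}\lambda$---is stronger than $\mathrm{p}$. Combining the two inequalities yields $\mathrm{u}\lambda=\mathrm{p}$.

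There is no serious obstacle here once the monotonicity of the operation $\eta\mapsto\mathrm{u}\eta$ is noted; the only points requiring a line of justification are this monotonicity and the verification that $\mathrm{u}\lambda$ is Hausdorff, both of which are immediate from the earlier material.
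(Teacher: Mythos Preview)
Your proof is correct and follows essentially the same route as the paper: both argue $\mathrm{u}\lambda\le\mathrm{uo}=\mathrm{p}$ from order continuity and Proposition~\ref{discr-uo-p}, then invoke the minimality of $\mathrm{p}$ among Hausdorff locally solid convergences (the preceding proposition) to close the inequality. Your version simply spells out the monotonicity of $\eta\mapsto\mathrm{u}\eta$ and the Hausdorffness of $\mathrm{u}\lambda$, which the paper leaves implicit.
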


\begin{proof}
  Since $\lambda$ is order continuous, we have
  $\lambda\le\mathrm{o}$. It follows that
  $\mathrm{u}\lambda\le\mathrm{uo}$. By Proposition~\ref{discr-uo-p},
  we have $\mathrm{uo}=\mathrm{p}$, hence
  $\mathrm{u}\lambda\le\mathrm{p}$. Minimality of $p$ yields
  $\mathrm{u}\lambda=\mathrm{p}$.
\end{proof}

\begin{remark}\label{ocn-ru-o}
  Recall that on an order continuous Banach lattice, $\mathrm{ru}$ convergence
  agrees with order convergence (see, e.g.,~\cite{Bedingfield:80}).
  Let $X$ be an order continuous discrete Banach lattice and $\eta$ a
  Hausdorff locally solid convergence on~$X$. By
  Proposition~\ref{u-strongest}, we have
  $\eta\le\mathrm{ru}=\mathrm{o}$, so Corollary~\ref{u-ocont-p} yields
  $\mathrm{u}\eta=\mathrm{p}$. In particular, $\eta$ agrees with
  $\mathrm{p}$ on order bounded nets. Furthermore, the bounded
  modification of $\eta$ by the bornology of all order
  bounded sets is order convergence.
\end{remark}

\subsection*{Minimal convergences.}
We observed earlier that for every downward directed set of Hausdorff
linear (or locally solid) convergence structures, its infimum in the
set of all convergence structures always exists and is again Hausdorff
and linear (respectively, locally solid). Using Zorn's lemma, we now
obtain the following:

\begin{proposition}
  Let $X$ be a vector space (or a vector lattice). The set of all
  Hausdorff linear (respectively, locally solid) convergences on $X$
  has a minimal element.
\end{proposition}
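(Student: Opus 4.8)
The plan is to apply Zorn's lemma to the collection $\mathcal H$ of all Hausdorff linear (respectively, locally solid) convergences on $X$, partially ordered by the ``stronger than'' relation $\ge$, and to obtain a minimal element by checking that every chain in $\mathcal H$ admits a lower bound that still lies in $\mathcal H$. Recall that ``minimal'' here means minimal with respect to $\ge$, i.e.\ a convergence that has no convergence in $\mathcal H$ strictly weaker than it; so we are seeking a minimal (not maximal) element, and Zorn's lemma is to be applied in its dual form, to the order-reversed poset $(\mathcal H,\le)$.

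First I would verify that $\mathcal H$ is nonempty. In the vector-space case, finite-dimensional convergence is a Hausdorff linear convergence on any $X$ (indeed it is the strongest such), so $\mathcal H\ne\varnothing$. In the vector-lattice case, relative uniform convergence is the strongest locally solid convergence by Proposition~\ref{u-strongest}, and since $X$ is Archimedean it is Hausdorff (no nonzero constant positive net is $\mathrm{ru}$-null, cf.\ Theorem~\ref{plsc}); hence $\mathcal H\ne\varnothing$ here as well.

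Next I would take an arbitrary chain $\mathcal A\subseteq\mathcal H$. A totally ordered family is, in particular, downward directed, so the description of infima for downward directed families given in the discussion preceding this proposition applies: $\inf\mathcal A$, computed in the complete lattice of all convergence structures on $X$, is characterized by ``a net converges with respect to $\inf\mathcal A$ iff it converges with respect to some member of $\mathcal A$,'' and from this it was observed that $\inf\mathcal A$ inherits Hausdorffness and linearity (respectively, local solidity) from the members of $\mathcal A$. Consequently $\inf\mathcal A\in\mathcal H$, and since $\inf\mathcal A\le\eta$ for every $\eta\in\mathcal A$, it is a lower bound for the chain $\mathcal A$ inside $\mathcal H$. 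Having produced such a lower bound for each chain, Zorn's lemma yields a minimal element of $\mathcal H$, which is precisely a minimal Hausdorff linear (respectively, locally solid) convergence on $X$.

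The one point demanding care -- the main obstacle -- is guaranteeing that the lower bound of a chain does not escape the class $\mathcal H$. This is exactly what downward-directedness of a chain provides: for a downward directed family the infimum is realized as ``convergence with respect to some member,'' so no Hausdorffness, linearity, or local solidity is lost in passing to $\inf\mathcal A$. For a general (non-directed) family this fails, as Example~\ref{no-least} shows that the infimum of all Hausdorff locally solid convergences need not be Hausdorff; restricting attention to chains is therefore essential, and it is precisely what makes the Zorn's lemma argument go through.
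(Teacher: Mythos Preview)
Your proposal is correct and follows exactly the same approach as the paper: apply Zorn's lemma after observing that the infimum of a downward directed (in particular, totally ordered) family of Hausdorff linear (respectively, locally solid) convergences is again Hausdorff and linear (respectively, locally solid). The paper's proof is in fact just this one-line observation, while you have spelled out the nonemptiness check and the role of downward directedness more explicitly.
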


\begin{remark}\label{min-dom}
  A similar argument shows that given a Hausdorff linear (or locally
  solid) convergence structure~$\lambda$, there is a minimal Hausdorff
  linear (respectively, locally solid) convergence structure that is
  dominated by~$\lambda$.
\end{remark}

By a \term{minimal Hausdorff locally solid convergence} on a vector
lattice $X$ we mean a minimal element in the set of all Hausdorff
locally solid convergences on~$X$.

\begin{proposition}\label{min-unb-Ch}
  Every minimal Hausdorff locally solid convergence is unbounded and
  Choquet.
\end{proposition}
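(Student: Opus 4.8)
The plan is to exploit the fact that both the unbounded modification and the Choquet modification produce convergences that are again Hausdorff and locally solid, and that are \emph{dominated} by the original convergence. Minimality of $\eta$ then does all the work: any Hausdorff locally solid convergence that is weaker than $\eta$ must in fact equal $\eta$. So the whole argument reduces to verifying that $\mathrm{u}\eta$ and $\mathrm{c}\eta$ are competitors in the same class.

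First I would handle unboundedness. Recall from Section~\ref{sec:unbdd} that for a locally solid convergence the unbounded modification $\mathrm{u}\eta$ is again locally solid and satisfies $\mathrm{u}\eta\le\eta$. Moreover, as observed earlier, if $\eta$ is Hausdorff then so is $\mathrm{u}\eta$. Thus $\mathrm{u}\eta$ is a Hausdorff locally solid convergence with $\mathrm{u}\eta\le\eta$. Since $\eta$ is minimal among all Hausdorff locally solid convergences, there can be no such convergence strictly weaker than $\eta$; hence $\mathrm{u}\eta=\eta$, which is precisely the statement that $\eta$ is unbounded.

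The Choquet case is entirely parallel. By Proposition~\ref{Ch-ls}, the Choquet modification $\mathrm{c}\eta$ of a locally solid $\eta$ is locally solid, and by construction $\mathrm{c}\eta\le\eta$. Since $\eta$ is Hausdorff, so is $\mathrm{c}\eta$ (as noted just before Proposition~\ref{Choq-born}). Therefore $\mathrm{c}\eta$ is again a Hausdorff locally solid convergence weaker than $\eta$, and minimality forces $\mathrm{c}\eta=\eta$, i.e.\ $\eta$ is Choquet.

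There is essentially no hard step here: the content lies entirely in the previously established closure properties of the two modifications (preservation of local solidity, preservation of the Hausdorff property, and domination by $\eta$), after which minimality is invoked twice. The only point requiring any care is to make sure each modification really lands back in the exact class over which $\eta$ is minimal—Hausdorff \emph{and} locally solid—so that minimality is applicable; both requirements are guaranteed by the results cited above.
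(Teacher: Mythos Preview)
Your proof is correct and follows exactly the same approach as the paper: both modifications $\mathrm{u}\eta$ and $\mathrm{c}\eta$ are Hausdorff locally solid convergences dominated by $\eta$, so minimality forces $\mathrm{u}\eta=\eta=\mathrm{c}\eta$. Your version is simply more explicit about citing where each closure property is established.
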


\begin{proof}
  Let $\eta$ be a minimal locally solid Hausdorff convergence. Then
  $\mathrm{u}\eta$ and $\mathrm{c}\eta$ are Hausdorff locally
  solid convergences
  with $\mathrm{u}\eta\le\eta$ and $\mathrm{c}\eta\le\eta$.
  It follows that $\mathrm{u}\eta=\eta=\mathrm{c}\eta$.
\end{proof}

We are going to prove that every minimal Hausdorff locally solid
convergence is order continuous. We will use the following auxiliary
concept. Given two locally solid convergences $\eta$ and $\lambda$ on
a vector lattice~$X$, we introduce their \term{composition} $\eta\lambda$
as follows: we say that $x_\alpha\xrightarrow{\eta\lambda}x$ if there
exists a net $(v_\gamma)$ in $X_+$ such that $v_\gamma\goesl 0$ and
for every $\gamma$ the net
\begin{math}
  \bigl(\abs{x_\alpha-x}-v_\gamma\bigr)^+\goeseta 0
\end{math}
as a net in~$\alpha$.
We say that $(v_\gamma)$ is a \term{control net} for the convergence.

\begin{proposition}
  If $\eta$ ans $\lambda$ are two locally solid convergence structures
  then so is~$\eta\lambda$. 
\end{proposition}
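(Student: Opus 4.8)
The plan is to realize $\eta\lambda$ as the locally solid convergence produced by Theorem~\ref{plsc} applied to a suitable family of positive nets, thereby avoiding a direct verification of all the convergence-structure axioms and of linearity. Concretely, I declare a net $(w_\alpha)$ in $X_+$ to belong to the family, writing $w_\alpha\to 0$, exactly when there is a \emph{control net} $(v_\gamma)$ in $X_+$ with $v_\gamma\goesl 0$ and $(w_\alpha-v_\gamma)^+\goeseta 0$ as a net in $\alpha$ for every $\gamma$. Taking $w_\alpha=\abs{x_\alpha-x}$ shows that this family-convergence, extended via $x_\alpha\to x$ iff $\abs{x_\alpha-x}\to 0$, coincides with the stated definition of $\eta\lambda$; so it suffices to verify the four axioms of Theorem~\ref{plsc}. (The family is non-empty, e.g.\ via axiom~\eqref{plsc-1n} below.)

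For axiom~\eqref{plsc-sub} I would keep the same control net: if $(w'_\beta)$ is a quasi-subnet of $(w_\alpha)$, then applying the map $t\mapsto(t-v_\gamma)^+$ termwise to the same tail sets preserves the quasi-subnet relation, so $\bigl((w'_\beta-v_\gamma)^+\bigr)$ is a quasi-subnet of $\bigl((w_\alpha-v_\gamma)^+\bigr)\goeseta 0$ and hence $\eta$-null. For axiom~\eqref{plsc-dom}, if $0\le u_\alpha\le w_\alpha$ then monotonicity of $t\mapsto t^+$ gives $0\le(u_\alpha-v_\gamma)^+\le(w_\alpha-v_\gamma)^+$ for each $\gamma$, so local solidity of $\eta$ yields $(u_\alpha-v_\gamma)^+\goeseta 0$ and the same control net witnesses $u_\alpha\to 0$. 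For axiom~\eqref{plsc-1n}, given $x\in X_+$ I would use the control net $v_m=\tfrac1m x$: it is $\lambda$-null by continuity of scalar multiplication, and for each fixed $m$ the net $\bigl(\tfrac1n x-\tfrac1m x\bigr)^+=\bigl(\tfrac1n-\tfrac1m\bigr)^+x$ in $n$ is eventually $0$, hence $\eta$-null.

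The main point is axiom~\eqref{plsc-sum}. Given $w_\alpha\to 0$ and $u_\alpha\to 0$ (same index set) with control nets $(v_\gamma)_{\gamma\in\Gamma}$ and $(v'_\delta)_{\delta\in\Delta}$, I would take as control net for the sum the double net $(v_\gamma+v'_\delta)_{(\gamma,\delta)\in\Gamma\times\Delta}$, which lies in $X_+$ and is $\lambda$-null because $\lambda$ is a linear convergence (joint continuity of addition applied to the product net converging to $(0,0)$). Using the elementary lattice inequality $(a+b)^+\le a^++b^+$ with $a=w_\alpha-v_\gamma$ and $b=u_\alpha-v'_\delta$, one obtains
\[
0\le\bigl(w_\alpha+u_\alpha-v_\gamma-v'_\delta\bigr)^+
\le (w_\alpha-v_\gamma)^+ + (u_\alpha-v'_\delta)^+ .
\]
For each fixed $(\gamma,\delta)$ the right-hand side is $\eta$-null (a sum of two $\eta$-null nets), so local solidity of $\eta$ forces the left-hand side to be $\eta$-null as well; thus $w_\alpha+u_\alpha\to 0$ with the chosen control net. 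With all four axioms verified, Theorem~\ref{plsc} yields that $\eta\lambda$ is a locally solid convergence structure.

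The only genuine obstacle I anticipate is the bookkeeping in the sum axiom: one must handle two control nets indexed by different directed sets, confirm that their product-indexed sum is both positive and $\lambda$-null, and apply the inequality term by term. Everything else reduces to the monotonicity of $a\mapsto(a-v)^+$ together with the domination clause of local solidity of $\eta$.
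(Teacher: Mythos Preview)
Your proposal is correct and follows essentially the same route as the paper: both reduce to verifying the four axioms of Theorem~\ref{plsc}, using the same control-net bookkeeping for quasi-subnets, domination, and the sum (via $(a+b)^+\le a^++b^+$ and the product-indexed control net). The only cosmetic difference is in axiom~\eqref{plsc-1n}: the paper simply takes the zero control net (so that $(\tfrac1n x)^+=\tfrac1n x\goeseta 0$ by linearity of $\eta$), whereas you use $v_m=\tfrac1m x$; both are valid.
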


\begin{proof}
  It suffices to verify that the assumptions of Theorem~\ref{plsc}
  are satisfied for nets in~$X_+$. \eqref{plsc-sub} is obvious because
  if $(y_\beta)$ is a quasi-subnet of $(x_\alpha)$ then for every
  $\gamma$ the net
  $(y_\beta-v_\gamma)^+$ is a quasi-subnet of
  $(x_\alpha-v_\gamma)^+$. \eqref{plsc-dom} follows from the
  observation that if $0\le y_\alpha\le x_\alpha$ then
  $0\le(y_\alpha-v_\gamma)^+\le(x_\alpha-v_\gamma)^+$.

  To verify \eqref{plsc-sum}, let $x_\alpha\xrightarrow{\eta\lambda}0$
  and $y_\alpha\xrightarrow{\eta\lambda}0$ in~$X_+$. Let $(u_\beta)$
  and $(v_\gamma)$ be the corresponding control nets. Then
  $u_\beta+v_\gamma\goesl 0$ and for every $\beta$ and $\gamma$ we have
  \begin{displaymath}
    \bigl((x_\alpha+y_\alpha)-(u_\beta+v_\gamma)\bigr)^+
    \le(x_\alpha-u_\beta)^++(y_\alpha-v_\gamma)^+\goeseta 0,
  \end{displaymath}
  hence $x_\alpha+y_\alpha\xrightarrow{\eta\lambda}0$ with control net
  $(u_\beta+v_\gamma)$.

  To see \eqref{plsc-1n}, just take the control net to be the zero net.
\end{proof}

\begin{proposition}
  Let $\eta$ and $\lambda$ be two locally solid convergence
  structures. Then $\eta\lambda\le\eta$. Furthermore, $\lambda$ 
  dominates $\eta\lambda$ on monotone nets. If $\lambda$ is order
  continuous then so is~$\eta\lambda$. 
\end{proposition}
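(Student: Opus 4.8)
The plan is to prove the three assertions by choosing, in each case, a suitable control net in the definition of $\eta\lambda$; the recurring device is to take the control net to be either the zero net or a reindexing of $(x_\alpha)$ itself. For the inequality $\eta\lambda\le\eta$, I would suppose $x_\alpha\goeseta x$; by linearity I may assume $x=0$, and then local solidity of $\eta$ gives $\abs{x_\alpha}\goeseta 0$. Taking the constant zero net as control net, I have $0\goesl 0$ and $(\abs{x_\alpha}-0)^+=\abs{x_\alpha}\goeseta 0$, so $x_\alpha\xrightarrow{\eta\lambda}0$. This is precisely the choice that verifies axiom~\eqref{plsc-1n} of Theorem~\ref{plsc}, and it disposes of the first claim at once.

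For the second assertion I read ``$\lambda$ dominates $\eta\lambda$ on monotone nets'' as: for a monotone net, $x_\alpha\goesl x$ implies $x_\alpha\xrightarrow{\eta\lambda}x$. Negating the net if it is decreasing, I may assume $(x_\alpha)$ is increasing, and by Proposition~\ref{MCT} the $\lambda$-limit is then the supremum, so $x_\alpha\uparrow x$ and $\abs{x_\alpha-x}=x-x_\alpha\downarrow 0$. The engine is to take the control net $v_\gamma=x-x_\gamma$: it satisfies $v_\gamma\goesl 0$ since $x_\gamma\goesl x$, while for each fixed $\gamma$ and every $\alpha\ge\gamma$ monotonicity gives $x_\alpha\ge x_\gamma$, whence $(\abs{x_\alpha-x}-v_\gamma)^+=(x_\gamma-x_\alpha)^+=0$ on a tail. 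A net that is eventually $0$ is tail equivalent to the constant zero net, hence $\eta$-null, so this yields $x_\alpha\xrightarrow{\eta\lambda}x$.

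For the last assertion I would invoke Theorem~\ref{ocn-mon}: it suffices to check that $x_\alpha\downarrow 0$ implies $x_\alpha\xrightarrow{\eta\lambda}0$. If $x_\alpha\downarrow 0$ then order continuity of $\lambda$ gives $x_\alpha\goesl 0$, and since $(x_\alpha)$ is already positive and decreasing the control net $v_\gamma=x_\gamma$ works verbatim, $(x_\alpha-v_\gamma)^+=0$ whenever $\alpha\ge\gamma$; hence $\eta\lambda$ is order continuous. The only delicate step is in the monotone assertion, where I must know that the deviation $\abs{x_\alpha-x}$ genuinely decreases to $0$: this is where I rely on a monotone net converging to its order supremum or infimum (Proposition~\ref{MCT}), so that $x_\alpha-x$ has the correct sign. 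If one does not wish to assume Hausdorffness here, the honest content is the unconditional special case of a positive decreasing net $\lambda$-converging to $0$ (the computation with $v_\gamma=x_\gamma$), to which every monotone net converging to its sup or inf reduces. Parts one and three, by contrast, are routine once the control net (the zero net, respectively the net itself) has been identified.
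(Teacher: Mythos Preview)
Your proof is correct and follows essentially the same approach as the paper: zero control net for $\eta\lambda\le\eta$, the net itself as control net for the monotone case, and Theorem~\ref{ocn-mon} together with the monotone case for order continuity. The paper's reduction ``without loss of generality, $x_\alpha\downarrow$ and $x=0$'' is exactly your reduction via Proposition~\ref{MCT}, and your explicit acknowledgment that this step tacitly uses Hausdorffness of $\lambda$ is a point the paper glosses over; your remark that the unconditional content is the case of a positive decreasing $\lambda$-null net is apt.
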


\begin{proof}
  If $x_\alpha\goeseta x$ then $x_\alpha\xrightarrow{\eta\lambda}x$;
  just take $(v_\gamma)$ to be a constant zero net.
  
  Suppose that $(x_\alpha)$ is monotone and $x_\alpha\goesl x$; we
  need to show that $x_\alpha\xrightarrow{\eta\lambda}x$. Without loss
  of generality, $x_\alpha\downarrow$ and $x=0$. We take $(x_\alpha)$
  itself as a control net. For every fixed~$\beta$, the net
  $(x_\alpha-x_\beta)^+$ is eventually zero, hence is $\eta$-null, so
  that $x_\alpha\xrightarrow{\eta\lambda}0$.

  The last claim now follows from Theorem~\ref{ocn-mon}.
\end{proof}

\begin{proposition}
  The composition $\eta\lambda$ of two locally solid convergences is
  Hausdorff iff both $\eta$ and $\lambda$ are Hausdorff.
\end{proposition}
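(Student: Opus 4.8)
The plan is to reduce Hausdorffness to a single convenient criterion and then transport witnesses between the three convergences. Since $\eta\lambda$ was just shown to be locally solid, I can invoke the characterization from Theorem~\ref{plsc} (equivalently Proposition~\ref{Haus}): a locally solid convergence is Hausdorff precisely when no constant net with value in $X_+\setminus\{0\}$ converges to zero. Thus for each of $\eta$, $\lambda$, and $\eta\lambda$, the failure of Hausdorffness is witnessed by a single vector $x>0$ whose constant net converges to zero, and the whole argument becomes a matter of moving such witnesses between the three convergences.

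For the forward direction I would assume $\eta\lambda$ is Hausdorff. That $\eta$ is Hausdorff is immediate: the preceding proposition gives $\eta\lambda\le\eta$, so a constant net $x>0$ with $x\goeseta 0$ would also satisfy $x\xrightarrow{\eta\lambda}0$, contradicting the criterion for $\eta\lambda$. For $\lambda$ I would argue contrapositively: if some constant net $x>0$ has $x\goesl 0$, then taking the control net $(v_\gamma)$ to be the constant net $x$ itself works, since $v_\gamma\goesl 0$ by assumption and for every $\gamma$ the net $(\abs{x}-v_\gamma)^+=(x-x)^+$ is the constant zero net, which is trivially $\eta$-null. Hence $x\xrightarrow{\eta\lambda}0$ with $x\ne 0$, again contradicting Hausdorffness of $\eta\lambda$. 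So both $\eta$ and $\lambda$ are Hausdorff.

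For the converse I would assume $\eta$ and $\lambda$ are Hausdorff and suppose toward a contradiction that some constant net $x\in X_+\setminus\{0\}$ satisfies $x\xrightarrow{\eta\lambda}0$. Unwinding the definition, there is a control net $(v_\gamma)$ in $X_+$ with $v_\gamma\goesl 0$ such that, for each fixed $\gamma$, the net $(\abs{x}-v_\gamma)^+$ is $\eta$-null. The key observation is that for a constant net this inner net is constant in $\alpha$ with value $(x-v_\gamma)^+$; since $\eta$ is Hausdorff, a constant net converging to zero must be the zero net, so $(x-v_\gamma)^+=0$, i.e.\ $x\le v_\gamma$, for every $\gamma$. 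Viewing $x$ as the constant net $w_\gamma=x$ indexed by $\Gamma$, we then have $\abs{w_\gamma}\le\abs{v_\gamma}$ with $v_\gamma\goesl 0$, so local solidness of $\lambda$ (Theorem~\ref{def-ls}\eqref{def-ls-net}) yields $w_\gamma\goesl 0$, i.e.\ the constant net $x$ is $\lambda$-null. As $\lambda$ is Hausdorff this forces $x=0$, a contradiction, so $\eta\lambda$ is Hausdorff. The main obstacle is genuinely this extraction step: correctly reading the quantifier structure of the composition so that $\eta$-Hausdorffness collapses the inner constant nets to the pointwise domination $x\le v_\gamma$, after which the $\lambda$ side is dispatched by a single application of local solidness.
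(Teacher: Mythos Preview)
Your proof is correct and follows essentially the same approach as the paper: both directions hinge on reducing to constant nets via the Hausdorff criterion, collapsing the inner $\eta$-net to the domination $x\le v_\gamma$, and then applying local solidity of~$\lambda$. The only cosmetic difference is that, for showing $\lambda$ is Hausdorff when $\eta\lambda$ is, the paper cites the preceding proposition (that $\lambda$ dominates $\eta\lambda$ on monotone nets), whereas you unpack this directly by taking the constant $x$ as the control net---which is exactly what that proposition does in the constant case.
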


\begin{proof}
  Suppose that $\eta$ and $\lambda$ are both Hausdorff; we will show
  that so is~$\eta\lambda$. We will use
  Proposition~\ref{Haus}\eqref{Haus-zero}. Let $(x_\alpha)$ be a
  constant zero net such that $x_\alpha\xrightarrow{\eta\lambda}x$ for
  some $x\in X$. Let
  $(v_\gamma)$ be a control net. For every~$\gamma$, the net
  \begin{math}
  \bigl(\bigl(\abs{x_\alpha-x}-v_\gamma\bigr)^+\bigr)
  \end{math}
  is $\eta$-null. However, every term in this net is the constant
  $\bigl(\abs{x}-v_\gamma\bigr)^+$. Since $\eta$ is Hausdorff, we
  conclude that
   $\bigl(\abs{x}-v_\gamma\bigr)^+=0$ and, therefore, $\abs{x}\le
   v_\gamma$. It now follows from $v_\gamma\goesl 0$ and local
   solidity of $\lambda$ that $\abs{x}\goesl 0$. Since $\lambda$ is
   Hausdorff, we conclude that $x=0$.
  
  Suppose that $\eta\lambda$ is Hausdorff, we will now show that $\eta$ and
  $\lambda$ are both Hausdorff. It follows from  $\eta\lambda\le\eta$
  that $\eta$ is Hausdorff. Suppose now that $(x_\alpha)$ is a zero
  net and $x_\alpha\goesl x$. Since this net is monotone, we get
  $x_\alpha\xrightarrow{\eta\lambda}x$ and, therefore, $x=0$.
\end{proof}

We call a convergence structure $\eta$ \term{idempotent} if
$\eta\eta=\eta$ (idempotent convergence structures were introduced and
investigated in~\cite{Bilokopytov:23a}).

\begin{theorem}\label{min-uo-cont}
  Every minimal Hausdorff locally solid convergence is order
  continuous and idempotent, and is weaker than uo-convergence.
\end{theorem}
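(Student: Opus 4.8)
The plan is to exploit the composition operation $\eta\lambda$ introduced in the preceding propositions, together with the minimality of $\eta$. The key observation is that for a suitable second argument $\lambda$, the composition $\eta\lambda$ is a Hausdorff locally solid convergence dominated by $\eta$; minimality then forces $\eta\lambda=\eta$, and the desired property of $\eta$ is read off from whatever $\eta\lambda$ inherits from $\lambda$. Throughout I use that order convergence $\mathrm{o}$ is itself a Hausdorff locally solid convergence on an Archimedean lattice, and that it is trivially order continuous (being weaker than itself).

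First I would establish order continuity by taking $\lambda=\mathrm{o}$ and forming $\eta\mathrm{o}$. By the composition propositions above, $\eta\mathrm{o}$ is again locally solid and satisfies $\eta\mathrm{o}\le\eta$; moreover, since both $\eta$ and $\mathrm{o}$ are Hausdorff, the Hausdorffness criterion for compositions shows $\eta\mathrm{o}$ is Hausdorff. Minimality of $\eta$ then yields $\eta\mathrm{o}=\eta$. Because $\mathrm{o}$ is order continuous, the proposition asserting that $\eta\lambda$ is order continuous whenever $\lambda$ is gives that $\eta=\eta\mathrm{o}$ is order continuous. The same scheme with $\lambda=\eta$ proves idempotency: $\eta\eta$ is a locally solid convergence with $\eta\eta\le\eta$, and it is Hausdorff since $\eta$ is, so minimality forces $\eta\eta=\eta$.

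Finally, to obtain $\eta\le\mathrm{uo}$ I would combine order continuity with the fact, supplied by Proposition~\ref{min-unb-Ch}, that $\eta$ is unbounded, i.e.\ $\mathrm{u}\eta=\eta$. Order continuity means precisely $\eta\le\mathrm{o}$. The unbounded modification is monotone in its argument: if $x_\alpha\goesuo x$ then $\abs{x_\alpha-x}\wedge u\goeso 0$ for every $u\in X_+$, whence $\abs{x_\alpha-x}\wedge u\goeseta 0$ by $\eta\le\mathrm{o}$, that is $x_\alpha\goesueta x$; thus $\mathrm{u}\eta\le\mathrm{uo}$. Since $\eta=\mathrm{u}\eta$, this gives $\eta\le\mathrm{uo}$, completing the proof.

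The only genuinely delicate point is checking, in each application, that the relevant composition is Hausdorff so that minimality may legitimately be invoked; but this is exactly the content of the composition proposition characterizing Hausdorffness, so once that is cited the remaining steps are a short chain of applications and I expect no further obstacle.
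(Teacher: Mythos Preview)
Your proof is correct and follows essentially the same route as the paper: use the composition $\eta\mathrm{o}$ (respectively $\eta\eta$) together with minimality to obtain order continuity (respectively idempotency), then combine $\eta\le\mathrm{o}$ with $\eta=\mathrm{u}\eta$ from Proposition~\ref{min-unb-Ch} to get $\eta\le\mathrm{uo}$. Your version is slightly more explicit about verifying Hausdorffness of the compositions before invoking minimality, which the paper leaves implicit.
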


\begin{proof}
  Let $\eta$ be a minimal Hausdorff locally solid convergence. It
  follows from $\eta\eta\le\eta$ that $\eta\eta=\eta$.

  Recall that $\mathrm{o}$ stand for order convergence. Notice that
  $\eta\mathrm{o}$ is an order continuous Hausdorff locally solid
  convergence, and $\eta \mathrm{o}\le\eta$. It follows that
  $\eta\mathrm{o}=\eta$ and, therefore, $\eta$ is order continuous.

  Since $\eta$ is unbounded and $\eta\le\mathrm{o}$, we conclude that
  $\eta=\mathrm{u}\eta\le\mathrm{uo}$.
\end{proof}

Since every disjoint net is uo-null, we get the following:

\begin{corollary}
  For every minimal Hausdorff locally solid convergence, every
  disjoint net is null.
\end{corollary}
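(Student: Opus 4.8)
The plan is to reduce the statement entirely to a single known fact about unbounded order convergence. By Theorem~\ref{min-uo-cont}, any minimal Hausdorff locally solid convergence $\eta$ satisfies $\eta\le\mathrm{uo}$, so that every $\mathrm{uo}$-null net is automatically $\eta$-null. Hence it suffices to prove that every disjoint net is $\mathrm{uo}$-null, and the reduction step itself is immediate from the theorem. In other words, the whole corollary rests on the implication ``disjoint $\Rightarrow$ $\mathrm{uo}$-null'', with the passage to $\eta$ being a one-line consequence of $\eta\le\mathrm{uo}$.

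To justify that a disjoint net $(x_\alpha)$ is $\mathrm{uo}$-null, I would first reduce to the positive, order-bounded case. Replacing $x_\alpha$ by $\abs{x_\alpha}$ keeps the net disjoint and does not affect $\mathrm{uo}$-nullity, so assume $x_\alpha\ge 0$. Fixing $u\in X_+$, set $y_\alpha=x_\alpha\wedge u$; these remain pairwise disjoint and are now order bounded by $u$, and by the definition of $\mathrm{uo}$ it remains to show $y_\alpha\goeso 0$. Since order convergence of an order-bounded net is unaffected by passing to a regular super-lattice, I would compute in the Dedekind completion $X^\delta$, where the tail-suprema $w_\alpha:=\sup_{\beta\ge\alpha}y_\beta$ exist and decrease. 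Writing $w=\inf_\alpha w_\alpha$, the distributive law together with disjointness gives, for each fixed $\beta$, that $w\wedge y_\beta\le w_\gamma\wedge y_\beta=\sup_{\delta\ge\gamma}(y_\delta\wedge y_\beta)=0$ for a suitable $\gamma$; hence $w\perp y_\beta$ for every $\beta$, so $w\wedge w_\alpha=\sup_{\delta\ge\alpha}(w\wedge y_\delta)=0$, and since $w\le w_\alpha$ this forces $w=0$. Thus $w_\alpha\downarrow 0$, i.e. $y_\alpha\goeso 0$, whence $x_\alpha\goesuo 0$.

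The main obstacle is adapting the classical disjoint-\emph{sequence} computation to nets. For a sequence the term $y_\beta$ is trivially absent from all sufficiently far tails, whereas for a net indexed by a preordered directed set one must check that a fixed $\beta$ does not meet the tail $\{\delta\ge\gamma\}$ used in the supremum; this succeeds precisely because for a genuine net (whose index set has no greatest element) every $\beta$ fails to dominate some $\gamma$, so $\beta\notin\{\delta\ge\gamma\}$ and the relevant meet collapses to $0$. Once this ``disjointness in the limit'' is secured, the vanishing of $w$ is immediate, and combining $x_\alpha\goesuo 0$ with $\eta\le\mathrm{uo}$ from Theorem~\ref{min-uo-cont} completes the proof.
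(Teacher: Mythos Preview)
Your approach matches the paper's exactly: the paper's entire proof is the one-line observation ``since every disjoint net is uo-null'' combined with Theorem~\ref{min-uo-cont}, and you supply the standard argument for that cited fact. The details you give (passing to $X^\delta$, using infinite distributivity $(\sup_\delta y_\delta)\wedge y_\beta=\sup_\delta(y_\delta\wedge y_\beta)$, and showing $\inf_\alpha\sup_{\beta\ge\alpha}y_\beta=0$) are correct, and your parenthetical caveat about the index set having no greatest element is the usual convention under which the statement is intended.
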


It may be interesting to compare properties of minimal Hausdorff
locally solid convergences to that of minimal Hausdorff locally solid
topologies. In the following theorem, we summarize several known
results about the minimal Hausdorff locally solid topologies.

\begin{theorem}[\cite{Aliprantis:03,Conradie:05,Taylor:19,Taylor:TH}]\label{min-top}
  ~
  \begin{enumerate}
  \item There is
    at most one minimal Hausdorff locally solid
    topology.
  \item A Hausdorff locally solid topology $\tau$ is minimal iff
    $\tau\le\mathrm{uo}$ iff $\tau$ is order continuous and unbounded.
  \item If $\tau$ is minimal, then it extends to a locally solid
    topology on the universal completion $X^u$ of~$X$, which then
    becomes the completion of $(X,\tau)$. In particular, $\tau$ is
    complete iff $X$ is universally complete.
  \end{enumerate}
\end{theorem}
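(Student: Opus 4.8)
The plan is to organize everything around two facts about order bounded sets and to defer the genuinely structural steps to \cite{Aliprantis:03,Conradie:05,Taylor:19,Taylor:TH}. The first fact is that any two Hausdorff order continuous locally solid topologies on $X$ induce the same topology on every order interval (see \cite[Theorem~4.22]{Aliprantis:03}); the second is that the unbounded modification of a locally solid topology is determined by its restriction to order bounded sets, since $x_\alpha\goesueta x$ is tested only through the order bounded nets $\abs{x_\alpha-x}\wedge u$. Together these show that all Hausdorff order continuous locally solid topologies share one common unbounded modification, which I would denote by $\tau_0$; since $\mathrm u$ is idempotent, $\tau_0=\mathrm u\tau_0$, so $\tau_0$ is itself order continuous and unbounded.

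I would record the clean half of (ii) first. If $\tau$ is order continuous and unbounded then $\tau=\mathrm u\tau\le\mathrm{uo}$, because $\tau\le\mathrm o$ and $\mathrm u$ is monotone. For ``order continuous and unbounded $\Rightarrow$ minimal'', let $\sigma$ be a Hausdorff locally solid topology with $\sigma\le\tau$; then $\sigma\le\tau\le\mathrm o$ makes $\sigma$ order continuous, so by the two facts above $\mathrm u\sigma=\mathrm u\tau=\tau$, and $\mathrm u\sigma\le\sigma$ gives $\tau\le\sigma\le\tau$, i.e.\ $\sigma=\tau$. For the reverse directions I would use that $\tau_0$ is exactly the topological modification $\mathrm t(\mathrm{uo})$, the strongest locally solid topology coarser than $\mathrm{uo}$, as in Proposition~\ref{uo-top-un} under the countable supremum property and in general in \cite{Taylor:19,Taylor:TH}. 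Granting this, any Hausdorff locally solid $\tau\le\mathrm{uo}$ satisfies $\tau\le\tau_0$, while $\tau\le\mathrm{uo}\le\mathrm o$ makes $\tau$ order continuous, so that $\tau_0=\mathrm u\tau\le\tau$; hence $\tau=\tau_0$. Thus $\tau\le\mathrm{uo}$ already pins $\tau$ down to the single topology $\tau_0$, which is minimal and order continuous and unbounded, and this simultaneously yields the uniqueness in (i).

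The main obstacle I expect is the implication ``minimal $\Rightarrow$ order continuous''. Unboundedness of a minimal $\tau$ is free, since $\mathrm u\tau$ is again a Hausdorff locally solid topology with $\mathrm u\tau\le\tau$, so minimality forces $\mathrm u\tau=\tau$. Order continuity, however, cannot be obtained by the composition device used for convergences in Theorem~\ref{min-uo-cont}, because the composition $\tau\mathrm o$ is in general not a topology and therefore leaves the very category in which minimality is asserted. The route I would take is to produce an order continuous Hausdorff locally solid topology dominated by $\tau$ (its Lebesgue part, built from the solid, order closed $\tau$-neighbourhoods of zero) and then invoke minimality to identify it with $\tau$; verifying that this Lebesgue part is again a Hausdorff topology of the same type is precisely the structural input of \cite{Conradie:05,Taylor:19,Taylor:TH}, and is the hard step.

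Finally, for (iii) I would use that $X$ is a regular, indeed order dense, sublattice of its universal completion $X^u$, together with the fact that the minimal topology is compatible with regular embeddings, exactly as $\mathrm{uo}$ is preserved by regular sublattices. Concretely, the minimal topology on $X^u$ restricts to $\tau=\tau_0$ on $X$, and $X$ is dense in $X^u$ for it; since $X^u$ is universally complete, this topology is complete, so $X^u$ equipped with it is the completion of $(X,\tau)$. In particular $\tau$ is complete precisely when $X$ coincides with its completion $X^u$, that is, when $X$ is universally complete. The extension and completeness statements are again those of \cite{Taylor:19,Taylor:TH}, which I would cite rather than reprove, consistently with this theorem being a summary of known results.
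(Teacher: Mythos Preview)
The paper does not prove this theorem at all: it is stated as a summary of known results from \cite{Aliprantis:03,Conradie:05,Taylor:19,Taylor:TH} and immediately followed by ``We will now explore to which extent these results remain valid for convergences.'' So there is no proof in the paper to compare your proposal against.

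Your sketch is more than the paper offers and is essentially sound. The easy directions in (ii) are argued correctly: order continuous plus unbounded gives $\tau=\mathrm u\tau\le\mathrm{uo}$, and the minimality argument via \cite[Theorem~4.22]{Aliprantis:03} (agreement on order intervals forces equal unbounded modifications) is clean. You correctly isolate ``minimal $\Rightarrow$ order continuous'' as the genuine obstacle, and your observation that the composition device of Theorem~\ref{min-uo-cont} fails here because $\tau\mathrm o$ leaves the category of topologies is exactly the point the paper is implicitly making by contrasting topologies with convergences. One small caution: your route through $\tau_0=\mathrm t(\mathrm{uo})$ to close the loop ``$\tau\le\mathrm{uo}\Rightarrow\tau=\tau_0$'' is only established in the paper (Proposition~\ref{uo-top-un}) under the countable supremum property, and in general $\mathrm t(\mathrm{uo})$ is not known from the paper alone to be locally solid or even linear; you are right to defer this to \cite{Taylor:19,Taylor:TH}, but it is worth flagging that this identification is itself part of the nontrivial content being cited, not a free fact. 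The treatment of (iii) via order density of $X$ in $X^u$ and restriction of the minimal topology is the standard line and matches what the references do.
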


We will now explore to which extent these results remain valid for
convergences.

\begin{example}
  \emph{A minimal Hausdorff locally solid convergence need not be
  unique}. In the notation of Example~\ref{no-least}, we use
  Remark~\ref{min-dom} to find two minimal Hausdorff locally solid
  convergences $\mu'$ and $\eta'$ such that $\mu'\le\mu$ and
  $\eta'\le\eta$. It follows that $x_n\xrightarrow{\mu'}0$ and
  $x_n\xrightarrow{\eta'}\one$. Therefore, $\mu'\ne\eta'$. 
\end{example}

It follows from Remark~\ref{min-dom}
that if there is a unique minimal Hausdorff locally solid
convergence, then it also has to be the least. Again, this fails for
topologies; see, e.g., \cite[Example~3.28]{Taylor:TH}.

\begin{example}
  By Theorem~\ref{min-uo-cont}, every minimal Hausdorff locally solid
  convergence is unbounded and order continuous. The converse is false:
  on $L_p[0,1]$ with $1\le p<\infty$, uo-convergence is unbounded and
  order continuous, yet it is not minimal as it strictly dominates
  un-convergence.
\end{example}

\begin{example}
  According to Theorem~\ref{min-top},
  every Hausdorff locally solid topology that is weaker than
  uo-convergence is itself unbounded. The following example shows that
  this fails for convergences. Let $X=C[0,1]$. For a positive net
  $(f_\alpha)$, we define $f_\alpha\goeseta 0$ if for every non-empty
  open interval $I$ in $[0,1]$ and every $\varepsilon>0$ there exists
  an non-empty open interval $J\subseteq I$ and an index $\alpha_0$
  such that for all $\alpha\ge\alpha_0$ the average of $f_\alpha$ over
  $J$ is less than $\varepsilon$. By Theorem~\ref{plsc}, this yields a
  Hausdorff locally solid convergence on~$X$. By
  \cite[Theorem~3.2]{Bilokopytov:22}, $\eta\le\mathrm{uo}$. We will
  show that, nevertheless, $\eta$ is not unbounded. Let $(f_n)$ be the
  Schauder sequence in $C[0,1]$ as in, say,
  \cite[p.~28]{Carothers:05}. Let
  \begin{math}
    g_n=f_n/\norm{f_n}_{L_1}.
  \end{math}
  Observe that $(g_n)$ is not $\eta$-null because for every interval
  $J$ and every $n$ there exists $m>n$ such that the support of $g_m$
  is contained in $J$ and, therefore, the average of $g_m$ over $J$ is
  greater that one. On the other hand, for every positive
  $u\in C[0,1]$, $g_n\wedge u\goeseta 0$ because for every
  interval~$J$, the average of $g_n\wedge u$ over $J$ converges to
  zero as $n\to\infty$.
\end{example}

Recall that, by~\cite[Theorem~17]{Azouzi:19} or \cite[Theorem~4.14]{Taylor:TH},
$X$ is universally complete iff it is uo-complete.

\begin{proposition}\label{univ-compl}
  If $X$ is complete with respect to a Hausdorff locally solid
  convergence $\eta$ with $\eta\le\mathrm{uo}$ then $X$ is universally complete.
\end{proposition}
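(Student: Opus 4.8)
The plan is to reduce everything to the cited equivalence stated just before the proposition: $X$ is universally complete iff it is uo-complete. So it suffices to show that $X$ is complete with respect to uo-convergence, i.e., that every uo-Cauchy net uo-converges. Let $(x_\alpha)_{\alpha\in A}$ be uo-Cauchy, which means the double net $(x_\alpha-x_\beta)_{(\alpha,\beta)\in A^2}$ is uo-null. Since $\eta\le\mathrm{uo}$, this double net is also $\eta$-null, so $(x_\alpha)$ is $\eta$-Cauchy; by $\eta$-completeness there is an $x\in X$ with $x_\alpha\goeseta x$. The remaining task is to upgrade this $\eta$-limit to a uo-limit.

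First I would fix $u\in X_+$ and unwind uo-Cauchyness into the order convergence $\abs{x_\alpha-x_\beta}\wedge u\goeso 0$ of the double net. This produces a control net $(w_\delta)\downarrow 0$ such that for each $\delta$ there is a pair $(\alpha_0,\beta_0)$ with $\abs{x_\alpha-x_\beta}\wedge u\le w_\delta$ whenever $\alpha\ge\alpha_0$ and $\beta\ge\beta_0$. The idea is then to let $\beta$ run to infinity and replace $x_\beta$ by $x$, keeping the same control net.

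The key step is to pass this bound to the $\eta$-limit in $\beta$. Fixing $\delta$ and any $\alpha\ge\alpha_0$, the net $\bigl(\abs{x_\alpha-x_\beta}\wedge u\bigr)_{\beta\ge\beta_0}$ lies in the order interval $[0,w_\delta]$; since $x_\beta\goeseta x$ and the lattice operations are $\eta$-continuous by Proposition~\ref{op-cont}, this net $\eta$-converges to $\abs{x_\alpha-x}\wedge u$. Because $X$ is Hausdorff, order intervals are $\eta$-closed by Proposition~\ref{Haus}, so the $\eta$-limit satisfies $\abs{x_\alpha-x}\wedge u\le w_\delta$. As this holds for every $\alpha\ge\alpha_0$ and every $\delta$, the same control net $(w_\delta)\downarrow 0$ witnesses $\abs{x_\alpha-x}\wedge u\goeso 0$. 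Since $u\in X_+$ is arbitrary, $x_\alpha\goesuo x$, so $X$ is uo-complete and hence universally complete.

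I expect the main obstacle to be precisely this interchange-of-limits step: the order bound extracted from uo-Cauchyness is a statement about the \emph{double} net, yet the control net $(w_\delta)$ must be made to serve simultaneously as a control net for the \emph{single} net $\bigl(\abs{x_\alpha-x}\wedge u\bigr)_\alpha$. The mechanism that makes the transfer legitimate is the $\eta$-closedness of order intervals, which lets the inequality $\abs{x_\alpha-x_\beta}\wedge u\le w_\delta$ survive the passage to the $\eta$-limit in $\beta$; without the Hausdorff hypothesis (hence without closed order intervals) this argument would break down.
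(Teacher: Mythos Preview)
Your proof is correct and follows essentially the same route as the paper's: reduce to uo-completeness, use $\eta\le\mathrm{uo}$ to get an $\eta$-limit of a uo-Cauchy net, then upgrade to a uo-limit by passing the inequality $\abs{x_\alpha-x_\beta}\wedge u\le w_\delta$ to the $\eta$-limit in~$\beta$. You are simply more explicit than the paper about why that passage is legitimate (continuity of lattice operations and $\eta$-closedness of order intervals via Hausdorffness), which the paper compresses into the phrase ``passing to the limit in~$\eta$''.
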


\begin{proof}
  It suffices to prove that $X$ is uo-complete. Let
  $(x_\alpha)$ be a uo-Cauchy net in~$X$. Then it is $\eta$-Cauchy,
  hence $x_\alpha\goeseta x$ for some~$x$. We will show that
  $x_\alpha\goesuo x$. Fix $u\ge 0$. Since  $(x_\alpha)$ is
  uo-Cauchy, $x_\alpha-x_\beta\goesuo 0$. Hence
  there exists a net $v_\gamma$
  such that $v_\gamma\downarrow 0$ and for every $\gamma$ there exists
  $\alpha_0$ such that $\abs{x_\alpha-x_\beta}\wedge u\le v_\gamma$ for
  all $\alpha,\beta\ge\alpha_0$. Passing to the limit in~$\eta$, we
  conclude that $\abs{x_\alpha-x}\wedge u\le v_\gamma$ for
  all $\alpha\ge\alpha_0$. It follows that $x_\alpha\goesuo x$.
\end{proof}

\begin{corollary}\label{min-compl-univ}
  If $X$ is complete with respect to a minimal Hausdorff locally solid
  convergence then $X$ is universally complete.
\end{corollary}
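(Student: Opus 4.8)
The plan is to simply combine two results that have already been established in this section. Let $\eta$ be a minimal Hausdorff locally solid convergence on $X$, and suppose $X$ is complete with respect to $\eta$.

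The key observation is that Theorem~\ref{min-uo-cont} tells us that \emph{every} minimal Hausdorff locally solid convergence satisfies $\eta\le\mathrm{uo}$. So the hypothesis ``$X$ is complete with respect to a minimal Hausdorff locally solid convergence'' immediately puts us in the situation of Proposition~\ref{univ-compl}: we have a Hausdorff locally solid convergence $\eta$ with $\eta\le\mathrm{uo}$ and with respect to which $X$ is complete. Applying Proposition~\ref{univ-compl} then yields at once that $X$ is universally complete.

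There is no real obstacle here: the content of the corollary lies entirely in Theorem~\ref{min-uo-cont} (which supplies the crucial comparison $\eta\le\mathrm{uo}$ via the composition-with-$\mathrm{o}$ trick and unboundedness) and in Proposition~\ref{univ-compl} (which reduces completeness to uo-completeness and invokes the characterization that $X$ is universally complete iff it is uo-complete). The corollary is thus a one-line deduction, and I would write it as such.

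Concretely, the proof reads: By Theorem~\ref{min-uo-cont}, the convergence $\eta$ in question satisfies $\eta\le\mathrm{uo}$. Since $X$ is $\eta$-complete and $\eta$ is a Hausdorff locally solid convergence dominated by $\mathrm{uo}$, Proposition~\ref{univ-compl} shows that $X$ is universally complete. \qed
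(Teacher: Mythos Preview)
Your proof is correct and matches the paper's intended argument exactly: the corollary is stated without proof immediately after Proposition~\ref{univ-compl}, and the implicit reasoning is precisely to combine Theorem~\ref{min-uo-cont} (giving $\eta\le\mathrm{uo}$) with Proposition~\ref{univ-compl}.
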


We feel that we only scratched the surface in the subject of minimal
Hausdorff locally solid convergences. We end the section with a list
of questions that may suggest further developments of this topic.

\begin{question}
  Is every minimal Hausdorff locally solid convergence on a
  universally complete vector lattice necessarily complete? Does every
  minimal locally solid convergence on $X$ extend to $X^u$?
\end{question}

Assume that $X$ has the countable supremum property, and $\tau$ is the
minimal Hausdorff locally solid topology. By Theorem~\ref{min-top},
$\tau$ is unbounded and order continuous, so by
Proposition~\ref{uo-top-un},
$\tau=\mathrm{u}\tau=\mathrm{t}(\mathrm{uo})$. This motivates the
following question:

\begin{question}
  Suppose that $X$ admits a minimal Hausdorff locally solid topology.
  Does it agree with the topological or Choquet modification of
  uo-convergence? In particular, if the topological modification of
  uo-convergence is Hausdorff, is it a minimal Hausdorff locally
  solid topology?
\end{question}

\begin{question}
  Proposition~\ref{min-unb-Ch} and Theorem~\ref{min-uo-cont} motivate
  the following question: Is the Choquet modification of
  uo-convergence nec- essarily a minimal Hausdorff locally solid
  convergence?
\end{question}

\begin{question}
  Is every minimal Hausdorff locally solid topology automatically
  a minimal Hausdorff locally solid convergence?
\end{question}

\begin{question}
  It is known that for a Hausdorff locally solid topology, the
  following two properties are equivalent: every increasing order
  bounded net is Cauchy, and, every order bounded disjoint net is null;
  see \cite[Theorem~3.22]{Aliprantis:03}. Such topologies are called
  pre-Lebesgue. For locally solid convergences, the forward
  implication remains true (just consider the net of partial sums). Is
  the converse implication true for convergences?
\end{question}

\section{Adherence of an ideal}
\label{sec:u-ideal}

Throughout this section, ``$\to$'' is a Hausdorff locally solid
convergence structure on a vector lattice~$X$. Recall that for a
subset $A$ of~$X$, we write $\overline{A}^1$ for the adherence of~$A$,
$\overline{A}^2$ for the adherence of $\overline{A}^1$, etc. $J$ will
stand for an ideal of~$X$. For $x\in X^+$, the set $[0,x]\cap J$ may
be viewed as an increasing net in~$X$.

\begin{lemma}\label{ideal-adh}
  Let $y\in X_+$. Then $y\in\overline{J}^1$ iff $y=\lim[0,y]\cap J$. 
\end{lemma}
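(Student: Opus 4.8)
The plan is to prove the two implications separately, with the backward one being essentially immediate. If $y=\lim[0,y]\cap J$, then $y$ is the limit of a net all of whose terms lie in $[0,y]\cap J\subseteq J$, so by the very definition of the adherence we get $y\in\overline{J}^1$. No local solidity is needed for this direction.

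For the forward implication, suppose $y\in\overline{J}^1$ with $y\ge 0$, and choose a net $(z_\alpha)$ in $J$ with $z_\alpha\to y$. First I would truncate: set $w_\alpha:=z_\alpha^+\wedge y$. Since $J$ is an ideal (hence a solid sublattice), we have $z_\alpha^+\in J$, and from $0\le w_\alpha\le z_\alpha^+$ we also get $w_\alpha\in J$; moreover $0\le w_\alpha\le y$, so $(w_\alpha)$ is a net in $[0,y]\cap J$. Applying the continuity of the lattice operations (Proposition~\ref{op-cont}) to $z_\alpha\to y$ gives $z_\alpha^+\to y^+=y$, and then joint continuity of $\wedge$ gives $w_\alpha=z_\alpha^+\wedge y\to y\wedge y=y$. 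Thus $y$ is already the limit of a net whose range sits inside $[0,y]\cap J$.

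The remaining task is to upgrade this to convergence of the canonical increasing net $N:=[0,y]\cap J$ itself. Here I would pass to the decreasing net $x_v:=y-v$ indexed by $v\in N$: it lies in $X_+$ and satisfies $x_v\!\downarrow$. The terms $y-w_\alpha$ form a net in $X_+$ with $y-w_\alpha\to 0$ (as $w_\alpha\to y$), and since each $w_\alpha\in N$, the range $\{y-w_\alpha\}$ is contained in $\{x_v\mid v\in N\}$. These are exactly the hypotheses of Proposition~\ref{sset-mon-conv}, which yields $x_v\to 0$, i.e.\ $v\to y$ as $v$ runs over $N$. This is precisely the assertion $y=\lim[0,y]\cap J$.

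The only genuine content lies in this last step, namely converting a net that merely \emph{ranges inside} $[0,y]\cap J$ into convergence of the full monotone net $[0,y]\cap J$; this is supplied directly by Proposition~\ref{sset-mon-conv} (one could alternatively route through Corollary~\ref{qsnet-mon-conv}). The truncation and the continuity of the lattice operations are routine, and I expect no serious obstacle beyond correctly matching the setup to the hypotheses of Proposition~\ref{sset-mon-conv}.
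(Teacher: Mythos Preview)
Your proof is correct and follows essentially the same approach as the paper: truncate the net to $y\wedge z_\alpha^+\in[0,y]\cap J$ using continuity of lattice operations, then invoke Proposition~\ref{sset-mon-conv} to pass from this net to the full increasing net $[0,y]\cap J$. The paper's proof is terser, leaving implicit the passage through the decreasing net $(y-v)_{v\in[0,y]\cap J}$ that you spell out explicitly.
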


\begin{proof}
  Suppose that $y\in\overline{J}^1$. Find a net $(x_\alpha)$ in $J$
  with $x_\alpha\to y$. Replacing $x_\alpha$ with
  $y\wedge x_\alpha^+$, we may assume without loss of generality that
  $x_\alpha\in [0,y]\cap J$. Proposition~\ref{sset-mon-conv} yields that
  $y=\lim[0,y]\cap J$. 

  The converse implication is trivial.
\end{proof}

\begin{corollary}
  The adherence of an ideal $J$ is not changed by replacing the given
  locally solid convergence by its unbounded or Choquet modification,
  or its bounded modification by a locally solid bornology.
\end{corollary}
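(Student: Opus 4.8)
The plan is to reduce the statement to Lemma~\ref{ideal-adh}, which expresses membership of a positive element in $\overline{J}^1$ entirely in terms of convergence of the single increasing net $[0,y]\cap J$, and then to exploit that each of the three modifications agrees with the original convergence precisely on such nets. First I would record that each replacement produces again a Hausdorff locally solid convergence on $X$: this is noted for the unbounded modification in Section~\ref{sec:unbdd}, for the Choquet modification in Proposition~\ref{Ch-ls}, and for the bounded modification by a locally solid bornology in the locally solid case of Section~\ref{sec:bddmod}; Hausdorffness of $\eta_{\mathcal B}$ is immediate since $\eta_{\mathcal B}\ge\eta$. Consequently both Lemma~\ref{ideal-adh} and Proposition~\ref{adh-sublat} are available for each modified convergence, not just for the given $\eta$.

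Next I would reduce to positive elements. Being an ideal, $J$ is a sublattice and a linear subspace; by Proposition~\ref{adh-sublat} together with the continuity of the vector operations, its adherence in any of these convergences is again both a sublattice and a subspace, and is therefore determined by its positive cone (each $x$ in it has $x^+,x^-$ in it, and $x=x^+-x^-$). Thus it suffices to show that for every $y\in X_+$ the condition $y\in\overline{J}^1$ is the same whether computed with $\eta$ or with any of the modifications.

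The heart of the argument is then as follows. Fix $y\in X_+$ and set $N_y:=[0,y]\cap J$, an increasing net that is order bounded by $y$. By Lemma~\ref{ideal-adh}, $y\in\overline{J}^1$ holds iff $N_y$ converges to $y$, the limit being taken in the convergence under consideration, so I only need to check that $N_y$ converges to $y$ in a modification iff it does in $\eta$. For $\mathrm{u}\eta$ this holds because $\eta$ and $\mathrm{u}\eta$ agree on order bounded sets and $N_y\subseteq[0,y]$, so $N_y\goesueta y$ iff $N_y\goeseta y$. For $\mathrm{c}\eta$ it holds because $\eta$ and $\mathrm{c}\eta$ agree on monotone nets by Proposition~\ref{Ch-ls} and $N_y$ is increasing, so $N_y\goesceta y$ iff $N_y\goeseta y$. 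For $\eta_{\mathcal B}$ it holds because the order bounded bornology is the least locally solid bornology, whence $[0,y]\in\mathcal B$; the whole net $N_y$ then lies in a member of $\mathcal B$, so by the definition of the bounded modification $N_y\xrightarrow{\eta_{\mathcal B}} y$ iff $N_y\goeseta y$.

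I do not expect a serious obstacle. The only points requiring care are the reduction to the positive cone, namely making sure the adherence really is determined by its positive part, which uses that it is simultaneously a sublattice (Proposition~\ref{adh-sublat}) and a subspace (continuity of the operations), and the verification that Lemma~\ref{ideal-adh} applies verbatim to each modification, which rests on their being Hausdorff and locally solid. Everything else is a direct appeal to the two recorded facts that $\eta$ coincides with $\mathrm{u}\eta$ on order bounded nets and with $\mathrm{c}\eta$ on monotone nets, and to the fact that $N_y$ already sits inside a bornology member.
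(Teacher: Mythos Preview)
Your proposal is correct and follows essentially the same approach as the paper: reduce to positive $y$ via Lemma~\ref{ideal-adh}, then observe that the net $[0,y]\cap J$ is order bounded (handling the unbounded and bounded modifications) and increasing (handling the Choquet modification via Proposition~\ref{Ch-ls}). You are simply more explicit than the paper in spelling out the reduction to the positive cone and in checking that each modification is again Hausdorff locally solid so that Lemma~\ref{ideal-adh} applies to it.
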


\begin{proof}
  For the unbounded modification, this follows from the fact that the
  net $[0,y]\cap J$ is order bounded. The same works for a bounded
  modification with a locally solid bornology, because such a
  bornology contains order bounded sets. For the Choquet modification,
  this follows from the fact that the net $[0,y]\cap J$ is increasing
  and Proposition~\ref{Ch-ls}.
\end{proof}

\begin{proposition}\label{adhJpb-intlim}
  $\overline{J}^1$ is a projection band iff the net $[0,x]\cap J$ is
  convergent for every $x\in X_+$.
\end{proposition}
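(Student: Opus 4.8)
The plan is to work throughout with the partial map $x\mapsto Px$ on $X_+$, where $Px:=\lim([0,x]\cap J)$ whenever this limit exists, and to identify the hypothesis ``$[0,x]\cap J$ converges for every $x\in X_+$'' with the statement that the natural candidate for a band projection onto $\overline{J}^1$ is everywhere defined. A first preliminary step is to record that $\overline{J}^1$ is an ideal: it is a sublattice by Proposition~\ref{adh-sublat}; it is a linear subspace because addition and scalar multiplication are jointly continuous, so a sum or scalar multiple of limits of nets in $J$ is again such a limit; and it is solid because if $0\le w\le u$ with $u\in\overline{J}^1$ then Lemma~\ref{ideal-adh} gives $u=\lim([0,u]\cap J)$, whence $w\wedge s\to w\wedge u=w$ along $s\in[0,u]\cap J$, with each $w\wedge s$ lying in $J$. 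I would also note, via Proposition~\ref{MCT}, that whenever $[0,x]\cap J$ converges its limit $Px$ is in fact $\sup([0,x]\cap J)$, since an increasing convergent net increases to its limit; hence $0\le Px\le x$ and $Px\in\overline{J}^1$.

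For the forward implication, suppose $\overline{J}^1$ is a projection band with band projection $P$. Fix $x\in X_+$; then $Px=\sup([0,x]\cap\overline{J}^1)\in\overline{J}^1$ and $0\le Px\le x$. The key observation is that $[0,x]\cap J=[0,Px]\cap J$ as sets: any $z\in J$ with $0\le z\le x$ lies in $[0,x]\cap\overline{J}^1$ (because $J\subseteq\overline{J}^1$) and is therefore $\le Px$, while the reverse inclusion is immediate from $Px\le x$. Applying Lemma~\ref{ideal-adh} to $Px\in(\overline{J}^1)_+$ gives $Px=\lim([0,Px]\cap J)=\lim([0,x]\cap J)$, so the net converges.

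For the converse, suppose $[0,x]\cap J$ converges for every $x\in X_+$ and set $Px=\lim([0,x]\cap J)=\sup([0,x]\cap J)$. I would first prove that $Px=\sup([0,x]\cap\overline{J}^1)$: indeed $Px$ belongs to this set, and for any $w\in[0,x]\cap\overline{J}^1$ Lemma~\ref{ideal-adh} yields $w=\sup([0,w]\cap J)\le\sup([0,x]\cap J)=Px$, since $[0,w]\cap J\subseteq[0,x]\cap J$. Next I would check that $x-Px\in(\overline{J}^1)^d$: for $w\in(\overline{J}^1)_+$ the element $c:=(x-Px)\wedge w$ lies in $\overline{J}^1$ (solidity) and satisfies $0\le Px+c\le x$ with $Px+c\in\overline{J}^1$, so $Px+c\le\sup([0,x]\cap\overline{J}^1)=Px$, forcing $c=0$. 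Thus $x=Px+(x-Px)$ is a decomposition with $Px\in\overline{J}^1$ and $x-Px\in(\overline{J}^1)^d$; ranging over $x\in X_+$ gives $X=\overline{J}^1\oplus(\overline{J}^1)^d$, and since $\overline{J}^1$ is an ideal this makes it a projection band.

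The crux of the argument, used in both directions, is the identification $\lim([0,x]\cap J)=\sup([0,x]\cap\overline{J}^1)$, that is, that the order projection onto the adherence is computed by the convergence of the increasing net $[0,x]\cap J$. This is exactly what Lemma~\ref{ideal-adh} supplies, by letting me pass from an arbitrary positive element $w$ of the adherence back to the convergence of its own section $[0,w]\cap J$ and then compare suprema; the Hausdorff hypothesis, through Proposition~\ref{MCT} and uniqueness of limits, is what guarantees these limits are genuine suprema. The main point to be careful about is the solidity of $\overline{J}^1$ together with the set identity $[0,x]\cap J=[0,Px]\cap J$, since these are the places where one must use $J\subseteq\overline{J}^1$ and Lemma~\ref{ideal-adh} in place of any completeness of $X$.
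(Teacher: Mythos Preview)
Your proof is correct. The forward direction is essentially identical to the paper's. In the converse, your route differs from the paper's in how you obtain $x-Px\perp\overline{J}^1$. You first establish that $\overline{J}^1$ is an ideal (sublattice plus your solidity argument via Lemma~\ref{ideal-adh}), then prove the stronger identity $Px=\sup([0,x]\cap\overline{J}^1)$, and from this deduce $x-Px\in(\overline{J}^1)^d$ directly using solidity of~$\overline{J}^1$. The paper instead shows $x-y\perp J$ by an elementary translation argument in $[0,x]\cap J$ (if $z\in[0,x-y]\cap J$ then $u+z\in[0,x]\cap J$ for every $u\in[0,x]\cap J$, forcing $z=0$), and then upgrades this to $x-y\perp\overline{J}^1$ via the closedness of $J^{dd}$ from Proposition~\ref{Jd-closed}. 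Your approach is slightly longer because it needs the preliminary ideal structure of $\overline{J}^1$, but it is arguably more self-contained in that it never leaves~$\overline{J}^1$; the paper's is shorter but trades your solidity lemma for the closedness-of-bands fact.
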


\begin{proof}
  Suppose that $\overline{J}^1$ is a projection band; let $x\in
  X_+$. Then $y:=\sup[0,x]\cap\overline{J}^1$ exists. Clearly,
  $y\le x$ and $y\in \overline{J}^1$. It is easy to see that
  $[0,x]\cap J=[0,y]\cap J$. It follows from Lemma~\ref{ideal-adh}
  that $y=\lim[0,y]\cap J=\lim[0,x]\cap J$.

  Conversely, suppose that $\lim[0,x]\cap J$ exists for every
  $x\in X_+$.  Fix $x\in X_+$. Put $y:=\lim[0,x]\cap J$. Clearly,
  $y\in\overline{J}^1$ and $y\le x$. By Proposition~\ref{MCT}, we 
  have $y=\sup[0,x]\cap J$.

  We claim that $x-y\perp J$. To see this, let $u\in[0,x]\cap J$ (in
  particular, $u\le y$) and $z\in[0,x-y]\cap J$. Then $u+z\le x$, so
  that $u+z\in[0,x]\cap J$ and, therefore, $u+z\le y$. It follows that
  $u\le y-z$. Taking the supremum over all $u\in[0,x]\cap J$, we get
  $y\le y-z$, hence $z=0$. This yields $[0,x-y]\cap J=\{0\}$, hence
  $x-y\perp J$.

  It follows that $x-y\perp J^{dd}$. Since $J^{dd}$ is closed by
  Proposition~\ref{Jd-closed}, we have $x-y\perp \overline{J}^1$. So
  we have a decomposition $x=y+(x-y)$ with $y\in \overline{J}^1$ and $x-y\perp
  \overline{J}^1$. We conclude that  $\overline{J}^1$ is a
  projection band.
\end{proof}

\begin{lemma}\label{uJu-int}
  Let $\mathrm{u}_J(\mathrm{ru})$ be the unbounded modification of
  relative uniform convergence with respect to~$J$. Then
  $[0,x]\cap J\xrightarrow{\mathrm{u}_J(\mathrm{ru})}x$ for every
  $x\in X_+$.
\end{lemma}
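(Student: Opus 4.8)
The plan is to unwind the definition of $\mathrm{u}_J(\mathrm{ru})$ and exhibit an explicit regulator. Write $(y_\beta)$ for the increasing net $[0,x]\cap J$ (it is upward directed because $J$ is a sublattice and $y_1\vee y_2\le x$ whenever $y_1,y_2\le x$). Since each $y_\beta\le x$, unwinding the definition of $\mathrm{u}_J$-convergence shows that $[0,x]\cap J\xrightarrow{\mathrm{u}_J(\mathrm{ru})}x$ is equivalent to the statement that $(x-y_\beta)\wedge u\goesu 0$ for every $u\in J_+$. So I fix $u\in J_+$ and aim to produce a regulator for this relative uniform convergence.

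The natural guess is that $x$ itself is a regulator, with cofinal indices $y_n:=x\wedge nu$, which indeed lie in $[0,x]\cap J$ since $nu\in J$. Using the identity $x-x\wedge nu=(x-nu)^+$ together with the monotonicity of $\wedge$, for every $y_\beta\ge y_n$ I obtain
\[
 (x-y_\beta)\wedge u\le (x-y_n)\wedge u=(x-nu)^+\wedge u .
\]
Everything therefore reduces to the single inequality
\[
 (x-nu)^+\wedge u\le \tfrac1n\,x\qquad(n\in\mathbb N).
\]
Granting this, for each $n$ the index $y_n$ witnesses that $(x-y_\beta)\wedge u\le\frac1n x$ for all $\beta$ beyond $y_n$; this is precisely the $\tfrac1n$-form of $(x-y_\beta)\wedge u\goesu 0$ with regulator $x$ (to match the strict form in the definition, apply the inequality with $n+1$ in place of $n$). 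As $u\in J_+$ was arbitrary, the claim follows.

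The only nonroutine step is the displayed inequality, and this is where I would focus. I would reduce it to scalars: with $e:=x\vee u$ we have $x,u\in I_e$, and both sides of the inequality lie in $I_e$; by the Krein--Kakutani theorem recalled before Theorem~\ref{ru-compl}, the $\norm{\cdot}_e$-completion of $I_e$ is lattice isometric to some $C(K)$, and the canonical embedding is an injective, order-preserving lattice homomorphism. Hence it suffices to check the inequality for the (nonnegative) images of $x$ and $u$ in $C(K)$, i.e.\ pointwise for reals $s,t\ge0$: if $s\le nt$ the left-hand side is $0$, while if $s>nt$ then $(s-nt)^+\wedge t\le t<\tfrac1n s$; in either case $(s-nt)^+\wedge t\le\tfrac1n s$. (Alternatively, the inequality can be obtained algebraically in the Dedekind completion by projecting onto the band generated by $(nu-x)^+$ and its disjoint complement.) This completes the proof.
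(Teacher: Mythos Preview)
Your proof is correct and follows essentially the same route as the paper: fix $u\in J_+$, use the cofinal indices $x\wedge nu\in[0,x]\cap J$, and reduce everything to the inequality $(x-nu)^+\wedge u\le\tfrac1n x$. The only notable difference is in how that inequality is verified: the paper observes directly that $(x-nu)^+\wedge(u-\tfrac1n x)^+=0$ (since $(x-nu)^+\perp(nu-x)^+$), whence $(x-nu)^+\wedge u\le(x-nu)^+\wedge(u-\tfrac1n x)^++\tfrac1n x=\tfrac1n x$, whereas you pass through Krein--Kakutani to check it pointwise; both arguments are fine, though the disjointness argument avoids the representation theorem.
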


\begin{proof}
  Fix $e\in J_+$. For every $n\in\mathbb N$, we have
  \begin{displaymath}
    (x-x\wedge ne)\wedge e\le(x-ne)^+\wedge(e-\tfrac1nx)^++\tfrac1nx
    =\tfrac1nx.
  \end{displaymath}
  It follows that
  $(x-x\wedge ne)\wedge e\goesu x$. This sequence is contained, as a
  subset, in the decreasing positive net $(x-y)\wedge e$ indexed by
  $y\in[0,x]\cap J$. By Proposition~\ref{sset-mon-conv}, the latter net
  $\mathrm{ru}$-converges to zero. Since  $e\in J_+$ is arbitrary, we
  get the conclusion.
\end{proof}

\begin{proposition}\label{uI-unbdd}
  Let $\eta$ be a Hausdorff locally solid convergence on $X$ and $J$
  an ideal in~$X$. If $\mathrm{u}_J\eta=\eta$ then $\overline{J}^{1,\eta}=X$. 
\end{proposition}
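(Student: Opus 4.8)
The plan is to reduce everything to the single net $[0,x]\cap J$ and exploit the fact that relative uniform convergence is the strongest locally solid convergence. Since the convergence $\eta$ is linear, the adherence $\overline{J}^{1,\eta}$ of the subspace $J$ is itself a linear subspace: if $x_\alpha\to a$ and $y_\beta\to b$ with all $x_\alpha,y_\beta\in J$, then the net $(x_\alpha-y_\beta)$ indexed by the product directed set lies in $J$ (as $J$ is in particular a subspace) and converges to $a-b$ by continuity of the vector operations. Hence it suffices to prove $X_+\subseteq\overline{J}^{1,\eta}$, because then for arbitrary $x=x^+-x^-$ both $x^+,x^-\in\overline{J}^{1,\eta}$, and the subspace property gives $x\in\overline{J}^{1,\eta}$, so that $\overline{J}^{1,\eta}=X$. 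Moreover, by Lemma~\ref{ideal-adh}, for $x\in X_+$ membership $x\in\overline{J}^{1,\eta}$ is \emph{equivalent} to $[0,x]\cap J\to x$ in $\eta$, so the whole problem becomes: show that the increasing net $[0,x]\cap J$ is $\eta$-convergent to $x$ for every $x\in X_+$.

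The key comparison step is to relate the hypothesis $\mathrm{u}_J\eta=\eta$ to relative uniform convergence. First I would record the monotonicity of the operation $\mathrm{u}_J$: if $\lambda\ge\mu$ are two locally solid convergences, then $\mathrm{u}_J\lambda\ge\mathrm{u}_J\mu$, since $x_\alpha\xrightarrow{\mathrm{u}_J\lambda}x$ means $\abs{x_\alpha-x}\wedge u\goesl 0$ for all $u\in J_+$, which forces $\abs{x_\alpha-x}\wedge u\goeseta[\mu] 0$ whenever $\lambda\ge\mu$. Applying this with $\lambda=\mathrm{ru}$ and $\mu=\eta$, and using that $\mathrm{ru}$ is the strongest locally solid convergence (Proposition~\ref{u-strongest}, giving $\mathrm{ru}\ge\eta$), I obtain
\begin{math}
  \mathrm{u}_J(\mathrm{ru})\ge\mathrm{u}_J\eta=\eta,
\end{math}
where the last equality is exactly the hypothesis.

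It then remains to feed in Lemma~\ref{uJu-int}, which asserts that $[0,x]\cap J\xrightarrow{\mathrm{u}_J(\mathrm{ru})}x$ for every $x\in X_+$. Combining this with the comparison $\mathrm{u}_J(\mathrm{ru})\ge\eta$ just established, I conclude $[0,x]\cap J\goeseta x$, which by Lemma~\ref{ideal-adh} yields $x\in\overline{J}^{1,\eta}$. This proves $X_+\subseteq\overline{J}^{1,\eta}$, and by the reduction of the first paragraph, $\overline{J}^{1,\eta}=X$.

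I do not expect a serious obstacle here, as the two supporting lemmas do the real work; the only genuinely substantive point is the realization that the hypothesis $\mathrm{u}_J\eta=\eta$, although a statement about $\eta$ alone, can be transported to $\mathrm{u}_J(\mathrm{ru})$ precisely because $\mathrm{ru}$ dominates $\eta$ and $\mathrm{u}_J$ is monotone. The one place requiring a little care is verifying that $\overline{J}^{1,\eta}$ is a linear subspace (rather than merely a sublattice, as in Proposition~\ref{adh-sublat}), which is why I spell out the product-net argument for closure under subtraction above; alternatively one may bypass this by directly forming the net $([0,x^+]\cap J)-([0,x^-]\cap J)$ in $J$, which converges to $x^+-x^-=x$.
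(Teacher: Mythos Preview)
Your proof is correct and follows essentially the same route as the paper: invoke Lemma~\ref{uJu-int} to get $[0,x]\cap J\xrightarrow{\mathrm{u}_J(\mathrm{ru})}x$, then use $\mathrm{ru}\ge\eta$ and monotonicity of $\mathrm{u}_J$ together with the hypothesis $\mathrm{u}_J\eta=\eta$ to conclude $x\in\overline{J}^{1,\eta}$ for $x\in X_+$. The paper's version is terser, leaving the reduction to $X_+$ and the monotonicity of $\mathrm{u}_J$ implicit, whereas you spell these out; but the substance is identical.
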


\begin{proof}
  Let $x\in X_+$. Then
  \begin{math}
    [0,x]\cap J\xrightarrow{\mathrm{u}_J(\mathrm{ru})}x,
  \end{math}
  so that $x\in\overline{J}^{1,\mathrm{u}_J(\mathrm{ru})}$.
  Since ru is the strongest locally solid convergence, we get
  $x\in\overline{J}^{1,\mathrm{u}_J\eta}=\overline{J}^{1,\eta}$.
\end{proof}

\begin{question}
  Is the converse true? That is, if $\eta$ is unbounded and
  $\overline{J}^{1,\eta}=X$, does this imply $\mathrm{u}_J\eta=\eta$? 
\end{question}

The answer to this question is ``yes'' if $\eta$ is
idempotent; see Proposition~3.1(v) in \cite{Bilokopytov:23a}.

\medskip

If $B$ is a band in a Banach lattice~$X$, then the restriction
$(\mathrm{un})_{|B}$ of un-convergence in $X$ to $B$ does not
necessarily equal the ``native'' un-convergence of~$B$,
$\mathrm{u}(\textrm{n}_{|B})$; see Example~4.2
in~\cite{Kandic:17}. If $B$ is a projection band, we do have equality
by Theorem~4.3 in~\cite{Kandic:17}. In Question~4.7
in~\cite{Kandic:17} it was asked whether the converse is true: if
$(\mathrm{un})_{|B}=\mathrm{u}(\mathrm{n}_{|B})$, is $B$ a projection
band? The following theorem answers this question in the affirmative;
just take $\eta$ to be the norm convergence and $J=B=\overline{J}^1$.

\begin{theorem}\label{ideal-comm}
  Let $(X,\eta)$ be a complete locally solid convergence vector
  lattice, and $J$ an ideal in~$X$. Suppose that
  $(\mathrm{u}\eta)_{|J}=\mathrm{u}(\eta_{|J})$, that is, the
  operation of taking the unbounded modification commutes with that of
  taking the restriction to~$J$. Then $\overline{J}^1$ is a projection
  band.
\end{theorem}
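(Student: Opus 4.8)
The plan is to reduce everything, via Proposition~\ref{adhJpb-intlim}, to showing that the net $[0,x]\cap J$ is $\eta$-convergent for every $x\in X_+$; this is exactly the criterion for $\overline{J}^1$ to be a projection band. So I would fix $x\in X_+$ and write $\nu$ for the increasing net $[0,x]\cap J$, indexed by itself. A preliminary observation that organizes the use of the hypothesis is the routine identification $\mathrm{u}(\eta_{|J})=(\mathrm{u}_J\eta)_{|J}$: for a net $(y_\alpha)$ in $J$ with limit $y\in J$, both sides declare convergence precisely when $\abs{y_\alpha-y}\wedge u\goeseta 0$ for every $u\in J_+$ (note $\abs{y_\alpha-y}\wedge u\in J$ because $J$ is an ideal, so $\eta_{|J}$-nullity of this quantity is just $\eta$-nullity in $X$). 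Thus the hypothesis may be rewritten as $(\mathrm{u}\eta)_{|J}=(\mathrm{u}_J\eta)_{|J}$.

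The core of the argument is to prove that $\nu$ is $\eta$-Cauchy. By Lemma~\ref{uJu-int} we have $\nu\xrightarrow{\mathrm{u}_J(\mathrm{ru})}x$; since relative uniform convergence is the strongest locally solid convergence (Proposition~\ref{u-strongest}) and the unbounding operation $\mathrm{u}_J$ is monotone, this yields $\nu\xrightarrow{\mathrm{u}_J\eta}x$. As $\mathrm{u}_J\eta$ is a linear convergence, convergence implies Cauchyness, so the double net $(\nu_\gamma-\nu_\delta)$ indexed by pairs converges to $0$ in $\mathrm{u}_J\eta$. This double net lies in $J$ and has limit $0\in J$, hence by the rewritten hypothesis it converges to $0$ in $(\mathrm{u}\eta)_{|J}$, i.e.\ $\nu_\gamma-\nu_\delta\goesueta 0$ in $X$. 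But each $\nu_\gamma-\nu_\delta$ lies in the order interval $[-x,x]$, and $\mathrm{u}\eta$ agrees with $\eta$ on order bounded sets; therefore $\nu_\gamma-\nu_\delta\goeseta 0$, which is precisely the statement that $\nu$ is $\eta$-Cauchy. Completeness of $(X,\eta)$ then gives that $\nu$ converges, and since $x\in X_+$ was arbitrary, Proposition~\ref{adhJpb-intlim} finishes the proof.

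The step I expect to be the main obstacle — indeed, the whole content of the theorem — is the passage from $\mathrm{u}_J\eta$ to $\mathrm{u}\eta$ to $\eta$ for the difference net. The $\mathrm{u}_J\eta$-convergence comes essentially for free from Lemma~\ref{uJu-int}, but by itself it is too weak: because $x$ need not belong to $J$, the convergence $\mathrm{u}_J\eta$ need \emph{not} agree with $\eta$ on the order interval $[-x,x]$, so one cannot directly conclude $\eta$-Cauchyness. The hypothesis is exactly the device that upgrades $\mathrm{u}_J\eta$ to $\mathrm{u}\eta$ on nets living in $J$, after which order boundedness upgrades $\mathrm{u}\eta$ to $\eta$ and completeness closes the argument. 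The only small facts I would verify in passing are that $\lambda\ge\mu$ implies $\mathrm{u}_J\lambda\ge\mathrm{u}_J\mu$ and that differences of $\mathrm{u}_J\eta$-convergent nets are $\mathrm{u}_J\eta$-null, both immediate from the definitions and the linearity of $\mathrm{u}_J\eta$.
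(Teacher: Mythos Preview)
Your proof is correct and follows essentially the same route as the paper's: reduce via Proposition~\ref{adhJpb-intlim} to showing the net $[0,x]\cap J$ is $\eta$-Cauchy, use Lemma~\ref{uJu-int} and Proposition~\ref{u-strongest} to get $\mathrm{u}_J\eta$-Cauchyness, invoke the hypothesis to upgrade to $\mathrm{u}\eta$-Cauchyness, and then use order boundedness to pass to $\eta$-Cauchyness. Your explicit identification $\mathrm{u}(\eta_{|J})=(\mathrm{u}_J\eta)_{|J}$ spells out a step the paper leaves implicit in its passage from ``$\mathrm{u}_J\eta$-Cauchy'' to ``$\mathrm{u}(\eta_{|J})$-Cauchy'', but otherwise the arguments are the same.
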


\begin{proof}
  Fix $x\in X_+$. By Proposition~\ref{adhJpb-intlim} and since $\eta$
  is complete, it suffices to prove that the net $[0,x]\cap J$ is
  $\eta$-Cauchy. By Lemma~\ref{uJu-int},
  $[0,x]\cap J\xrightarrow{\mathrm{u}_J(\mathrm{ru})}x$. By
  Proposition~\ref{u-strongest},
  $[0,x]\cap J\xrightarrow{\mathrm{u}_J\eta}x$. It follows that this
  net is $\mathrm{u}_J\eta$-Cauchy and, therefore,
  $\mathrm{u}(\eta_{|J})$-Cauchy. By assumption, we conclude that the
  net is $(\mathrm{u}\eta)_{|J}$-Cauchy, hence
  $\mathrm{u}\eta$-Cauchy. Finally, this net is bounded, thus 
  $\eta$-Cauchy.
\end{proof}

\begin{remark}
  The condition $(\mathrm{u}\eta)_{|J}=\mathrm{u}(\eta_{|J})$ in the
  theorem is equivalent to $(\mathrm{u}\eta)_{|J}$ being
  unbounded. The forward implication is obvious. For the converse, if
  $(\mathrm{u}\eta)_{|J}$ is unbounded then we have
  \begin{math}
    \mathrm{u}(\eta_{|J})
    \le(\mathrm{u}\eta)_{|J}
    =\mathrm{u}\bigl((\mathrm{u}\eta)_{|J}\bigr)
    \le\mathrm{u}(\eta_{|J}).
  \end{math}
\end{remark}

Let $X$ be a Banach lattice and $(x_\alpha)$ a net in~$X$. Since $X$
may be viewed as a closed sublattice of~$X^{**}$,  if
$x_\alpha\goesun 0$ in $X^{**}$ then $x_\alpha\goesun 0$ in~$X$. That is,
the un-convergence on $X$ is weaker than (or equal to) the un-convergence
on $X^{**}$ reduced to~$X$.

\begin{corollary}
  Let $X$ be an order continuous Banach lattice. The following are equivalent:
  \begin{enumerate}
  \item $X$ is a KB-space;
  \item\label{KB-iff} $x_\alpha\goesun 0$ in $X$ iff
    $x_\alpha\goesun 0$ in $X^{**}$ for every net $(x_\alpha)$ in~$X$.
  \end{enumerate}
\end{corollary}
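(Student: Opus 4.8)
The plan is to read condition~\eqref{KB-iff} as the commutation hypothesis of Theorem~\ref{ideal-comm}, taking the ambient space to be the bidual. Throughout I identify $X$ with its canonical image in $X^{**}$ and regard $X^{**}$ as a complete locally solid convergence vector lattice under norm convergence~$\mathrm{n}$. Two standard structural facts drive the argument (see, e.g.,~\cite{Aliprantis:06}): since $X$ is order continuous, $X$ is an ideal in $X^{**}$; and $X$ is a KB-space if and only if $X$ is a band in $X^{**}$. As $X^{**}$ is Dedekind complete, every band in it is a projection band, so ``KB-space'' is equivalent to ``$X$ is a projection band in $X^{**}$''. Finally, with $J:=X$ the ideal of $X^{**}$, condition~\eqref{KB-iff} is exactly the assertion $(\mathrm{un})_{|X}=\mathrm{u}(\mathrm{n}_{|X})$, one inequality of which is automatic by the remark preceding the corollary.

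For \eqref{KB-iff}$\Rightarrow$(i), I would feed this data into Theorem~\ref{ideal-comm}: with $(X^{**},\mathrm{n})$ complete and locally solid and $J=X$ an ideal (by order continuity), the hypothesis $(\mathrm{u}\mathrm{n})_{|J}=\mathrm{u}(\mathrm{n}_{|J})$ is precisely condition~\eqref{KB-iff}. The theorem then yields that $\overline{J}^{1}=\overline{X}^{1,\mathrm{n}}$ is a projection band in $X^{**}$. Since $X$ is norm-closed in $X^{**}$ (the canonical embedding is isometric and $X$ is complete), its norm adherence equals $X$ itself, so $X$ is a projection band in $X^{**}$, hence a KB-space.

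For (i)$\Rightarrow$\eqref{KB-iff}, I would argue directly from the band projection. Let $P$ be the band projection of $X^{**}$ onto the projection band $X$, fix a net $(x_\alpha)$ in $X$, and take $u\in (X^{**})_+$. Writing $u=Pu+(I-P)u$ with $Pu\in X_+$ and $(I-P)u\in (X^{d})_+$, the Birkhoff inequalities together with $\abs{x_\alpha}\perp(I-P)u$ give $\abs{x_\alpha}\wedge u=\abs{x_\alpha}\wedge Pu$. As $u$ ranges over $(X^{**})_+$, the element $Pu$ ranges exactly over $X_+$ (since $Pw=w$ for $w\in X_+$). Hence $\bignorm{\abs{x_\alpha}\wedge u}\to 0$ for all $u\in(X^{**})_+$ if and only if $\bignorm{\abs{x_\alpha}\wedge w}\to 0$ for all $w\in X_+$; that is, $x_\alpha\goesun 0$ in $X^{**}$ iff $x_\alpha\goesun 0$ in $X$, which is~\eqref{KB-iff}. (Alternatively, this direction follows from Theorem~4.3 in~\cite{Kandic:17} applied to the projection band $X$ of $X^{**}$.)

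The work here is almost entirely organizational: matching condition~\eqref{KB-iff} to the commutation hypothesis of Theorem~\ref{ideal-comm} and recalling the bidual characterizations of order continuity and of the KB-property. The only genuine computation is the identity $\abs{x_\alpha}\wedge u=\abs{x_\alpha}\wedge Pu$ in the forward direction, and the main point to get right is that the adherence $\overline{X}^{1,\mathrm{n}}$ produced by Theorem~\ref{ideal-comm} collapses to $X$ because $X$ is norm-closed — so that ``$\overline{J}^{1}$ is a projection band'' really does say ``$X$ is a band in $X^{**}$'', i.e. that $X$ is a KB-space.
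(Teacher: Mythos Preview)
Your proof is correct and follows essentially the same route as the paper: identify $X$ as a closed ideal in $X^{**}$ via order continuity, translate ``KB-space'' into ``$X$ is a projection band in $X^{**}$'', and apply Theorem~\ref{ideal-comm} for the direction \eqref{KB-iff}$\Rightarrow$(i). The only difference is that the paper's proof is terser and handles (i)$\Rightarrow$\eqref{KB-iff} by an implicit appeal to Theorem~4.3 of~\cite{Kandic:17} (mentioned just before Theorem~\ref{ideal-comm}), whereas you spell out the band-projection identity $\abs{x_\alpha}\wedge u=\abs{x_\alpha}\wedge Pu$ directly; both are fine.
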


\begin{proof}
  Since $X$ is order continuous, $X$ is a closed ideal
  in~$X^{**}$. $X$ is a KB-space iff $X$ is a projection band
  in~$X^{**}$. By Theorem~\ref{ideal-comm}, the latter is equivalent
  to~\eqref{KB-iff}.
\end{proof}

The theorem fails without the order continuity assumption; $C[0,1]$ is a
counterexample, because un-convergence agrees with norm
convergence on both $C[0,1]$ and $C[0,1]^{**}$. 

\bigskip

\textbf{Acknowledgements.} We would like to thank K.~Abela,
M.~O'Brien, M.~Taylor, M.~Wortel, and O.~Zabeti for valuable discussions.

\end{document}